\documentclass[12pt, reqno]{amsart}
\usepackage{amssymb}
\usepackage{graphicx}
\usepackage{xcolor} 
\usepackage{tensor}
\usepackage{hyperref}
\usepackage{enumitem}

\usepackage{tikz}
\usepackage{fullpage} 

\definecolor{green}{rgb}{0,0.8,0} 

\newtheorem{theorem}{Theorem}[section]

\newtheorem{lemma}[theorem]{Lemma}
\newtheorem{proposition}[theorem]{Proposition}
\newtheorem*{convention}{Convention}
\theoremstyle{definition}

\newtheorem{example}[theorem]{Example}
\theoremstyle{remark}
\newtheorem{remark}[theorem]{Remark}
\numberwithin{equation}{section}
\newcommand{\nrm}[1]{\Vert#1\Vert}
\newcommand{\wnrm}[1]{\vert\hskip-.1em\vert\hskip-.1em\vert  #1 \vert\hskip-.1em\vert\hskip-.1em\vert}
\newcommand{\abs}[1]{\vert#1\vert}
\newcommand{\brk}[1]{\langle#1\rangle}
\newcommand{\set}[1]{\{#1\}}

\newcommand{\tr}{\textrm{tr}}
\newcommand{\supp}{{\mathrm{supp}}}

\renewcommand{\Re}{\mathrm{Re}}

\newcommand{\aeq}{\simeq}
\newcommand{\aleq}{\lesssim}

\newcommand{\lap}{\triangle}

\newcommand{\ud}{\mathrm{d}}
\newcommand{\rd}{\partial}
\newcommand{\nb}{\nabla}

\newcommand{\bb}{\Big}

\newcommand{\alp}{\alpha}
\newcommand{\bt}{\beta}
\newcommand{\gmm}{\gamma}
\newcommand{\Gmm}{\Gamma}
\newcommand{\dlt}{\delta}
\newcommand{\Dlt}{\Delta}
\newcommand{\eps}{\epsilon}

\newcommand{\kpp}{\kappa}
\newcommand{\lmb}{\lambda}

\newcommand{\sgm}{\sigma}
\newcommand{\Sgm}{\Sigma}
\newcommand{\tht}{\theta}
\newcommand{\vartht}{\vartheta}
\newcommand{\Tht}{\Theta}

\newcommand{\omg}{\omega}
\newcommand{\Omg}{\Omega}


\newcommand{\bfd}{{\bf d}}
\newcommand{\bfe}{{\bf e}}

\newcommand{\bfn}{{\bf n}}

\newcommand{\bfD}{{\bf D}}

\newcommand{\bfI}{{\bf I}}

\newcommand{\bfN}{{\bf N}}

\newcommand{\bfS}{{\bf S}}
\newcommand{\bfT}{{\bf T}}

\newcommand{\bfZ}{{\bf Z}}


\newcommand{\bbC}{\mathbb C}

\newcommand{\bbR}{\mathbb R}
\newcommand{\bbS}{\mathbb S}

\newcommand{\calA}{\mathcal A}

\newcommand{\calC}{\mathcal C}
\newcommand{\calD}{\mathcal D}

\newcommand{\calH}{\mathcal H}

\newcommand{\calL}{\mathcal L}

\newcommand{\calO}{\mathcal O}

\newcommand{\calQ}{\mathcal Q}
\newcommand{\calR}{\mathcal R}
\newcommand{\calS}{\mathcal S}
\newcommand{\calT}{\mathcal T}
\newcommand{\calU}{\mathcal U}


\newcommand{\covD}{{}^{(A)}\bfD}			
\newcommand{\covT}{\bfT}			
\newcommand{\covZ}{\bfZ}
\newcommand{\covS}{\bfS}			
\newcommand{\covN}{\bfN}			
\newcommand{\covBox}{{}^{(A)} \Box}		
\newcommand{\covsD}{{}^{(A)} \! \not \!\! \bfD}		
\newcommand{\covud}{{}^{(A)}\ud}			
\newcommand{\covdlt}{{}^{(A)}\dlt}			
\newcommand{\covLD}{{}^{(A)}\calL}			
\newcommand{\p}{+}
\newcommand{\m}{-}
\newcommand{\np}{}					

\newcommand{\hT}{\tau}				
\newcommand{\hY}{y}				

\newcommand{\met}{\eta}				
\newcommand{\LD}{\calL}	
\newcommand{\Vol}{\sgm}				
\newcommand{\ed}{\bfe}				
\newcommand{\EM}{\calQ}			
\newcommand{\vC}[1]{{}^{(#1)} P}		
\newcommand{\sC}[1]{{}^{(#1)} Q}		
\newcommand{\defT}[1]{{}^{(#1)} \pi}		

\newcommand{\covSgmD}{{}^{(A, \Sgm_{0})}\bfD}			
\newcommand{\covSgmTD}{{}^{(A, \Sgm_{t_{0}})}\bfD}			
\newcommand{\covHTD}{{}^{(A, \calH_{\hT_{0}})}\bfD}			
\newcommand{\DD}{\calD^{+}}			

\newcommand{\LieGrp}{\mathfrak{G}}
\newcommand{\LieAlg}{\mathfrak{g}}
\newcommand{\LieBr}[2]{[ #1, #2 ]}
\newcommand{\LieMet}[2]{\langle #1, #2 \rangle_{\LieAlg}}

\newcommand{\llbracket}{[\![}	
\newcommand{\rrbracket}{]\!]}	
\newcommand{\bbrk}[1]{\llbracket #1 \rrbracket}

\newcommand{\CSD}{\mathrm{CSD}}
\newcommand{\CSH}{\mathrm{CSH}}

\newcommand{\frkN}{\mathfrak{N}}



\newcommand{\bq}{\begin{equation}}
\newcommand{\eq}{\end{equation}}

\newcommand{\ga}{\gamma}

\newcommand{\bbr}{{ \mathbb{R}  }}

\newcommand{\itr}[1]{^{(#1)}}

\vfuzz2pt 
\hfuzz2pt 

\setlength{\marginparwidth}{0.5in}


\begin{document}

\title[]{Small data global existence and decay for \\ relativistic Chern--Simons equations}
\author{Myeongju Chae}%
\address{Department of Mathematics, Hankyung University, Anseong-si, Gyeonggi-do, Korea}%
\email{mchae@hknu.ac.kr}%

\author{Sung-Jin Oh}%
\address{Department of Mathematics, UC Berkeley, Berkeley, CA, USA}%
\email{sjoh@math.berkeley.edu}%

\thanks{The authors thank Hyungjin Huh for helpful discussions. M.~Chae was partially supported by NRF-2011-0028951. S.-J. Oh is a Miller Research Fellow, and acknowledges support from the Miller Institute. }%

\begin{abstract}
We establish a general small data global existence and decay theorem for Chern--Simons theories with a general gauge group, coupled with a massive relativistic field of spin 0 or 1/2. Our result applies to a wide range of relativistic Chern--Simons theories considered in the literature, including the abelian/non-abelian self-dual Chern--Simons--Higgs equation and the Chern--Simons--Dirac equation. A key idea is to develop and employ a gauge invariant vector field method for relativistic Chern--Simons theories, which allows us to avoid the long range effect of charge.
\end{abstract}
\maketitle
\setcounter{tocdepth}{1}
\tableofcontents

%

\section{Introduction}
In this article, we consider Chern--Simons gauge theories with a general gauge group $\LieGrp$ coupled with a massive field of spin 0 (Higgs) or 1/2 (Dirac) on $\bbR^{1+2}$. Our main result (Theorems~\ref{thm:CSH} and \ref{thm:CSD}) is global existence of a unique solution to small compactly supported initial data, along with sharp decay rates. We give a unified proof that applies to a wide range of relativistic Chern--Simons theories; some well-studied examples include the self-dual Chern--Simons--Higgs equation with abelian and non-abelian gauge groups, and the abelian Chern--Simons--Dirac equation.

As will be explained in Section~\ref{subsec:main-ideas} in more detail, the main difficulty in the proof is the long range effect of charge, which manifests as the slow spatial decay of the magnetic potential. In order to overcome this difficulty, we develop a gauge covariant vector field approach 
\cite{MR1672001, MR2131047, Lindblad:2006vh, Bieri:2014lq} for relativistic Chern--Simons theories, which allows us to avoid the gauge potential in the analysis. An issue for executing this strategy is the anomalous commutation property of the Chern--Simons coupled Klein--Gordon equation; see \eqref{eq:comm-problem} for details. This issue is taken care of by adapting the ODE technique of \cite{MR2188297, MR2056833}, developed to address possible long range effects, to our gauge covariant setting.

We begin by describing the Chern--Simons--Higgs and Dirac equations with a general gauge group in Sections~\ref{subsec:CSH} and \ref{subsec:CSD}, respectively. In Section~\ref{subsec:main-results}, we state the main results of the paper in precise terms. Section~\ref{subsec:main-ideas} contains an explanation of the main ideas of our proof, and in Section~\ref{subsec:history} we give a brief discussion of the history of the problem and related results. The introduction ends with a short outline of the rest of the paper in Section~\ref{subsec:outline}.


\subsection{Non-abelian self-dual Chern--Simons--Higgs equation} \label{subsec:CSH}
Here we first give a general formulation of the Chern--Simons--Higgs equation with a general gauge group $\LieGrp$; see \eqref{eq:CSH-general}. This formulation  requires a choice of a real scalar potential $\calU(\varphi)$. We then describe a particular choice of $\calU(\varphi)$ leading to the \emph{self-dual Chern--Simons--Higgs equation} \eqref{eq:CSH}. For concreteness, our first main theorem (Theorem~\ref{thm:CSH}) is stated for this equation, but our proof is clearly valid for more general potentials $\calU(\varphi)$; see Remark~\ref{rem:CSH-general}. Important special cases of \eqref{eq:CSH} include the \emph{abelian self-dual equation} ($\LieGrp = \mathrm{U}(1)$ and $V = \bbC$) and the \emph{non-abelian self-dual equation with adjoint coupling} ($\LieGrp = \mathrm{SU}(n)$ and $V = \mathrm{sl}(n; \bbC)$); see Examples~\ref{ex:a-CSH} and \ref{ex:na-CSH} below.

Consider a Lie group $\LieGrp$ with the associated Lie algebra $\LieAlg$, which possesses a positive-definite metric $\LieMet{\cdot}{\cdot}$ that is bi-invariant (i.e., invariant under the adjoint action $\LieGrp \times \LieAlg \ni (g, a) \mapsto g a g^{-1} \in \LieAlg$).
Let $V$ be a complex vector space equipped with an inner product $\brk{\cdot, \cdot}_{V}$, on which the group $\LieGrp$ acts via a unitary representation $\rho: \LieGrp \to \mathrm{U} (V)$.
In what follows, the subscript $V$ in $\brk{\cdot, \cdot}_{V}$ will often be omitted.

\begin{remark} 
When $\LieGrp$ is \emph{compact}, which is the case in all examples below, a bi-invariant metric always exists, since any left-invariant metric can be made bi-invariant by averaging its right-translates using the Haar measure; recall that the Haar measure is finite and bi-invariant on compact Lie groups \cite[Chapter 1]{MR3136522}. Moreover, for any representation $\rho: \LieGrp \to \mathrm{GL}(V)$ there exists an inner product on $V$ which makes $\rho$ unitary, by starting with any inner product $\brk{\cdot, \cdot}$ and averaging its left-translates $\brk{\rho(g) \, \cdot , \rho(g) \, \cdot }$ using the Haar measure. 
\end{remark}

Let $\bbR^{1+2}$ denote the (2+1)-dimensional Minkowski space equipped with the metric 
\begin{equation*}
\eta_{\mu \nu} = (\eta^{-1})^{\mu \nu} = \mathrm{diag} \, (\m1,\p1,\p1)
\end{equation*}
in the rectilinear coordinates $(x^{0}, x^{1}, x^{2})$.
Let $E$ be a vector bundle with fiber $V$ over $\bbR^{1+2}$ with structure group $\LieGrp$.
We refer to the sections of $E$ as \emph{scalar multiplet fields}.
Since $\bbR^{1+2}$ is contractible, every fiber bundle over this space is trivial, i.e.,
$E$ is (smoothly) equivalent to the product bundle $\bbR^{1+2} \times V$. Hence the scalar multiplet fields may be concretely realized as the $V$-valued functions on $\bbR^{1+2}$; see Section~\ref{subsec:gauge-str} below. 

In order to differentiate a scalar multiplet field, we introduce the notion of a \emph{covariant derivative} $\covD$ on $E$,  described by a $\LieAlg$-valued 1-form $A(\cdot)$ in the following fashion:
\begin{equation} \label{eq:covd-def}
	\covD_{X} \varphi = \nb_{X} \varphi + A(X) \cdot \varphi.
\end{equation}
Here $\varphi$ is a scalar multiplet field (i.e., a $V$-valued function), $X$ is a vector on $\bbR^{1+2}$, $\nb_{X}$ is the usual directional derivative of $\varphi$ (viewed as a $V$-valued function) in the direction $X$ and $A(X) \in \LieAlg$ acts on $\varphi$ by the infinitesimal representation $\ud \rho \restriction_{I}: \LieAlg \to \mathrm{u}(V)$. 
Given two vector fields $X, Y$ on $\bbR^{1+2}$, the associated \emph{curvature 2-form} $F = F[A]$ is defined by the relation
\begin{equation} \label{eq:curv-def}
	F(X, Y) \varphi = \big( \covD_{X} \covD_{Y}  - \covD_{Y} \covD_{X} - \covD_{\LieBr{X}{Y}} \big) \varphi.
\end{equation}
In terms of the connection 1-form $A$, the curvature $F$ takes the form
\begin{equation} \label{eq:curv-eq1}
	F = \ud A + \frac{1}{2} [A \wedge A]
\end{equation}
(see Section~\ref{subsec:extr-calc} for the notation) or in coordinates, 
\begin{equation}  \label{eq:curv-eq2}
	F_{\mu \nu} = \rd_{\mu} A_{\nu} - \rd_{\nu} A_{\mu} + \LieBr{A_{\mu}}{A_{\nu}}.
\end{equation}
%
%
In analogy with Maxwell's theory of electromagnetism, the $F_{12}$ component of the curvature 2-form is sometimes called the \emph{magnetic field}, and $F_{01}, F_{02}$ are referred to as the \emph{electric field}. The components $A_{1}, A_{2}$ are alternatively referred to as the \emph{magnetic potential}, and $A_{0}$ as the \emph{electric potential}.

The Lagrangian density for the Chern--Simons--Higgs system is given by
\begin{equation*}
	L[A, \varphi] = \frac{\kpp}{2} L_{CS}[A] - \brk{\covD^{\mu} \varphi, \covD_{\mu} \varphi} - \calU(\varphi)
\end{equation*}
where $\kpp \in \bbR \setminus \set{0}$ is called the coupling constant and $\calU(\varphi)$ is a real-valued scalar potential. The term $L_{CS}[A]$ is the \emph{Chern--Simons Lagrangian}, defined as
\begin{equation*}
	L_{CS}[A] = \eps^{\mu \nu \rho} \bb( \brk{A_{\mu}, \rd_{\nu} A_{\rho}}_{\LieAlg} + \frac{1}{3} \brk{A_{\mu}, [A_{\nu}, A_{\rho}]}_{\LieAlg} \bb).
\end{equation*}
We say that $(A, \varphi)$ is a solution to the \emph{Chern--Simons--Higgs equation} if it is a formal critical point of the action $(A, \varphi) \mapsto \calS[A, \varphi] = \int_{\bbR^{1+2}} L[A, \varphi] \, \ud t \ud x$. The corresponding Euler--Lagrange equation satisfied by the formal critical points takes the form
\begin{equation}  \label{eq:CSH-general}
\left\{
\begin{aligned}
	\covBox \varphi =& \frac{1}{2} \frac{\dlt \calU}{\dlt \varphi}, \\
	F = & \frac{1}{\kpp} (\star J_{\CSH}), \\
	J_{\CSH} =&  \brk{\calT \varphi, \covud \varphi} + \brk{\covud \varphi, \calT \varphi}.
\end{aligned}
\right.
\end{equation}
Here $\covBox = \covD^{\mu} \, \covD_{\mu}$ is the covariant d'Alembertian, $\covud \varphi = \covD_{\mu} \varphi \, \ud x^{\mu}$ is the covariant differential of $\varphi$ and $\star$ is the Hodge star (see Section~\ref{subsec:extr-calc}). The notation $\frac{\dlt \calU}{\dlt \varphi}$ refers to the functional derivative of $\calU(\varphi)$, characterized by
\begin{equation*}
	\frac{\ud}{\ud s} \bb\vert_{s = 0} \bb( \int_{\bbR^{1+2}} \calU(\varphi + s f)  \, \ud t \ud x \bb) 
	= \int_{\bbR^{1+2}} \Re \brk{\frac{\dlt \calU}{\dlt \varphi}, f} \, \ud t \ud x
\end{equation*}
for all $V$-valued $f \in C^{\infty}_{0}(\bbR^{1+2})$. The linear operator $\calT : V \to \LieAlg \otimes_{\bbR} V$ is defined as follows: 
Given an orthonormal basis $\set{e_{A}} \subseteq \LieAlg$ with respect to $\brk{\cdot, \cdot}_{\LieAlg}$, let
\begin{equation*}
	\calT v = \sum_{A} e_{A} \otimes \calT^{A} v, \quad \hbox{ where } \calT^{A} : V \to V, \ v \mapsto e_{A} \cdot v \hbox{ for each index } A.
\end{equation*}
A compact way of denoting $\calT^{A}$ while respecting the difference between upper and lower indices is to write $\calT^{A} v = \sum_{A'} \dlt^{A A'} e_{A'} \cdot v$, where $\dlt^{AA'}$ is the diagonal symbol that equals $1$ when $A = A'$ and vanishes otherwise.
The inner product between $\calT v \in \LieAlg \otimes_{\bbR} V$ and $w \in V$ is naturally defined to be an element of $\LieAlg$ by the formula
\begin{equation*}
	\brk{\calT v, w} = \sum_{A} \brk{\calT^{A} v, w} e_{A}, \quad
	\brk{w, \calT v} = \sum_{A} \brk{w, \calT^{A} v} e_{A}.
\end{equation*}
According to these definitions, note that $\brk{a \cdot v, w} = \brk{a, \brk{\calT v , w}}_{\LieAlg}$ for $a \in \LieAlg$ and $v, w \in V$.
Note also that $\brk{\calT \varphi, \covud \varphi}$ and $\brk{\covud \varphi, \calT \varphi}$ in \eqref{eq:CSH-general} define $\LieAlg$-valued 1-forms.

In the study of the Chern--Simons--Higgs equation, a special emphasis is given to the \emph{self-dual} case, in which the energy functional has a particular structure so that its minima can be found by solving a simpler first order elliptic equation (Bogomol'nyi equation).
In this case, the scalar potential $\calU$ is given by 
\begin{equation} \label{eq:CSH-ptnl-lagrange}
\calU(\varphi) 
= \frac{1}{\kpp^{2}} \bb\vert \dlt_{AA'}  \brk{\calT^{A} \varphi, \varphi} \calT^{A'} \varphi+ v^{2} \varphi \bb\vert^{2},
\end{equation}
where $v \in \bbR \setminus \set{0}$ is a constant playing the role of the mass parameter for $\varphi$.
Computing the functional derivative, we are led to the following \emph{self-dual Chern--Simons--Higgs equation}:
\begin{equation} \label{eq:CSH} \tag{CSH}
\left\{
\begin{aligned}
	\covBox \varphi - \frac{v^{4}}{\kpp^{2}} \varphi =& U_{\CSH}(\varphi), \\
	F = & \frac{1}{\kpp} (\star J_{\CSH}), \\
	J_{\CSH} =&  \brk{\calT \varphi, \covud \varphi} + \brk{\covud \varphi, \calT \varphi}.
\end{aligned}
\right.
\end{equation}
where
\begin{equation} \label{eq:CSH-ptnl}
\begin{aligned}
	U_{\CSH}(\varphi) = & \frac{4 v^{2}}{\kpp^{2}} \dlt_{A A'} \brk{\calT^{A} \varphi, \varphi} \calT^{A'} \varphi \\
			& + \frac{1}{\kpp^{2}} \dlt_{A A'} \dlt_{B B'} \brk{\calT^{A} \varphi, \varphi} \brk{(\calT^{A'} \calT^{B'} + \calT^{B'} \calT^{A'}) \varphi, \varphi} \calT^{B} \varphi  \\
			& + \frac{1}{\kpp^{2}} \dlt_{A A'} \dlt_{B B'}\brk{\calT^{A} \varphi, \varphi} \brk{\calT^{B} \varphi, \varphi} \calT^{A'} \calT^{B'} \varphi .
\end{aligned}
\end{equation}
Our first main theorem (Theorem~\ref{thm:CSH}) is small data global existence for the general self-dual Chern--Simons--Higgs equation \eqref{eq:CSH}. We remark that Theorem~\ref{thm:CSH} is stated for \eqref{eq:CSH} only for the sake of concreteness. In fact, due to the perturbative nature of the proof, self-duality is not essential for this theorem to hold; see Remark~\ref{rem:CSH-general}.

We now describe important special cases of \eqref{eq:CSH}. We begin with the case of the abelian gauge group $\LieGrp = \mathrm{U}(1)$, which has been extensively studied.
\begin{example}[Abelian self-dual Chern--Simons--Higgs {\cite[Section~IV.A]{dunne1995self}}] \label{ex:a-CSH}
Let $\LieGrp = \mathrm{U}(1) = \set{e^{i \tht} \in \bbC}$, so that $\LieAlg = \mathrm{u}(1) = i \bbR$ and $\brk{i a, i b}_{\LieAlg} = ab$ for $a, b \in \bbR$. Take $V = \bbC$, equipped with the usual inner product $\brk{z, w} = z \overline{w}$, and let $\rho(e^{i \tht}) z= e^{i \tht} z$ for $e^{i \tht} \in \mathrm{U}(1)$ and $z \in \bbC$. Using $i$ as a basis for $\LieAlg = \mathrm{u}(1)$, we may write $\calT v = i v$ and $\covD = \nb + i A$ for a real-valued 1-form $A$. Therefore,
\begin{equation*}
	J_{\CSH} = i \big( \varphi \overline{\covud \varphi} - \overline{\varphi} \, \covud \varphi \big).
\end{equation*}
The self-dual potential is given by
\begin{equation*}
	\calU(\varphi) = \frac{1}{\kpp^{2}} \abs{\varphi}^{2} \big( \abs{\varphi}^{2} - v^{2} \big)^{2}.
\end{equation*}
for some $v \in \bbR \setminus \set{0}$. Hence $U_{\CSH}(\varphi)$ takes the form
\begin{equation*}
	U_{\CSH}(\varphi) = \frac{1}{\kpp^{2}} \bb( - 4 v^{2} \abs{\varphi}^{2} \varphi + 3 \abs{\varphi}^{4} \varphi \bb).
\end{equation*}
\end{example}

Another important special case of \eqref{eq:CSH} is when the structure group $\LieGrp$ is $\mathrm{SU}(N)$ $(N > 1)$, and it acts on the space $\mathrm{sl}(N, \bbC)$ (complexification of the Lie algebra $\mathrm{su}(N)$) by the adjoint action.

\begin{example}[Non-abelian self-dual Chern--Simons--Higgs with adjoint coupling {\cite[Section~V.B]{dunne1995self}}] \label{ex:na-CSH} 
%
Let $\LieGrp = \mathrm{SU}(N)$ $(N > 1)$ be the group of $N \times N$ unitary matrices with unit determinant, so that $\LieAlg = \mathrm{su}(N)$ is the Lie algebra of $N \times N$ anti-hermitian matrices with zero trace and $\brk{a, b}_{\LieAlg} = \tr(a b^{\dagger})$ for matrices $a, b$. 
We take the state space to be the complexification of the Lie algebra $\LieAlg = \mathrm{su}(N)$, i.e., $V = \mathrm{sl}(N, \bbC)$ is the space of $N \times N$ complex matrices with zero trace and $\brk{v, w}_{V} = \tr (v w^{\dagger})$. The group $\LieGrp$ acts on $V$ via the adjoint action $\rho(g) v = g v g^{-1}$ for $g \in \mathrm{SU}(N)$ and $v \in \mathrm{sl}(N, \bbC)$.

In this case, the current $J_{\CSH}$ and the self-dual potential $\calU(\varphi)$ take the form
\begin{align*}
	J_{\CSH} =& -\LieBr{\varphi^{\dagger}}{\covud \varphi} + \LieBr{(\covud \varphi)^{\dagger}}{\varphi}, \\
	\calU(\varphi) = & \frac{1}{\kpp^{2}} \bb\vert \LieBr{\LieBr{\varphi}{\varphi^{\dagger}}}{\varphi} + v^{2} \varphi \bb\vert^{2},
\end{align*}
for some $v \in \bbR \setminus \set{0}$. Hence $U_{\CSH}(\varphi)$ is given by
\begin{equation*}
	U_{\CSH}(\varphi) = \frac{4 v^{2}}{\kpp^{2}} \LieBr{\varphi}{\LieBr{\varphi}{\varphi^{\dagger}}}
				+ \frac{1}{\kpp^{2}} \bb( 
				2 \LieBr{\LieBr{\varphi}{\LieBr{\varphi^{\dagger}}{\LieBr{\varphi}{\varphi^{\dagger}}}}}{\varphi}
				+ \LieBr{\LieBr{\varphi}{\LieBr{\varphi}{\varphi^{\dagger}}}}{\LieBr{\varphi}{\varphi^{\dagger}}} \bb).
\end{equation*}
\end{example}

\subsection{Non-abelian Chern--Simons--Dirac equations} \label{subsec:CSD}
Here we describe the Chern--Simons--Dirac equation with a general gauge group $\LieGrp$. Our formulation includes the well-studied abelian case \cite{MR2290338} as a special case; see Example~\ref{ex:a-CSD}.

Let $\LieGrp$ be a Lie group with a positive-definite bi-invariant metric, $W$ be a complex vector space with an inner product $\brk{\cdot, \cdot}_{W}$, and $\rho : \LieGrp \to U(W)$ be a unitary representation. In order to describe the Chern--Simons--Dirac system with a general gauge group $\LieGrp$, we first need to describe the \emph{spinor multiplet fields} on $\bbR^{1+2}$.

Let $\gmm^{\mu}$ ($\mu = 0,1,2$) be the \emph{gamma matrices}, which are $\bbC$-valued $2 \times 2$ matrices satisfying 
\begin{equation} \label{eq:gmm-mat}
	\gmm^{\mu} \gmm^{\nu} + \gmm^{\nu} \gmm^{\mu} = - 2 (\eta^{-1})^{\mu \nu} \, \bfI_{2 \times 2} \, .
\end{equation}
The standard representations of $\gmm^{\mu}$ are given by 
\begin{align*}
\gmm^{0} = \left(
\begin{array}{cc}
1 & 0 \\ 0 & - 1
\end{array}
\right), \quad
\gmm^{1} = \left(
\begin{array}{cc}
0 & 1 \\ -1 & 0
\end{array}
\right), \quad
\gmm^{2} = \left(
\begin{array}{cc}
0 & -i \\ -i & 0
\end{array}
\right).
\end{align*}
The space of \emph{spinors} associated to the Minkowski space $(\bbR^{1+2}, \eta)$ is simply $\Dlt = \bbC^{2}$, on which the gamma matrices act by matrix multiplication, and the spinor bundle is the trivial bundle $S = \bbR^{1+2} \times \Dlt$. Let $\tilde{E}$ be a vector bundle with fiber $W$ and structure group $\LieGrp$. 
The bundle of \emph{spinor multiplets} is the tensor product $E = S \otimes_{\bbC} \tilde{E}$, whose fiber is $V = \Dlt \otimes_{\bbC} W$.
Using the triviality of the bundle $E$, we will identify the sections (or \emph{spinor multiplet fields}) of $E$  with $V$-valued functions on $\bbR^{1+2}$.

The gamma matrices $\gmm^{\mu}$ and the elements $g \in \LieGrp$, $a \in \LieAlg$ act on $V$ by the rules
\begin{equation*}
	\gmm^{\mu} (s \otimes w) = \gmm^{\mu} s \otimes w, \quad
	g \cdot (s \otimes w) = s \otimes \rho(g) w, \quad 
	a \cdot (s \otimes w) = s \otimes (\ud \rho \restriction_{I} \! (a) w), 
\end{equation*}
where $s \in \Dlt$ and $w \in W$. Moreover, the inner products on $\Dlt$ and $W$ induce an inner product $\brk{\cdot, \cdot}_{V}$ on $V$, characterized by
\begin{equation*}
	\brk{s_{1} \otimes w_{1}, s_{2} \otimes w_{2}}_{V}
	= (s_{2}^{\dagger} s_{1}) \brk{w_{1}, w_{2}}_{W},
\end{equation*}
where $s_{1}, s_{2} \in \Dlt$ and $w_{1}, w_{2} \in W$. Note that $\gmm^{0}$ is hermitian, $\gmm^{j}$ ($j=1,2$) is anti-hermitian, $g \in \LieGrp$ is unitary and $a \in \LieAlg$ is anti-hermitian with respect to $\brk{\cdot, \cdot}_{V}$. 

Given a $\LieAlg$-valued connection 1-form $A$, a spinor multiplet field $\psi$ and a vector $X$ on $\bbR^{1+2}$, we define the gauge covariant derivative $\covD_{X}$ in the direction $X$ associated to $A$ by
\begin{equation} \label{eq:covd-def-CSD}
	\covD_{X} \psi = \nb_{X} \psi +A(X) \cdot \psi.
\end{equation}
The curvature 2-form $F$ is defined by \eqref{eq:curv-def} as in the case of Chern--Simons--Higgs. In addition, we introduce the \emph{covariant Dirac operator}, defined by
\begin{equation*}
	\covsD := \gmm^{\mu} \,  \covD_{\rd_{\mu}} .
\end{equation*}

The Chern--Simons--Dirac Lagrangian density is given by
\begin{equation*}
	L[A, \psi] = \frac{\kpp}{2} L_{CS} + i \brk{\covsD \psi, \gmm^{0} \psi} + m \brk{\psi, \gmm^{0} \psi}.
\end{equation*}
where $\kpp \neq 0$ is the coupling constant and $m > 0$ is the mass of the spinor multiplet field $\psi$ and $\brk{\cdot, \cdot} = \brk{\cdot, \cdot}_{V}$. 
The \emph{Chern--Simons--Dirac equation} for $(A, \psi)$ is the Euler--Lagrange equation for the action $\calS[A, \psi]  = \int_{\bbR^{1+2}} L[A, \psi] \, \ud t \ud x$, and takes the form
\begin{equation}\label{eq:CSD} \tag{CSD}
\left\{
\begin{aligned}
i \covsD \psi + m \psi =& 0 \\
F =& \frac{1}{\kpp} (\star J_{\CSD}) \\
J_{\CSD}(\rd_{\mu}) =& - i \eta_{\mu \nu} \brk{\gmm^{0} \gmm^{\nu} \calT \psi, \psi}.
\end{aligned}
\right.
\end{equation}
Here $\calT v \in \LieAlg \otimes V$ for $v \in V$ is again defined as $\calT v = \sum_{A} e_{A} \otimes \calT^{A} v$ with $\calT^{A} v = \sum_{A'} \dlt^{A A'} e_{A'} \cdot v$, where $\set{e_{A}}$ is any orthonormal basis for $\LieAlg$ with respect to $\brk{\cdot, \cdot}_{\LieAlg}$. The matrix $\gmm^{0} \gmm^{\nu}$ acts on $\calT v$ in the natural fashion, i.e., $\gmm^{0} \gmm^{\nu} \calT v = \sum_{A} e_{A} \otimes \gmm^{0} \gmm^{\nu} \calT^{A} v$.

An important special case of \eqref{eq:CSD} is when the gauge group is abelian, i.e., $\LieGrp = \mathrm{U}(1)$.
\begin{example}[Abelian Chern--Simons--Dirac \cite{MR2290338}] \label{ex:a-CSD}
Let $\LieGrp = \mathrm{U}(1)$, $\LieAlg = \mathrm{u}(1) = i \bbR$ and $\brk{i a, i b}_{\LieAlg} = ab$ for $a, b \in \bbR$. Taking $W = \bbC$ with the usual action of $\mathrm{U}(1)$, we have the natural equivalence $V = \Dlt \otimes_{\bbC} \bbC \cong \Dlt = \bbC^{2}$ and $e^{i \tht} \in \mathrm{U}(1)$ acts on this space by component-wise multiplication. Then the 1-form $J_{\CSD}$ takes the form
\begin{equation*}
	J_{\CSD}(\rd_{\mu}) = \eta_{\mu \nu} (\psi^{\dagger} \gmm^{0} \gmm^{\nu} \psi).
\end{equation*}
\end{example}


\subsection{Main theorems} \label{subsec:main-results}
We now state precisely the main theorems of this paper, which are small data global existence and decay results for the general Chern--Simons--Higgs and Dirac equations formulated above.

We begin with the case of \eqref{eq:CSH}.
We say that a triplet $(a, f, g)$ of a $\LieAlg$-valued 1-form $a = a_{1} \ud x^{1} + a_{2} \ud x^{2}$ and $V$-valued functions $f, g$ on $\Sgm_{0} = \set{0} \times \bbR^{2}$ is an \emph{initial data set for \eqref{eq:CSH}} if it obeys the \emph{\eqref{eq:CSH} constraint equation}, i.e.,
\begin{equation}
	\rd_{1} a_{2} - \rd_{2} a_{1} + \LieBr{a_{1}}{a_{2}} = - \frac{1}{\kpp} \bb( \brk{\calT f, g} + \brk{g, \calT f} \bb).
\end{equation}
We say that $(A, \varphi)$ is a solution to the initial value problem (IVP) for \eqref{eq:CSH} with data $(a, f, g)$ if $(A, \varphi)$ solves \eqref{eq:CSH} and obeys
\begin{equation*}
	(A , \varphi, \covD_{0} \varphi) \restriction_{\Sgm_{0}} = (a, f, g),
\end{equation*}
where the notation $\restriction_{\Sgm_{0}}$ refers to the pullback along the embedding $\Sgm_{0} \hookrightarrow \bbR^{1+2}$; see the end of Section~\ref{subsec:extr-calc} for the precise definition.
Note that the constraint equation is precisely the pullback of the equation $F = \frac{1}{\kpp} \star J_{\CSH}$ along the embedding $\Sgm_{0} \hookrightarrow \bbR^{1+2}$; hence it necessarily holds for $(a, f, g)$ if a solution to the IVP exists.

The precise statement of the main theorem for \eqref{eq:CSH} is as follows.
\begin{theorem} \label{thm:CSH}
Consider the IVP for \eqref{eq:CSH} with $v \neq 0$ and $\kpp \neq 0$. There exists a positive function $\dlt_{1}(R)$ of $R \in (0, \infty)$ such that the following holds.
Let $(a, f, g)$ be a smooth initial data set for \eqref{eq:CSH} obeying
\begin{equation} \label{eq:CSH-id}
	\supp \, (f,g) \subseteq B_{R}, \quad
	\sum_{k=1}^{5} \nrm{(\covSgmD^{(k)} f, \covSgmD^{(k-1)} \, g)}_{L^{2}(\bbR^{2})}
	+ \nrm{f}_{L^{2}(\bbR^{2})} \leq \eps,
\end{equation}
where $B_{R} := \set{x \in \bbR^{2} : \abs{x} < R}$ and $\covSgmD$ is the (induced) gauge covariant derivative on $\set{0} \times \bbR^{2}$. If $\eps \leq \dlt_{1}(R)$, then a smooth solution to the IVP exists globally, and it is unique up to smooth local gauge transformations.
Moreover, the solution $(\phi, A)$ exhibits the following gauge invariant asymptotic behavior:
\begin{equation}
	\abs{\phi(t,x)} + \abs{\covD \phi(t,x)} < C \eps (1+\abs{t})^{-1}.
\end{equation} 
\end{theorem}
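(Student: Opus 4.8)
The plan is to run a continuity/bootstrap argument for a hierarchy of \emph{gauge covariant} energies, organised so that the slowly-decaying magnetic potential never appears explicitly; this is the gauge covariant vector field method advertised in the introduction. It suffices to work forward in time, $t \ge 0$. First I would address local well-posedness: the Chern--Simons relation $F = \tfrac1\kpp (\star J_{\CSH})$ is a constraint rather than an evolution equation, so after imposing a gauge condition on $A$ (say the temporal gauge $A_0 = 0$, or the Coulomb gauge $\rd^j A_j = 0$ on each slice) one solves an elliptic system to recover the spatial components of $A$ from $\varphi$, and the first equation of \eqref{eq:CSH} becomes a Klein--Gordon equation for $\varphi$ with mass $m = v^2/\abs{\kpp}$ and a nonlinearity $U_{\CSH}(\varphi)$ that is \emph{cubic} and higher in $\varphi$. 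Standard theory then gives a smooth local solution, unique up to smooth gauge transformations. From this point on all estimates are expressed through the gauge invariant objects $\varphi$, $\covD\varphi$, the curvature $F$ (which by the field equation is bilinear in $(\varphi, \covD\varphi)$, hence already of perturbative size), and their covariant derivatives.

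The heart of the matter is a covariant energy estimate. For commuting fields I would use the Klein--Gordon Killing fields $Z \in \set{\rd_0, \rd_1, \rd_2, \, L_1, L_2, \, \Omega}$ (translations, Lorentz boosts and the rotation; scaling is excluded by the mass term) together with their gauge covariant counterparts $\covD_Z \varphi = Z \varphi + A(Z) \varphi$. Commuting $\covD_Z$ through the covariant Klein--Gordon operator $\covBox - m^2$ produces, in place of the vanishing flat commutator, only curvature terms: schematically $[\covBox - m^2, \covD_Z]\varphi$ is a sum of terms of the form $F(Z, \covD\varphi)$, $(\covD F)(Z, \cdot)\varphi$, and lower order pieces, and by the field equation $\covD F$ is itself bilinear in $(\covD\varphi, \covD\varphi)$ and $(\varphi, \covD^2\varphi)$. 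Setting $\calE_N(t) = \sum_{\abs{I} \le N}\bigl(\nrm{\covD (\covD_Z)^I \varphi(t)}_{L^2} + m\nrm{(\covD_Z)^I \varphi(t)}_{L^2}\bigr)$, the contribution of the cubic nonlinearity to $\tfrac{\ud}{\ud t}\calE_N$ is bounded, after distributing the derivatives via the Leibniz rule and Klainerman--Sobolev, by $\nrm{\varphi(t)}_{L^\infty}^2 \calE_N(t)$, which under the bootstrap pointwise bounds is $\lesssim \eps^2 (1+t)^{-2}\calE_N(t)$ and hence harmless by Gronwall. The obstruction --- precisely the anomalous commutation issue \eqref{eq:comm-problem} flagged in the introduction --- is that when $Z$ is a boost or the rotation its coefficients grow like $1+\abs{x}$, so inside the light cone (where $\abs{x} \lesssim t$ and $F \sim \eps^2(1+t)^{-2}$) the term $F(Z, \covD\varphi)$ is only of size $\eps^2(1+t)^{-1}$; a naive Gronwall then forces polynomial-in-$t$ growth of $\calE_N$, which is fatal.

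The hard part --- and the step where I would adapt the ODE technique of \cite{MR2188297, MR2056833} to the gauge covariant setting --- is to show that this borderline $(1+t)^{-1}$ term does not actually produce growth. The key point to establish is that, in the covariant energy identity for $(\covD_Z)^I \varphi$, the borderline contribution is not a generic non-integrable coefficient multiplying $\calE_N$, but rather has the structure $\bigl(\tfrac{\ud}{\ud t}\Theta_I(t)\bigr)\calE_N(t)$ up to genuinely integrable errors, where $\Theta_I(t)$ is a \emph{bounded} function of $t$ --- morally a logarithmic weight against the finite conserved charge $\tfrac1\kpp \int \star J_{\CSH}$, frozen along the characteristics, so that although $\tfrac{\ud}{\ud t}\Theta_I \sim (1+t)^{-1}$ is not integrable, its primitive stays bounded. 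One then propagates the modified energy $e^{-\Theta_I}\calE_N$, from which the borderline term has been removed, and closes the bootstrap with $\calE_N(t) \lesssim \eps$ uniformly in $t$. I expect isolating this ``perfect time-derivative'' structure --- exhibiting $\Theta_I$ and proving its boundedness from the algebraic structure of the Chern--Simons current --- to be the main technical obstacle of the proof.

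Finally, having propagated $\calE_N(t) \lesssim \eps$ for $N$ large enough (the hypothesis \eqref{eq:CSH-id} provides $5$ covariant derivatives of the data, comfortably above the threshold), I would apply a gauge covariant Klein--Gordon Klainerman--Sobolev inequality, $\abs{\varphi(t,x)} + \abs{\covD\varphi(t,x)} \lesssim (1+\abs{t})^{-1}\sum_{\abs{I} \le 2}\bigl(\nrm{(\covD_Z)^I \varphi(t)}_{L^2} + \nrm{(\covD_Z)^I \covD\varphi(t)}_{L^2}\bigr)$, to convert the energy bound into the claimed pointwise decay $\abs{\varphi(t,x)} + \abs{\covD\varphi(t,x)} < C\eps(1+\abs{t})^{-1}$. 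Since this decay is exactly what rendered the nonlinear and commutator terms controllable, the bootstrap is self-consistent and yields global existence; uniqueness up to smooth local gauge transformations follows from the local theory together with finite speed of propagation.
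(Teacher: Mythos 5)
You correctly identify the anomalous commutation term as the obstruction, but the step on which your whole argument rests --- that the borderline contribution can be written as $\bigl(\tfrac{\ud}{\ud t}\Theta_I(t)\bigr)\calE_N(t)$ plus integrable errors with $\Theta_I$ \emph{bounded}, so that $e^{-\Theta_I}\calE_N$ propagates and $\calE_N(t)\lesssim \eps$ uniformly --- is not proved, and as stated it is internally inconsistent: a coefficient of definite size $(1+t)^{-1}$ has primitive $\log(1+t)$, so boundedness of $\Theta_I$ would require a sign cancellation that you never exhibit. The dangerous commuted term is, schematically, $N_{cubic}=\iota_{Z}\star(\varphi\wedge\overline{\covud\varphi}\wedge\covud\varphi)$, bounded only by $\abs{\varphi}\abs{\covT\varphi}\abs{\covS\varphi}$, and $\covS\varphi$ does not decay; it is not a charge-type coefficient ``frozen along characteristics,'' and there is no identified structure that turns it into an exact time derivative of a bounded quantity. (This is also not a phase correction as in modified scattering: there the logarithmic correction is genuinely unbounded and enters as a phase, not as an amplitude factor multiplying an energy.) Since your sharp pointwise bootstrap bound $\abs{\varphi}+\abs{\covD\varphi}\lesssim\eps(1+t)^{-1}$ is recovered \emph{only} through Klainerman--Sobolev applied to the uniformly bounded $\calE_N$, the unproved claim is exactly the closing step: with only Gronwall the coefficient $\eps^2(1+t)^{-1}$ yields growth of $\calE_N$ (polynomial $t^{C\eps^2}$, or logarithmic after more careful bookkeeping), Klainerman--Sobolev then gives decay with a loss, and the sharp rate fed into the borderline coefficient is never reproduced --- precisely the vicious circle described around \eqref{eq:sharp-decay}--\eqref{eq:weak-decay}.

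The paper resolves this differently, and this is the missing idea: it does \emph{not} claim uniformly bounded commuted energies. Working on the hyperboloidal foliation $\calH_{\hT}$ (which is also where the $R$-dependence of $\dlt_1$ comes from), it allows the energies of $\covZ^{(m)}\phi$ to grow like $\log^{m}(1+\hT)$ (Proposition~\ref{prop:main}, \eqref{eq:main:L2}, and the bootstrap \eqref{eq:BA:L2}), and recovers the sharp $\hT^{-1}$ decay of the \emph{undifferentiated} field by a gauge covariant ODE argument applied to the uncommuted equation: writing $\covBox-1$ in hyperboloidal coordinates, $\covD_{\hT}^{2}(t\phi)+t\phi$ equals the covariant Laplacian on $\calH_{\hT}$ expressed through $\covZ$'s with an extra $\hT^{-1}$ weight plus the cubic potential (Lemma~\ref{lem:covLapOnH}, Proposition~\ref{prop:ODE}), so the logarithmic losses on the commuted quantities enter only through $\hT^{-1-}$-integrable errors; integrating this covariant ODE in $\hT$ gives $\abs{\phi}+\abs{\covN\phi}\lesssim\eps\hT^{-1}$ without logs, which is then strong enough to close the (log-growing) energy hierarchy. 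Your proposal needs either this mechanism or a genuine proof of the claimed bounded integrating factor; as written, the latter is asserted, not established, and the evidence in the problem (the presence of $\covS\varphi$ in the commutator) points against it.
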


By \emph{uniqueness up to smooth local gauge transformations}, we mean the following: Given two solutions $(A, \varphi)$, $(A', \varphi')$ to the IVP for \eqref{eq:CSH}, there exists an open covering $\set{O_{\alp}}_{\alp \in \calA}$ of $\bbR^{1+2}$ and smooth functions (local gauge transformations) $\set{U_{\alp} : O_{\alp} \to \LieGrp}_{\alp \in \calA}$ such that the gauge transform of $(A, \varphi) \restriction_{O_{\alp}}$ by $U_{\alp}$ equals $(A', \varphi')$, i.e.,
\begin{equation*}
	(A', \varphi')(t,x) = (U_{\alp} A U_{\alp}^{-1} - \ud U_{\alp} U_{\alp}^{-1}, U_{\alp} \cdot \varphi)(t,x) \quad \hbox{ for every } (t,x) \in O_{\alp}.
\end{equation*}
\begin{remark} \label{rem:CSH-general}
As one may expect from the perturbative nature of the statement, the exact self-duality of \eqref{eq:CSH} is unnecessary for Theorem~\ref{thm:CSH} to hold. It will be clear from our proof that the important points are: $\calU(\varphi)$ has a positive mass term $m^{2} \abs{\varphi}^{2}$ $(m \neq 0)$ and the remaining terms of $\calU(\varphi)$ are quartic or higher in $\varphi$, so that $U(\varphi)$ is cubic or higher.
\end{remark}

Next, we consider the case of \eqref{eq:CSD}. We say that a pair $(a, \psi_{0})$ of a $\LieAlg$-valued 1-form $a = a_{1} \ud x^{1} + a_{2} \ud x^{2}$ and $V = \Dlt \otimes W$-valued functions $\psi_{0}$ on $\Sgm_{0}$ is an \emph{initial data set for \eqref{eq:CSD}} if it obeys the \emph{\eqref{eq:CSD} constraint equation}, i.e.,
\begin{equation} \label{eq:CSD-constraint}
	\rd_{1} a_{2} - \rd_{2} a_{1} + \LieBr{a_{1}}{a_{2}} = - \frac{i}{\kpp} \brk{\calT \psi_{0}, \psi_{0}}
\end{equation}
We say that $(A, \psi)$ is a solution to the IVP for \eqref{eq:CSD} with data $(a, \psi_{0})$ if $(A, \psi)$ solves \eqref{eq:CSD} and obeys
\begin{equation*}
	(A, \psi) \restriction_{\Sgm_{0}} = (a, \psi_{0}).
\end{equation*}
Again, since the constraint equation \eqref{eq:CSD-constraint} is a part of \eqref{eq:CSD}, it necessarily holds for $(a, \psi_{0})$ if a solution to the IVP exists.

We now state our main theorem for \eqref{eq:CSD}.
\begin{theorem} \label{thm:CSD}
Consider the IVP for \eqref{eq:CSD} with $m \neq 0$ and $\kpp \neq 0$. There exists a positive function $\dlt_{2}(R)$ of $R \in (0, \infty)$ such that the following holds: Let $(a, \psi_{0})$ be a smooth initial data set obeying
\begin{equation} \label{eq:CSD-id}
	\supp \, \psi_{0} \subseteq B_{R}, \quad \sum_{k=0}^{5} \nrm{\covSgmD^{(k)} \psi_{0}}_{L^{2}(\bbR^{2})} < \eps.
\end{equation}
If $\eps \leq \dlt_{2}(R)$, then a smooth solution to the IVP exists globally on $\bbR^{1+2}$, and it is unique up to smooth local gauge transformations. Moreover, the solution $(\psi, A)$ exhibits the following gauge invariant asymptotic behavior:
\begin{equation}
	\abs{\psi(t,x)} + \abs{\covD \psi(t,x)} < C \eps (1+\abs{t})^{-1}.
\end{equation} 
\end{theorem}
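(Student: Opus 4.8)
The plan is to run a bootstrap argument centered on a gauge covariant vector field method, closely mirroring (and in fact being slightly simpler than) the proof of Theorem~\ref{thm:CSH}. First I would fix a convenient gauge in which the analysis is cleanest: I expect the Coulomb gauge (or a variant such as the Cronström/exponential gauge centered at a point, convenient given the compact support of the data) to be appropriate, and observe that in $2+1$ dimensions the Chern--Simons relation $F = \frac{1}{\kpp} \star J_{\CSD}$ makes $A$ an elliptic (non-local) function of the current $J_{\CSD}$, which is quadratic in $\psi$. The key structural point is that $\psi$ satisfies a covariant Dirac equation $i\covsD\psi + m\psi = 0$; squaring it via $(-i\covsD + m)$ yields a covariant Klein--Gordon equation $\covBox \psi - m^2 \psi = (\hbox{curvature terms})\cdot\psi$, where the inhomogeneity involves $\gmm^{\mu}\gmm^{\nu} F_{\mu\nu}\psi$, hence is cubic in $\psi$ (since $F$ is quadratic). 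This is the same perturbative structure flagged in Remark~\ref{rem:CSH-general}: a genuine mass term plus cubic-and-higher nonlinearity, which in $2+1$ dimensions is short-range for a massive field.

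The core of the argument is the gauge covariant vector field method. I would introduce the gauge covariant analogues of the Killing and conformal Killing vector fields of Minkowski space --- translations $\covD_{\rd_\mu}$, rotation, and Lorentz boosts --- and commute them with the covariant Klein--Gordon operator $\covBox - m^2$. Here arises the anomalous commutation phenomenon referenced in the introduction (see \eqref{eq:comm-problem}): commuting a covariant vector field $Z$ through $\covBox$ produces not only lower-order covariant terms but also terms involving contractions of the curvature $F$ with $\covD\psi$, which do not close at the same regularity level in the naive estimate. Following \cite{MR2188297, MR2056833}, I would absorb this by an ODE argument along the characteristic/orbit structure: treat the top-order covariant energy as satisfying an ODE inequality whose coefficients involve pointwise norms of $F$, and close it using the decay of $F$ obtained from the Chern--Simons elliptic relation together with the bootstrap bounds on $\psi$. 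Concretely, I would set up a bootstrap assumption of the form $\sum_{k\le 5}\nrm{\covD^{(k)}\psi(t)}_{L^2} \lesssim \eps$ and the pointwise decay $|\psi|+|\covD\psi| \lesssim \eps(1+|t|)^{-1}$ on a maximal interval, propagate the $L^2$ covariant energies using the covariant Klein--Gordon equation and the ODE correction, then recover the improved pointwise decay via a gauge covariant Klainerman--Sobolev inequality (the weight $(1+|t+|x||)^{1/2}(1+|t-|x||)^{1/2}$ in $2+1$ dimensions gives the $(1+|t|)^{-1}$ rate for a massive field), and finally close the loop by showing the constants improve for $\eps \le \dlt_2(R)$.

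For local existence and uniqueness up to gauge, I would invoke standard well-posedness for the Chern--Simons--Dirac system in a suitable gauge (e.g. the work cited for the abelian case and its non-abelian adaptation), noting the constraint \eqref{eq:CSD-constraint} is propagated, and patch the local solution with the global a priori bounds via a continuity argument; the gauge invariance of the bounds $|\psi| + |\covD\psi|$ makes the asymptotic statement gauge independent as claimed. The compact support hypothesis $\supp\psi_0 \subseteq B_R$ is used to guarantee (by finite speed of propagation for the hyperbolic part and the decay of the Chern--Simons potential) that the solution occupies the forward light cone of $B_R$ plus acceptable tails, and to control the initial covariant energies in \eqref{eq:CSD-id}; this is where the dependence $\dlt_2 = \dlt_2(R)$ enters, since the Chern--Simons potential $A$ decays only like $|x|^{-1}$ at spatial infinity and the constant in the weighted estimates degrades with $R$.

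I expect the main obstacle to be the anomalous commutation: making the ODE technique of \cite{MR2188297, MR2056833} work in the gauge covariant Dirac setting requires carefully identifying which curvature contractions appear when the covariant boosts and rotation are commuted through $\covsD$ (not just $\covBox$), and showing that the resulting source terms, once the Chern--Simons relation is used to express $F$ in terms of $\psi$, are controlled by quantities already in the bootstrap --- in particular that no loss of derivatives or of decay occurs. The spin-$1/2$ structure should, if anything, help here, since the first-order Dirac equation has better commutation properties than the scalar wave equation for some vector fields, but the bookkeeping of the $\gmm$-matrix algebra interacting with the $\LieAlg$-valued curvature is the delicate part.
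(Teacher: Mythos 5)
Your overall skeleton is the right one and largely matches the paper: square the covariant Dirac operator to obtain a covariant Klein--Gordon equation with cubic right-hand side, commute gauge covariant Lorentz fields, use a gauge invariant Klainerman--Sobolev inequality, and close a bootstrap; local solvability and uniqueness up to gauge are handled by a gauge-fixed local theory plus constraint propagation (the paper uses the temporal and Cronstr\"om gauges rather than Coulomb precisely to have finite speed of propagation and to avoid solving elliptically for $A$, whose $q/r$ tail is the enemy, but for small data this is a technical rather than conceptual difference).

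The genuine gap is in how you close the sharp decay. When $\covZ_{\mu\nu}$ is commuted through $\covBox \m 1$ using $F = \star J$, the resulting cubic term is only bounded by $\abs{\phi}\abs{\covT\phi}\abs{\covS\phi}$ (see \eqref{eq:comm-problem}), and $\covS\phi$ does \emph{not} decay; consequently the commuted energies on the hyperboloids $\calH_{\hT}$ can only be propagated with logarithmic growth (this is exactly the bootstrap \eqref{eq:BA:L2}), and your assumed uniform-in-time bound $\sum_{k\le 5}\nrm{\covD^{(k)}\psi(t)}_{L^{2}}\aleq\eps$ cannot be established. With growing energies, the Klainerman--Sobolev inequality only yields $\abs{\covZ^{(m)}\phi}\aleq \eps\, t^{-1}\log^{m+2}(1+t)$ as in \eqref{eq:weak-decay}, so your proposed route of recovering $\abs{\psi}+\abs{\covD\psi}\aleq \eps(1+t)^{-1}$ directly from Klainerman--Sobolev does not close. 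Moreover, you assign the ODE technique of \cite{MR2188297, MR2056833} to a Gronwall-type inequality for the top-order energy with coefficients $\sup\abs{F}$ --- but that is just the standard covariant energy estimate (Proposition~\ref{prop:en} in the paper), not the missing ingredient. What actually closes the loop is a \emph{pointwise} gauge covariant ODE for the \emph{uncommuted} field: writing $\covBox \m 1$ in hyperboloidal coordinates, one gets $\covD_{\hT}^{2}(\hT\cosh\hY\,\phi) + \hT\cosh\hY\,\phi = $ (terms of size $\hT^{-1}\abs{\covZ^{(\le 2)}\phi}$ plus $\hT\abs{(\covBox\m1)\phi}$), whose right-hand side is integrable in $\hT$ even with the logarithmic losses; integrating this ODE along each ray recovers the sharp $\hT^{-1}$ decay of $\abs{\phi}$ and $\abs{\covN\phi}$ (Proposition~\ref{prop:ODE}), which is then fed back into the commutator estimates. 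Without this (or an equivalent mechanism), and without the hyperboloidal foliation on which both the log-growth energies and the ODE argument live (and which is also the source of the $R$-dependence of $\dlt_{2}$), the bootstrap as you have set it up cannot be closed.
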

The notion of uniqueness up to smooth local gauge transformations is defined as in the case of \eqref{eq:CSH}. 

We conclude this section with a few remarks.
\begin{remark} 
For \eqref{eq:CSH}, global existence and regularity for initial data of arbitrary size have been already established in the abelian case (Example~\ref{ex:a-CSH}); see \cite{Chae:2002eu, Selberg:2012vb, Oh:2013bq}. 
This result is essentially proved by iterating a local well-posedness theorem with the help of the conserved energy of the system, with respect to which \eqref{eq:CSH} is subcritical. Even when global regularity is known, however, Theorem \ref{thm:CSH} provides complementary information about the asymptotic decay of the solution, at least in the regime of small compactly supported initial data. 

On the other hand, for \eqref{eq:CSD} a similar global regularity statement is not available even in the abelian case; to our knowledge, Theorem~\ref{thm:CSD} is the first global existence result for \eqref{eq:CSD}.
\end{remark}

\begin{remark} 
Dependence of $\dlt_{1}$ and $\dlt_{2}$ on the size $R$ of the support of the matter field is a technical condition, which is common in the literature of nonlinear Klein-Gordon equations. It arises from the use of foliation by hyperboloids (see Subsection \ref{subsec:polar-coords}), which only covers the domain of dependence of a ball in $\set{t=0}$ (or equivalently, an outgoing null cone). One idea for removing this condition is to prove a separate global existence and decay theorem in the domain of dependence of $\set{t=0} \setminus B_{R}$. In this region, one may exploit the improved rate of decay for solutions to the free Klein-Gordon equation, namely $t^{-N}$ for any $N$ as opposed to $t^{-1}$ in the case considered in the present paper. 
\end{remark}

%

\subsection{Main ideas} \label{subsec:main-ideas}
In this subsection, we discuss the key difficulties of the problem and thereby motivate the main ideas of the paper.
To keep the discussion simple and concrete, we mostly focus on the special case of the abelian self-dual Chern--Simons--Higgs equation (Example~\ref{ex:a-CSH}), where we furthermore fix $v = \kpp = 1$. Unless otherwise specified, we let $(A, \phi)$ denote a solution to this system on $\bbR^{1+2}$, which is assumed to be smooth and suitably decaying in space.

\subsubsection*{The problem of magnetic charge}
The main difficulty for studying the precise asymptotic behavior of a solution is the possible long range effect of the total magnetic charge of the system, which is defined by
\begin{equation*}
	q = \int_{\set{t} \times \bbR^{2}} F.
\end{equation*}
By integrating the equation $\ud F = \ud^{2} A = 0$ over sets of the form $(t_{1}, t_{2}) \times \bbR^{2}$ and applying Stoke's theorem, it follows that $q$ is conserved in time. On the other hand, integrating $\ud A = F$ over a ball of the form $\set{t} \times B_{R}$, where $t \in \bbR$ and $R > 0$, we see that
\begin{equation*}
	\int_{\set{t} \times \rd B_{R}} A = \int_{\set{t} \times B_{R}} F \to q \quad \hbox{ as } R \to \infty.
\end{equation*}
For generic initial data, the total magnetic charge $q$ would be non-zero. In this case, the preceding computation shows that a part of $A$ has a long range tail $q r^{-1}$ as $r \to \infty$. This behavior is potentially problematic, since upon expansion the covariant Klein--Gordon equation for $\phi$ has a quadratic term of the form $2 i A^{\mu} \rd_{\mu} \phi$ and $r^{-1}$ is not integrable.

\subsubsection*{Gauge covariant vector field method for Chern--Simons theories}
To overcome the above difficulty, we observe that the $r^{-1}$ tail manifests itself in gauge dependent fields, such as $A$, but not for gauge covariant fields, such as $F$. In fact, note that $F$ is compactly supported if $\phi$ is. These considerations suggest that it might be favorable to analyze the long time behavior of solutions to Chern--Simons theories in a \emph{gauge covariant} fashion. 
To this end, we develop and employ a gauge covariant version of the celebrated vector field method, which originated in \cite{MR784477,Klainerman:tc, MR1316662} in the context of the wave equation, and was first used in the context of Klein--Gordon equations in \cite{MR803252}. 
The key idea of the gauge covariant vector field method is to replace the commuting vector fields $Z_{\mu \nu}$ (see Section~\ref{subsec:KillingVF}) by their gauge covariant analogues 
\begin{equation*}
	Z_{\mu \nu} = x_{\mu} \rd_{\nu } - x_{\nu} \rd_{\mu} \quad \mapsto \quad \covZ_{\mu \nu} = \covD_{Z_{\mu \nu}} = x_{\mu} \covD_{\nu} - x_{\nu} \covD_{\mu},
\end{equation*}
expressed in the rectilinear coordinates $(x^{0}, x^{1}, x^{2})$. On one hand, we develop a geometric formalism based on exterior differential calculus and Hodge duality, which seem natural for Chern--Simons theories, to compute iterated commutators of $\covZ_{\mu \nu}$ with the Chern--Simons system (see Section~\ref{sec:comm}). On the other hand, we establish a gauge invariant Klainerman--Sobolev inequality (Proposition~\ref{prop:KlSob}), which converts boundedness of generalized energy constructed by commutation of $\covZ_{\mu \nu}$ to pointwise decay.

A gauge invariant version of the Klainerman--Sobolev inequality for the Klein--Gordon equation was first proved by Psarelli in the work \cite{MR1672001, MR2131047} on the massive Maxwell--Klein--Gordon and Maxwell--Dirac equations in $\bbR^{1+3}$. We remark that a gauge covariant vector field method was employed in the study of massless Maxwell--Klein--Gordon equation in $\bbR^{1+3}$ as well; see \cite{Lindblad:2006vh, Bieri:2014lq}. Furthermore, a suitable version of this method proved to be useful in the small data global existence problem for the closely related Chern--Simons--Schr\"odinger equation in $\bbR^{1+2}$ \cite{Oh:2013tx}. 

\subsubsection*{The problem of anomalous commutation}
The success of the vector field method relies on a good commutation property of the system with the commuting vector fields, which in this case are $\covZ_{\mu \nu}$.
However, it turns out that the Chern--Simons theories exhibit an \emph{anomalous} commutation property with $\covZ_{\mu \nu}$, which is a priori problematic. To demonstrate this issue in more detail, we begin by computing (up to the main term) the commutator between $\covZ_{\mu \nu}$ and the covariant Klein--Gordon operator $\covBox \m 1$ using the Chern--Simons equation $F = \star J$:
\begin{equation} \label{eq:comm-problem}
\begin{aligned}
\LieBr{\covZ_{\mu \nu}}{\covBox - 1} \varphi
= & \iota_{Z_{\mu \nu}} \star ( (\ud J) \wedge \varphi)
- 2 \iota_{Z_{\mu \nu}} \star ( J \wedge \covud \varphi)
+ (\hbox{l.o.t.}) \\
= & N_{cubic} + (\hbox{l.o.t.})
\end{aligned}
\end{equation}
where $(\hbox{l.o.t.})$ denotes terms which are quintic and higher in $\varphi$ and
\begin{equation*}
	N_{cubic} = \iota_{Z_{\mu \nu}} \star (\varphi \wedge \overline{\covud \varphi} \wedge \covud \varphi).
\end{equation*}
A simple computation (see Lemma~\ref{lem:ptwise-N}) shows that, in general, the best one can say is
\begin{equation*}
\abs{N_{cubic}} = \abs{\iota_{Z_{\mu \nu}} \star (\varphi \wedge \overline{\covud \varphi} \wedge \covud \varphi)}
\leq C \abs{\varphi} \abs{\covT \varphi} \abs{\covS \varphi},
\end{equation*}
where $\bfT$ denotes one of $\set{\covD_{0}, \covD_{1}, \covD_{2}}$ and $\bfS$ is the gauge covariant analogue the scaling vector field, i.e., $\bfS = x^{\mu} \, \covD_{\mu}$. 

The appearance of $\bfS \varphi$ is undesirable, since $\covS$ does not commute well with $\covBox \m 1$. Indeed, comparing\footnote{Let $f$ be a solution to the free Klein--Gordon equation $(\Box \m 1) f = 0$ with compactly supported data. In general, the sharp decay rate for $\nb f$ and $f$ is $\hT^{-1}$, where $\hT = \sqrt{t^{2} - r^{2}}$. Since each $Z_{\mu \nu}$ commutes with $\Box \m 1$, note that $\abs{Z_{\mu \nu} f} \aleq \hT^{-1}$ as well. If, in addition, $\abs{S f} \aleq \hT^{-\alp}$ for any $\alp > 0$, then it can be proved that $\abs{\nb f} \aleq \hT^{-\min \set{1+\alp, 2}}$, which is impossible in general.} with the free case, we see that $\abs{\covS \varphi}$ should not exhibit any decay in time. In general, one may hope for uniform boundedness of $\abs{\covS \varphi}$ at best. This fact renders the nonlinearity $N_{cubic}$ essentially quadratic, which is borderline for closing the proof of global existence with only the (gauge invariant) Klainerman--Sobolev inequality.

In fact, even when we assume the sharp decay rate
\begin{equation} \label{eq:sharp-decay}
	\abs{\varphi} + \abs{\covN \varphi} \aleq \eps t^{-1},
\end{equation}
where $\bfN = \frac{1}{\sqrt{t^{2} - r^{2}}} \bfS$ is the normalization of $\bfS$, 
the gauge covariant vector field method discussed so far seems to only lead to a weak decay rate
\begin{equation} \label{eq:weak-decay}
	\abs{\covZ^{(m)} \varphi} + \abs{\covN \covZ^{(m-1)} \varphi} \aleq \eps t^{-1} \log^{m+2} (1+t) 
\end{equation}
due to the above anomalous commutation property. In particular, this decay is insufficient to recover the sharp decay rate \eqref{eq:sharp-decay}.

%

\subsubsection*{Gauge covariant ODE method for decay}
To solve the problem of anomalous commutation, we begin by observing that the equation for $\varphi$ itself without any commutation with $\covZ_{\mu \nu}$,
\begin{equation} \label{eq:KG4phi}
	(\covBox \m 1) \varphi = U_{\CSH}(\varphi),
\end{equation}
is favorable in the sense that $U_{\CSH}(\varphi)$ is at least cubic or higher in $\varphi$, and no nonlinearity containing $\bfS \varphi$ is present. If one is able to work directly with this equation, then one may hope to prove that at least the undifferentiated field $\varphi$ obeys the sharp decay rate $t^{-1}$. 

Fortunately, this is indeed the case. We first rewrite $\covBox \m 1$ as
\begin{equation} \label{eq:KG4phi-ODE}
\begin{aligned}
	(\covBox \m 1) \varphi
	= &- \frac{1}{\hT^{2}} \covD_{\hT}( \hT^{2} \covD_{\hT} \varphi) - \varphi + \lap_{A, \calH_{\hT}}  \varphi \\
	= & \frac{1}{t} \bb[ - \covD_{\hT}^{2} (t \varphi) - t \varphi + O\bb( \frac{\eps^{3}}{t^{1+}} \bb) \bb] 
\end{aligned}
\end{equation}
where $\hT = \sqrt{t^{2} - r^{2}}$ and $\lap_{A, \calH_{\hT}}$ is the covariant on constant $\hT$-hypersurfaces; see Section~\ref{subsec:ODE} for more details. The last equality can be justified just using the weak decay bounds \eqref{eq:weak-decay}. By \eqref{eq:weak-decay}, \eqref{eq:KG4phi} and \eqref{eq:KG4phi-ODE}, it follows that
\begin{equation*}
	\covD_{\hT}^{2} (t \varphi) + t \varphi = O\bb( \frac{\eps^{3}}{t^{1+}} \bb)
\end{equation*}
which may be viewed as \emph{gauge covariant ODE} for $t \varphi$. Multiplying by $\overline{\covD_{\hT} (t \varphi)}$ and integrating in $\hT$, we recover the sharp decay rate \eqref{eq:sharp-decay} in terms of the initial data, which allows us to close the whole proof. 


We note that such an ODE technique has been used effectively in non-gauge covariant setting to handle nonlinear Klein--Gordon equations exhibiting modified scattering; see, for instance, \cite{MR2188297}. We also the mention the work \cite{MR2056833}, where a similar ODE technique was used.

\subsubsection*{Squaring the covariant Dirac equation}
Finally, we remark that it is possible to treat Chern--Simon--Dirac with mass on the same footing as Chern--Simons--Higgs, using the well-known fact that squaring the (covariant) Dirac operator leads to a (covariant) Klein--Gordon operator. The lower order terms turn out to be cubic in $\psi$, which is acceptable; see Section~\ref{subsec:uni} for more details. We remark that the same observation was used by Psarelli \cite{MR2131047} to treat the small data global existence problem for the massive Maxwell--Dirac equation in $\bbR^{1+3}$ essentially in the same fashion as the massive Maxwell--Klein--Gordon equation in the same spacetime.


\subsection{History of the problem and related results} \label{subsec:history}
The  relativistic Chern--Simons model in $\bbr^{1+2}$ was first suggested by Hong--Kim--Pac \cite{hong:1990} and Jackiw--Weinberg \cite{jackiw:1990}
to study vortex solutions of the Abelian Higgs model carrying both electric and magnetic charges. 
When the potential in the Lagrangian is self-dual (Example \ref{ex:a-CSH}), the minimum of energy is saturated if and only if $(A, \varphi)$ satisfies a simpler system of first order equations called the self-dual equations, or  the Bogomol'nyi equations. The self-dual equations can be further reduced to a single elliptic equation by the Jaffe--Taubes reduction \cite{jaffe:1980}. According to boundary conditions $|\varphi| \to 0$ or $|\varphi|\to 1$ at infinity, the solutions are called topological or  non-topological, respectively. The topological solution was constructed earlier by Wang \cite{wang:1991}. The general multi-vortex non-topological solution was later constructed by Chae--Imanuvilov \cite{chae:2000}. 

The relativistic non-abelian Chern--Simons model was proposed by Kao and Lee \cite{kao:1994},
and  Dunne \cite{du1, du2}. The supersymmetric Chern--Simons model was discussed in \cite{gu1, gu2, lo}.  Topological solutions were constructed by Yang \cite{yang:1997}. The existence of non-topological solutions 
was obtained  very recently and the general theory is still limited. 
For the recent developments  we refer to \cite{ao:2014, lin:2013, huang:2014, choe:2015}. Most of the known results consider  $\mathcal B = SU(3)$ as the gauge group.

In recent years, the initial value problem for relativistic Chern--Simons theories has been studied by many authors. Most of the work in the literature (to the best of our knowledge) concern well-posedness of such equations under a certain gauge condition. 

The most investigated case so far is the abelian Chern--Simons--Higgs equation (Example~\ref{ex:a-CSH}). This equation is \emph{energy subcritical}: After neglecting the lower order linear and cubic terms in the potential, the scaling critical Sobolev space is $(\phi, \rd_{t} \phi) \in \dot{H}_{x}^{1/2} \times \dot{H}_{x}^{-1/2}$, whereas the energy (essentially) controls the $\dot{H}_{x}^{1} \times L_{x}^{2}$ norm. Global well-posedness of the IVP with sufficiently smooth initial data was proved by Chae--Choe \cite{Chae:2002eu} in the Coulomb gauge $\rd_{1} A_{1} + \rd_{2} A_{2} = 0$, by combining higher order energy estimate with the Br\'ezis--Gallouet inequality \cite{MR582536}.  Afterwards, building on the work of Huh \cite{MR2274820, MR2812958} and Bournaveas \cite{MR2539222} on low regularity local well-posedness, global well-posedness for arbitrary finite energy data was established by Selberg--Tesfahun \cite{Selberg:2012vb} in the Lorenz gauge $- \rd_{0} A_{0} + \rd_{1} A_{1} + \rd_{2} A_{2} = 0$. 
The regularity condition for local well-posedness has been subsequently improved in various gauges (Lorenz, Coulomb and temporal $A_{0} = 0$) by various authors \cite{Oh:2012uq, Oh:2013bq, Pecher:2014pd, Pecher:2015yg}. 

The local well-posedness theory for the abelian Chern--Simons--Dirac equation (Example \ref{ex:a-CSD}) parallels that of the abelian Chern--Simons--Higgs equation; see \cite{MR2290338, Oh:2012uq, MR3163407, Bournaveas:2013vk, Pecher:2014ul}.
 We note however that the scaling critical Sobolev space for this equation is $\psi \in L_{x}^{2}$, which coincides with the only known coercive conserved quantity of the equation (charge). Consequently, large data global well-posedness is far more difficult to establish in the Dirac case compared to the Higgs case, and remains a major open problem.

The IVP for relativistic Chern--Simons equations with general non-abelian gauge groups has not been addressed much in the literature.
In the small data case, the local well-posedness theory in the abelian case extends without much difficulty. However, new issues arise when considering data of arbitrary size. For instance, the classical result of Uhlenbeck (see Proposition~\ref{prop:id-temporal}) on the existence of a regular gauge transformation into the Coulomb gauge requires a certain smallness condition, which makes the existing proofs of global well-posedness of the abelian Chern--Simons--Higgs equation fail in the non-abelian case. Nevertheless, in the forthcoming work of the second author, global well-posedness for any finite energy data is proved for the Chern--Simons--Higgs equation with general non-abelian gauge groups, using the Yang--Mills heat flow gauge introduced in \cite{MR3190112, MR3357182}.

Finally, we mention the recent development concerning a non-relativistic version of Chern--Simons theory, namely the (abelian) \emph{Chern--Simons--Schr\"odinger} equation. This equation is critical with respect to the conserved mass, i.e., the $L^{2}$-norm of the Schr\"odinger field. After the initial work of Berg\'e--de~Bouard--Saut \cite{MR1328596}, local well-posedness for data small in $H^{s}$ for any $s > 0$ was established in the interesting work of Liu--Smith--Tataru \cite{MR3286341} using the heat gauge $A_{0} = \rd_{1} A_{1} + \rd_{2} A_{2}$. 

We are aware of two works on Chern--Simons--Schr\"odinger equation on the global in time behavior of the solutions. One is the recent work of Liu--Smith \cite{Liu:2013xr}, where large data global well-posedness and scattering for subthreshold mass was established under equivariance symmetry. Another is the work \cite{Oh:2013tx} of the second author with Pusateri, where analogue of the main theorems of this paper (i.e., global existence and optimal pointwise decay rate of the solution with small localized data) was established for Chern--Simons--Schr\"odinger without any symmetry assumptions. In fact, by revealing a new genuinely cubic null structure of the Chern--Simons--Sch\"odinger equation in the Coulomb gauge, it was furthermore proved in \cite{Oh:2013tx} that the solutions scatter to free waves in this gauge. At the moment, scattering to free waves in any gauge is open for \eqref{eq:CSH} and \eqref{eq:CSD}. 

\subsection{Structure of the paper} \label{subsec:outline}
This paper is structured as follows.
\begin{itemize} 
\item In Section~\ref{sec:setup}, the basic geometric setup (e.g., tensor notation, vector bundles, exterior differential calculus, Killing vector fields etc.) is given. 
\item Next, in Section~\ref{sec:reduction}, preliminary reductions of the main theorems are performed. For instance, a unified system of equations \eqref{eq:CS-uni} is introduced, which allows us to treat \eqref{eq:CSH} and \eqref{eq:CSD} concurrently. By the end of this section, the proof of the main theorems is reduced to showing the main a priori estimates, Proposition~\ref{prop:main}.
\item In Section~\ref{sec:covVF}, the main analytic tools of the paper are presented, including a gauge covariant vector field method (energy inequality and Klainerman--Sobolev inequality). Also introduced are a gauge covariant ODE argument for establishing the sharp decay rate, and gauge invariant Gagliardo--Nirenberg inequalities.
\item Section~\ref{sec:comm} is the algebraic heart of the paper; we use the formalism of exterior differential calculus for vector-valued forms to derive the commutation properties of the Chern--Simons systems with respect to the Killing vector fields $Z_{\mu \nu}$.
\item Finally, in Section~\ref{sec:BA}, we use the tools developed in Sections~\ref{sec:covVF} and \ref{sec:comm} to establish Proposition~\ref{prop:main}, thereby completing the proof of the main theorems.
\item In Appendix~\ref{app:gauge}, we record the reduced systems in the temporal and Cronstr\"om gauges. These computations are used in Section~\ref{sec:reduction}.
\end{itemize}

\section{Geometric setup and notation} \label{sec:setup}
In this section we provide the basic geometric setup used in this paper. We also take this opportunity to fix the notation and conventions.

\subsection{Tensor notation}
In this paper, we mostly use the invariant notation for tensors. All tensor products, unless otherwise specified, are taken over $\bbR$. The metric dual 1-form of a vector field $X$ will be denoted $X^{\flat}$, and the metric dual $k$-contravariant tensor of a $k$-covariant tensor $T$ will be denoted by $T_{\sharp}$.
The Levi-Civita connection associated to the Minkowski metric $\met$ will be denoted by $\nb$. This connection is trivial (i.e., has vanishing Christoffel symbols) in the rectilinear coordinates $(t = x^{0}, x^{1}, x^{2})$. 

Greek indices (e.g., $\mu, \nu$) run over $0,1,2$, and are used either to indicate tensor components in the rectilinear coordinates $(t=x^{0}, x^{1}, x^{2})$ or to parametrize Killing vector fields on $\bbR^{1+2}$; see Subsection \ref{subsec:KillingVF} below. We employ the Einstein summation convention of summing up repeated upper and lower indices. Furthermore, indices are raised or lowered using the metric $\met$, e.g., $T^{\mu} = (\met^{-1})^{\mu \nu} T_{\nu}$. 

Sometimes it will be convenient to employ the \emph{abstract index notation}, which we now briefly explain. The abstract indices $a, b, c, \ldots$ are not numbers (like $\mu, \nu = 0, 1, 2$), but rather placeholders which indicates the type of a tensor. For example, a vector field is written as $X^{a}$ and a $k$-covariant tensor is denoted by $T_{a_{1} \cdots a_{k}}$. Contraction is indicated by repeated upper and lower abstract indices as in the Einstein summation convention, e.g., $T(X, Y) = T_{ab} X^{a} Y^{b}$ for a 2-covariant tensor $T$ and vector fields $X$ and $Y$. This elegant representation of the contraction operation is a key advantage of the abstract index notation. Finally, abstract indices are raised and lowered using the metric $\eta$. When applied to all indices of a vector field $X^{a}$ or a $k$-covariant tensor $T_{a_{1} \cdots a_{k}}$, this is equivalent to taking their respective metric dual, i.e., $X_{a} = X^{\flat}_{a}$ and $T^{a_{1} \cdots a_{k}} = T_{\sharp}^{a_{1} \cdots a_{k}}$.

\subsection{Vector bundles and gauge structure of \eqref{eq:CSH} and \eqref{eq:CSD}} \label{subsec:gauge-str}

The proper way to describe gauge theories is to use the language of vector bundles. For a general introduction to the theory of vector bundles, we refer to \cite{Kobayashi:1963uh, Kobayashi:1969ub}. 

In this paper, we only need to consider the trivial $V$-bundle $E := \bbR^{1+2} \times V$ for a complex vector space $V$, equipped with a metric $\brk{\cdot, \cdot}_{V}$, as well as the restricted bundles on subsets $\calO \subseteq \bbR^{1+2}$. For simplicity, we will often omit the subscript $V$ and write $\brk{\cdot, \cdot} = \brk{\cdot, \cdot}_{V}$. 

The sections of $E$ may be identified with the $V$-valued functions on $\bbR^{1+2}$ by the following procedure. Take a global orthonormal frame field $\set{\Tht_{\mathfrak{a}}}$, i.e., $\dim V$-many sections which form an orthonormal basis with respect to $\brk{\cdot, \cdot}$ at every point; it exists thanks to the triviality of $E$. Then identifying the frame $\set{\Tht_{\mathfrak{a}}(p)}$ at each point $p \in \bbR^{1+2}$ with a fixed basis $\set{\tht_{\mathfrak{a}}}$ of $V$, we obtain the desired identification. Note that this procedure works equally well for any vector bundle equipped with a real or complex inner product (e.g., the adjoint $\LieAlg$-bundle) on any contractible subset of $\bbR^{1+2}$. In this paper, this identification is freely used.

A $\LieGrp$-valued function $U$ acts naturally (on the left) on a $V$-valued function $\phi$ by the pointwise action, i.e,. $(U \cdot \phi) (p) = U(p) \cdot \phi(p)$. Geometrically, this corresponds to a change of frame at each point $p$ by an appropriate action (on the right) of $U(p)$. We call $U$ a \emph{gauge transformation}, and $U \cdot \phi$ the \emph{gauge transform} of $\phi$ by $U$. 

Given a section $\phi$ of $E$, realized as a $V$-valued function, a gauge covariant derivative $\covD$ of $\phi$ can be written in reference to $\nb$ as in \eqref{eq:covd-def}; it is characterized by a $\LieAlg$-valued 1-form $A$, called the corresponding \emph{connection 1-form}. The commutator of two gauge covariant derivatives leads to the curvature 2-form $F$ by \eqref{eq:curv-def}. 
Under a gauge transformation $U$, the connection 1-form $A$ and the curvature 2-form $F$ transform under the rules
\begin{equation} \label{eq:gt}
	A \mapsto U A U^{-1} - (\ud U) U^{-1}, \quad
	F \mapsto U F U^{-1}.
\end{equation}
As a consequence, note that $F$ takes values in the adjoint $\LieAlg$-bundle, whereas $A$ does not.

Let $A$ be any connection 1-form. As the representation $\rho$ is unitary, $\LieAlg$ acts on $V$ by anti-hermitian operators; hence we have the following Leibniz rule for $V$-valued functions:
\begin{equation} \label{eq:leibniz-V}
	\nb \brk{\phi^{1}, \phi^{2}} = \brk{\covD \phi^{1}, \phi^{2}} + \brk{\phi^{1}, \covD \phi^{2}}.
\end{equation}
Similarly, by the bi-invariance of $\brk{\cdot, \cdot}_{\LieAlg}$, we have
\begin{equation} \label{eq:leibniz-g}
	\nb \brk{a^{1}, a^{2}}_{\LieAlg} = \brk{\covD a^{1}, a^{2}}_{\LieAlg} + \brk{a^{1}, \covD a^{2}}_{\LieAlg}
\end{equation}
for $\LieAlg$-valued functions $a^{1}, a^{2}$. 

Finally, if we define $\covD a$ (where $a$ is $\LieAlg$-valued) by the adjoint action (i.e., Lie bracket), then 
\begin{equation} \label{eq:leibniz-gV-0}
	\covD (a \cdot \phi) = (\covD a) \cdot \phi + a \cdot \covD \phi.
\end{equation}
where $a$ and $\phi$ are $\LieAlg$- and $V$-valued functions, respectively.

\subsection{Exterior differential calculus} \label{subsec:extr-calc}
We now introduce basic operations of the exterior differential calculus, which will be our main tool for computing commutation relations. 
A standard reference is \cite[Chapter 1]{Kobayashi:1963uh}.
Our notation is as follows: $\wedge$ denotes the wedge product, $\ud$ is the exterior derivative and $\iota_{X}$ is the interior product\footnote{Our convention is that the contraction takes place in the left-most slot, i.e., $(\iota_{X} \omg)(Y_{1}, \ldots, Y_{k-1})= \omg(X, Y_{1}, \ldots, Y_{k-1})$. } with a vector field $X$. The Lie derivative with respect to $X$ will be denoted by $\LD_{X}$. This operation makes sense for any tensor field; in particular, one has $\LD_{X} f = X f$ for a function $f$ and  $\LD_{X} Y = [X, Y]$ for a vector field $Y$. 

We also need to develop the exterior differential calculus of vector- and Lie algebra-valued forms.
Let $V$ be a complex vector space, equipped with an inner product $\brk{\cdot, \cdot}_{V}$. Consider also the Lie algebra $\LieAlg$ associated with $\LieGrp$, whose action on $V$ is denoted by $a \cdot v$ $(a \in \LieAlg, v \in V)$. When $V = \LieAlg$, we let $\LieAlg$ act by the adjoint action, i.e., $a \cdot v = \LieBr{a}{v}$.

A \emph{$V$-valued $k$-form at a point $p \in \bbR^{1+2}$} is a totally anti-symmetric multilinear form that takes in $k$ tangent vectors at $p$ and gives an element of $V$. 
A (smooth) \emph{$V$-valued $k$-form} on an open subset $\calU$ of $\bbR^{1+2}$ is a (smooth) association of points $p$ with a $V$-valued $k$-form at $p$.
A $\LieAlg$-valued $k$-form is defined similarly.
In order to distinguish from these objects, the usual $k$-forms on $\bbR^{1+2}$ will be referred to as being \emph{real-valued}. Any $V$-valued $k$-form can be decomposed to a linear combination of tensor products of the form $\phi \otimes \omg$, where $\phi$ is a $V$-valued function and $\omg$ is a real-valued $k$-form.

The operations $\ud$ and $\iota_{X}$ are naturally extended (component-wisely) to $V$- and $\LieAlg$-valued $k$-forms, as well as the wedge product $v \wedge \omg$ of a $V$-valued $k$-form $v$ and a real-valued $\ell$-form $\omg$. 
On the other hand, we define the wedge product $a \wedge v$ of a $\LieAlg$-valued $k$-form $a$ and a $V$-valued $\ell$-form $v$ using the action (on the left) of $\LieAlg$ on $V$. This product is characterized by the relation
\begin{equation*}
	(b \otimes \omg^{1}) \wedge (\phi \otimes \omg^{2}) = (b \cdot \phi) \omg^{1} \wedge \omg^{2}
\end{equation*}
for a $\LieAlg$-valued function $b$, a $V$-valued function $\phi$ and real-valued differential forms $\omg^{1}, \omg^{2}$.
In particular, when $V = \LieAlg$, the wedge product of two $\LieAlg$-valued forms $a, b$ is defined using the adjoint action, or the Lie bracket; for this reason, we use the notation $[a \wedge b]$ for this product.

Throughout the paper, the following convention is in effect:
\begin{convention}
Unless otherwise specified by parentheses, wedge products are understood to be taken from the right to the left.
\end{convention}
Note that, due to the lack of associativity of the Lie bracket, the wedge product of $\LieAlg$-valued forms generally \emph{fails} to be associative, i.e., we have $[[a \wedge b] \wedge c] \neq [a \wedge [b \wedge c]]$ for $\LieAlg$-valued forms $a, b, c$ in general. Similarly, for a $V$-valued form $v$, in general we have $[a \wedge b] \wedge v \neq a \wedge (b \wedge v)$. 

Given a connection 1-form $A$, we define the \emph{gauge covariant exterior derivative} $\covud$ of a $V$-valued $k$-form $v$ to be
\begin{equation} \label{eq:covud}
	\covud v = \ud v + A \wedge v.
\end{equation}
Furthermore, the \emph{gauge covariant Lie derivative} is defined as
\begin{equation} \label{eq:covLD}
	\covLD_{X} v = \calL_{X} v + (\iota_{X} A) v.
\end{equation}
Observe that for $V$-valued functions, both definitions coincide with the gauge covariant derivative, i.e., $\covud_{A} \phi (X) = \covLD_{X} \phi = \covD_{X}  \phi$. 

The \emph{Hodge star operator} associated to $\met$ is denoted by $\star$. This operator linearly maps a real-valued $k$-form ($k=0,1,2,3$) to a real-valued $(3-k)$-form, and is characterized by the relation
\begin{equation} \label{eq:star-def}
	\omg^{1} \wedge \star \omg^{2}
	= \eta^{-1}(\omg^{1}, \omg^{2}) \eps,
\end{equation}
where $\eps = \ud x^{0} \wedge \ud x^{1} \wedge \ud x^{2}$ is the volume form on $\bbR^{1+2}$ and $\eta^{-1}(\cdot, \cdot)$ is the induced Minkowski metric\footnote{The induced Minkowski metric for real-valued $k$-forms is defined so that given any orthonormal set of 1-forms $\set{e^{0}, e^{1}, e^{2}}$, $\set{e^{i_{1}} \wedge \cdots \wedge e^{i_{k}} : i_{1}, \ldots, i_{k} = 0, 1,2}$ is orthonormal.} on real-valued $k$-forms. This definition naturally extends to $V$- and $\LieAlg$-valued differential forms componentwisely. Equivalently, the Hodge star operator $\star$ on a $V$- [resp. $\LieAlg$-]valued $k$-form is characterized by the relation
\begin{equation*}
	\star (\phi \otimes \omg) = \phi \otimes \star \omg
\end{equation*}
for $\phi \in V$ [resp. $\phi \in \LieAlg$] and a $k$-form $\omg$.


In order to measure the size of real-valued forms, we use the auxiliary Euclidean metric $(\ud x^{0})^{2} + (\ud x^{1})^{2} + (\ud x^{2})^{2}$, which has the benefit of being parallel. Hence for a real-valued $k$-form $\omg$, we define
\begin{equation*}
	\abs{\omg}^{2} = \sum_{\mu_{1} < \cdots < \mu_{k}} \abs{\omg(T_{\mu_{1}}, \ldots, T_{\mu_{k}})}^{2} ,
\end{equation*}
where $T_{\mu} = \rd_{\mu}$ in the rectilinear coordinates. The norm of a $V$- or $\LieAlg$-valued $k$-form is defined similarly, using in addition $\brk{\cdot, \cdot}_{V}$ or $\brk{\cdot, \cdot}_{\LieAlg}$, respectively.

Given a $V$-valued $k$-form $v$ on $\bbR^{1+2}$ and an embedded submanifold $\Sgm \subset \bbR^{1+2}$ of $\bbR^{1+2}$, we denote by $v \restriction_{\Sgm}$ the \emph{pullback} of $v$ along the inclusion map $\iota: \Sgm \hookrightarrow \bbR^{1+2}$, which is a $V$-valued $k$-form on $\Sgm$ characterized by
\begin{equation*}
	v \restriction_{\Sgm}(X_{1}, \ldots, X_{k}) = v(\ud \iota (X_{1}), \ldots, \ud \iota (X_{k})) 
\end{equation*}
for every $p \in \Sgm$ and $X_{1}, \ldots, X_{k} \in T_{p} \Sgm$, where $\ud \iota : T_{p} \Sgm \to T_{\iota(p)} \bbR^{1+2}$ is the differential of the map $\iota$ at $p$. In particular, if $v$ is a $V$-valued $0$-form (i.e., a $V$-valued function) then $v \restriction_{\Sgm}$ is simply the restriction of $v$ to $\Sgm$.

For further formulae and results in exterior differential calculus, we refer to Section~\ref{subsec:extr-calc-2}.

\subsection{Killing vector fields on $\bbR^{1+2}$} \label{subsec:KillingVF}
A vector field on a Lorentzian (more generally, pseudo-Riemannian) manifold is said to be \emph{Killing} if it generates a one-parameter group of isometries. As is well-known, there are $6$ linearly independent Killing vector fields on $\bbR^{1+2}$, given in the rectilinear coordinates $(t = x^{0}, x^{1}, x^{2})$ by
\begin{itemize}
\item {\it Translations:} $T_{\mu} = \displaystyle{\rd_{\mu}}$ 
\item {\it Lorentz transforms and rotations:} $Z_{\mu \nu} = x_{\mu} \rd_{\nu} - x_{\nu} \rd_{\mu}$
\end{itemize}
where $\mu, \nu = 0, 1, 2$ and $x_{\mu} = \eta_{\mu \lmb} x^{\lmb}$. These vector fields commute with the Klein-Gordon operator $\Box \m 1$. We also define the scaling vector field
\begin{equation*}
	S = x^{\mu} \rd_{\mu} ,
\end{equation*}
which is \emph{not} a Killing vector field. It is a conformal Killing vector field, but it does not satisfy a good commutation relation with respect to $\Box \m 1$.

The span of the vector fields $T_{\mu}$, $Z_{\mu \nu}$ and $S$ form a Lie algebra under the natural commutation operation. Schematically, their commutation relations are given as follows:
\begin{align*}
	[T, T ] = & 0, 
&	[Z, T] = & T, 
&	[Z, Z] = & Z, \\
	[T, S] = & T, 
&	[Z, S] = & 0,
&	[S, S] = & 0.
\end{align*}

Below, we will often consider covariant derivatives of $V$-valued functions with respect to the vector fields introduced above. It will be convenient to introduce the following notation:
\begin{equation*}
	\covT_{\mu} := \covD_{T_{\mu}}, \quad 
	\covZ_{\mu \nu} := \covD_{Z_{\mu \nu}} = x_{\mu} \covT_{\nu} - x_{\nu} \covT_{\mu}, \quad
	\covS = \covD_{S} = x^{\mu} \covT_{\mu}.
\end{equation*}
Furthermore, in view of \eqref{eq:normal} below, we also introduce the notation
\begin{equation*}
	\covN = \covD_{N} = \hT^{-1} \covS.
\end{equation*}
Note that these covariant differential operators coincide with the covariant Lie derivatives along the same vector fields, i.e., $\covZ_{\mu \nu} \phi = \covLD_{Z_{\mu \nu}} \phi$ etc.

\subsection{Spherical and hyperboloidal polar coordinates} \label{subsec:polar-coords}
On each constant $t$-hypersurface, we define the spherical polar coordinates $(r, \tht) \in (0, \infty) \times [0, 2 \pi)$ by 
\begin{equation*}
	x^{1} = r \cos \tht, \quad x^{2} = r \sin \tht.
\end{equation*}
In what follows, we will refer to this coordinate system simply as the \emph{polar coordinates}. In this coordinate system the metric takes the form
\begin{equation*}
	\met = \m \ud t^{2} \p \ud r^{2} \p r^{2} \ud \tht^{2}.
\end{equation*}

Define the function $\omg_{j}$ in the rectilinear coordinates $(t, x^{1}, x^{2})$ by 
\begin{equation*}
	\omg_{j} = \omg^{j} := \frac{x^{j}}{\sqrt{(x^{1})^{2} + (x^{2})^{2}}} \qquad (j = 1, 2) \, .
\end{equation*}
In the polar coordinates, we have $\omg_{1} = \cos \tht$ and $\omg_{2} = \sin \tht$.

The \emph{hyperboloidal polar coordinate system} is the Minkowski analogue of the spherical polar coordinate system on Euclidean spaces. 
The coordinates consist of $(\hT, \hY, \tht) \in (0, \infty) \times (0, \infty) \times [0, 2 \pi)$, where
\begin{equation*}
	t = \hT \cosh \hY, \quad
	r = \hT \sinh \hY.
\end{equation*}
These coordinates cover (of course, minus the axis of rotation $\set{(x^{0}, 0, 0)}$, like the standard polar coordinates) the solid future light cone 
\begin{equation*}
\calC_{0} = \set{(x^{0}, x^{1}, x^{2}): -(x^{0})^{2} + (x^{1})^{2} + (x^{2})^{2} < 0, x^{0} > 0}.
\end{equation*}
In this coordinate system, the metric and its inverse take the form
\begin{align*}
	\met =& \m \ud \hT^{2} \p \hT^{2} \ud \hY^{2} \p \hT^{2} \sinh^{2} \hY \ud \tht^{2}, \\
	\met^{-1}	
		=& \m \rd_{\hT} \otimes \rd_{\hT}
			 \p \hT^{-2} \rd_{\hY} \otimes \rd_{\hY} 
			 \p \hT^{-2} (\sinh \hY)^{-2} \rd_{\tht} \otimes \rd_{\tht} \, .
\end{align*}
We denote the constant $\hT$-hypersurface by $\calH_{\hT}$. 
Observe that the future pointing unit normal $N = n_{\calH_{\hT}}$ to $\calH_{\hT}$ is equal to $\rd_{\hT}$, which coincides with the vector field $\hT^{-1} S$, i.e.,
\begin{equation} \label{eq:normal}
	N = n_{\calH_{\hT}} = \rd_{\hT} = \hT^{-1} S.
\end{equation}
The induced volume form on $\calH_{\hT}$ is given by
\begin{equation*}
	\ud \sgm_{\calH_{\hT}} = \hT^{2} \cosh \hY \, \ud \hY \ud \tht.
\end{equation*}
The hyperboloidal polar coordinates are useful since they are Lorentz-invariant, i.e., the vector fields $Z_{\mu \nu}$ are tangent to $\calH_{\hT}$. Indeed, partial derivatives in the hyperboloidal polar coordinate system are related to $Z_{\mu \nu}$ by
\begin{align}
	\rd_{\tht} = & Z_{12}, \label{eq:dTht} \\
	\rd_{\hY}
	= & - (\omg_{1} Z_{01} + \omg_{2} Z_{02}). \label{eq:dY}
\end{align}
We also note that
\begin{align} 
	\frac{\cosh \hY}{\sinh \hY} \rd_{\tht}
	=&	-( \omg_{1} Z_{02} - \omg_{2} Z_{01}). \label{eq:wdTht} 
\end{align}
which is favorable in the region $\set{r \leq t}$. Inverting the linear system consisting of \eqref{eq:dY} and \eqref{eq:wdTht}, $Z_{01}$ and $Z_{02}$ may be written in terms of $\rd_{\hY}$ and $\rd_{\tht}$ as follows:
\begin{align}
	Z_{01}
=&	\omg_{1} \rd_{y} + \omg_{2} \frac{\cosh \hY}{\sinh \hY} \rd_{\tht} \label{eq:Z01} \\
	Z_{02}
=&	\omg_{2} \rd_{y} - \omg_{1} \frac{\cosh \hY}{\sinh \hY} \rd_{\tht}.	 \label{eq:Z02}
\end{align}





\subsection{Notation for spacetime regions} 
Given $R \in \bbR$, we denote by $\calC_{R}$ the solid future light cone with its tip at $(R, 0, 0)$, i.e.,
\begin{equation*}
	\calC_{R} = \set{(x^{0}, x^{1}, x^{2}) \in \bbR^{1+2} : - (x^{0}-R)^{2} + (x^{1})^{2} + (x^{2})^{2} < 0, \, x^{0} > R}.
\end{equation*}
As discussed above, the cone $\calC_{0}$ admits a foliation by the hyperboloids 
\begin{equation*}
	\calH_{\hT} = \set{\hT = const} = \set{(x^{0}, x^{1}, x^{2}) \in \bbR^{1+2} : - (x^{0})^{2} + (x^{1})^{2} + (x^{2})^{2} = - \hT^{2}, x^{0} > 0}.
\end{equation*}
for $\tau > 0$, i.e., $\calC_{0} = \cup_{\hT > 0} \calH_{\hT}$. 

For $R > 0$, we denote by $B_{R}(x_{0})$ the open ball in $\bbR^{2}$ of radius $R$ and centered at $x_{0}$, i.e.,
\begin{equation*}
	B_{R}(x_{0}) = \set{(x^{1}, x^{2}) \in \bbR^{2} : (x^{1} - x_{0}^{1})^{2} + (x^{2} - x_{0}^{2})^{2} < R^{2}}.
\end{equation*}
In the case $x_{0} = 0$, we will often omit $x_{0}$ and simply write $B_{R} = B_{R}(0)$.


\subsection{Norms and other conventions}
We use the standard notation $L^{p}$, $W^{k, p}$ and $H^{k}$ for the Lebesgue, $L^{p}$- and $L^{2}$-based Sobolev spaces of order $k$, respectively.
Furthermore, we also introduce a weighted norm on $\calH_{\hT}$:
\begin{equation} \label{eq:nrm-def} 
	\wnrm{\phi}_{L^{p}_{\hT}} := \nrm{\phi}_{L^{p}(\calH_{\hT}, \frac{\ud \Vol}{\cosh \hY})}.
\end{equation}
This norm arises naturally from the energy inequality; see Section~\ref{subsec:energy}.

In this paper, complicated formulae would often be simplified to its \emph{schematic} form; see for instance Propositions~\ref{prop:comm-covKG}, \ref{prop:comm-J-CSH} and \ref{prop:comm-J-CSD} below. By a \emph{schematic formula} of the form
\begin{equation*}
	(\hbox{LHS}) = \sum_{k} B_{k},
\end{equation*}
we mean precisely that the left-hand side is equal to the linear combination of terms of the right-hand side, i.e., there exist constants $c_{k}$ such that $(\hbox{LHS}) = \sum_{k} c_{k} B_{k}$.

\section{Reduction to the main a priori estimate} \label{sec:reduction}
The goal of this section is to reduce the proof of the main theorems (Theorems~\ref{thm:CSH} and \ref{thm:CSD}) to establishing a priori estimates for a unified system \eqref{eq:CS-uni} encompassing both \eqref{eq:CSH} and \eqref{eq:CSD}; see Proposition~\ref{prop:main} below. 

In order to simplify the exposition, the following convention will be in effect for the remainder of the paper: 
\begin{convention}
The non-zero parameters $\kpp, v$ and $m$ in \eqref{eq:CSH} and \eqref{eq:CSD} are normalized to 1, i.e., $\kpp = v = m = 1$.
\end{convention}
Our analysis can be adapted in an obvious fashion to the general parameters, as long as they are non-zero.

\subsection{Unified system for \eqref{eq:CSH} and \eqref{eq:CSD}} \label{subsec:uni}
In this subsection, we first show how \eqref{eq:CSD} can be reduced to a covariant Klein--Gordon equation by squaring the Dirac equation. Building on this reduction, we then introduce a single system that allows for a unified treatment of \eqref{eq:CSH} and \eqref{eq:CSD}; see \eqref{eq:CS-uni} below.

Consider the covariant Dirac equation in \eqref{eq:CSD}, i.e.,
\begin{equation*}
	(i \covsD + m) \psi = 0.
\end{equation*}
Applying the covariant Dirac operator $i \covsD$ and using \eqref{eq:gmm-mat}, we obtain the following covariant Klein-Gordon equation with mass $m^{2}$ for $\psi$:
\begin{equation}	\label{eq:CSD:KG}
\covBox \psi - m^{2} \psi 
		= \frac{1}{2} \gmm^{\mu} \gmm^{\nu} F(T_{\mu}, T_{\nu}) \cdot \psi.
\end{equation}
In what follows, we will work exclusively with the equation \eqref{eq:CSD:KG}. In particular, we may forget the spinorial structure of $\psi$, and consider $\psi$ simply as a $V$-valued function on $\bbR^{1+2}$. Similarly, we view
\begin{equation*}
	\gmm = \eta_{\mu \nu} \gmm^{\mu} \ud x^{\nu}, \quad \alp = \gmm^{0} \gmm = \eta_{\mu \nu} \alp^{\mu} \ud x^{\nu}
\end{equation*}
as $2 \times 2$ matrix-valued 1-forms on $\bbR^{1+2}$. These observations allow us to treat \eqref{eq:CSD} on the same footing as \eqref{eq:CSH}, despite its original spinorial nature.
 
We now introduce a unified system of equations that subsumes both \eqref{eq:CSH} and \eqref{eq:CSD}. 
Let $V$ be a complex vector space with inner product $\brk{\cdot, \cdot}$, with an additional structure $V = \Dlt \otimes_{\bbC} W$ in the case of \eqref{eq:CSD}. Let $\phi$ be a $V$-valued function on $\bbR^{1+2}$, which represents
\begin{equation*}
	\phi =
\left\{
\begin{array}{cl}
\varphi & \hbox{ for } \eqref{eq:CSH} \\
\psi & \hbox{ for } \eqref{eq:CSD}.
\end{array}
\right.
\end{equation*}
Let $\LieGrp$ be a Lie group with a positive-definite bi-invariant metric $\brk{\cdot, \cdot}_{\LieAlg}$, which acts on $V$ as described in Sections~\ref{subsec:CSH} and \ref{subsec:CSD}, and let $A$ be a connection 1-form on $\bbR^{1+2}$. The \emph{unified Chern--Simons system} is given by
\begin{equation} \label{eq:CS-uni}
\left\{
\begin{aligned}
	(\covBox \m 1) \phi = & U(\phi) \\
	F =& \star J(\phi).
\end{aligned}
\right.
\end{equation}
For \eqref{eq:CSH} and \eqref{eq:CSD}, $J(\phi)$ equals (respectively)
\begin{align*}
J_{\CSH}(\varphi)
=&  \brk{\calT \varphi, \covud \varphi} + \brk{\covud \varphi, \calT \varphi}, \\
J_{\CSD}(\psi)
=& \brk{\calT \psi, i \alp \psi} .
\end{align*}
In the case of \eqref{eq:CSH}, the $V$-valued potential $U(\phi) = U_{\CSH} (\varphi)$ takes the form
\begin{equation*}
	U_{\CSH}(\varphi) = 4 \LieBr{\varphi}{\LieBr{\varphi}{\varphi^{\dagger}}}
				+  2 \LieBr{\LieBr{\varphi}{\LieBr{\varphi^{\dagger}}{\LieBr{\varphi}{\varphi^{\dagger}}}}}{\varphi}
				+ \LieBr{\LieBr{\varphi}{\LieBr{\varphi}{\varphi^{\dagger}}}}{\LieBr{\varphi}{\varphi^{\dagger}}} .
\end{equation*}
In the case of \eqref{eq:CSD}, we have $U(\phi) = U_{\CSD}(\psi)$ with
\begin{equation} \label{eq:U-CSD}
U_{\CSD} (\psi) = \frac{1}{2} \eps_{\mu \nu \lmb} \gmm^{\mu} \gmm^{\nu} (J_{\CSD}^{\lmb}(\psi) \cdot \psi).
\end{equation}
Here $\eps_{\mu \nu \lmb} = \eps(T_{\mu}, T_{\nu}, T_{\lmb})$.
In what follows, we will refer to the first equation of \eqref{eq:CS-uni} and the \emph{covariant Klein--Gordon equation}, and the second equation as the \emph{Chern--Simons equation}.

Let $\Sgm \subset \bbR^{1+2}$ be a spacelike hypersurface with a future directed unit normal vector field $n_{\Sgm}$. For instance, $(\Sgm, n_{\Sgm}) = (\Sgm_{t_{0}}, T_{0})$ or $(\calH_{\hT_{0}}, N)$. The data on $\Sgm$ for a solution $(A, \phi)$ to \eqref{eq:CSH} consist of a triple $(a, f, g)$ of a $\LieAlg$-valued 1-form and $V$-valued functions $f, g$ on $\Sgm$, such that
\begin{equation} \label{eq:CS-uni-id}
	(a, f, g) =(A, \phi, \covD_{n_{\Sgm}} \phi) \restriction_{\Sgm}.
\end{equation}
As a consequence of the equation \eqref{eq:CS-uni}, such a triple $(a, f, g)$ obeys the constraint equation
\begin{equation} \label{eq:CS-uni-const}
	\ud a + \frac{1}{2} [a \wedge a] = \star J \restriction_{\Sgm}
\end{equation}
Accordingly, we say that $(a, f, g)$ is an \emph{initial data set} for \eqref{eq:CS-uni} if it solves \eqref{eq:CS-uni-const} with $(\phi, \covD_{n_{\Sgm}} \phi) \restriction_{\Sgm} = (f, g)$ on the right-hand side. Note that an initial data set $(a, f, g)$ for \eqref{eq:CSH} is also an initial data set for \eqref{eq:CS-uni}, whereas an initial data set $(a, \psi_{0})$ of \eqref{eq:CSD} gives rise to an initial data set $(a, f, g)$ for \eqref{eq:CS-uni}, where $f = \psi_{0}$ and $g$ is computed from the Dirac equation $i \covsD \psi + m \psi = 0$.

\subsection{Solving up to the initial hyperboloid}
Since both \eqref{eq:CSH} and \eqref{eq:CSD} are time reversible, it suffices to prove Theorems~\ref{thm:CSH} and \ref{thm:CSD} just in the future time direction.  As is usual in the vector field method for a Klein--Gordon equation \cite{MR803252}, the main part of our analysis takes place in the hyperboloidal foliation $\set{\calH_{\hT}}_{\hT > 0}$. To connect this analysis with the Cauchy problem for the foliation $\set{\Sgm_{t}}_{t \in \bbR}$, we first apply a time translation to place the initial data on $\Sgm_{2R}$, where we remind the reader that $R$ measures the radius of the support of the initial data. Then we use the following result, which passes from initial data posed on $\Sgm_{2R}$ to a solution up to $\calH_{2R}$.

\begin{figure}[h] 
\begin{tikzpicture}
   \fill[fill=lightgray] (0, 2) -- (0, 5/2) -- (3/2, 5/2) -- (4,5) -- (5,5) -- (5,2) -- cycle;
  \draw[->] (0,0) -- (5,0) node[right] {$r$};
  \draw[->] (0,0) -- (0,5) node[above] {$t$};
  \draw[<-] (3/2-0.1, 5/2+0.1) to [out=90, in=-45] (1,3) node[above] {$(\frac{3}{2} R, \frac{5}{2} R)$};
  \draw (0,5/2) -- (3/2,5/2) ;
  \draw [dashed] (3/2,5/2) -- (5,5/2) node[right] {$\Sgm_{\frac{5}{2} R}$};
  \draw (0,2) -- (5,2) node[right] {$\Sgm_{2R}$};
  \node[left] at (0, 1) {R};
  \node at (1, 4.5) {$\calC_{R}$};
  \node at (4, 3) {$\calQ$};
  \draw[domain=0:4,smooth,variable =\x] plot ({\x},{\x + 1}) node[above left] {$t = R + r$};
  \draw[domain=0:5,dashed,variable =\x] plot ({\x},{\x }) node[below right] {$t = r$};
   \draw[domain=0:sqrt(21), thick, smooth, red, variable =\x] plot ({\x},{sqrt(4 + \x*\x)}) node[above right] {$\calH_{2R}$};
   \draw[thick, red] (0,2) -- (1,2) node[below right] {$B_{R}$};
\end{tikzpicture}
\caption{The initial time slice and the initial hyperboloid}
\label{fig:initial-hyp}
\end{figure}
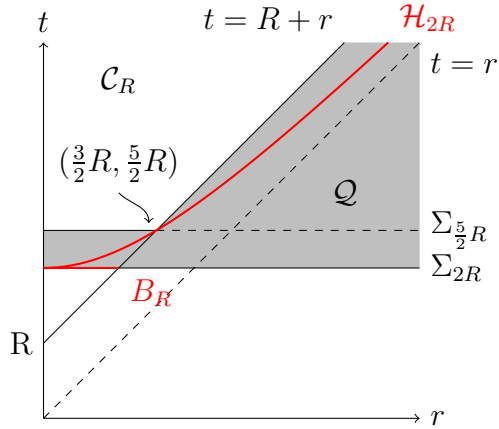
\begin{proposition}[Solution up to the initial hyperboloid] \label{prop:initial-hyp}
There exists $\dlt_{\ast\ast} = \dlt_{\ast \ast}(R) > 0$ such that the following statements hold.
Let $(a, f, g)$ be a smooth \eqref{eq:CSH} initial set obeying \eqref{eq:CSH-id} [resp. a smooth \eqref{eq:CSD} initial data set obeying \eqref{eq:CSD-id}], and consider the IVP for \eqref{eq:CSH} [resp. \eqref{eq:CSD}] with data on $\Sgm_{2R} = \set{t = 2R}$. 
If $\eps \leq \dlt_{\ast \ast}(R)$, then there exists a  smooth solution $(A, \phi)$ to the IVP on the spacetime region (see Figure~\ref{fig:initial-hyp})
\begin{equation*}
	\calQ := \bb( \set{2R \leq t \leq \tfrac{5}{2} R} \cup \set{\hT \leq 2R} \cup \calC_{R}^{c} \bb) \cap \set{t \geq 2R}.
\end{equation*}
which is unique up to smooth local gauge transformations. We have
\begin{equation} \label{eq:initial-hyp:fsp}
	\supp \, \phi \subseteq \calC_{R} \cap \set{t \geq 2R} = \set{(t, x) : t \geq 2R, \ \abs{x} \leq t + R}.
\end{equation}
Moreover, the solution $(A, \phi)$ obeys the following gauge invariant bounds:
\begin{equation} \label{eq:initial-hyp:est}
\sup_{t \in [2R, \frac{5}{2} R]} \sum_{k=0}^{5} \nrm{\covT^{(k)} \phi}_{L^{2}(\Sgm_{t})} \leq C \eps.
\end{equation}
\end{proposition}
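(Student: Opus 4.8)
\textbf{Proof proposal for Proposition~\ref{prop:initial-hyp}.}

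The plan is to solve the IVP for \eqref{eq:CSH} or \eqref{eq:CSD} in a fixed (non-covariant) gauge on the compact-in-time slab $\set{2R \le t \le \tfrac52 R}$, and then to propagate the resulting solution into the exterior region $\calC_R^c$ and up to $\calH_{2R}$ by a combination of finite speed of propagation and a short additional time of existence. Concretely, I would first put the data on $\Sgm_{2R}$ into the temporal gauge $A_0 = 0$ (the reduced system in this gauge is recorded in Appendix~\ref{app:gauge}). In this gauge the Chern--Simons equation $F = \star J$ becomes a transport equation for the spatial connection $(A_1, A_2)$ driven by $\phi$, coupled to a covariant Klein--Gordon equation for $\phi$; since the data $(a, f, g)$ satisfy the constraint \eqref{eq:CS-uni-const}, the constraint propagates, so this reduced system is equivalent to the original one. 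Standard energy estimates for the semilinear system (using that $U(\phi)$ is cubic-or-higher and $J(\phi)$ is quadratic, and Sobolev embedding $H^2(\bbR^2) \hookrightarrow L^\infty$) give local-in-time well-posedness in $H^5$ with a time of existence bounded below in terms of the $H^5$ size of the data; for $\eps$ small enough depending on $R$, this time exceeds $\tfrac12 R$. This yields a smooth solution on $\set{2R \le t \le \tfrac52 R}$ with the stated energy bound \eqref{eq:initial-hyp:est}, where the covariant norms $\nrm{\covT^{(k)}\phi}_{L^2}$ are comparable to ordinary $H^k$ norms once one also controls $\nrm{A}_{H^k}$, which the transport equation provides on a bounded time interval.

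Next I would establish the finite-speed-of-propagation statement \eqref{eq:initial-hyp:fsp}. Because $\supp(f,g) \subseteq B_R$ on $\Sgm_{2R}$ and the covariant Klein--Gordon operator $\covBox - 1$ has the same characteristic cone as $\Box - 1$, a covariant energy estimate on truncated backward light cones shows $\phi$ vanishes outside the forward solid light cone $\calC_R \cap \set{t \ge 2R}$; moreover $F = \star J(\phi)$ is then also supported there, so outside $\calC_R$ the connection $A$ is flat and, on the contractible region $\calC_R^c \cap \set{t \ge 2R}$, gauge-equivalent to the trivial connection, with $\phi \equiv 0$. Thus in the exterior region $\calC_R^c$ the solution trivially exists for all $t \ge 2R$ (in the temporal gauge, $A$ is simply transported from its Cauchy data, which is pure gauge there). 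It remains to cover the piece $\set{\hT \le 2R} \setminus \set{t \le \tfrac52 R}$; but $\set{\hT \le 2R} \cap \set{t \ge 2R}$ is contained in the domain of dependence of $B_{\sqrt{(5R/2)^2 - \cdots}}$... more simply, every point of $\set{\hT \le 2R, \, t \ge \tfrac52 R}$ lies in the domain of dependence of its intersection with $\Sgm_{\frac52 R}$ (a ball), and there the solution is already constructed; so by the local existence theorem applied with data on $\Sgm_{\frac52 R}$ (whose $H^5$ norm is still $\lesssim \eps$ by \eqref{eq:initial-hyp:est}) and finite speed of propagation, the solution extends to all of $\calQ$. One checks from the geometry in Figure~\ref{fig:initial-hyp} that these three regions—the slab, the exterior of $\calC_R$, and the domain of dependence of the slab's top—indeed cover $\calQ$.

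The main technical point is the interplay between the gauge choice and the covariant norms: the a priori bound \eqref{eq:initial-hyp:est} is stated gauge-invariantly, so one must verify that the temporal-gauge solution's ordinary $H^5$ bounds translate into bounds on $\nrm{\covT^{(k)}\phi}_{L^2}$, and that these are genuinely gauge-independent (which follows since $\covT_\mu = \covD_{\rd_\mu}$ transforms covariantly and $|\covT^{(k)}\phi|$ is pointwise gauge-invariant). A secondary subtlety is uniqueness up to smooth local gauge transformations: within a fixed gauge, uniqueness is the standard energy-estimate argument for the reduced hyperbolic system, and two solutions in possibly different gauges are related by solving the (overdetermined but compatible, thanks to the constraint) gauge-transformation equations locally, producing the covering $\set{O_\alpha}$ and transition functions $U_\alpha$ as in the statement of Theorems~\ref{thm:CSH} and \ref{thm:CSD}. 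The only place where smallness of $\eps$ (depending on $R$) is essential is in guaranteeing that the local time of existence exceeds $\tfrac12 R$; everything else is soft.
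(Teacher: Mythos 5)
There is a genuine gap at the very first step of your construction: you assume that, after imposing the temporal gauge, the data are small in ordinary $H^{5}$, so that the lifespan of the reduced system exceeds $\tfrac{1}{2}R$ and so that covariant and ordinary Sobolev norms are comparable. But the hypotheses \eqref{eq:CSH-id}, \eqref{eq:CSD-id} are purely gauge-covariant: they bound only $\covSgmD^{(k)} f$ and $\covSgmD^{(k)} g$ and place no bound whatsoever on the spatial connection $a$ (acting on nice data by a wild gauge transformation produces admissible data with $a$ arbitrarily large in every Sobolev norm). The temporal gauge condition constrains only $A_{0}$ and does nothing to tame $a$ on $\Sgm_{2R}$; consequently neither your claimed lower bound on the time of existence "in terms of the $H^{5}$ size of the data," nor the asserted comparability of $\nrm{\covT^{(k)}\phi}_{L^{2}}$ with ordinary $H^{k}$ norms ("which the transport equation provides"), follows from the hypotheses. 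The missing ingredient is an initial gauge normalization on the data surface: since the constraint equation expresses $F[a]$ in terms of $(f,g)$, the curvature is small, and Uhlenbeck's theorem (the paper's Proposition~\ref{prop:id-temporal}, applied with $m=5$ on the ball $B_{3R}$) produces a gauge in which $\tilde a$ is of size $\eps^{2}$ in $H^{5}$ and $(\tilde f,\tilde g)$ are small in ordinary $H^{5}\times H^{4}$; only then can one invoke the temporal-gauge local theory (Theorem~\ref{thm:lwp-temporal}, which moreover carries a smallness threshold) on $\DD(B_{3R})$ and undo the gauge transformation. This also shows that your closing remark is off: smallness of $\eps$ is needed not only for the lifespan but already for the Uhlenbeck gauge-fixing and for meeting the smallness threshold of the local well-posedness theorem.

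The same issue recurs in your treatment of the exterior region. Outside $B_{R}$ the data connection is flat but not quantitatively controlled, so one cannot simply say that "$A$ is transported from its Cauchy data, which is pure gauge there" and appeal to the local theory; the paper instead covers $\Sgm_{2R}\setminus(\set{2R}\times B_{R})$ by balls, uses Proposition~\ref{prop:id-temporal} to gauge the (flat, zero-matter) data to be identically zero on each ball, observes that the unique temporal-gauge solution on each $\DD(B)$ is then the zero solution, undoes the gauge transformations, and patches; this simultaneously yields $\phi^{(out)}=0$, i.e.\ \eqref{eq:initial-hyp:fsp}, without a separate finite-speed argument (your version is slightly circular, since you invoke finite speed of propagation for a solution whose existence in the exterior you are in the process of establishing). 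Your geometric covering of $\calQ$ and your remarks on uniqueness and on the gauge invariance of \eqref{eq:initial-hyp:est} are consistent with the paper (which achieves the covering in one stroke by taking the initial ball $B_{3R}$, whose domain of dependence reaches $t<5R$); the essential omission is the initial-surface gauge fixing.
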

The notion of uniqueness up to smooth local gauge transformations is defined as in the case of \eqref{eq:CSH}; see the discussion following Theorem~\ref{thm:CSH} above.
The significance of the time $t = \frac{5}{2} R$ in the definition of $\calQ$ and \eqref{eq:initial-hyp:est} is that $\calH_{2R}$ intersects the cone $\set{t = R + r}$ precisely on the circle $\set{t = \frac{5}{2} R, \ r = \frac{3}{2} R}$; see Figure~\ref{fig:initial-hyp}. 

A result like Proposition~\ref{prop:initial-hyp} is usually a quick consequence of the local well-posedness theory in the $\set{\Sgm_{t}}_{t \in \bbR}$ foliation and the finite speed of propagation; for instance, see \cite{MR2056833}. In order to properly formulate these properties for \eqref{eq:CS-uni}, we must address the issue of gauge choice, which arises at two stages: First, in finding a description of initial data obeying good bounds, and second, in the unique evolution of the initial data. 

To find a representation of the data with nice bounds, we rely on the following result. In what follows, given an open subset $O$ of $\Sgm_{t_{0}}$ and a $V$-(or $\LieAlg$-)valued $k$-form $v$, we define its Sobolev norms on $O$ by
\begin{equation*}
	\nrm{v}_{H^{m}(O)}^{2} := \sum_{k=0}^{m} \nrm{{}^{(\Sgm_{t_{0}})}\nb^{(k)} v}_{L^{2}(O)}^{2},
\end{equation*}
where ${}^{(\Sgm_{t_{0}})} \nb$ denotes the (induced) Levi-Civita connection on $\Sgm_{t_{0}}$. If $v$ is defined on a larger set $\calO \supset O$ (possibly an open set in the spacetime), then this norm is defined using the pullback along $O \hookrightarrow \calO$, i.e., $\nrm{v}_{H^{m}(O)} := \nrm{v \restriction_{O}}_{H^{m}(O)}$.

\begin{proposition}\label{prop:id-temporal}
Let $B := \set{t_{0}} \times B_{r_{0}}(x_{0}) \subseteq \Sgm_{t_{0}}$ and $(a, f, g)$ an initial data set for \eqref{eq:CS-uni} on $B$ satisfying the constraint equation \eqref{eq:CS-uni-const}. For a fixed integer $m \geq 1$, let
\begin{equation} \label{eq:id-size-temporal}
	\alp := \sum_{k=0}^{m} \nrm{\covSgmTD^{(k)} f}_{L^{2}(B)} + \sum_{k=0}^{m-1} \nrm{\covSgmTD^{(k)} g}_{L^{2}(B)},
\end{equation}
where $\covSgmTD$ is the (induced) covariant derivative on $\Sgm_{t_{0}}$.
If $\alp < \alp_{\ast}(r_{0})$, where $\alp_{\ast}(r_{0})$ is some fixed positive function depending only on $r_{0}$, then there exists a smooth gauge transformation $U$ on $B$ such that
the gauge-transformed potential $\tilde{a} = U a U^{-1} - \ud U U^{-1}$ satisfies
\begin{align*}
	\iota_{\bfn_{\rd B}} \tilde{a} =& 0 \quad \hbox{ on } \rd B = \set{x : \abs{x - x_{0}} = r_{0}}, \\
	{}^{(\Sgm_{t_{0}})} \dlt  \tilde{a} = & 0 \quad \hbox{ on } B.
\end{align*}
Here, $\bfn_{\rd B}$ is the outer normal vector field on $\rd B$ tangent to $\Sgm_{t_{0}}$ and ${}^{(\Sgm_{t_{0}})} \dlt$ is the exterior codifferential on $\Sgm_{t_{0}}$. Moreover, the gauge transformed initial data set $(\tilde{a}, \tilde{f} = U \cdot f, \tilde{g} = U \cdot g$) obeys the bounds
\begin{align*}
	\nrm{\tilde{a}}_{H^{m}(B)} \leq & C(m) \alp^{2}, \\
	\nrm{\tilde{f}}_{H^{m}(B)} + \nrm{\tilde{g}}_{H^{m-1}(B)} \leq & C(m) \alp.
\end{align*}
\end{proposition}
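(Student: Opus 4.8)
The plan is to solve the Coulomb-type gauge-fixing problem on the disk $B$ by an elliptic boundary value problem, treating the nonlinear (commutator) terms perturbatively via a fixed point / continuity argument. First I would reduce to a \emph{convenient temporary gauge}: since $B$ is a disk, pick any smooth gauge in which $a$ is defined as a genuine $\LieAlg$-valued $1$-form (e.g.\ the Cronstr\"om/radial gauge $\iota_{\bfn} a = 0$, using the construction recorded in Appendix~\ref{app:gauge}), and record that in this gauge $a$ together with all its derivatives is controlled by the curvature $F = \star J \restriction_{B}$, which by the constraint equation \eqref{eq:CS-uni-const} is a quadratic expression in $(f,g)$; thus $\nrm{a}_{H^{m}(B)} \lesssim \alp^{2}$ provided $\alp$ is small, after also controlling $\covSgmTD^{(k)} a$ by $\nb^{(k)} a$ using that each covariant derivative differs from a plain derivative by a factor of $a$ itself (an absorption argument, valid once $\nrm{a}$ is small).

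Next I would set up the transformation into the Coulomb gauge with Neumann-type boundary condition. We seek $U: B \to \LieGrp$ with $\tilde a = U a U^{-1} - (\ud U) U^{-1}$ satisfying ${}^{(\Sgm_{t_{0}})}\dlt \tilde a = 0$ on $B$ and $\iota_{\bfn_{\rd B}} \tilde a = 0$ on $\rd B$. Writing $U = \exp(\chi)$ for a $\LieAlg$-valued function $\chi$, the gauge condition becomes, to leading order, the Neumann problem
\begin{equation*}
	{}^{(\Sgm_{t_{0}})}\lap \, \chi = {}^{(\Sgm_{t_{0}})}\dlt a + (\hbox{quadratic in } \chi, a), \qquad
	\iota_{\bfn_{\rd B}} \ud \chi = \iota_{\bfn_{\rd B}} a + (\hbox{quadratic}) \ \hbox{ on } \rd B,
\end{equation*}
which is solvable (the compatibility condition $\int_{B} {}^{(\Sgm_{t_{0}})}\dlt a = \int_{\rd B} \iota_{\bfn_{\rd B}} a$ holds by the divergence theorem) with the elliptic estimate $\nrm{\chi}_{H^{m+1}(B)} \lesssim \nrm{a}_{H^{m}(B)} + (\hbox{errors})$. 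I would then run a contraction mapping on $\chi$ in $H^{m+1}(B)$ on a ball of radius $\sim \alp^{2}$: the quadratic error terms are bounded in $H^{m}$ using the algebra property of $H^{m}(B)$ for $m \geq 1$ (here $B \subset \bbR^{2}$, so $H^{m}$, $m\geq 1$, is \emph{not} quite an algebra for $m=1$, so one uses instead the Gagliardo--Nirenberg / product estimates, controlling e.g.\ $\nrm{\chi \cdot a}_{H^{1}} \lesssim \nrm{\chi}_{H^{2}}\nrm{a}_{H^{1}}$, which is why the iteration is run with one extra derivative of regularity on $\chi$; alternatively one simply assumes $m \geq 2$ in the intermediate step and notes the $m=1$ case of the final bound follows a fortiori). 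Smallness of $\alp$ makes this a contraction, yielding $U$ and the bound $\nrm{\tilde a}_{H^{m}(B)} \lesssim \alp^{2}$.

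Finally I would transfer the bounds to the matter fields. Since $U$ is smooth and bounded in $H^{m+1}(B) \hookrightarrow C^{0}$, and $\LieGrp$ acts unitarily on $V$, we get $\nrm{\tilde f}_{L^{2}} = \nrm{f}_{L^{2}}$ and then inductively $\nrm{\covSgmTD^{(k)}\tilde f}_{L^{2}(B)} \lesssim \alp$ using $\covSgmTD \tilde f = U \covSgmD f$ (gauge covariance of $\covSgmD$) together with the boundedness of $U$ and its derivatives; likewise for $\tilde g$. This gives $\nrm{\tilde f}_{H^{m}(B)} + \nrm{\tilde g}_{H^{m-1}(B)} \lesssim \alp$. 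The main obstacle is the elliptic theory on the disk with \emph{mixed} data (Dirichlet-type interior condition $\dlt \tilde a = 0$ together with the Neumann-type boundary condition $\iota_{\bfn}\tilde a = 0$), i.e.\ correctly identifying the well-posed linearized boundary value problem for $\chi$, verifying its solvability condition, and obtaining elliptic estimates uniform in the data; once the correct linear problem and its estimate are in hand, the nonlinear iteration is routine given the smallness of $\alp$.
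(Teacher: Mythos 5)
Your proposal is correct in outline but takes a genuinely different route from the paper's proof, which is much shorter: for the base case $m=1$ the paper simply invokes Uhlenbeck's theorem \cite{Uhlenbeck:1982vna}, which produces the Coulomb gauge with the boundary condition $\iota_{\bfn_{\rd B}}\tilde a = 0$ directly from smallness of the \emph{curvature} alone (no intermediate gauge needed), the curvature being small because the constraint \eqref{eq:CS-uni-const} expresses $F[a]\restriction_{B}$ as a quadratic expression in $(f,g)$; for $m>1$ it then bootstraps regularity by regarding $\tilde a$ as a solution of the div--curl boundary value problem $\ud\tilde a = F[\tilde a]-\tfrac12[\tilde a\wedge\tilde a]$, ${}^{(\Sgm_{t_{0}})}\dlt\tilde a = 0$ on $B$, $\iota_{\bfn_{\rd B}}\tilde a = 0$ on $\rd B$, and inducting on $m$. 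You instead avoid Uhlenbeck by first passing to a radial-type gauge to make $a$ itself small (controlled by the curvature), and then constructing $U=\exp\chi$ by solving a nonlinear Neumann problem for $\chi$ via contraction in $H^{m+1}(B)$, so that all derivatives of $\tilde a$ come out of the elliptic estimate for $\chi$ rather than from a separate induction. Both strategies work for small $\alpha$: yours is more self-contained and constructive, at the cost of carrying out the radial-gauge Sobolev estimates (boundedness of the ray-integral operator, conversion between covariant and ordinary derivatives by absorption) and the fixed-point bookkeeping, which the appeal to Uhlenbeck plus the linear div--curl estimate sidesteps; in your iteration the Neumann compatibility condition indeed persists at every step, since the interior divergence datum and the normal-trace datum always arise from one and the same $\LieAlg$-valued 1-form. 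One gloss you share with the paper: at the lowest regularity $m=1$ the $L^{2}$ bound on the curvature by $C\alpha^{2}$ is borderline (it pairs $f$, which is only in $L^{p}$ for $p<\infty$, with $g\in L^{2}$), but this corner case is immaterial for the application, which uses $m=5$.
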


In the case $m = 1$, this proposition is an immediate consequence of the classical theorem of Uhlenbeck \cite[Theorem~1.3]{Uhlenbeck:1982vna} and the explicit formula for the curvature 2-form $F[a]$ in terms of $f$ and $g$ via the constraint equation \eqref{eq:CS-uni-const}. To handle the case $m > 1$, first note that $\tilde{a}$ is a solution to the boundary value problem for the div-curl system
\begin{equation*}
\left\{
\begin{aligned}
	\ud \tilde{a} =& F[\tilde{a}] - \frac{1}{2} [\tilde{a} \wedge \tilde{a}]\quad \hbox{ on } B \\
	{}^{(\Sgm_{t_{0}})} \dlt  \tilde{a} =& 0 \qquad \hbox{ on } B\\
	\iota_{\bfn_{\rd B}} \tilde{a} =& 0 \qquad \hbox{ on } \rd B,
\end{aligned}
\right.
\end{equation*}
where $F[\tilde{a}]$ is again given explicitly in terms of $\tilde{f}, \tilde{g}$ through the constraint equation. Hence standard elliptic arguments lead to higher regularity bounds for $\tilde{a}$ in terms of those for $F[\tilde{a}]$, which in turn follows from bounds for $\tilde{f}$, $\tilde{g}$. We omit the routine induction argument on $m$ that leads to the full proof.

To state a local well-posedness theorem for \eqref{eq:CS-uni}, we choose the \emph{temporal gauge}
\begin{equation*}
	\iota_{\rd_{t}} A = A_{0} = 0,
\end{equation*}
which has the nice feature of directly exhibiting the property of finite speed of propagation. 

To proceed further, we need to introduce some terminology. We say that a vector (or a direction) $X$ tangent to $\bbR^{1+2}$ is time-like [resp. null or space-like] if $\met(X, X) < 0$ [resp. $\met(X, X) = 0$ or $\met(X, X) > 0$]. A time-like or null vector $X$ is said to be future-directed [resp. past-directed] if $\met(X, \rd_{t}) < 0$ [resp. $\met(X, \rd_{t}) > 0$].  Finally, given a subset $O \subseteq \Sgm_{t_{0}} = \set{t_{0}} \times \bbR^{2}$, we define the \emph{future} [resp. past]  \emph{domain of dependence} $\DD(O)$ to be the set of all points $p \in \bbR^{1+2}$ such that all straight rays emanating from $p$ in the past- [resp. future-] directed time-like or null directions intersect with $O$. For instance, if $O = \set{t_{0}} \times B_{r_{0}}(x_{0})$ is the ball of radius $r_{0}$ centered at $x_{0}$ in $\Sgm_{t_{0}}$, then $\DD(O)$ is the cone 
\begin{equation*}
\DD(O) = \set{(t, x) \in \bbR^{1+2} : t_{0} \leq t < t_{0} + r_{0}, \ \abs{x - x_{0}} < r_{0} + t_{0} - t}
\end{equation*}
We are now ready to state local in spacetime well-posedness of \eqref{eq:CS-uni} in the temporal gauge, which includes the finite speed of propagation property.
\begin{theorem}[Local well-posedness in the temporal gauge] \label{thm:lwp-temporal}
Let $B := \set{t_{0}} \times B_{r_{0}}(x_{0}) \subseteq \Sgm_{t_{0}}$ and $(a, f, g)$ a smooth initial data set for \eqref{eq:CS-uni} on $B$. Fix $ m \geq 3$, and let 
\begin{equation*}
	\tilde{\alp}^{2} := \nrm{a}_{H^{m-1}(B)} + \nrm{{}^{(\Sgm_{t_{0}})} \dlt a}_{H^{m-1}(B)} + \nrm{f}_{H^{m}(B)}^{2} + \nrm{g}_{H^{m-1}(B)}^{2}.
\end{equation*}
If $\tilde{\alp} < \tilde{\alp}_{\ast}(r_{0})$, where $\tilde{\alp}_{\ast}(r_{0})$ is some fixed positive nondecreasing function of $r_{0}$, then there exists a unique smooth solution $(A, \phi)$ to the IVP for \eqref{eq:CS-uni} satisfying the temporal gauge condition $\iota_{\rd_{t}} A = 0$ in the set $\DD(B)$. Moreover, the solution obeys the bound
\begin{equation*}
\sup_{t \in [t_{0}, t_{0} + r_{0})} \bb( \nrm{A}_{H^{m-1}(B_{t})} + \nrm{\dlt A}_{H^{m-1}(B_{t})} + \nrm{\phi}_{H^{m}(B_{t})}^{2} + \nrm{\rd_{t} \phi}_{H^{m-1}(B_{t})}^{2} \bb) \leq C \tilde{\alp}^{2},
\end{equation*}
where $B_{t} := \Sgm_{t} \cap \DD(B)$.
\end{theorem}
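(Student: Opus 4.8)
The plan is to fix the temporal gauge $\iota_{\rd_{t}} A = A_{0} = 0$, in which \eqref{eq:CS-uni} becomes a coupled system consisting of a Klein--Gordon equation for $\phi$ and a first-order (in $t$) evolution equation for the spatial potential $A = A_{1}\ud x^{1} + A_{2}\ud x^{2}$, plus a constraint that must be propagated. Since $F_{0j} = \rd_{t} A_{j}$ when $A_{0} = 0$, the $0j$-components of $F = \star J(\phi)$ read $\rd_{t} A_{j} = \mathcal{G}_{j}(\phi,\covD\phi)$, where $\mathcal{G}_{j} = (\star J(\phi))(\rd_{0},\rd_{j})$ is quadratic in $(\phi,\covD_{1}\phi,\covD_{2}\phi)$ with no time derivatives (and, for \eqref{eq:CSD}, no derivatives of $\psi$ at all), while the $12$-component is the constraint $F_{12} = (\star J(\phi))_{12}$, which at $t=t_{0}$ is exactly \eqref{eq:CS-uni-const}. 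Expanding $\covBox = \covD^{\mu}\covD_{\mu}$ relative to $\nb$ by \eqref{eq:covd-def} and using $A_{0}=0$, the covariant Klein--Gordon equation turns into the semilinear wave equation
\begin{equation*}
	\Box \phi - \phi = - (\rd^{j} A_{j})\phi - 2 A^{j} \rd_{j}\phi - A^{j} A_{j}\phi + U(\phi) =: N, \qquad \Box = -\rd_{t}^{2} + \lap,
\end{equation*}
in which $\rd^{j}A_{j}$ is, up to sign, the codifferential $\dlt A$. It therefore suffices to solve the Cauchy problem for this evolution pair with $(A,\phi,\rd_{t}\phi)\restriction_{\Sgm_{t_{0}}} = (a,f,g)$ and then verify that $F = \star J(\phi)$ holds on $\DD(B)$.

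The delicate point is an apparent derivative loss in the coupling: $N$ contains $(\rd^{j}A_{j})\phi$, i.e.\ one derivative of $A$, while $\rd_{t} A \sim \phi\,\covD\phi$ costs one derivative of $\phi$, so naively $A$ would lie one Sobolev degree below $\phi$ and $(\rd^{j}A_{j})\phi$ two degrees below, which $\Box$ cannot absorb. The remedy is to differentiate the $A$-equation tangentially: taking its divergence gives $\rd_{t}(\rd^{j}A_{j}) = \rd^{j}\mathcal{G}_{j} = (\ud J(\phi))_{12}$, and $(\ud J)_{12} = \rd_{1}J_{2} - \rd_{2}J_{1}$ contains \emph{no second derivatives of $\phi$}, because the would-be second-derivative terms cancel by symmetry of mixed partials — equivalently, $\covud\covud\phi = F\wedge\phi$, which is the same cancellation underlying \eqref{eq:comm-problem}. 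Hence $\rd_{t}(\dlt A)$ is quadratic-or-higher with at most one derivative on each factor. I would therefore run the iteration in the space of $(\phi,\rd_{t}\phi,A,\dlt A)$ at regularity $(H^{m},H^{m-1},H^{m-1},H^{m-1})$ — precisely the norms in the statement — and observe that, since $H^{m-1}(\bbR^{2})$ is an algebra and $H^{m}(\bbR^{2}) \subset L^{\infty}$ for $m \geq 3$, each nonlinearity ($N$, $\mathcal{G}$, and $\rd_{t}\dlt A$) is a product that lands in $H^{m-1}$. The quadratic nature of the current is also why the asserted estimate controls $A$ and $\dlt A$ only at the level $\tilde{\alp}^{2}$, i.e.\ quadratically in $\phi$.

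For the construction proper, I would first extend $(a,f,g)$ from $B$ to $\Sgm_{t_{0}} = \set{t_{0}}\times\bbR^{2}$ as smooth, compactly supported data with $\nrm{\tilde{a}}_{H^{m-1}(\bbR^{2})} + \nrm{\dlt\tilde{a}}_{H^{m-1}(\bbR^{2})} + \nrm{\tilde{f}}_{H^{m}(\bbR^{2})}^{2} + \nrm{\tilde{g}}_{H^{m-1}(\bbR^{2})}^{2} \aleq \tilde{\alp}^{2}$ (the constraint need not be preserved by the extension). One then solves the evolution pair globally in space and locally in time by the standard Picard iteration — the energy inequality for $\Box-1$ for the $\phi$-iterate, direct integration in $t$ for the $A$- and $\dlt A$-iterates — closing the estimates with the product bounds above; a continuity/bootstrap argument shows that if $\tilde{\alp} < \tilde{\alp}_{\ast}(r_{0})$ then the solution persists, with the stated bound, on the whole interval $[t_{0},t_{0}+r_{0})$, whose finiteness keeps the Gronwall factor under control. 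Uniqueness in the temporal gauge and continuous dependence follow from energy estimates for differences of solutions. Finally, finite speed of propagation is manifest in this gauge — the $\phi$-equation propagates at unit speed, the $A$-equations at zero speed — so restricting to $\DD(B)$ produces a solution that depends only on the data on $B$ (hence is independent of the chosen extension) and obeys the bound with $B$ replaced by $B_{t} = \Sgm_{t}\cap\DD(B)$.

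It remains to upgrade from the $0j$-components of $F = \star J$ to the full Chern--Simons equation on $\DD(B)$. Setting $\mathcal{C} := F_{12} - (\star J(\phi))_{12}$, an $\LieAlg$-valued function, I would use the $A$-equations together with the conservation law $\covD^{\mu} J_{\mu}(\phi) = 0$ — itself a consequence of the covariant Klein--Gordon (resp.\ Dirac) equation and the gauge invariance of $\calU$ (resp.\ of the Dirac Lagrangian), via anti-hermiticity of the $\LieAlg$-action on $V$ — to compute that $\rd_{t}\mathcal{C} = 0$ in the temporal gauge. Since $\mathcal{C}$ vanishes on $B$ by \eqref{eq:CS-uni-const} and is constant along the integral curves of $\rd_{t}$, it vanishes throughout $\DD(B)$; thus $F = \star J(\phi)$ there and $(A,\phi)$ solves \eqref{eq:CS-uni} on $\DD(B)$. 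I expect the genuine obstacle to be the second step: recognizing that the $\phi$--$A$ coupling loses no derivative once the $A$-equation is differentiated tangentially and the structure of $\covud J$ (equivalently $\covud\covud\phi = F\wedge\phi$) is exploited — which is exactly the reason the codifferential $\dlt A$ must be carried along as part of the solution norm.
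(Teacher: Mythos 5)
Your proposal is correct and follows essentially the same route as the paper: the paper's sketch (together with Lemma~\ref{lem:reduce:temp} in Appendix~\ref{subsec:temporal}) likewise reduces \eqref{eq:CS-uni} in the temporal gauge to a Klein--Gordon equation coupled to transport equations for $A$ and for an auxiliary variable $b$ playing the role of your $\dlt A$ (introduced for exactly the reason you identify, namely that $\ud J$ costs only one derivative of $\phi$), runs a standard Picard iteration with the localized energy inequality, integration along characteristics and Sobolev embedding, and propagates the constraints $(F-\star J)\restriction_{\Sgm_{t}}=0$ and $b=\dlt A$ using $\covdlt J=0$, just as in your final step.
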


In the temporal gauge $\iota_{\rd_{t}} A = 0$, the Chern--Simons system \eqref{eq:CS-uni} becomes a coupled system of a Klein--Gordon equation for $\phi$ and transport equations for $A$ and $\dlt A$ whose characteristics are precisely the constant $x$ curves. The precise form of the system can be found in Appendix~\ref{subsec:temporal}, using the formalism developed in Sections~\ref{subsec:extr-calc} and \ref{subsec:extr-calc-2}. The initial data for $(A, \phi)$ on $B$ are $(a, f, g)$ as in \eqref{eq:CS-uni-id}, whereas the initial data for $\dlt A$ on $B$ is ${}^{(\Sgm_{t_{0}})} \dlt a$, thanks to the temporal gauge condition. Theorem~\ref{thm:lwp-temporal} follows from a standard Picard iteration argument using the localized energy inequality for the wave equation in $\DD(B)$, integration along characteristics for the transport equation and the Sobolev inequality. We omit the details.

\begin{proof} [Sketch of proof of Proposition~\ref{prop:initial-hyp}]
Recall that $0 \leq \eps < \dlt_{\ast \ast}(R)$ by hypothesis. Choosing $\dlt_{\ast\ast}(R)$ sufficiently small, we may apply Proposition~\ref{prop:id-temporal} with $B = \set{2R} \times B_{3 R}(0)$ and $m = 5$ to find a gauge transform $(\tilde{a}, \tilde{f}, \tilde{g})$ of $(a, f, g)$ on $B$ obeying
\begin{equation*}
	\nrm{\tilde{a}}_{H^{5}(B)} + \nrm{\tilde{f}}_{H^{5}(B)}^{2} + \nrm{\tilde{g}}_{H^{4}(B)}^{2} \leq C \eps^{2}.
\end{equation*}
Taking $\dlt_{\ast \ast}(R)$ smaller if necessary, we may apply Theorem~\ref{thm:lwp-temporal} to $(\tilde{a}, \tilde{f}, \tilde{g})$ to construct a unique smooth solution $(\tilde{A}^{(in)}, \tilde{\phi}^{(in)})$ to \eqref{eq:CS-uni} in the temporal gauge on the set
\begin{equation*}
\calQ^{(in)}:= \DD(B) = \set{(t,x) : 2 R \leq t < 5 R, \ \abs{x} < 5 R - t},
\end{equation*}
which obeys the estimate
\begin{equation} \label{eq:initial-hyp:in-sol}
	\sup_{t \in [2R, 5R)} 
	\bb( \nrm{\tilde{A}^{(in)}}_{H^{4}(B_{t})} + \nrm{\dlt \tilde{A}^{(in)}}_{H^{4}(B_{t})} + \nrm{\tilde{\phi}^{(in)}}_{H^{5}(B_{t})}^{2} + \nrm{\rd_{t} \tilde{\phi}^{(in)}}_{H^{4}(B_{t})}^{2}
	\bb) \leq C \eps^{2},
\end{equation}
where $B_{t} = \Sgm_{t} \cap \DD(B) = \set{(t, x) : \abs{x} < 5R - t}$. 
Simply by undoing the gauge transformation from Proposition~\ref{prop:id-temporal}, we obtain from $(\tilde{A}^{(in)}, \tilde{\phi}^{(in)})$ a smooth solution to the IVP for the original data $(a, f, g)$ on $\calQ^{(in)}$, which we denote by $(A^{(in)}, \phi^{(in)})$.

To complete the proof, it remains to prove the existence of a smooth solution $(A^{(out)}, \phi^{(out)})$ to the IVP with the data $(a, f, g)$ on the set $\calQ^{(out)} := \calC_{R}^{c}$. Indeed, once we have $(A^{(out)}, \phi^{(out)})$ on $\calQ^{(out)}$, the desired solution $(A, \phi)$ may be constructed by simply patching $(A^{(in)}, \phi^{(in)})$ and $(A^{(out)}, \phi^{(out)})$ on $\calQ^{(in)} \cup \calQ^{(out)} \supset \calQ$. As we will see below, $\phi^{(out)} = 0$, which proves \eqref{eq:initial-hyp:fsp}. Furthermore, \eqref{eq:initial-hyp:est} follows from \eqref{eq:initial-hyp:fsp}, \eqref{eq:initial-hyp:in-sol} and \eqref{eq:CS-uni}. Uniqueness up to smooth local gauge transformations can be proved afterwards using Proposition~\ref{prop:id-temporal} and Theorem~\ref{thm:lwp-temporal}.

Fix a ball $B$ contained in $\Sgm_{2R} \setminus (\set{2R} \times B_{R})$. Since $(f, g) = (0, 0)$ on $B$, Proposition~\ref{prop:id-temporal} implies the existence of a smooth gauge transformation $U_{B}$, such that the gauge transform $(\tilde{a}, \tilde{f}, \tilde{g})$ of $(a, f, g)$ by $U_{B}$ is identically zero on $B$. Therefore, by Theorem~\ref{thm:lwp-temporal} the unique smooth solution to \eqref{eq:CS-uni} on $\DD(B)$ with data $(\tilde{a}, \tilde{f}, \tilde{g})$ in the temporal gauge is the zero solution. Undoing the gauge transformation $U_{B}$, we obtain $(A^{(out)}, \phi^{(out)})$ on $\DD(B)$. Since $B$ is arbitrary and sets of the form $\DD(B)$ cover $\calQ^{(out)}$, these local solutions patch up to the desired solution $(A^{(out)}, \phi^{(out)})$ on the whole region $\calQ^{(out)}$. The fact that $\phi^{(out)} = 0$ is clear from the construction. \qedhere
\end{proof}

\subsection{Local well-posedness in the hyperboloidal foliation}
In order to proceed, we need a local well-posedness theory of \eqref{eq:CS-uni} in the hyperboloidal foliation $\set{\calH_{\hT}}_{\hT >0}$.
For our purposes, it suffices to formulate analogues of Proposition~\ref{prop:id-temporal} and Theorem~\ref{thm:lwp-temporal} in this setting.

Let $\bfd_{\calH_{\hT_{0}}}(x, y)$ denote the geodesic distance between points $x$ and $y$ on $\calH_{\hT_{0}}$. We define the geodesic ball ${}^{(\calH_{\hT_{0}})} B_{r_{0}}(x_{0})$ with radius $r_{0}$ and center $x_{0}$ in $\calH_{\hT_{0}}$ by 
\begin{equation*}
	{}^{(\calH_{\hT_{0}})} B_{r_{0}}(x_{0})
	= \set{x \in \calH_{\hT_{0}} : \bfd_{\calH_{\hT_{0}}}(x, x_{0}) < r_{0}}.
\end{equation*}
Given an open subset $O$ of $\calH_{\hT_{0}}$ and a $V$-(or $\LieAlg$-)valued $k$-form $v$ on $O$, let
\begin{equation*}
	\nrm{v}_{H^{m}(O)}^{2} := \sum_{k=0}^{m} \nrm{{}^{(\calH_{\hT_{0}})} \nb^{(k)} v}_{L^{2}(O)}^{2},
\end{equation*}
where ${}^{(\calH_{\hT_{0}})} \nb$ denotes the (induced) Levi-Civita connection on $\calH_{\hT_{0}}$. If $v$ is defined on a larger set $\calO \supset O$ (possibly an open set in the spacetime), then $\nrm{v}_{H^{m}(O)} := \nrm{v \restriction_{O}}_{H^{m}(O)}$.

We are now ready to state the analogue of Proposition~\ref{prop:id-temporal} in $\calH_{\hT_{0}}$.
\begin{proposition}\label{prop:id-cronstrom}
Let $B := {}^{(\calH_{\hT_{0}})} B_{r_{0}}(x_{0}) \subseteq \calH_{\hT_{0}}$ and $(a, f, g)$ an initial data set for \eqref{eq:CS-uni} on $B$ satisfying the constraint equation \eqref{eq:CS-uni-const}. For a fixed integer $m \geq 1$, let
\begin{equation} \label{eq:id-size-cronstrom}
	\bt := \sum_{k=0}^{m} \nrm{\covHTD^{(k)} f}_{L^{2}(B)} + \sum_{k=0}^{m-1} \nrm{\covHTD_{x}^{(k)} g}_{L^{2}(B)},
\end{equation}
where $\covHTD$ is the (induced) covariant derivative on $\calH_{\hT_{0}}$.
If $\bt < \bt_{\ast}(\tau_{0}, x_{0}, r_{0})$, where $\bt_{\ast}(\tau_{0}, x_{0}, r_{0})$ is some fixed positive function depending only on $\tau_{0}, x_{0}$ and $r_{0}$, then there exists a smooth gauge transformation $U$ on $B$ such that the gauge-transformed potential $\tilde{a} = U a U^{-1} - \ud U U^{-1}$ satisfies
\begin{align*}
	\iota_{\bfn_{\rd B}} \tilde{a} =& 0 \quad \hbox{ on } \rd B, \\
	{}^{(\calH_{\hT_{0}})} \dlt  \tilde{a} = & 0 \quad \hbox{ on } B.
\end{align*}
Here, $\rd B = \set{x \in \calH_{\hT_{0}} : \bfd_{\calH_{\hT_{0}}}(x, x_{0}) = r_{0}}$, $\bfn_{\rd B}$ is the outer normal vector field on $\rd B$ tangent to $\calH_{\hT_{0}}$ and ${}^{(\calH_{\hT_{0}})} \dlt$ is the exterior codifferential on $\calH_{\hT_{0}}$. Moreover, the gauge transformed initial data set $(\tilde{a}, \tilde{f} = U \cdot f, \tilde{g} = U \cdot g$) obeys the bounds
\begin{align*}
	\nrm{\tilde{a}}_{H^{m}(B)} \leq & C(m) \bt^{2}, \\
	\nrm{\tilde{f}}_{H^{m}(B)} + \nrm{\tilde{g}}_{H^{m-1}(B)} \leq &C(m) \bt.
\end{align*}
\end{proposition}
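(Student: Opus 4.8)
The plan is to reduce Proposition~\ref{prop:id-cronstrom} to the proof of Proposition~\ref{prop:id-temporal}, which relies on only three ingredients specific to a ball $B$ in the flat slice $\Sgm_{t_0}$: the explicit expression for the curvature $F[a]\restriction_B$ in terms of $(f,g)$ furnished by the constraint equation \eqref{eq:CS-uni-const}, together with the Sobolev multiplication inequalities; Uhlenbeck's Coulomb-gauge theorem \cite[Theorem~1.3]{Uhlenbeck:1982vna}; and the $H^m$ elliptic estimate for the div--curl (Hodge) boundary value problem on $B$ with the boundary condition $\iota_{\bfn_{\rd B}}\tilde{a} = 0$. Each of these has a counterpart on geodesic balls in $\calH_{\hT_0}$, so the task is to put them in place and then run the argument of Proposition~\ref{prop:id-temporal} verbatim.

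First I would record the relevant geometry of $\calH_{\hT_0}$. On the slice $\set{\hT = \hT_0}$ the induced metric is $\met\restriction_{\calH_{\hT_0}} = \hT_0^2\,(\ud \hY^2 + \sinh^2 \hY\, \ud \tht^2)$, i.e.\ $\hT_0^2$ times the standard metric on the hyperbolic plane; thus $\calH_{\hT_0}$ is a complete, homogeneous Riemannian surface of constant curvature $-\hT_0^{-2}$ with no conjugate points. Consequently the geodesic ball $B = {}^{(\calH_{\hT_0})}B_{r_0}(x_0)$ is, in geodesic normal coordinates centered at $x_0$, diffeomorphic to a Euclidean disk of radius $r_0$, and in that chart the metric is comparable to the Euclidean metric with all coefficients, their derivatives, and the injectivity radius of $B$ controlled solely in terms of $\hT_0$ and $r_0$ (in particular, independently of $x_0$, by homogeneity). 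Transplanting through this chart --- or, equivalently, invoking the standard Riemannian-manifold versions of Uhlenbeck's theorem and of the Hodge estimate, whose constants are governed by exactly such data --- makes the two analytic ingredients available on $B$ with constants depending only on $(\hT_0, x_0, r_0)$.

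The remainder then copies the proof of Proposition~\ref{prop:id-temporal}. Pulling back \eqref{eq:CS-uni-const} to $B$ expresses $F[a]\restriction_B$ as an explicit bilinear expression in $(f,g)$, schematically $F[a] \sim \brk{\calT f, g}$, so the Sobolev multiplication estimates on $B$ bound $\nrm{F[a]}_{H^{m-1}(B)}$ and in particular $\nrm{F[a]}_{L^1(B)}$ by $C(m,\hT_0,r_0)\,\bt^2$. Choosing $\bt_{\ast}(\hT_0,x_0,r_0)$ small, Uhlenbeck's theorem yields a smooth gauge transformation $U$ on $B$ such that $\tilde{a} = U a U^{-1} - \ud U\, U^{-1}$ satisfies $\iota_{\bfn_{\rd B}}\tilde{a} = 0$ on $\rd B$, ${}^{(\calH_{\hT_0})}\dlt\,\tilde{a} = 0$ on $B$, and $\nrm{\tilde{a}}_{H^1(B)} \leq C\nrm{F[\tilde{a}]}_{L^1(B)} \leq C\bt^2$ --- here $F[\tilde{a}] = U F[a] U^{-1}$, $\tilde{f} = U\cdot f$, $\tilde{g} = U\cdot g$ have the same pointwise norms as $F[a], f, g$, and once $\nrm{\tilde{a}}_{H^1(B)}$ is small the covariant derivatives built from $a$ and from $\tilde{a}$ are comparable, giving $\nrm{\tilde{f}}_{H^1(B)} + \nrm{\tilde{g}}_{L^2(B)} \leq C\bt$. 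This is the case $m=1$. For $m > 1$ one bootstraps from the div--curl system
\begin{equation*}
\left\{
\begin{aligned}
	\ud \tilde{a} =& F[\tilde{a}] - \frac{1}{2}[\tilde{a} \wedge \tilde{a}] \quad \hbox{ on } B, \\
	{}^{(\calH_{\hT_0})}\dlt\,\tilde{a} =& 0 \quad \hbox{ on } B, \\
	\iota_{\bfn_{\rd B}}\tilde{a} =& 0 \quad \hbox{ on } \rd B,
\end{aligned}
\right.
\end{equation*}
in which $F[\tilde{a}]$ is again bilinear in $(\tilde{f},\tilde{g})$: given the bounds at order $m-1$, the multiplication estimates control $\nrm{F[\tilde{a}] - \frac{1}{2}[\tilde{a}\wedge\tilde{a}]}_{H^{m-1}(B)} \leq C\bt^2$, the $H^m$ Hodge estimate upgrades this to $\nrm{\tilde{a}}_{H^m(B)} \leq C\bt^2$, and then comparability of covariant derivatives up to order $m$ yields $\nrm{\tilde{f}}_{H^m(B)} + \nrm{\tilde{g}}_{H^{m-1}(B)} \leq C\bt$. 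As in Proposition~\ref{prop:id-temporal}, the routine induction on $m$ is omitted.

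The main obstacle, such as it is, is the base case: arranging Uhlenbeck's theorem on geodesic balls of $\calH_{\hT_0}$ with constants depending only on $(\hT_0, x_0, r_0)$, which is where the coordinate transplantation (or the Riemannian version of the theorem, together with the uniform geometry of $\calH_{\hT_0}$) enters. Everything downstream is an elliptic bootstrap word-for-word identical to the flat case.
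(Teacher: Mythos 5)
Your proposal is correct and follows essentially the same route as the paper, which itself only remarks that Proposition~\ref{prop:id-cronstrom} ``proceeds exactly as'' Proposition~\ref{prop:id-temporal}: express $F[\tilde a]$ through the constraint equation, invoke Uhlenbeck's theorem for the $m=1$ case, and bootstrap higher regularity from the div--curl boundary value problem. The only added content is your explicit justification that Uhlenbeck's theorem and the Hodge estimate transplant to geodesic balls in $\calH_{\hT_0}$ with constants depending only on $(\hT_0, r_0)$, a point the paper leaves implicit (and note the base-case bound should read $\nrm{\tilde a}_{H^1(B)} \leq C \nrm{F[\tilde a]}_{L^2(B)}$ rather than $L^1$, a harmless slip since $F[\tilde a]$ is controlled in $H^{m-1}$).
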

The proof of Proposition~\ref{prop:id-cronstrom} proceeds exactly as that of Proposition~\ref{prop:id-temporal}; we skip the details.

We now turn to the task of formulating a local (in spacetime) well-posedness theorem in the $\set{\calH_{\hT}}_{\hT > 0}$ foliation. In order to fix the gauge ambiguity, we use the \emph{Cronstr\"om gauge condition}, which reads $x^{\mu} A_{\mu} = 0$ in the rectilinear coordinates. In the hyperboloidal polar coordinates, the gauge condition takes the form
\begin{equation} \label{eq:cronstrom}
\iota_{\rd_{\hT}} A = A_{\hT} = 0.
\end{equation}
This gauge is an analogue of the temporal gauge in the hyperboloidal foliation $\set{\calH_{\hT}}_{\hT > 0}$.

In the Cronstr\"om gauge, the analogue of Theorem~\ref{thm:lwp-temporal} reads as follows.
\begin{theorem}[Local well-posedness in the Cronstr\"om gauge] \label{thm:lwp}
Let $B:={}^{(\calH_{\hT_{0}})} B_{r_{0}}(x_{0}) \subseteq \calH_{\hT_{0}}$ and $(a, f, g)$ a smooth initial data set for \eqref{eq:CS-uni} on $B$. Fix $ m \geq 3$, and let 
\begin{equation*}
	\tilde{\bt}^{2} := \nrm{a}_{H^{m}(B)} + \nrm{{}^{(\calH_{\hT_{0}})} \dlt a}_{H^{m}(B)} + \nrm{f}_{H^{m}(B)}^{2} + \nrm{g}_{H^{m-1}(B)}^{2}.
\end{equation*}
If $\tilde{\bt} < \tilde{\bt}_{\ast}(\hT_{0}, x_{0}, r_{0})$, where $\tilde{\bt}_{\ast}(\hT_{0}, x_{0}, r_{0})$ is some positive nondecreasing function of $r_{0}$ for each fixed $\hT_{0}$ and $x_{0}$, then there exists a unique smooth solution $(A, \phi)$ to the IVP for \eqref{eq:CS-uni} satisfying the Cronstr\"om gauge condition \eqref{eq:cronstrom} in the set $\DD(B)$. Moreover, the solution obeys the bound
\begin{equation*}
\sup_{\hT \geq \hT_{0}} \bb( \nrm{A}_{H^{m-1}(B_{\hT})} + \nrm{\dlt A}_{H^{m-1}(B_{\hT})} + \nrm{\phi}_{H^{m}(B_{\hT})}^{2} + \nrm{\rd_{\hT} \phi}_{H^{m-1}(B_{\hT})}^{2} \bb) \leq C \tilde{\bt}^{2},
\end{equation*}
where $B_{\hT} := \calH_{\hT} \cap \DD(B)$.
\end{theorem}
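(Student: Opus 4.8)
The plan is to mimic the proof of Theorem~\ref{thm:lwp-temporal}, with the temporal gauge and the foliation $\set{\Sgm_{t}}$ replaced by the Cronstr\"om gauge~\eqref{eq:cronstrom} and the hyperboloidal foliation $\set{\calH_{\hT}}$. First I would record, using the calculus of Section~\ref{subsec:extr-calc} (see Appendix~\ref{app:gauge}), the reduced form of \eqref{eq:CS-uni} under the condition $A_{\hT} = 0$. Since the $\rd_{\hT}$-components of $F = \ud A + \frac{1}{2}[A \wedge A]$ reduce to $\rd_{\hT}(\iota_{X} A)$ for $X$ tangent to $\calH_{\hT}$, the Chern--Simons equation $F = \star J(\phi)$ splits into a transport equation $\rd_{\hT}(A \restriction_{\calH_{\hT}}) = \iota_{\rd_{\hT}} \star J(\phi)$ for the $\calH_{\hT}$-tangential part of $A$ along the integral curves of $\rd_{\hT}$, with initial value $a$ on $B$, together with its remaining component (a single scalar, since $\dim \calH_{\hT} = 2$), which I treat as a propagated constraint; one also derives a transport equation for ${}^{(\calH_{\hT})}\dlt A$, needed because the divergence of $A$ enters the zeroth order coefficient of the reduced Klein--Gordon equation. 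The covariant Klein--Gordon equation $(\covBox - 1)\phi = U(\phi)$, in turn, becomes a linear wave equation of Klein--Gordon type for $\phi$, with principal part $\Box$ and with lower order (first and zeroth order) coefficients built from $A$ and ${}^{(\calH_{\hT})}\dlt A$, plus the polynomial source $U(\phi)$ (cubic and higher order in $\phi$).

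Next I would set up a Picard iteration in $\DD(B)$. Given $(A\itr{n}, \phi\itr{n})$, let $\phi\itr{n+1}$ solve the linear wave equation with connection $A\itr{n}$ and source $U(\phi\itr{n})$ and data $(f, g)$ on $B$; by finite speed of propagation for $\Box$ this is well defined on $\DD(B)$. Then let $A\itr{n+1}$ and ${}^{(\calH_{\hT})}\dlt A\itr{n+1}$ be obtained by integrating the transport equations, with right-hand sides assembled from $\phi\itr{n+1}$ and its $A\itr{n}$-covariant derivatives, along $\rd_{\hT}$ starting from $a$ and ${}^{(\calH_{\hT_{0}})}\dlt a$ on $B$. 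Here one uses the geometric feature peculiar to the hyperboloidal foliation that the $\rd_{\hT}$-integral curve through a point $p \in \DD(B)$, traced back to $\hT = \hT_{0}$, reaches $B$, and the segment joining $p$ to $B$ stays in $\DD(B)$---this follows because the cross-sections $\calH_{\hT} \cap \DD(B)$ form a decreasing family in the $(\hY, \tht)$-variables as $\hT$ increases. Thus the iteration is well defined on $\DD(B)$.

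The estimates close exactly as in Theorem~\ref{thm:lwp-temporal}. For the wave equation one applies the localized energy inequality of Section~\ref{subsec:energy} between $\calH_{\hT_{0}}$ and $\calH_{\hT}$ inside $\DD(B)$, estimating the coefficient and source contributions via the Sobolev and Gagliardo--Nirenberg inequalities on the geodesic balls $B_{\hT}$ (for $m \geq 3$, $H^{m-1}$ is an algebra on these $2$-dimensional slices), which controls $\sup_{\hT}(\nrm{\phi\itr{n+1}}_{H^{m}(B_{\hT})} + \nrm{\rd_{\hT} \phi\itr{n+1}}_{H^{m-1}(B_{\hT})})$ by $\tilde{\bt}$ plus terms at least quadratic in the iterate norms. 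For the transport equations one integrates along characteristics and applies Gr\"onwall to control $\sup_{\hT}(\nrm{A\itr{n+1}}_{H^{m-1}(B_{\hT})} + \nrm{\dlt A\itr{n+1}}_{H^{m-1}(B_{\hT})})$ by $\tilde{\bt}$ plus quadratic terms. Taking $\tilde{\bt} < \tilde{\bt}_{\ast}(\hT_{0}, x_{0}, r_{0})$ small enough traps the iterates in a fixed small ball; running the same estimates at one lower order on differences gives a contraction, hence a unique fixed point $(A, \phi)$; uniqueness among all solutions follows from the difference estimate, and smoothness from the fact that the bounds hold for every $m$. The dependence of $\tilde{\bt}_{\ast}$ on $\hT_{0}$ and $x_{0}$ in addition to $r_{0}$ comes from the position-dependence of the geometry of $\calH_{\hT}$ and of the balls $B_{\hT}$ (e.g.\ the $\sinh \hY$ factor in the induced metric), which enters the constants in the Sobolev and energy inequalities.

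The one point that is not mere bookkeeping is to check that a solution of the reduced system solves the full system \eqref{eq:CS-uni}, i.e.\ that the constraint component is recovered on all of $\DD(B)$. Set $G := F - \star J(\phi)$. The transport equations force the $\rd_{\hT}$-components of $G$ to vanish identically, while the hypothesis that $(a, f, g)$ obeys \eqref{eq:CS-uni-const} gives $G \restriction_{\calH_{\hT_{0}}} = 0$. Combining the Bianchi identity $\covud F = 0$ (an identity for any connection) with the covariant conservation law $\covud \star J(\phi) = 0$---a consequence of the Klein--Gordon equation for $\phi$ together with the algebraic structure of $J$ and $U$---shows that the remaining component of $G$ obeys a homogeneous transport equation along $\rd_{\hT}$ (the $[A \wedge G]$ terms drop out since the $\rd_{\hT}$-components of both $A$ and $G$ vanish), whence $G \equiv 0$ on $\DD(B)$. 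I expect this constraint-propagation step, rather than the analytic estimates, to be the main obstacle; everything else is a routine adaptation of the argument indicated for Theorem~\ref{thm:lwp-temporal}, and I would omit the details.
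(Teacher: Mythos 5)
Your proposal is correct and follows essentially the same route as the paper: one reduces \eqref{eq:CS-uni} in the gauge $\iota_{\rd_{\hT}} A = 0$ to a Klein--Gordon equation for $\phi$ coupled to transport equations along $\rd_{\hT}$ for $A$ and for (a variable identified with) $\dlt A$, runs a Picard iteration exactly as for Theorem~\ref{thm:lwp-temporal}, and uses the causality of the $\rd_{\hT}$-characteristics to localize to $\DD(B)$. The reduced system and the constraint-propagation step you single out are precisely the content of Lemma~\ref{lem:reduce:cron} in Appendix~\ref{subsec:cronstrom}, where, as you anticipate, the Bianchi identity together with $\covdlt J = 0$ gives $\covLD_{\rd_{\hT}}(F - \star J) = 0$; the only additional wrinkle there is the $\frac{2}{\hT}$ term in the transport equation for $\dlt A$, which comes from $S$ being only conformally Killing.
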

As in the case of the temporal gauge, the Chern--Simons system \eqref{eq:CS-uni} becomes a coupled system of a Klein-Gordon equation for $\phi$ and transport equations for $A$ and $\dlt A$ whose characteristics are precisely the integral curves of the scaling vector field $S$ (or equivalently $\rd_{\hT}$). For the precise form of the system, we refer to Appendix~\ref{subsec:cronstrom}. Hence Theorem~\ref{thm:lwp} is again proved by a standard Picard iteration argument as in the case of Theorem~\ref{thm:lwp-temporal}.
We remark that the finite speed of propagation of the transport equation (more precisely, the fact that the solution to the tranport equation on $\DD(B)$ is determined solely on the data on $B$) follows from the fact that the characteristics are causal curves. 




\subsection{Reduction to the main a priori estimate}
Our goal now is to reduce the proof of the main theorems (Theorems~\ref{thm:CSH} and \ref{thm:CSD}) to establishing a priori estimates for solutions to \eqref{eq:CS-uni} with initial data obeying \eqref{eq:CSH-id} or \eqref{eq:CSD-id} with sufficiently small $\eps$. 

Before we state the main a priori estimates, we need to specify the class of solutions to which these estimates apply. As in the hypothesis of Proposition~\ref{prop:initial-hyp}, let $(a, f, g)$ be a smooth \eqref{eq:CSH} initial set obeying \eqref{eq:CSH-id} [resp. a smooth \eqref{eq:CSD} initial data set obeying \eqref{eq:CSD-id}] with $\eps \leq \dlt_{\ast \ast}(R)$, and consider the IVP for \eqref{eq:CSH} [resp. \eqref{eq:CSD}] with data on $\set{t = 2R}$. Applying Proposition~\ref{prop:initial-hyp}, there exists a smooth solution $(A, \phi)$ (unique up to smooth local gauge transformations) to the IVP on $(\set{\tau \leq 2R} \cup \calC_{R}^{c}) \cap \set{t \geq 2 R}$. Applying Theorem~\ref{thm:lwp}, $(A, \phi)$ extends as a smooth solution (again, unique up to smooth local gauge transformations) to a region of the form
\begin{equation*}
	\calO_{T} := (\set{\hT \leq T} \cup \calC_{R}^{c}) \cap \set{t \geq 2R}
\end{equation*}
for some $T > 2R$. 

In order to show that $(A, \phi)$ extends to a global solution to the future, we would need to show that $T$ can be taken to be $+ \infty$, if $\eps > 0$ is sufficiently small. The following a priori estimates is a key step.

\begin{proposition}[Main a priori estimate] \label{prop:main}
There exists $\dlt_{\ast} = \dlt_{\ast}(R) > 0$ such that for any $T > 2R$, the following statements hold. Let $(A, \phi)$ be a smooth solution to the IVP for \eqref{eq:CS-uni} on the spacetime region $\calO_{T}$ constructed by Proposition~\ref{prop:initial-hyp} and Theorem~\ref{thm:lwp} as above.
If $\eps \leq \dlt_{\ast}(R)$, then the solution obeys the following estimates for $2R \leq \hT \leq T$:
\begin{itemize}
\item {\bf $L^{2}$ bounds with growth.} For $0 \leq m \leq 4$,
\begin{equation} \label{eq:main:L2}
	\nrm{\cosh \hY \covZ^{(m)} \phi}_{L^{2}(\calH_{\hT}, \frac{\ud \Vol}{\cosh \hY})}
	+ \nrm{\covT \covZ^{(m)} \phi}_{L^{2}(\calH_{\hT}, \frac{\ud \Vol}{\cosh \hY})}
	\aleq \eps  \log^{m} (1+\hT) .
\end{equation}
\item {\bf Sharp $L^{\infty}$ decay.}
\begin{equation} 	\label{eq:main:Linfty}
	\nrm{\cosh \hY \phi}_{L^{\infty}(\calH_{\hT}, \frac{\ud \Vol}{\cosh \hY})} 
	+ \nrm{\cosh \hY \, \covN \phi}_{L^{\infty}(\calH_{\hT}, \frac{\ud \Vol}{\cosh \hY})} 
	+ \nrm{\covT \phi}_{L^{\infty}(\calH_{\hT}, \frac{\ud \Vol}{\cosh \hY})} 
	\aleq \eps \hT^{-1}.
\end{equation}
\end{itemize}
\end{proposition}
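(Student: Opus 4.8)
The plan is a continuity (bootstrap) argument in $\hT$ on the region $\calO_{T}$. Fix a large constant $C_{\ast}$ (to be chosen) and let $T_{\ast} \in [2R, T]$ be the supremum of all $T' \geq 2R$ for which \eqref{eq:main:L2} and \eqref{eq:main:Linfty} hold on $[2R, T']$ with $\eps$ replaced by $C_{\ast} \eps$. All quantities appearing in \eqref{eq:main:L2}--\eqref{eq:main:Linfty} are gauge invariant (gauge transformations act unitarily on $V$ and the induced measures are gauge independent), so we may prove the estimates by working in the Cronstr\"om gauge of Theorem~\ref{thm:lwp} on geodesic balls covering $\calO_{T}$; the set of admissible $T'$ is then closed and, once we prove the strict improvement below, open, hence it suffices to improve the bootstrap constant from $C_{\ast}$ to $\tfrac{1}{2} C_{\ast}$ provided $\eps \leq \dlt_{\ast}(R)$. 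At the base hyperboloid $\hT = 2R$, the bound $\sum_{m \leq 4} \wnrm{\cosh \hY \, \covZ^{(m)} \phi}_{L^{2}_{2R}} + \wnrm{\covT \covZ^{(m)} \phi}_{L^{2}_{2R}} \aleq \eps$ follows from \eqref{eq:initial-hyp:est} of Proposition~\ref{prop:initial-hyp}, relating $\covZ$ to $\covT$ on the compact piece $\calH_{2R} \cap \set{t \leq \tfrac{5}{2} R}$ and using $\phi \equiv 0$ elsewhere on $\calH_{2R}$. We also record that finite speed of propagation and the shape of $\calC_{R}$ force $\supp \, \phi \subseteq \calC_{R}$ throughout $\calO_{T}$, so that $\cosh \hY \aleq \hT / R$ on $\supp \, \phi \cap \calH_{\hT}$.

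\textbf{Commutation, energy, and the weak bounds.} Commuting the covariant vector fields $\covZ_{\mu \nu}$ through \eqref{eq:CS-uni} using the exterior calculus identities of Section~\ref{sec:comm} (Propositions~\ref{prop:comm-covKG}, \ref{prop:comm-J-CSH}, \ref{prop:comm-J-CSD}), the field $\covZ^{I} \phi$ ($\abs{I} = m \leq 4$) satisfies $(\covBox - 1) \covZ^{I} \phi = (\hbox{schematic cubic in } \covZ^{(\leq m)} \phi) + (\hbox{higher order})$, where the only dangerous summand is the anomalous term of \eqref{eq:comm-problem}, which by Lemma~\ref{lem:ptwise-N} is bounded by $\abs{\phi} \abs{\covT \phi} \abs{\covS \covZ^{(m)} \phi}$ up to permutations of which factor carries the top-order derivatives. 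Inserting this into the gauge covariant energy inequality of Section~\ref{subsec:energy}, using the gauge invariant Gagliardo--Nirenberg inequalities of Section~\ref{sec:covVF} to place the two low-order factors in $L^{\infty}$ (or $L^{4}$), and applying Gr\"onwall in $\hT$ under the bootstrap hypotheses, yields \eqref{eq:main:L2} with one extra logarithm per commutation. The gauge invariant Klainerman--Sobolev inequality (Proposition~\ref{prop:KlSob}) then converts \eqref{eq:main:L2} into the \emph{weak} pointwise decay $\abs{\covZ^{(m)} \phi} + \abs{\covN \covZ^{(m-1)} \phi} \aleq \eps \, \hT^{-1} \log^{m+2}(1 + \hT)$ for $m \leq 2$; in particular $\abs{\cosh \hY \, \phi}, \abs{\cosh \hY \, \covN \phi}, \abs{\covT \phi}$ and their analogues with up to two covariant vector fields decay like $\eps \, \hT^{-1} \log^{c}(1 + \hT)$.

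\textbf{The gauge covariant ODE for sharp decay.} The decisive observation is that the \emph{undifferentiated} equation $(\covBox - 1) \phi = U(\phi)$ has a genuinely cubic right-hand side, with no factor of $\covS \phi$. Rewriting $\covBox - 1$ in the hyperboloidal foliation as in \eqref{eq:KG4phi-ODE} gives the exact identity $\covD_{\hT}^{2}(t \phi) + t \phi = - t \, U(\phi) + t \, \lap_{A, \calH_{\hT}} \phi$, and, on $\supp \, \phi$ where $\cosh \hY \aleq \hT / R$, both right-hand side terms are $O(\eps \, \hT^{-2} \log^{c}(1 + \hT))$ by the weak bounds just obtained (the cubic term is even $O(\eps^{3} \hT^{-2} \log^{c})$, while the Laplacian term, being first order in $\phi$, is $O(\eps \, \hT^{-2} \log^{c})$ after expanding $\lap_{A, \calH_{\hT}}$ in the tangential derivatives $\covD_{\hY}, \covD_{\tht}$ via \eqref{eq:dY}--\eqref{eq:dTht}); in all cases these are integrable in $\hT$. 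Since $t \phi$ is $V$-valued, multiplying by $\overline{\covD_{\hT}(t \phi)}$ and using the Leibniz rule \eqref{eq:leibniz-V} (so that no gauge potential enters) turns this into a scalar ODE inequality; integrating from $\calH_{2R}$ gives $\abs{\covD_{\hT}(t \phi)}^{2} + \abs{t \phi}^{2} \aleq \eps^{2}$ uniformly, hence $\abs{\phi} \aleq \eps \, t^{-1}$, and then $\abs{\covN \phi}, \abs{\covT \phi} \aleq \eps \, \hT^{-1}$ upon solving for $\covD_{\hT} \phi$ and decomposing $\covT$ into its normal and $\calH_{\hT}$-tangential parts. This proves \eqref{eq:main:Linfty} with a constant independent of $C_{\ast}$, improving the $L^{\infty}$ part of the bootstrap.

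\textbf{Closing the bootstrap and the main obstacle.} It remains to revisit the energy estimate with the sharp decay \eqref{eq:main:Linfty} now available for the low-order factors of every cubic nonlinearity: in the anomalous term this replaces the log-laden factors by honest $\eps \, \hT^{-1}$ (for $\phi, \covT \phi$) and $\eps$ (for $\covS \phi = \hT \covN \phi$), so the driving spacetime integral now produces exactly the $\log^{m}(1 + \hT)$ claimed in \eqref{eq:main:L2}, and every remaining term is strictly better. Choosing $\eps \leq \dlt_{\ast}(R)$ so that the implicit constants times $\eps$ beat $\tfrac{1}{2}$ improves both bootstrap bounds, whence $T_{\ast} = T$ and the proposition follows. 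I expect the main obstacle to be precisely the anomalous commutation embodied in $N_{cubic}$ of \eqref{eq:comm-problem}, which renders the commuted Klein--Gordon equation effectively quadratic: controlling it needs the precise algebraic cancellations extracted in Section~\ref{sec:comm}, a careful accounting of logarithms along the hierarchy $m = 0, \dots, 4$, a (possibly by-parts-in-$\hT$) treatment of the normal derivative hidden inside $\covS \covZ^{(m-1)} \phi$, and the decoupling that allows the genuinely cubic undifferentiated equation to feed the sharp decay back into the estimates through the ODE argument.
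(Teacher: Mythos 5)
Your proposal follows essentially the same route as the paper's proof: a bootstrap combining weighted $L^{2}$ bounds with logarithmic growth and the sharp $L^{\infty}$ decay, improved by the covariant energy inequality together with the Klainerman--Sobolev and Gagliardo--Nirenberg inequalities for the commuted equations, and by the gauge covariant ODE argument applied to the uncommuted Klein--Gordon equation to recover the sharp rate. The paper's implementation differs only in bookkeeping --- its bootstrap additionally carries the $L^{2}$ bounds for $\covZ^{(m)} \covN \phi$ and the nonlinearity estimate \eqref{eq:BA:KG}, and the ``accounting of logarithms'' you defer is carried out through the sharp $L^{p}$ interpolation bounds of Lemma~\ref{lem:sharpLp} and the commutator estimates of Lemmas~\ref{lem:BA:comm-NZ} and \ref{lem:BA:NZ} --- so these are refinements of the same scheme rather than a different argument.
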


Assuming the validity of Proposition~\ref{prop:main} for the moment, we may now prove Theorems~\ref{thm:CSH} and \ref{thm:CSD}.
\begin{proof} [Proof of Theorems~\ref{thm:CSH} and \ref{thm:CSD} assuming Proposition~\ref{prop:main}]
First, we note that uniqueness up to smooth local gauge transformations follows from Proposition~\ref{prop:id-temporal} and Theorem~\ref{thm:lwp-temporal}. Hence it only remains to show global existence of the smooth solution $(A, \phi)$ to the future. 
For this purpose, it suffices to show that if $(A, \phi)$ obeys the hypothesis of Proposition~
\ref{prop:main} on some $\calO_{T}$, then it can be extended as a smooth solution to $\calO_{T'}$ for some $T' > T$.
Indeed, using Proposition~\ref{prop:main} we may then set up a simple continuity argument to prove global existence of $(A, \phi)$ to the future, as well as the desired estimates stated in Theorems~\ref{thm:CSH} and \ref{thm:CSD}. 

Such an extension statement is a consequence of \eqref{eq:main:L2}, Proposition~\ref{prop:id-cronstrom} and Theorem~\ref{thm:lwp}. 
Given any point $x_{0} \in \calH_{T}$, there exists $r_{\ast} > 0$, depending on $\covT \covZ \phi, \ldots, \covT \covZ^{(4)} \phi$ and $x_{0}$, such that Proposition~\ref{prop:id-cronstrom} (with $m = 5$) applies to $(a, f, g) = (A, \phi, \covD_{\rd_{\hT}} \phi) \restriction_{B}$ on $ B = {}^{(\calH_{T})} B_{r_{\ast}}(x_{0})$, which produces a gauge-transformed data set $(\tilde{a}, \tilde{f}, \tilde{g})$ on $B$. Choosing $r_{\ast} > 0$ smaller if necessary, we may use Theorem~\ref{thm:lwp} to find a unique smooth solution $(\tilde{A}, \tilde{\phi})$ to \eqref{eq:CS-uni} on $\DD(B)$ in the Cronstr\"om gauge with data $(\tilde{a}, \tilde{f}, \tilde{g})$. Undoing the gauge transformation from Proposition~\ref{prop:id-cronstrom}, we arrive at a smooth extension of $(A, \phi)$ to $\DD({}^{(\calH_{T})} B_{r_{\ast}}(x_{0}))$. Thanks to the support property \eqref{eq:initial-hyp:fsp}, such an extension need to be performed only for $x_{0}$ on the compact set $\calC_{R} \cap \calH_{T}$. Therefore, we can find $\calO_{T'}$ with $T' > T$ to which $(A, \phi)$ extends as desired. \qedhere

\end{proof}
As described in Section~\ref{subsec:outline}, Sections~\ref{sec:covVF}--\ref{sec:BA} of this article are devoted to the proof of the main a priori estimates (Proposition~\ref{prop:main}).

\section{Gauge covariant vector field method} \label{sec:covVF}
In this section, we develop the machinery of gauge covariant vector field method for the covariant Klein--Gordon equation
\begin{equation*}
	\covBox \phi \m \phi = N.
\end{equation*}
Throughout this section, we denote by $A$ a $\LieAlg$-valued connection 1-form, and by $\phi$ a $V$-valued function. We furthermore assume that $\phi$ and $N$ are sufficiently smooth and decaying towards the spatial infinity (for instance, $\phi(t), N(t) \in \calS$ uniformly in $t$). 
 
\subsection{A covariant energy inequality} \label{subsec:energy}
In this subsection, we derive an energy inequality for the covariant Klein--Gordon equation on constant $\hT$-hypersurfaces $\calH_{\hT}$ using the time-like Killing vector field $T_{0}$. This inequality is fundamental to our development of the gauge covariant version of the vector field method. The main result is as follows:
\begin{proposition}[Energy inequality for covariant Klein--Gordon equation] \label{prop:en}
Suppose that the curvature 2-form $F$ satisfies the bound 
\begin{equation} \label{eq:en:F}
	\int_{\hT_{0}}^{\hT_{1}} \sup_{\calH_{\hT}} \bb( \sum_{\mu} \abs{F(T_{\mu}, T_{0})}^{2} \bb)^{1/2} \, \ud \tau \leq C_{F}.
\end{equation}
for some constant $0 < C_{F} < \infty$.
Then there exists a constant $C = C(C_{F}) > 0$ such that for all $\hT_{0} \leq \hT \leq \hT_{1}$, we have
\begin{align}
\bb( \int_{\calH_{\hT}} \ed_{\calH_{\hT}}[\phi] \, \ud \Vol_{\calH_{\hT}} \bb)^{1/2}
\leq C \bb( \int_{\calH_{\hT_{0}}} \ed_{\calH_{\hT_{0}}}[\phi] \, \ud \Vol_{\calH_{\hT_{0}}} \bb)^{1/2}
	+ C \int_{\hT_{0}}^{\hT} \wnrm{\cosh \hY (\covBox \m 1) \phi(\tau')}_{L^{2}_{\tau'}}\, \ud \tau',  \label{eq:en}
\end{align}
where the energy density $\ed_{\calH_{\hT}}[\phi]$ is defined in \eqref{eq:ed} below. Moreover, there exist constants $c, C' > 0$ such that the integral of $\ed_{\calH_{\hT}}[\phi]$ obeys the following lower bounds:
\begin{align} \label{eq:en-phi}
	\int_{\calH_{\hT}} \ed_{\calH_{\hT}}[\phi] \, \ud \Vol_{\calH_{\hT}} 
	\geq& c \bb( \wnrm{\cosh \hY \phi}_{L^{2}_{\hT}}^{2} 
		+ \wnrm{\covN \phi}_{L^{2}_{\hT}}^{2}
		+ \wnrm{\hT^{-1} \covZ \phi}_{L^{2}_{\hT}}^{2} 
		+ \wnrm{\covT \phi}_{L^{2}_{\hT}}^{2} \bb)
\end{align}
\begin{equation} \label{eq:en-Nphi}
\begin{aligned}
	\int_{\calH_{\hT}} \ed_{\calH_{\hT}}[\phi] \, \ud \Vol_{\calH_{\hT}} 
	+ \sum_{\mu, \nu} \frac{C'}{\hT^{2}} \int_{\calH_{\hT}} \ed_{\calH_{\hT}}[\covZ_{\mu \nu} \phi] \, \ud \Vol_{\calH_{\hT}}  
\geq c \wnrm{\cosh \hY \covN \phi}_{L^{2}_{\hT}}^{2}
\end{aligned}
\end{equation}
\end{proposition}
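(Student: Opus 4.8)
The plan is to run the multiplier (energy--momentum) method, made gauge covariant and adapted to the hyperboloidal foliation, using the time-like Killing field $T_{0} = \rd_{t}$ as the multiplier.

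\emph{Step 1 (covariant energy--momentum tensor).} Introduce
\begin{equation*}
	Q_{\mu \nu}[\phi] = \Re \brk{\covD_{\mu} \phi, \covD_{\nu} \phi} - \tfrac{1}{2} \met_{\mu \nu} \bb( (\met^{-1})^{\alp \bt} \brk{\covD_{\alp} \phi, \covD_{\bt} \phi} + \abs{\phi}^{2} \bb).
\end{equation*}
Using the metric compatibility \eqref{eq:leibniz-V} of $\covD$ with $\brk{\cdot, \cdot}$ and the curvature identity $\covD_{\mu} \covD_{\nu} \phi - \covD_{\nu} \covD_{\mu} \phi = F(T_{\mu}, T_{\nu}) \cdot \phi$ (the case $[T_{\mu}, T_{\nu}] = 0$ of \eqref{eq:curv-def}), a direct computation gives
\begin{equation*}
	\nb^{\mu} Q_{\mu \nu}[\phi] = \Re \brk{(\covBox \m 1) \phi, \covD_{\nu} \phi} + \Re \brk{\covD^{\mu} \phi, F(T_{\mu}, T_{\nu}) \cdot \phi}.
\end{equation*}
The last term is the curvature source, absent in the non-covariant theory; it is precisely what the hypothesis \eqref{eq:en:F} is designed to absorb. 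Since $T_{0}$ is Killing, the current $P_{\mu} := Q_{\mu \nu}[\phi] (T_{0})^{\nu}$ obeys $\nb^{\mu} P_{\mu} = (\nb^{\mu} Q_{\mu \nu})(T_{0})^{\nu}$, and we take $\ed_{\calH_{\hT}}[\phi] := Q[\phi](N, T_{0})$ as the energy density, i.e.\ the flux density of $P$ across $\calH_{\hT}$ (this is \eqref{eq:ed}).

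\emph{Step 2 (energy identity and Gr\"onwall).} Integrate $\nb^{\mu} P_{\mu}$ over the slab $\set{\hT_{0} \leq \hT \leq \hT_{1}} \cap \calC_{0}$ and apply the divergence theorem. Since $\phi$ is supported in $\calC_{R} \subseteq \calC_{0}$ and $\calC_{R} \cap \calH_{\hT}$ is precompact, there is no lateral boundary term, so with the coarea identity one obtains, writing $\calE(\hT) := \int_{\calH_{\hT}} \ed_{\calH_{\hT}}[\phi] \, \ud \Vol_{\calH_{\hT}}$,
\begin{equation*}
	\calE(\hT_{1}) = \calE(\hT_{0}) + \int_{\hT_{0}}^{\hT_{1}} \int_{\calH_{\hT'}} \bb( \Re \brk{(\covBox \m 1) \phi, \covD_{0} \phi} + \Re \brk{\covD^{\mu} \phi, F(T_{\mu}, T_{0}) \cdot \phi} \bb) \, \ud \Vol_{\calH_{\hT'}} \, \ud \hT'.
\end{equation*}
Splitting off the weight $\cosh \hY$, Cauchy--Schwarz bounds the first integrand's contribution by $\wnrm{\cosh \hY (\covBox \m 1) \phi}_{L^{2}_{\hT'}} \wnrm{\covD_{0} \phi}_{L^{2}_{\hT'}}$. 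Because $\rho$ is unitary, $F(T_{\mu}, T_{0})$ acts on $V$ anti-hermitianly, so pointwise $\abs{\Re \brk{\covD^{\mu} \phi, F(T_{\mu}, T_{0}) \cdot \phi}} \aleq ( \sum_{\mu} \abs{F(T_{\mu}, T_{0})}^{2} )^{1/2} \abs{\covD \phi} \abs{\phi}$, and Cauchy--Schwarz bounds its contribution by $g(\hT') \wnrm{\covD \phi}_{L^{2}_{\hT'}} \wnrm{\cosh \hY \, \phi}_{L^{2}_{\hT'}}$ with $g(\hT) := \sup_{\calH_{\hT}} ( \sum_{\mu} \abs{F(T_{\mu}, T_{0})}^{2} )^{1/2}$. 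By the coercivity of Step 3 below, $\wnrm{\covD \phi}_{L^{2}_{\hT'}} + \wnrm{\cosh \hY \, \phi}_{L^{2}_{\hT'}} \aleq \calE(\hT')^{1/2}$, so
\begin{equation*}
	\calE(\hT_{1}) \aleq \calE(\hT_{0}) + \int_{\hT_{0}}^{\hT_{1}} \wnrm{\cosh \hY (\covBox \m 1) \phi(\hT')}_{L^{2}_{\hT'}} \calE(\hT')^{1/2} \, \ud \hT' + \int_{\hT_{0}}^{\hT_{1}} g(\hT') \calE(\hT') \, \ud \hT',
\end{equation*}
and since $\int_{\hT_{0}}^{\hT_{1}} g \leq C_{F}$ by \eqref{eq:en:F}, a Gr\"onwall (Bihari) argument yields \eqref{eq:en} with $C = C(C_{F})$.

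\emph{Step 3 (coercivity of $\ed$).} Write $N = \cosh \hY \, T_{0} + \sinh \hY \, \omg^{j} T_{j}$ and decompose $\covD \phi$ into the radial derivative $\omg^{j} \covD_{j} \phi$, the angular derivative $r^{-1} \covZ_{12} \phi$, and $u_{\pm} := \tfrac{1}{\sqrt{2}} ( \covD_{0} \phi \pm \omg^{j} \covD_{j} \phi )$. A short computation gives
\begin{equation*}
	\ed_{\calH_{\hT}}[\phi] = \tfrac{1}{2} e^{\hY} \abs{u_{+}}^{2} + \tfrac{1}{2} e^{-\hY} \abs{u_{-}}^{2} + \tfrac{1}{2} \cosh \hY \, r^{-2} \abs{\covZ_{12} \phi}^{2} + \tfrac{1}{2} \cosh \hY \, \abs{\phi}^{2},
\end{equation*}
which is positive definite (the dominant-energy positivity of $Q$ for the two future-time-like vectors $N, T_{0}$). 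From $e^{-\hY} \aeq \cosh^{-1} \hY$ and $\sum_{j} \abs{\covD_{j} \phi}^{2} = \abs{\omg^{j} \covD_{j} \phi}^{2} + r^{-2} \abs{\covZ_{12} \phi}^{2}$ one reads off $\ed \ageq \cosh \hY \abs{\phi}^{2} + \cosh^{-1} \hY \sum_{\mu} \abs{\covT_{\mu} \phi}^{2}$, giving the $\wnrm{\cosh \hY \, \phi}^{2}_{L^{2}_{\hT}}$ and $\wnrm{\covT \phi}^{2}_{L^{2}_{\hT}}$ parts of \eqref{eq:en-phi}. Next, $\covN \phi = \tfrac{1}{\sqrt{2}}(e^{\hY} u_{+} + e^{-\hY} u_{-})$ yields $\cosh^{-1} \hY \abs{\covN \phi}^{2} \aleq e^{\hY} \abs{u_{+}}^{2} + e^{-\hY} \abs{u_{-}}^{2} \aleq \ed$; and, since the $Z_{\mu \nu}$ are tangent to $\calH_{\hT}$, using \eqref{eq:dY}, \eqref{eq:dTht}, \eqref{eq:Z01}, \eqref{eq:Z02} together with $\hT^{-1} \covD_{\rd_{\hY}} \phi = \sinh \hY \, \covD_{0} \phi + \cosh \hY \, \omg^{j} \covD_{j} \phi = \tfrac{1}{\sqrt{2}}(e^{\hY} u_{+} - e^{-\hY} u_{-})$ (the $\tfrac{\cosh \hY}{\sinh \hY}$ coefficients in \eqref{eq:Z01}--\eqref{eq:Z02} being harmless since $\hT^{-1} \tfrac{\cosh \hY}{\sinh \hY} \covZ_{12} \phi = \cosh \hY \cdot r^{-1} \covZ_{12} \phi$), one finds $\cosh^{-1} \hY \, \hT^{-2} \abs{\covZ_{\mu \nu} \phi}^{2} \aleq \ed$, completing \eqref{eq:en-phi}. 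Finally, inverting the linear system relating $\set{\covS \phi, \covD_{\rd_{\hY}} \phi}$ to $\set{\covD_{0} \phi, \omg^{j} \covD_{j} \phi}$ gives the pointwise identity $\cosh \hY \, \covN \phi = \covD_{0} \phi + r \hT^{-2} \covD_{\rd_{\hY}} \phi$; since $\abs{\covD_{\rd_{\hY}} \phi} \aleq \abs{\covZ \phi}$ and $r < t$, this gives $\cosh \hY \abs{\covN \phi}^{2} \aleq \cosh^{-1} \hY \abs{\covD_{0} \phi}^{2} + \hT^{-2} \cosh \hY \sum_{\mu \nu} \abs{\covZ_{\mu \nu} \phi}^{2}$, and integrating, using $\cosh^{-1} \hY \abs{\covD_{0} \phi}^{2} \aleq \ed[\phi]$ and \eqref{eq:en-phi} applied to each $\covZ_{\mu \nu} \phi$ (which controls $\hT^{-2} \int \cosh \hY \abs{\covZ_{\mu \nu} \phi}^{2} \ud \Vol = \hT^{-2} \wnrm{\cosh \hY \covZ_{\mu \nu} \phi}^{2}_{L^{2}_{\hT}}$), yields \eqref{eq:en-Nphi}.

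\emph{Main obstacle.} The curvature source term $\int \Re \brk{\covD^{\mu} \phi, F(T_{\mu}, T_{0}) \cdot \phi}$ is only borderline integrable; it closes only because \eqref{eq:en:F} supplies a uniform bound on $\int g$, after which Gr\"onwall applies. The other delicate point is the coercivity with sharp weights in Step 3: one cannot afford lossy Cauchy--Schwarz and must exploit the exact $e^{\pm \hY}$ eigenstructure of $\ed$ together with the algebraic identities expressing $\covN \phi$ and $\hT^{-1} \covZ_{\mu \nu} \phi$ in terms of $u_{\pm}$, $r^{-1} \covZ_{12} \phi$, and $\hT^{-1} \covZ \phi$, all the while matching the weighted norms $\wnrm{\cdot}_{L^{2}_{\hT}}$ against the geometric densities.
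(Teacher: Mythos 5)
Your proposal is correct and follows essentially the same route as the paper: the $T_{0}$-multiplier applied to the covariant energy--momentum tensor, the divergence theorem between hyperboloids with the curvature flux absorbed via \eqref{eq:en:F} and Gr\"onwall, and coercivity of $\ed_{\calH_{\hT}}$ obtained by explicit computation in adapted coordinates. Your Step 3 differs only cosmetically: you diagonalize $\ed$ with the combinations $u_{\pm}$ (and prove \eqref{eq:en-Nphi} via the identity $\cosh \hY \, \covN \phi = \covD_{0}\phi + r\hT^{-2}\covD_{\rd_{\hY}}\phi$ together with \eqref{eq:en-phi} applied to $\covZ_{\mu\nu}\phi$), whereas the paper works in the hyperboloidal frame and absorbs the cross term by Cauchy--Schwarz — both computations yield the same weighted lower bounds.
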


We remind the reader that $\wnrm{\cdot}_{L^{p}_{\hT}} = {\nrm{\cdot}_{L^{p}(\calH_{\hT}, \frac{\ud \sgm}{\cosh \hY})}}$.

\begin{proof} 
In this proof, it will be convenient to employ the abstract index notation for tensors. 
Then the energy-momentum tensor associated to the covariant Klein--Gordon equation $(\covBox - 1) \phi = 0$ may be written as
\begin{equation*}
	\EM[\phi]_{a b} = \Re \brk{\covD_{a} \phi, \covD_{b} \phi} - \frac{1}{2} \met_{a b} (\brk{\covD_{c} \phi, \covD^{c} \phi} + \brk{\phi, \phi}).
\end{equation*}
It can be easily verified that $\EM[\phi]_{ab}$ is symmetric in $a,b$ and satisfies 
\begin{align}
	\nb^{a} \EM[\phi]_{a b} 
=&	\Re \brk{(\covBox \phi - \phi), \covD_{b} \phi} + \Re \brk{F_{ab} \cdot \phi, \covD^{a} \phi}.	\label{eq:div4EM}
\end{align}
Given a vector field $X$, we may define the 1- and 0-currents associated to $X$ by 
\begin{align*}
	\vC{X}[\phi]_{a} :=& \EM[\phi]_{ab} X^{b}, \\
	\sC{X}[\phi] :=& \frac{1}{2} \EM[\phi]_{ab} (\defT{X}_{\sharp})^{ab},
\end{align*}
where $\defT{X}_{ab}$ is the deformation tensor of $X$, given by
\begin{equation*}
	\defT{X}_{ab} = \nb_{a} X_{b} + \nb_{b} X_{a}
\end{equation*}
and $(\defT{X}_{\sharp})^{ab}$ is its metric dual, i.e., $(\defT{X}_{\sharp})^{ab} := \defT{X}_{cd} \, \met^{ca} \met^{db}$. The currents ${}^{(X)}P_{a}$ and $\sC{X}$ satisfy the divergence identity
\begin{equation} \label{eq:en-div}
	\nb^{a} (\vC{X}[\phi])_{a} = \sC{X}[\phi] + (\nb^{a} \EM_{ab}[\phi]) X^{b}.
\end{equation}

We now derive the energy inequality that will be used below, by considering the associated currents to the time-like Killing vector field $T_{0} = \rd_{t}$. Since $T_{0}$ is Killing, it satisfies
\begin{equation*}
	\nb^{a} T_{0}^{b} + \nb^{b} T_{0}^{a} = 0.
\end{equation*}
It follows that $\sC{T_{0}} = 0$ and therefore, by \eqref{eq:div4EM}, we have
\begin{equation} \label{eq:en-div-T}
	\nb^{a} (\vC{T_{0}}[\phi])_{a} 
	= \bb( \Re \brk{(\covBox \phi - \phi), \covD_{b} \phi} + \Re \brk{F_{ab} \cdot \phi, \covD^{a} \phi} \bb) (T_{0})^{b}.
\end{equation}
Integrating this identity over the spacetime region $\set{(\hT', y, \tht) : \hT_{0} \leq \hT' \leq \hT}$ and applying the divergence theorem, we obtain 
\begin{align*}
	\int_{\calH_{\hT}} \ed_{\calH_{\hT}}[\phi] \, \ud \sgm_{\calH_{\hT}}
	= & \int_{\calH_{\hT_{0}}} \ed_{\calH_{\hT_{0}}}[\phi] \, \ud \sgm_{\calH_{\hT_{0}}}
	- \int_{\hT_{0}}^{\hT} \int \Re\brk{(\covBox \m 1) \phi, \covT_{0} \phi} \, \ud \sgm_{\calH_{\hT'}} \, \ud \hT' \\
	& -  \int_{\hT_{0}}^{\hT} \int (\eta^{-1})^{\mu \nu} \Re \brk{F(T_{\mu}, T_{0}) \cdot \phi, \covT_{\nu} \phi} \, \ud \sgm_{\calH^{\hT'}} \, \ud \hT'
\end{align*}
and the energy density $\ed_{\calH_{\hT}}[\phi]$ is defined as
\begin{equation} \label{eq:ed}
	\ed_{\calH_{\hT}}[\phi] 
	= \vC{T_{0}}[\phi](N) = \EM[\phi](T_{0}, N).
\end{equation}
Assume, for the moment, that $E(\hT)$ obeys the lower bound \eqref{eq:en-phi}. We introduce the function
\begin{equation*}
	E(\hT) = \sup_{\hT_{0} \leq \hT' \leq \hT} \int_{\calH_{\hT'}} \ed_{\calH_{\hT'}}[\phi] \, \ud \sgm_{\calH_{\hT'}},
\end{equation*}
which is non-decreasing. By \eqref{eq:en-phi}, Cauchy--Schwarz and H\"older's inequality, we arrive at the bound
\begin{align*}
	E(\hT)
	\leq & E(\hT_{0})
	+ \frac{1}{c^{1/2}} \int_{\hT_{0}}^{\hT} \wnrm{\cosh \hY (\covBox \m 1) \phi}_{L^{2}_{\hT'}} E(\hT')^{1/2} \, \ud \hT'  \\
	& + \frac{1}{c} \int_{\hT_{0}}^{\hT} \bb( \sup_{\calH_{\hT'}} \bb( \sum_{\mu} \abs{F(T_{\mu}, T_{0})}^{2} \bb)^{1/2} \bb) E(\hT') \, \ud \hT'.
\end{align*}
Using the fact that $E(\hT)$ is non-decreasing, we may pull out a factor of $E(\hT)^{1/2}$ from each term on the right-hand side, which can then be cancelled on both sides. Then applying Gronwall's inequality to handle the last term, \eqref{eq:en} follows.

To complete the proof of the proposition, it only remains to verify the bounds \eqref{eq:en-phi} and \eqref{eq:en-Nphi}.
In the hyperboloidal polar coordinates, $T_{0}$ can be written as
\begin{equation*}
	T_{0} = \cosh \hY \rd_{\hT} - \sinh \hY \frac{\rd_{\hY}}{\hT}.
\end{equation*}
Note furthermore that $N = \rd_{\hT}$ is the future-pointing unit normal to each $\calH_{\hT}$. Therefore, the energy density associated to $T_{0}$ on $\calH_{\hT}$ is given by
\begin{equation} \label{eq:ed-first}
 \begin{aligned}
	\ed_{\calH_{\hT}}[\phi] 
	=&  \cosh \hY \EM[\phi](\rd_{\hT}, \rd_{\hT}) - \sinh \hY \EM[\phi](\frac{\rd_{\hY}}{\hT}, \rd_{\hT}) \\
	=& \frac{1}{2} \cosh \hY \bb( \abs{\covD_{\hT} \phi}^{2} +  \abs{\frac{1}{\hT} \covD_{\hY} \phi}^{2} \bb) 
		- \sinh \hY \, \Re \brk{\frac{1}{\hT} \covD_{\hY} \phi, \covD_{\hT} \phi} \\
	&	+ \frac{1}{2} \cosh \hY \bb( \abs{\frac{1}{\hT \sinh \hY} \covD_{\tht} \phi}^{2} + \abs{\phi}^{2} \bb) 
\end{aligned}
\end{equation}
By Cauchy-Schwarz, we have
\begin{align*}
	\ed[\phi] 
	\geq & \frac{1}{2} \bb( \cosh \hY \abs{\phi}^{2} 
					+e^{-\hY} \abs{\covD_{\hT} \phi}^{2} + e^{-\hY}  \abs{\frac{1}{\hT}\covD_{\hY} \phi}^{2}  + \cosh \hY \abs{\frac{1}{\hT \sinh \hY} \covD_{\tht} \phi}^{2} \bb) \\
	\geq & \frac{1}{2 \cosh \hY} \bb(  \abs{\cosh \hY \phi}^{2} + \abs{\covD_{\hT} \phi}^{2} + \abs{\frac{1}{\hT} \covD_{\hY} \phi}^{2}
			+ \abs{\frac{1}{\hT } \frac{\cosh \hY}{\sinh \hY} \covD_{\tht} \phi}^{2}	\bb).		
\end{align*}
Integrating over $\calH_{\hT}$ with respect to the induced measure $\ud \sgm_{\calH_{\hT}}$, then applying \eqref{eq:normal}, \eqref{eq:dTht}, \eqref{eq:Z01} and \eqref{eq:Z02}, we obtain 
\begin{equation*}
	\int_{\calH_{\hT}} \ed[\phi] \, \ud \sgm_{\calH_{\hT}}
	\geq c \bb( \wnrm{\cosh \hY \phi}_{L^{2}_{\hT}}^{2} + \wnrm{\hT^{-1} \covZ \phi}_{L^{2}_{\hT}}^{2} + \wnrm{\covN \phi}_{L^{2}_{\hT}}^{2} \bb).
\end{equation*}
Combined with the simple pointwise bound
\begin{equation*}
\abs{\covT \phi} \leq C \cosh \hY ( \abs{\covN \phi} + \hT^{-1} \abs{\covZ \phi}),
\end{equation*}
the desired lower bound \eqref{eq:en-phi} follows.

To prove \eqref{eq:en-Nphi}, note first that
\begin{align*}
\sinh \hY \, \Re \brk{\frac{1}{\hT} \covD_{\hY} \phi, \covD_{\hT} \phi}
\leq& \cosh \hY \abs{\hT^{-1} \covD_{\hY} \phi}^{2} + \frac{1}{4} \cosh \hY \abs{\covD_{\hT} \phi}^{2} \\
\leq& \frac{C'}{\hT^{2}} \sum_{\mu, \nu} \ed[\covZ_{\mu \nu} \phi]  + \frac{1}{4} \cosh \hY \abs{\covD_{\hT} \phi}^{2},
\end{align*}
for some $C' > 0$. Combined with \eqref{eq:ed-first}, we see that there exists $c > 0$ such that
\begin{equation} \label{eq:energy4S}
	\ed[\phi] + \frac{C'}{\hT^{2}} \sum_{\mu, \nu} \ed[\covZ_{\mu \nu} \phi] 
	\geq c \cosh \hY \abs{\covD_{\hT} \phi}^{2}  = \frac{c}{\cosh \hY}  \abs{\cosh \hY \covN \phi}^{2} ,
\end{equation}
from which \eqref{eq:en-Nphi} follows. \qedhere
%
%
\end{proof}

As a consequence of the identity \eqref{eq:en-div-T} in the proceeding proof, we may relate the energy on the initial hyperboloid $\calH_{\hT_{0}}$ with the energy on the constant time hypersurface $\Sgm_{t_{0}} = \set{t = t_{0}}$. 

This result will be used later to prove Lemma~\ref{lem:BA:ini}, which justifies the bootstrap assumptions at the initial hypersurface $\calH_{2R}$.
\begin{lemma} \label{lem:ini-en}
Let $\hT_{0} \geq t_{0}$. Then we have
\begin{align*}
\int_{\calH_{\hT_{0}}} \ed_{\calH_{\hT}}[\phi](\hT_{0}) \, \ud \Vol_{\calH_{\hT_{0}}}
\leq& \int_{\Sgm_{t_{0}}} \ed_{\Sgm_{t_{0}}}[\phi] \, \ud x^{1} \ud x^{2} \\
	& + \int_{\calR_{t=t_{0}}^{\hT=\hT_{0}}} \bb( \abs{(\covBox \m 1) \phi } \abs{\covT_{0} \phi} 
	+ \sum_{\mu} \abs{F(T_{\mu}, T_{0})} \abs{\covT_{\mu} \phi} \bb)\, \ud t \ud x^{1} \ud x^{2}
\end{align*}
where $\Sgm_{t_{0}} = \set{t=t_{0}}$, $\ed_{\Sgm_{t_{0}}}[\phi] = \frac{1}{2} \sum_{\mu} \abs{\covT_{\mu} \phi}^{2} + \frac{1}{2} \abs{\phi}^{2}$ and 
\begin{equation} \label{eq:ini-en:region}
	\calR_{t=t_{0}}^{\hT = \hT_{0}} := \set{(x^{0}, x^{1}, x^{2}) \in \bbR^{1+2} : x^{0} \geq t_{0}, \, (x^{0})^{2} - (x^{1})^{2} - (x^{2})^{2} \leq \hT_{0}}.
\end{equation}
\end{lemma}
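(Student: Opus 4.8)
The plan is to replay the divergence-theorem (energy-flux) computation from the proof of Proposition~\ref{prop:en}, but applied to the spacetime slab bounded below by $\Sgm_{t_{0}}$ and above by $\calH_{\hT_{0}}$ rather than to a slab between two hyperboloids. The first step is to identify the region $\calR_{t=t_{0}}^{\hT=\hT_{0}}$ of \eqref{eq:ini-en:region}: a point with $x^{0}>0$ lies below $\calH_{\hT_{0}}$ precisely when $(x^{0})^{2}-r^{2}\leq\hT_{0}^{2}$, i.e.\ $x^{0}\leq\sqrt{\hT_{0}^{2}+r^{2}}$, so $\calR_{t=t_{0}}^{\hT=\hT_{0}}=\set{(x^{0},x^{1},x^{2}):t_{0}\leq x^{0}\leq\sqrt{\hT_{0}^{2}+r^{2}}}$, which is exactly the region whose past boundary is $\Sgm_{t_{0}}$ and whose future boundary is $\calH_{\hT_{0}}$. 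The hypothesis $\hT_{0}\geq t_{0}$ enters precisely here: on $\calH_{\hT_{0}}$ we have $x^{0}=\sqrt{\hT_{0}^{2}+r^{2}}\geq\hT_{0}\geq t_{0}$, so $\calH_{\hT_{0}}$ lies in the closed future of $\Sgm_{t_{0}}$ and the two surfaces together form the boundary of $\calR_{t=t_{0}}^{\hT=\hT_{0}}$ (away from the rotation axis, which is negligible).

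Next I would integrate the divergence identity \eqref{eq:en-div-T} for the current $\vC{T_{0}}[\phi]_{a}=\EM[\phi]_{ab}(T_{0})^{b}$ over $\calR_{t=t_{0}}^{\hT=\hT_{0}}$ and apply the divergence theorem exactly as in the proof of Proposition~\ref{prop:en}, now using the future-directed unit normals $N=\rd_{\hT}$ on $\calH_{\hT_{0}}$ and $T_{0}=\rd_{t}$ on $\Sgm_{t_{0}}$, together with the induced measures $\ud\Vol_{\calH_{\hT_{0}}}$ and $\ud x^{1}\ud x^{2}$. The boundary contributions at spatial infinity vanish under the standing decay assumptions on $\phi$ (e.g.\ $\phi(t)\in\calS$ uniformly in $t$); in the application to Lemma~\ref{lem:BA:ini} they vanish because $\phi$ is compactly supported on the relevant slices by finite speed of propagation. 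The relative sign of the two flux integrals is fixed exactly as in Proposition~\ref{prop:en}, and this yields
\begin{align*}
	\int_{\calH_{\hT_{0}}}\vC{T_{0}}[\phi](N)\,\ud\Vol_{\calH_{\hT_{0}}}
	=& \int_{\Sgm_{t_{0}}}\vC{T_{0}}[\phi](T_{0})\,\ud x^{1}\ud x^{2} \\
	& -\int_{\calR_{t=t_{0}}^{\hT=\hT_{0}}}\bb(\Re\brk{(\covBox\m1)\phi,\covT_{0}\phi}+(\eta^{-1})^{\mu\nu}\Re\brk{F(T_{\mu},T_{0})\cdot\phi,\covT_{\nu}\phi}\bb)\,\ud t\ud x^{1}\ud x^{2}.
\end{align*}

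Finally I would identify the two flux terms and estimate the bulk term. By \eqref{eq:ed}, $\vC{T_{0}}[\phi](N)=\EM[\phi](T_{0},N)=\ed_{\calH_{\hT_{0}}}[\phi]$. For the integrand on $\Sgm_{t_{0}}$, a direct computation from the formula for $\EM[\phi]_{ab}$, using $\eta_{00}=-1$ and $\brk{\covD_{c}\phi,\covD^{c}\phi}=-\abs{\covT_{0}\phi}^{2}+\abs{\covT_{1}\phi}^{2}+\abs{\covT_{2}\phi}^{2}$, gives $\vC{T_{0}}[\phi](T_{0})=\EM[\phi](T_{0},T_{0})=\frac{1}{2}\sum_{\mu}\abs{\covT_{\mu}\phi}^{2}+\frac{1}{2}\abs{\phi}^{2}=\ed_{\Sgm_{t_{0}}}[\phi]$. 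Substituting these, moving the bulk term to the right-hand side, and bounding it in absolute value — by Cauchy--Schwarz for the first summand, and using that $\eta^{-1}$ is diagonal with entries of modulus $1$ together with $\abs{F(T_{\mu},T_{0})\cdot\phi}\aleq\abs{F(T_{\mu},T_{0})}\abs{\phi}$ for the second — yields the asserted inequality. I expect no real obstacle here; the argument is a routine energy-flux identity, and the only two points that need mild care are the correct identification of $\calR_{t=t_{0}}^{\hT=\hT_{0}}$ as the slab between $\Sgm_{t_{0}}$ and $\calH_{\hT_{0}}$ (which is exactly where $\hT_{0}\geq t_{0}$ is used) and the orientation bookkeeping in the Lorentzian divergence theorem, i.e.\ the relative sign of the two flux integrals, both of which are inherited verbatim from the proof of Proposition~\ref{prop:en}.
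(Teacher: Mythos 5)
Your proposal is correct and follows essentially the same route as the paper: the paper's proof likewise integrates the divergence identity \eqref{eq:en-div-T} over $\calR_{t=t_{0}}^{\hT=\hT_{0}}$ and applies the divergence theorem, noting $\ed_{\Sgm_{t_{0}}}=\vC{T_{0}}(T_{0})$. Your extra details (identification of the region as the slab between $\Sgm_{t_{0}}$ and $\calH_{\hT_{0}}$, the role of $\hT_{0}\geq t_{0}$, and the Cauchy--Schwarz bound on the bulk term) are exactly the steps the paper leaves implicit.
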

\begin{proof} 
Note that $\ed_{\Sgm_{t_{0}}} = \vC{T_{0}}(T_{0})$, where $T_{0} = \rd_{t}$ is the future pointing unit normal to $\Sgm_{t_{0}}$. The lemma is an immediate consequence of integrating the identity \eqref{eq:en-div-T} over $\calR_{t=t_{0}}^{\hT = \hT_{0}}$ and applying the divergence theorem.  \qedhere
\end{proof}

\subsection{Gauge invariant Klainerman--Sobolev inequality}
In this subsection, we derive a gauge invariant version of the Klainerman--Sobolev inequality, which constitutes another key ingredient of the gauge covariant vector field method. 

\begin{proposition} \label{prop:KlSob}
Let $\phi$ be a smooth $V$-valued function on $\calH_{\hT}$. Then we have
\begin{equation} \label{eq:KlSob}
	\hT \nrm{\cosh \hY \phi}_{L^{\infty}(\calH_{\hT}, \frac{\ud \Vol}{\cosh \hY})}
	\leq C \sum_{k : 0 \leq k \leq 2} \nrm{\cosh \hY \covZ^{(k)} \phi}_{L^{2}(\calH_{\hT}, \frac{\ud \Vol}{\cosh \hY})}.
\end{equation}
\end{proposition}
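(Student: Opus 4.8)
The plan is to reduce the gauge invariant Klainerman--Sobolev inequality on the hyperboloid $\calH_{\hT}$ to the standard (non-covariant) Sobolev inequality on a fixed compact coordinate region, exploiting the fact that the pointwise norm $\abs{\phi}$ is gauge invariant and that the commuting vector fields $\covZ_{\mu\nu}$ are the covariant derivatives along vector fields \emph{tangent} to $\calH_{\hT}$. First I would use the hyperboloidal polar coordinates $(\hY, \tht)$ on $\calH_{\hT}$, in which $\rd_{\tht} = Z_{12}$ and $\rd_{\hY} = -(\omg_1 Z_{01} + \omg_2 Z_{02})$ by \eqref{eq:dTht}--\eqref{eq:dY}. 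Thus $\covD_{\rd_{\tht}}$ and $\covD_{\rd_{\hY}}$ are each linear combinations (with bounded coefficients) of the operators $\covZ_{\mu\nu}$, and consequently the covariant derivatives of $\phi$ along the coordinate vector fields on $\calH_{\hT}$ are controlled, pointwise and in $L^2$, by $\covZ^{(k)}\phi$. The strategy is to prove a local bound near a fixed point, then rescale and translate in the $\hY$ variable so that the estimate applies uniformly over $\calH_{\hT}$.

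Concretely, I would fix $\hY_0 \geq 0$ and consider the unit coordinate square $Q = [\hY_0, \hY_0+1]\times[\tht_0,\tht_0+1]$ (periodic in $\tht$). On $Q$ apply the standard Sobolev embedding $W^{2,2}(Q) \hookrightarrow L^\infty(Q)$ to the \emph{scalar} function $\abs{\phi}$; since $\abs{\,\abs{\phi}\,}$ and its weak derivatives are dominated by $\abs{\phi}$, $\abs{\covD_{\rd_{\hY}}\phi} + \abs{\covD_{\rd_{\tht}}\phi}$, and the analogous second covariant derivatives (this is the diamagnetic/Kato inequality $\abs{\nb\abs{\phi}} \le \abs{\covD\phi}$, applied iteratively), we get
\begin{equation*}
	\sup_{Q} \abs{\phi}^2 \leq C \sum_{k=0}^{2} \int_{Q} \abs{\covD_{(\rd_{\hY},\rd_{\tht})}^{(k)} \phi}^2 \, \ud \hY \, \ud \tht.
\end{equation*}
Using \eqref{eq:dTht}--\eqref{eq:dY} to convert the right side into $\covZ^{(k)}\phi$ (the coefficients $\omg_j$ and their $\covZ$-derivatives are bounded, and passing one $\covZ$ past another coefficient only generates lower order terms of the same schematic type), and inserting the weights: on $Q$ the factor $\cosh\hY$ is comparable to $\cosh\hY_0$ up to a universal constant, and the volume element satisfies $\ud\Vol_{\calH_{\hT}} = \hT^2\cosh\hY \, \ud\hY\,\ud\tht$, so $\ud\hY\,\ud\tht \simeq \hT^{-2}(\cosh\hY)^{-1}\ud\Vol_{\calH_{\hT}}$ on $Q$. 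This yields
\begin{equation*}
	(\cosh\hY_0)^2 \sup_Q \abs{\phi}^2 \leq C\,\hT^{-2} \sum_{k=0}^{2} \int_{\set{\hY_0-1\le\hY\le\hY_0+2}} (\cosh\hY)^2 \abs{\covZ^{(k)}\phi}^2 \, \frac{\ud\Vol_{\calH_{\hT}}}{\cosh\hY}.
\end{equation*}
Taking the supremum over all $\hY_0$ (noting the overlap of the thickened slabs is bounded) and rearranging gives $\hT^2 \nrm{\cosh\hY\,\phi}_{L^\infty(\calH_{\hT}, \ud\Vol/\cosh\hY)}^2 \lesssim \sum_{k\le 2}\nrm{\cosh\hY\,\covZ^{(k)}\phi}_{L^2(\calH_{\hT},\ud\Vol/\cosh\hY)}^2$, which is \eqref{eq:KlSob} after taking square roots.

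The main obstacle I anticipate is handling the weight $\cosh\hY$ correctly and uniformly. Near the "lightcone end" of $\calH_{\hT}$ (large $\hY$), $\cosh\hY$ grows exponentially, and one must check that commuting the weight through the vector fields $\covZ_{\mu\nu}$ does not lose powers of $\cosh\hY$: one has $\covZ_{\mu\nu}(\cosh\hY \cdot \phi) = (Z_{\mu\nu}\cosh\hY)\phi + \cosh\hY\,\covZ_{\mu\nu}\phi$, and since $\abs{Z_{\mu\nu}\cosh\hY} \lesssim \sinh\hY \le \cosh\hY$ by \eqref{eq:dY} (and $Z_{12}\cosh\hY = 0$), the weight behaves well, but this needs to be tracked through two derivatives. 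A secondary (minor) point is justifying the Kato inequality for a $V$-valued function, which is standard: $\nb\abs{\phi} = \Re\brk{\covD\phi, \phi/\abs{\phi}}$ where $\abs{\phi}\neq 0$, and $\abs{\nb\abs{\phi}}\le\abs{\covD\phi}$ follows by Cauchy--Schwarz, with the set $\set{\phi=0}$ handled by the usual approximation $\sqrt{\abs{\phi}^2+\veps^2}$.
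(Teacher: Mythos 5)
There are two genuine gaps. The first is the region near the axis $\hY \approx 0$. The induced volume on $\calH_{\hT}$ is $\hT^{2}\sinh\hY\,\ud\hY\,\ud\tht$ (the $\cosh\hY$ displayed in Section~\ref{subsec:polar-coords} is a typo, as one sees from the metric $\hT^{2}\ud\hY^{2}+\hT^{2}\sinh^{2}\hY\,\ud\tht^{2}$), so the measure degenerates linearly in $\hY$ at the axis. Consequently, on a coordinate square $Q=[0,1]\times[\tht_{0},\tht_{0}+1]$ the flat norm $\int_{Q}(\cdot)\,\ud\hY\,\ud\tht$ that your Sobolev embedding requires is \emph{not} dominated by the available weighted norm $\int_{Q}(\cdot)\,\sinh\hY\,\ud\hY\,\ud\tht$; the inequality goes the wrong way (test with data concentrated in $\set{\hY<\dlt}$). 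This is the same failure as trying to prove an $L^{\infty}$ bound near the origin of $\bbR^{2}$ by treating polar coordinates $(r,\tht)$ as a Euclidean rectangle with measure $\ud r\,\ud\tht$. Near the axis one needs a genuinely two-dimensional argument based on the vector fields $\rd_{\hY}$ and $(\sinh\hY)^{-1}\rd_{\tht}$, which are still controlled by the $\covZ_{\mu\nu}$ through \eqref{eq:dY} and \eqref{eq:wdTht}, together with the Sobolev inequality on the unit disc with respect to $\sinh\hY\,\ud\hY\,\ud\tht \simeq \hY\,\ud\hY\,\ud\tht$; this is exactly how the paper treats the region $\set{\hY\le 1}$. (Your stated worry, the large-$\hY$ end, is actually the unproblematic part: there $\sinh\hY\simeq\cosh\hY$, unit coordinate squares are uniformly comparable to Euclidean ones, and your unit-scale covering would give a workable alternative to the paper's fundamental-theorem-of-calculus-plus-$\bbS^{1}$-Sobolev treatment of $\set{\hY\ge 1}$.)

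The second gap is the ``iterated'' diamagnetic inequality you invoke for second derivatives: it is false. There is no pointwise bound of the distributional Hessian of $\abs{\phi}$ by $\abs{\covD^{(2)}\phi}$ (already for the scalar $\phi(x)=x$ the second derivative of $\abs{\phi}$ is a Dirac mass), and the regularization $\sqrt{\abs{\phi}^{2}+\veps^{2}}$ produces the uncontrolled term $\abs{\covD\phi}^{2}/\sqrt{\abs{\phi}^{2}+\veps^{2}}$. So you cannot place $\abs{\phi}$ in $W^{2,2}(Q)$ using only covariant data, and the single-step embedding $W^{2,2}(Q)\hookrightarrow L^{\infty}(Q)$ applied to $\abs{\phi}$ is not available. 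The standard repair --- and the paper's route --- is a two-step chain in which Kato's inequality is only ever applied to \emph{first} derivatives of a modulus: bound $\abs{\phi}$ and $\abs{\covZ\phi}$ in $L^{4}$ by their $W^{1,2}$ norms (using $\abs{\nb\abs{\covZ^{(k)}\phi}}\le\abs{\covD\covZ^{(k)}\phi}$), then bound $\abs{\phi}$ in $L^{\infty}$ by its $W^{1,4}$ norm. With these two corrections your unit-square strategy can be made to work away from the axis, but as written the argument does not establish \eqref{eq:KlSob}.
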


Gauge invariant Klainerman--Sobolev inequality of this type was first established by Psarelli \cite{MR2131047,MR1672001} in $\bbR^{1+3}$. To make the present paper self-contained, we sketch a proof of Proposition~\ref{prop:KlSob}.

\begin{remark} 
Recall that in the rectilinear coordinates $(t, x^{1}, x^{2})$,  we have $t = \hT \cosh \hY$. Therefore, if the norm on the right-hand side were bounded, then \eqref{eq:KlSob} would imply that $\phi$ decays with the rate $t^{-1}$, which is sharp for the Klein--Gordon equation on $\bbR^{1+2}$. In our application below, however, the norm on the right-hand side will grow in $\hT$, which will result in a loss of decay. 
\end{remark}

In our proof of Proposition \ref{prop:KlSob}, we will employ some standard Sobolev inequalities on $\bbR^{2}$ and $\bbS^{1}$. For the reader's convenience, we state (without proofs) the necessary inequalities in the next lemma.

\begin{lemma} \label{lem:stdSob}
The following statements hold.
\begin{enumerate}
\item Let $\phi$ be a function in the Sobolev space $W^{1, 2}(\set{x \in \bbR^{2} : \abs{x} \leq 1})$. Then we have
\begin{equation} \label{eq:stdSob:L2L4}
	\nrm{\phi}_{L^{4}(\set{x \in \bbR^{2} : \abs{x} \leq 1}, \ud x)} \leq C \nrm{\phi}_{W^{1,2}(\set{x \in \bbR^{2} : \abs{x} \leq 1}, \ud x)}.
\end{equation}
\item Let $\phi$ be a function in the Sobolev space $W^{1, 4}(\set{x \in \bbR^{2} : \abs{x} \leq 1})$. Then we have
\begin{equation} \label{eq:stdSob:L4Linfty}
	\nrm{\phi}_{L^{\infty}(\set{x \in \bbR^{2} : \abs{x} \leq 1}, \ud x)} \leq C \nrm{\phi}_{W^{1,4}(\set{x \in \bbR^{2} : \abs{x} \leq 1}, \ud x)}.
\end{equation}
\item Let $\phi$ be a function in the Sobolev space $W^{1,2}(\bbS^{1})$. Then we have
\begin{equation} \label{eq:stdSob:S1}
	\nrm{\phi}_{L^{\infty}(\bbS^{1})} \leq C \nrm{\phi}_{W^{1,2}(\bbS^{1})}.
\end{equation}
\end{enumerate}
\end{lemma}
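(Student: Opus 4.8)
The plan is to obtain all three estimates as immediate consequences of classical embedding theorems, so that no genuinely new analysis is needed. For parts (1) and (2) I would reduce the statements on the closed unit disk $\set{x \in \bbR^{2} : \abs{x} \le 1}$ to inequalities on all of $\bbR^{2}$ by means of a bounded linear extension operator $E : W^{1,p}(\set{\abs{x} \le 1}) \to W^{1,p}(\bbR^{2})$ for $p = 2$ and $p = 4$, which exists because the disk has smooth boundary and satisfies $E\phi = \phi$ on $\set{\abs{x} \le 1}$ together with $\nrm{E\phi}_{W^{1,p}(\bbR^{2})} \le C \nrm{\phi}_{W^{1,p}(\set{\abs{x} \le 1})}$; alternatively one may construct $E$ explicitly by reflection across the unit circle followed by multiplication by a smooth cutoff supported in $\set{\abs{x} < 2}$. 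Part (3) is one-dimensional and I would prove it by hand.

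For \eqref{eq:stdSob:L2L4}, set $u = E\phi$ and apply the endpoint two-dimensional Sobolev inequality $\nrm{v}_{L^{2}(\bbR^{2})} \le C \nrm{\nb v}_{L^{1}(\bbR^{2})}$ to $v = \abs{u}^{2}$: Cauchy--Schwarz then gives $\nrm{u}_{L^{4}(\bbR^{2})}^{2} = \nrm{\abs{u}^{2}}_{L^{2}(\bbR^{2})} \le C \nrm{\nb(\abs{u}^{2})}_{L^{1}(\bbR^{2})} \le C' \nrm{u}_{L^{2}(\bbR^{2})} \nrm{\nb u}_{L^{2}(\bbR^{2})} \le C' \nrm{u}_{W^{1,2}(\bbR^{2})}^{2}$, and restricting back to the disk and using boundedness of $E$ yields the claim. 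For \eqref{eq:stdSob:L4Linfty}, since $4 > 2 = \dim \bbR^{2}$, Morrey's inequality provides the embedding $W^{1,4}(\bbR^{2}) \hookrightarrow C^{0,1/2}(\bbR^{2})$ with $\nrm{u}_{L^{\infty}(\bbR^{2})} \le C \nrm{u}_{W^{1,4}(\bbR^{2})}$; applying this to $u = E\phi$ and restricting to the disk again finishes the proof.

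For \eqref{eq:stdSob:S1}, parametrize $\bbS^{1}$ by arclength and write $\br{\phi} = \frac{1}{2\pi} \int_{\bbS^{1}} \phi$. By the fundamental theorem of calculus and Cauchy--Schwarz, $\abs{\phi(x) - \phi(y)} \le \int_{\bbS^{1}} \abs{\phi'} \le (2\pi)^{1/2} \nrm{\phi'}_{L^{2}(\bbS^{1})}$ for all $x, y \in \bbS^{1}$; averaging in $y$ gives $\abs{\phi(x) - \br{\phi}} \le C \nrm{\phi'}_{L^{2}(\bbS^{1})}$, while $\abs{\br{\phi}} \le C \nrm{\phi}_{L^{2}(\bbS^{1})}$ by Cauchy--Schwarz, and adding the two bounds proves \eqref{eq:stdSob:S1} for smooth $\phi$; the general case follows by density of smooth functions in $W^{1,2}(\bbS^{1})$.

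Since every step invokes a textbook result, there is no real obstacle here; the only point demanding slight care is the construction (or citation) of the extension operator used in parts (1) and (2), which is precisely why these estimates are recorded in the statement without proof. I would either do the same or include the brief arguments above for completeness.
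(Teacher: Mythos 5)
Your proposal is correct: the extension-operator reduction combined with the endpoint Gagliardo--Nirenberg--Sobolev (Ladyzhenskaya-type) argument for \eqref{eq:stdSob:L2L4}, Morrey's embedding $W^{1,4}(\bbR^{2}) \hookrightarrow C^{0,1/2}$ for \eqref{eq:stdSob:L4Linfty}, and the fundamental-theorem-of-calculus argument on $\bbS^{1}$ for \eqref{eq:stdSob:S1} are all standard and sound. The paper itself states Lemma~\ref{lem:stdSob} explicitly without proof, treating these as classical Sobolev inequalities, so your write-up simply supplies the textbook arguments the authors chose to omit; there is no methodological divergence to report.
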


Another important ingredient of our proof is a version of the \emph{diamagnetic inequality} (also commonly referred to as \emph{Kato's inequality}), which allows us to relate covariant derivatives of a $V$-valued function with ordinary derivatives of its amplitude. 
\begin{lemma}[Diamagnetic inequality] \label{lem:diamag}
For any vector field $X$ and smooth $V$-valued function $\phi$ on $\calH_{1}$ or $\bbS^{1}$, we have
\begin{equation} \label{eq:diamag}
	\rd_{X} \abs{\phi} \leq \abs{\covD_{X} \phi},
\end{equation}
in the sense of distributions, i.e., the inequality holds after testing against smooth non-negative compactly supported functions on $\calH_{1}$ or $\bbS^{1}$. By the dual characterization of $L^{p}$ norms, it follows that $\rd_{X} \abs{\phi} \in L^{p}$ and $\nrm{\rd_{X} \abs{\phi}}_{L^{p}} \leq \nrm{\covD_{X} \phi}_{L^{p}}$ for all $1 \leq p < \infty$.
\end{lemma}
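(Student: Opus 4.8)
The plan is to prove the pointwise version of \eqref{eq:diamag} away from the zero set of $\phi$, and then extend it across $\set{\phi = 0}$ by a standard regularization. On the open set $\set{\phi \neq 0}$ the amplitude $\abs{\phi} = \brk{\phi, \phi}^{1/2}$ is a smooth function, and the metric compatibility of $\covD$ --- which is precisely the Leibniz rule \eqref{eq:leibniz-V}, valid since $\LieAlg$ acts on $V$ by anti-hermitian operators --- gives $\rd_{X} \brk{\phi,\phi} = 2 \Re \brk{\covD_{X} \phi, \phi}$. Hence, on $\set{\phi \neq 0}$,
\begin{equation*}
	\rd_{X} \abs{\phi} = \frac{\Re \brk{\covD_{X} \phi, \phi}}{\abs{\phi}} \leq \abs{\covD_{X} \phi}
\end{equation*}
by the Cauchy--Schwarz inequality. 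Replacing $X$ by $-X$, we moreover obtain the two-sided bound $\abs{\rd_{X} \abs{\phi}} \leq \abs{\covD_{X} \phi}$ there.

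To obtain the distributional statement everywhere, I would introduce for each $\veps > 0$ the regularized amplitude $\abs{\phi}_{\veps} := (\brk{\phi,\phi} + \veps^{2})^{1/2} - \veps$, which is smooth and nonnegative on all of $\calH_{1}$ (or $\bbS^{1}$), converges to $\abs{\phi}$ uniformly on compact sets as $\veps \to 0$, and obeys the everywhere-valid pointwise bound
\begin{equation*}
	\abs{\rd_{X} \abs{\phi}_{\veps}} = \frac{\abs{\Re \brk{\covD_{X} \phi, \phi}}}{(\brk{\phi,\phi} + \veps^{2})^{1/2}} \leq \frac{\abs{\covD_{X} \phi} \, \abs{\phi}}{(\abs{\phi}^{2} + \veps^{2})^{1/2}} \leq \abs{\covD_{X} \phi}.
\end{equation*}
Given a smooth nonnegative test function $\chi$, multiply by $\chi$, integrate against the induced volume measure $\ud \mu$, and integrate by parts: $\int (\rd_{X} \abs{\phi}_{\veps}) \chi \, \ud \mu = - \int \abs{\phi}_{\veps} \, \rd_{X} \chi \, \ud \mu - \int \abs{\phi}_{\veps} \chi \, (\mathrm{div}\, X) \, \ud \mu$, with no boundary contribution since $\chi$ has compact support. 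Letting $\veps \to 0$, the uniform convergence $\abs{\phi}_{\veps} \to \abs{\phi}$ on $\supp \chi$ shows that the left-hand side converges to the pairing of the distributional derivative $\rd_{X} \abs{\phi}$ with $\chi$, while the displayed bound keeps the right-hand side at most $\int \abs{\covD_{X} \phi} \, \chi \, \ud \mu$ throughout; this is exactly \eqref{eq:diamag}.

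For the $L^{p}$ assertion, note that $\phi$ smooth implies $\abs{\phi}$ is locally Lipschitz, hence differentiable almost everywhere, and its almost-everywhere derivative coincides with the distributional one and vanishes almost everywhere on $\set{\phi = 0}$; so the pointwise inequality established on $\set{\phi \neq 0}$ upgrades to $\abs{\rd_{X} \abs{\phi}} \leq \abs{\covD_{X} \phi}$ almost everywhere, and taking $L^{p}$ norms yields $\nrm{\rd_{X} \abs{\phi}}_{L^{p}} \leq \nrm{\covD_{X} \phi}_{L^{p}}$ for every $1 \leq p < \infty$. The only genuinely delicate point is the behavior on the zero set of $\phi$, where $\abs{\phi}$ fails to be smooth; this is precisely what the $\veps$-regularization handles, and all the remaining steps reduce to the Leibniz rule \eqref{eq:leibniz-V} and Cauchy--Schwarz.
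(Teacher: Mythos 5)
Your proof is correct: the Leibniz rule \eqref{eq:leibniz-V} (valid by unitarity of the representation) plus Cauchy--Schwarz gives the pointwise bound for the regularized amplitude $(\brk{\phi,\phi}+\veps^{2})^{1/2}-\veps$, and the integration by parts against a nonnegative test function followed by $\veps \to 0$ yields exactly the distributional inequality \eqref{eq:diamag}, while the a.e.\ pointwise bound (or, as in the paper's phrasing, $L^{p}$ duality applied to the distributional inequality and its analogue with $X \mapsto -X$) gives the $L^{p}$ statement. The paper omits the proof as standard, and your $\veps$-regularization argument is precisely that standard proof, so there is nothing further to compare.
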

We omit the standard proof. 

\begin{proof} [Proof of Proposition \ref{prop:KlSob}]

By scaling, it suffices to prove the following inequality for smooth compactly supported functions $\phi$ on $\calH_{1}$:
\begin{equation} \label{eq:KlSob:key}
	\cosh \hY \abs{\phi(\hY, \tht)} \leq C \sum_{\alp : 0 \leq \abs{\alp} \leq 2} 
			\bb( \int_{0}^{\infty} \int_{\bbS^{1}} \cosh \hY' \abs{\bfZ^{\alp} \phi(\hY', \tht')}^{2}\,  \sinh \hY' \ud \tht' \ud \hY'\bb)^{\frac{1}{2}}.
\end{equation}

We begin by establishing \eqref{eq:KlSob:key} in the region $\hY \leq 1$. In this region, the point is that \eqref{eq:KlSob:key} reduces to its unweighted analogue on $\bbR^{2}$ through the relations 
\begin{equation} \label{eq:KlSob:cptY}
	\cosh \hY \aeq 1, \quad 
	\sinh \hY \aeq \hY.
\end{equation}
Here, the notation $A \aeq B$ means that there exist positive constants $0 < c \leq C$ such that $cA \leq B \leq CA$.

By \eqref{eq:stdSob:L2L4}, \eqref{eq:KlSob:cptY}, the diamagnetic inequality \eqref{eq:diamag} with $X = \rd_{\hY}, \frac{1}{\hY} \rd_{\tht}$ and the relations \eqref{eq:dY}, \eqref{eq:wdTht}, we have
\begin{align*}
	\nrm{\phi}_{L^{4}(\calH_{1} \cap \set{\hY \leq 1})} 
	\leq & C \bb( \nrm{\rd_{\hY}\abs{\phi}}_{L^{2}(\calH_{1} \cap \set{\hY \leq 1})} 
			+ \nrm{\frac{1}{\hY} \rd_{\tht}\abs{\phi}}_{L^{2}(\calH_{1} \cap \set{\hY \leq 1})} 
			+ \nrm{\phi}_{L^{2}(\calH_{1} \cap \set{\hY \leq 1})} \bb) \\
	\leq & C \sum_{\alp: 0 \leq \abs{\alp} \leq 1} \nrm{\covZ^{\alp} \phi}_{L^{2}(\calH_{1} \cap \set{\hY \leq 1})},
\end{align*}
and similarly
\begin{align*}
	\nrm{\covZ \phi}_{L^{4}(\calH_{1} \cap \set{\hY \leq 1})} 
	\leq & C \sum_{\alp: 1 \leq \abs{\alp} \leq 2} \nrm{\covZ^{\alp} \phi}_{L^{2}(\calH_{1} \cap \set{\hY \leq 1})}.
\end{align*}
Repeating the preceding argument with \eqref{eq:stdSob:L2L4} replaced by \eqref{eq:stdSob:L4Linfty}, we have
\begin{align*}
	\nrm{\phi}_{L^{\infty}(\calH_{1} \cap \set{\hY \leq 1})} 
	\leq & C \sum_{\alp: 0 \leq \abs{\alp} \leq 1} \nrm{\covZ^{\alp} \phi}_{L^{4}(\calH_{1} \cap \set{\hY \leq 1})}.
\end{align*}
Putting together the previous three inequalities and using $\cosh \hY \aeq 1$ to build in the appropriate weights, the desired inequality \eqref{eq:KlSob:key} in the region $\set{\hY \leq 1}$ follows.

Next, we turn to the task of proving \eqref{eq:KlSob:key} in the region $\hY \geq 1$. Using the fundamental theorem of calculus, Cauchy-Schwarz and \eqref{eq:dY}, we compute
\begin{align*}
	\cosh^{2} \hY \abs{\phi(\hY, \tht)}^{2} 
\leq & 2 \frac{\cosh \hY}{\sinh \hY} \abs{\int_{\hY}^{\infty} \cosh \hY' \Re \brk{\phi, \covD_{\rd_{\hY}} \phi}(\hY', \tht) \sinh \hY' \, \ud \hY' } \\
\leq & C \int_{0}^{\infty} \cosh \hY' (\abs{\phi}^{2} + \abs{\covZ \phi}^{2})(\hY', \tht) \sinh \hY' \, \ud \hY'.
\end{align*}
We have used the fact that $\frac{\cosh \hY}{\sinh \hY} \leq C$, which holds since $\hY \geq 1$. Applying the previous computation to $\covD_{\rd_{\tht}} \phi = \covZ_{12} \phi$, we obtain
\begin{align*}
	\cosh^{2} \hY \abs{\covD_{\rd_{\tht}} \phi(\hY, \tht)}^{2} 
\leq & C \int_{0}^{\infty} \cosh \hY' (\abs{\covZ \phi}^{2} + \abs{\covZ^{(2)} \phi}^{2})(\hY', \tht) \sinh \hY' \, \ud \hY'.
\end{align*}
Integrating the preceding two inequalities over $\tht \in \bbS^{1}$, we obtain
\begin{align*}
&	\cosh^{2} \hY \int_{\bbS^{1}} \abs{\phi(\hY, \tht)}^{2} + \abs{\covD_{\rd_{\tht}} \phi(\hY, \tht)}^{2} \, \ud \tht \\
& \quad	\leq C \sum_{\alp : 0 \leq \abs{\alp} \leq 2} 
			 \int_{0}^{\infty} \int_{\bbS^{1}} \cosh^{2} \hY' \abs{\bfZ^{\alp} \phi(\hT, \hY', \tht')}^{2}\,  \frac{\sinh \hY' \ud \tht' \ud \hY}{\cosh \hY'}.
\end{align*}
Now the desired inequality \eqref{eq:KlSob:key} follows from the combination of the standard Sobolev inequality \eqref{eq:stdSob:S1} and the diamagnetic inequality \eqref{eq:diamag} (with $X = \rd_{\tht}$) on $\bbS^{1}$. \qedhere
\end{proof}

\subsection{A gauge invariant ODE argument for sharp decay} \label{subsec:ODE}
Due to the specific structure of our problem, it turns out that the combination of the energy and the Klainerman--Sobolev inequality is insufficient.
What we need is a version of the ODE argument \cite{MR2188297, MR2056833} devised to handle the modified scattering behavior due to a long range effect, adapted to the gauge covariant setting. 

\begin{proposition} \label{prop:ODE}
For every $(\hY, \tht) \in [0, \infty) \times \bbS^{1}$ and $\hT \in (0, \infty)$, the following inequality holds:
\begin{equation} \label{eq:ODE}
\begin{aligned}
&\hskip-2em	
	\abs{\covD_{\hT} (\hT \cosh \hY \phi) (\hT, \hY, \tht)} + \hT \cosh \hY \abs{\phi (\hT, \hY, \tht)} \\
	\leq & C \bb( \abs{\covD_{\hT} (\hT \cosh \hY \phi) (\hT_{0}, \hY, \tht)} + \hT \cosh \hY \abs{\phi (\hT_{0}, \hY, \tht)} \bb) \\
	& + C \sum_{k: 1 \leq k \leq 2} \int_{\hT_{0}}^{\hT} \frac{\cosh \hY}{\hT'} \abs{\covZ^{(k)} \phi(\hT', \hY, \tht)} \, \ud \hT' \\
	& + C \int_{\hT_{0}}^{\hT} \hT' \cosh \hY \abs{(\covBox  \m 1)\phi(\hT', \hY, \tht)} \, \ud \hT'.
\end{aligned}\end{equation}
\end{proposition}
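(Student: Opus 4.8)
The plan is to convert the undifferentiated covariant Klein--Gordon equation into a \emph{gauge covariant second order ODE} in $\hT$ along each integral curve of $\rd_{\hT}$ (with $(\hY, \tht)$ frozen as parameters), and then run the classical energy argument for $\covD_{\hT}^{2} u + u = G$. First I would record the splitting of the covariant Klein--Gordon operator in the hyperboloidal polar coordinates,
\[
	(\covBox \m 1) \phi = - \frac{1}{\hT^{2}} \covD_{\hT}(\hT^{2} \covD_{\hT} \phi) - \phi + \lap_{A, \calH_{\hT}} \phi ,
\]
where $\lap_{A, \calH_{\hT}}$ is the gauge covariant Laplace--Beltrami operator on $\calH_{\hT}$; this follows from the divergence form of the Laplace--Beltrami operator and the expression for $\met^{-1}$ from Subsection~\ref{subsec:polar-coords} (cf.\ \eqref{eq:KG4phi-ODE}). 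A short computation using $\rd_{\hT} \hT = 1$ gives $\covD_{\hT}(\hT^{2} \covD_{\hT} \phi) = \hT \, \covD_{\hT}^{2}(\hT \phi)$, so that, multiplying the displayed identity by $\hT \cosh \hY$ and using that $\cosh \hY$ is constant along $\rd_{\hT}$, the quantity $u := \hT \cosh \hY \, \phi$ solves the pointwise ODE $\covD_{\hT}^{2} u + u = G$ with $G := \hT \cosh \hY \, \lap_{A, \calH_{\hT}} \phi - \hT \cosh \hY \, (\covBox \m 1) \phi$.

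The heart of the matter is a pointwise estimate of $\lap_{A, \calH_{\hT}} \phi$ by $\covZ^{(k)} \phi$ ($k \leq 2$) with \emph{bounded} coefficients. In the hyperboloidal polar coordinates,
\[
	\hT^{2} \lap_{A, \calH_{\hT}} \phi = \covD_{\hY}^{2} \phi + \frac{1}{\sinh^{2} \hY} \covD_{\tht}^{2} \phi + \frac{\cosh \hY}{\sinh \hY} \covD_{\hY} \phi .
\]
I would expand each term using \eqref{eq:dY} and \eqref{eq:wdTht}, which write $\rd_{\hY}$ and $\frac{\cosh \hY}{\sinh \hY} \rd_{\tht}$ as combinations of $Z_{01}, Z_{02}$ with the bounded coefficients $\omg_{1}, \omg_{2}$, invoking only the $C^{\infty}(\calH_{\hT})$-linearity of $\covD$ in the vector field slot and the Leibniz rule for scalar coefficients. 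The crucial point is that in this expansion covariant derivatives are only \emph{composed} along vector fields and never commuted, so no curvature (cubic) terms appear --- any ordering of the resulting products of two Lorentz derivatives is an acceptable $\covZ^{(2)} \phi$ term. Since $\rd_{\hY} \omg_{j} = 0$, the first term is exactly $\sum_{i, j} \omg_{i} \omg_{j} \covZ_{0i} \covZ_{0j} \phi$. For the singular term one writes $\frac{1}{\sinh^{2} \hY} \covD_{\tht}^{2} \phi = \frac{1}{\cosh^{2} \hY} \covD_{\tld Z}^{2} \phi$ with $\tld Z := \frac{\cosh \hY}{\sinh \hY} \rd_{\tht} = -(\omg_{1} Z_{02} - \omg_{2} Z_{01})$; this time the Leibniz contributions (coming from $\tld Z \omg_{j} \neq 0$) do not drop out, but they reassemble, via \eqref{eq:dY}, into $- \frac{1}{\cosh \hY \sinh \hY} \covD_{\hY} \phi$ plus bounded combinations of $\covZ^{(2)} \phi$. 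Combining with the third term and using
\[
	\frac{\cosh \hY}{\sinh \hY} - \frac{1}{\cosh \hY \sinh \hY} = \frac{\cosh^{2} \hY - 1}{\cosh \hY \sinh \hY} = \tanh \hY
\]
cancels the axis singularity completely, leaving $\hT^{2} \lap_{A, \calH_{\hT}} \phi = \covD_{\hY}^{2} \phi + \tanh \hY \, \covD_{\hY} \phi + (\cosh \hY)^{-2} (\hbox{bounded}) \cdot \covZ^{(2)} \phi$. As $\abs{\covD_{\hY} \phi} \leq C \abs{\covZ \phi}$ by \eqref{eq:dY}, this yields $\abs{\hT^{2} \lap_{A, \calH_{\hT}} \phi} \leq C \sum_{1 \leq k \leq 2} \abs{\covZ^{(k)} \phi}$, and therefore $\abs{G} \leq C \frac{\cosh \hY}{\hT} \sum_{1 \leq k \leq 2} \abs{\covZ^{(k)} \phi} + \hT \cosh \hY \abs{(\covBox \m 1) \phi}$.

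The last step is the ODE energy estimate. Along a fixed integral curve of $\rd_{\hT}$ (taking $\hT \geq \hT_{0}$; the other case is symmetric), set $W := ( \abs{\covD_{\hT} u}^{2} + \abs{u}^{2} )^{1/2}$. Since the connection is anti-hermitian, \eqref{eq:leibniz-V} with $X = \rd_{\hT}$ gives $\rd_{\hT} \abs{v}^{2} = 2 \Re \brk{\covD_{\hT} v, v}$ for $V$-valued $v$, so $\rd_{\hT} W^{2} = 2 \Re \brk{\covD_{\hT}^{2} u + u, \covD_{\hT} u} = 2 \Re \brk{G, \covD_{\hT} u} \leq 2 \abs{G} \, W$. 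To cross the zeros of $W$ I would replace $W$ by $W_{\dlt} := (W^{2} + \dlt^{2})^{1/2}$, for which $\rd_{\hT} W_{\dlt} = \Re \brk{G, \covD_{\hT} u} / W_{\dlt} \leq \abs{G}$; integrating from $\hT_{0}$ to $\hT$ and letting $\dlt \to 0$ gives $W(\hT) \leq W(\hT_{0}) + \int_{\hT_{0}}^{\hT} \abs{G(\hT')} \, \ud \hT'$. Finally $\abs{\covD_{\hT} u} + \abs{u} \leq \sqrt{2} \, W$, and inserting the bound on $\abs{G}$ from the previous step while recalling $u = \hT \cosh \hY \, \phi$ yields \eqref{eq:ODE}.

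I expect the one real obstacle to be the bounded-coefficient pointwise bound on $\lap_{A, \calH_{\hT}} \phi$: one must organize the expansion so that covariant derivatives are composed and never commuted (to keep out curvature/cubic contributions) and simultaneously defeat the $\frac{1}{\sinh^{2} \hY}$ blow-up at the rotation axis, which is exactly what the cancellation $\frac{\cosh \hY}{\sinh \hY} - \frac{1}{\cosh \hY \sinh \hY} = \tanh \hY$ achieves. The remaining ingredients --- the algebraic identity $\covD_{\hT}(\hT^{2}\covD_{\hT}\phi) = \hT \covD_{\hT}^{2}(\hT\phi)$ and the scalar ODE inequality --- are routine.
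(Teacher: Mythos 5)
Your proof is correct and follows essentially the same route as the paper: the same hyperboloidal splitting of $\covBox \m 1$, the same reduction of $\lap_{A, \calH_{\hT}}$ to bounded-coefficient combinations of $\covZ_{01}, \covZ_{02}$ via \eqref{eq:dY} and \eqref{eq:wdTht} (your $\tanh \hY$ cancellation of the axis singularity is exactly the content of Lemma~\ref{lem:covLapOnH}), and the same covariant ODE energy argument for $\hT \cosh \hY \, \phi$. The only differences are cosmetic, e.g.\ your regularization $W_{\dlt}$ spells out what the paper's step of multiplying by $\covD_{\hT}(\hT \cosh \hY \phi)$ and integrating in $\hT$ leaves implicit.
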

Our proof of this proposition is based on the following algebraic computation, which relates the induced covariant Laplacian on $\calH_{\hT}$ with $\omg_{j}$'s and $\covZ_{\mu \nu}$'s. 
\begin{lemma} \label{lem:covLapOnH}
Let $\lap_{A, \calH_{\hT}}$ be the induced covariant Laplacian on $\calH_{\hT}$, i.e.,
\begin{equation*}
\lap_{A, \calH_{\hT}}
:= \frac{1}{\hT^{2}} \bb( \frac{1}{\sinh \hY} \covD_{\rd_{\hY}} (\sinh \hY \covD_{\rd_{\hY}}) + \frac{1}{\sinh^{2} \hY} \covD_{\rd_{\tht}}^{2} \bb).
\end{equation*}
Then the following identity holds.
\begin{equation} \label{eq:covLapOnH}
\begin{aligned}
\lap_{A, \calH_{\hT}}
=& - \frac{\sinh^{2} \hY}{\hT^{2} \cosh^{2} \hY} \bb( \omg_{1}^{2} \covZ_{02}^{2} + \omg_{2}^{2} \covZ_{01}^{2}
		- \omg_{1} \omg_{2} (\covZ_{01} \covZ_{02} + \covZ_{02} \covZ_{01}) \bb) \\
&  + \frac{1}{\hT^{2}} (\covZ_{01}^{2} + \covZ_{02}^{2} ) - \frac{\sinh \hY}{\hT^{2} \cosh \hY} (\omg_{1} \covZ_{01} + \omg_{2} \covZ_{02})
\end{aligned}	
\end{equation}
\end{lemma}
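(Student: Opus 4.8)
The plan is to reduce this purely algebraic identity to the relations \eqref{eq:Z01}, \eqref{eq:Z02} that express $\covZ_{01}, \covZ_{02}$ in terms of $\covD_{\rd_{\hY}}$ and $\covD_{\rd_{\tht}}$, together with the companion relations for the reverse substitution. Indeed, inverting \eqref{eq:Z01}--\eqref{eq:Z02} (or directly reading off \eqref{eq:dY} and \eqref{eq:wdTht}), one has
\begin{equation*}
	\covD_{\rd_{\hY}} = -(\omg_{1} \covZ_{01} + \omg_{2} \covZ_{02}), \qquad
	\frac{\cosh \hY}{\sinh \hY} \covD_{\rd_{\tht}} = -(\omg_{1} \covZ_{02} - \omg_{2} \covZ_{01}),
\end{equation*}
as covariant differential operators acting on $V$-valued functions; these hold because $\covD_{X}\phi = \covLD_{X}\phi$ and the algebra of gauge-covariant Lie derivatives along the $Z_{\mu\nu}$ mirrors that of the $Z_{\mu\nu}$ themselves. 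First I would substitute these two expressions into the definition
\begin{equation*}
	\lap_{A, \calH_{\hT}} = \frac{1}{\hT^{2}} \bb( \frac{1}{\sinh \hY} \covD_{\rd_{\hY}}(\sinh \hY \, \covD_{\rd_{\hY}}) + \frac{1}{\sinh^{2}\hY} \covD_{\rd_{\tht}}^{2} \bb)
\end{equation*}
and expand, being careful that $\covD_{\rd_{\hY}}$ and $\covD_{\rd_{\tht}}$ do not commute and that the coefficient functions $\omg_j$, $\sinh\hY$, $\cosh\hY$ are differentiated by these operators.

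The key computational inputs are: (i) $\rd_{\tht}\omg_1 = -\omg_2$, $\rd_{\tht}\omg_2 = \omg_1$, while $\rd_{\hY}\omg_j = 0$ (the $\omg_j$ depend only on the angle); (ii) $\covZ_{01}\omg_j$ and $\covZ_{02}\omg_j$ computed via \eqref{eq:Z01}--\eqref{eq:Z02}, e.g.\ $\covZ_{01}\omg_1 = \omg_2 \frac{\cosh\hY}{\sinh\hY}(-\omg_2) \cdot \tfrac{1}{\cdots}$—more precisely one uses $\rd_\hY \omg_j = 0$ and the angular derivatives above; (iii) the commutator $[\covZ_{01}, \covZ_{02}] = -\covZ_{12} = -\covD_{\rd_\tht}$, which follows from $[Z_{01},Z_{02}] = -Z_{12}$ together with the curvature term $F(Z_{01},Z_{02})$—here one must check that the $F$-term is absorbed or cancels. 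Actually, since \eqref{eq:covLapOnH} is an operator identity acting on functions, the commutator $[\covZ_{01},\covZ_{02}] - \covD_{[Z_{01},Z_{02}]} = F(Z_{01},Z_{02})$ will contribute, so I would keep track of it and verify it recombines correctly; alternatively, expanding everything in terms of the manifestly torsion-and-curvature-free operators $\covD_{\rd_\hY}, \covD_{\rd_\tht}$ from the start sidesteps this, since on the left side $\lap_{A,\calH_\hT}$ is already written in those terms. I lean toward the latter: substitute the $\covZ$'s on the \emph{right}-hand side of \eqref{eq:covLapOnH} using \eqref{eq:Z01}--\eqref{eq:Z02}, expand, and match against the definition of $\lap_{A,\calH_\hT}$.

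So concretely, the steps are: (1) plug \eqref{eq:Z01} and \eqref{eq:Z02} into the right-hand side of \eqref{eq:covLapOnH}; (2) expand all products of first-order operators into second-order operators plus first-order correction terms coming from differentiating the coefficient functions $\omg_j \tfrac{\cosh\hY}{\sinh\hY}$, using $\omg_1^2 + \omg_2^2 = 1$ repeatedly to collapse terms; (3) collect the coefficient of $\covD_{\rd_\hY}^2$, of $\covD_{\rd_\tht}^2$, of $\covD_{\rd_\hY}$, of $\covD_{\rd_\tht}$, and of the mixed second-order term $\covD_{\rd_\hY}\covD_{\rd_\tht}$ (resp.\ its reverse); (4) check the mixed and spurious first-order terms cancel, and that what remains is exactly $\tfrac{1}{\hT^2}\big(\tfrac{1}{\sinh\hY}\covD_{\rd_\hY}(\sinh\hY\covD_{\rd_\hY}) + \tfrac{1}{\sinh^2\hY}\covD_{\rd_\tht}^2\big) = \tfrac{1}{\hT^2}\big(\covD_{\rd_\hY}^2 + \tfrac{\cosh\hY}{\sinh\hY}\covD_{\rd_\hY} + \tfrac{1}{\sinh^2\hY}\covD_{\rd_\tht}^2\big)$.

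The main obstacle I anticipate is purely bookkeeping: correctly handling the non-commutativity of $\covD_{\rd_\hY}$ and $\covD_{\rd_\tht}$ and the Leibniz-rule terms where these operators hit the angle-dependent coefficients $\omg_1, \omg_2$ and the $\hY$-dependent factor $\cosh\hY/\sinh\hY$. In particular the cancellation of the mixed term $\covZ_{01}\covZ_{02} + \covZ_{02}\covZ_{01}$ against the pure $\covZ_{01}^2 + \covZ_{02}^2$ piece to produce the clean $\covD_{\rd_\hY}^2$ and $\covD_{\rd_\tht}^2$ requires the identities $\omg_1^2 + \omg_2^2 = 1$ and $\omg_1 \rd_\tht \omg_1 + \omg_2 \rd_\tht \omg_2 = 0$ at several points, and one must not drop the first-order remainder $-\tfrac{\sinh\hY}{\hT^2\cosh\hY}(\omg_1\covZ_{01} + \omg_2\covZ_{02}) = \tfrac{1}{\hT^2}\tfrac{\cosh\hY}{\sinh\hY}\covD_{\rd_\hY}$ coming precisely from $\covD_{\rd_\hY}$ differentiating $\sinh\hY$. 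Since no curvature term appears in the final formula, a useful consistency check is that in step (2) all $F$-contributions (which would arise if one expanded $\covZ_{01}\covZ_{02}$ as $\covZ_{02}\covZ_{01} + [\covZ_{01},\covZ_{02}]$ carelessly) are in fact absent because the symmetrized combination $\covZ_{01}\covZ_{02} + \covZ_{02}\covZ_{01}$ is used throughout; this is why the lemma is stated with that symmetrized product rather than with an ordering choice.
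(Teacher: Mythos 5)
Your plan is correct and is essentially the paper's own proof: the paper likewise uses \eqref{eq:dY}, \eqref{eq:wdTht} (equivalently \eqref{eq:Z01}--\eqref{eq:Z02}) to write $\covD_{\rd_\hY}^2 + \frac{\cosh\hY}{\sinh\hY}\covD_{\rd_\hY}$ and $\frac{\cosh^2\hY}{\sinh^2\hY}\covD_{\rd_\tht}^2$ as the squares of $\omg_1\covZ_{01}+\omg_2\covZ_{02}$ and $\omg_1\covZ_{02}-\omg_2\covZ_{01}$, expanding with $\rd_\hY\omg_j=0$, $\rd_\tht\omg_1=-\omg_2$, $\rd_\tht\omg_2=\omg_1$, $\omg_1^2+\omg_2^2=1$, and no curvature term ever enters because only the symmetrized product $\covZ_{01}\covZ_{02}+\covZ_{02}\covZ_{01}$ appears, exactly as you anticipated. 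One detail to fix in the write-up: your closing identity has the hyperbolic ratio inverted (since $\covD_{\rd_\hY}=-(\omg_1\covZ_{01}+\omg_2\covZ_{02})$, one has $\frac{1}{\hT^2}\frac{\cosh\hY}{\sinh\hY}\covD_{\rd_\hY}=-\frac{\cosh\hY}{\hT^2\sinh\hY}(\omg_1\covZ_{01}+\omg_2\covZ_{02})$), and the term $-\frac{\sinh\hY}{\hT^2\cosh\hY}(\omg_1\covZ_{01}+\omg_2\covZ_{02})$ in \eqref{eq:covLapOnH} actually arises from splitting $\frac{1}{\sinh^2\hY}=\frac{\cosh^2\hY}{\sinh^2\hY}-1$ and expanding the subtracted piece $\covD_{\rd_\tht}^2=\frac{\sinh^2\hY}{\cosh^2\hY}(\omg_1\covZ_{02}-\omg_2\covZ_{01})^2$, whose first-order cross terms supply exactly that contribution.
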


\begin{proof} 
Using \eqref{eq:Z01}, \eqref{eq:Z02} and the fact that $\omg_{1}^{2} + \omg_{2}^{2} =1$, we compute
\begin{align*}
	\frac{1}{\sinh \hY} \covD_{\rd_{\hY}} (\sinh \hY \covD_{\rd_{\hY}})
	=&	  \covD_{\rd_{\hY}}^{2} +  \frac{\cosh \hY}{\sinh \hY} \covD_{\rd_{\hY}} \\
	=&	(\omg_{1} \covZ_{01} + \omg_{2} \covZ_{02})^{2} 
		- \frac{\cosh \hY}{\sinh \hY} (\omg_{1} \covZ_{01} + \omg_{2} \covZ_{02}) \\
	=&	\omg_{1}^{2} \covZ_{01}^{2} + \omg_{2}^{2} \covZ_{02}^{2} 
		+ \omg_{1} \omg_{2} (\covZ_{01} \covZ_{02} + \covZ_{02} \covZ_{01}) 	\\
	&	- \frac{\cosh \hY}{\sinh \hY} (\omg_{1} \covZ_{01} + \omg_{2} \covZ_{02}).
\end{align*}
Similarly, using \eqref{eq:wdTht}, we have
\begin{align*}
	\frac{\cosh^{2} \hY}{\sinh^{2} \hY} \covD_{\rd_{\tht}}^{2}
	=& \bb( \omg_{1} \covZ_{02} - \omg_{2} \covZ_{01} \bb)^{2} \\
	=& \omg_{1} \bb( \omg_{1} \covZ_{02} - \omg_{2} \covZ_{01} \bb) \covZ_{02} 
	- \omg_{2} \bb( \omg_{1} \covZ_{02} - \omg_{2} \covZ_{01} \bb) \covZ_{01} \\
	&	- \frac{\cosh \hY}{\sinh \hY} \rd_{\tht} \omg_{1} \covZ_{02} + \frac{\cosh \hY}{\sinh \hY} \rd_{\tht} \omg_{2} \covZ_{01} \\
	=&	\omg_{1}^{2} \covZ_{02}^{2} + \omg_{2}^{2} \covZ_{01}^{2}
		- \omg_{1} \omg_{2} (\covZ_{01} \covZ_{02} + \covZ_{02} \covZ_{01}) 
		+ \frac{\cosh \hY}{\sinh \hY} (\omg_{1} \covZ_{01} + \omg_{2} \covZ_{02}).
\end{align*}
Hence
\begin{align*}
& \frac{1}{\sinh \hY} \covD_{\rd_{\hY}} (\sinh \hY \covD_{\rd_{\hY}}) + \frac{1}{\sinh^{2} \hY} \covD_{\rd_{\tht}}^{2} \\
& \quad = \frac{1}{\sinh \hY} \covD_{\rd_{\hY}} (\sinh \hY \covD_{\rd_{\hY}}) + \frac{\cosh^{2} \hY}{\sinh^{2} \hY} \covD_{\rd_{\tht}}^{2} - \covD_{\rd_{\tht}}^{2} \\
&\quad = \covZ_{01}^{2} + \covZ_{02}^{2} 
	- \frac{\sinh^{2} \hY}{\cosh^{2} \hY} \bb( \omg_{1}^{2} \covZ_{02}^{2} + \omg_{2}^{2} \covZ_{01}^{2}
		- \omg_{1} \omg_{2} (\covZ_{01} \covZ_{02} + \covZ_{02} \covZ_{01}) \bb) \\
& \qquad - \frac{\sinh \hY}{\cosh \hY} (\omg_{1} \covZ_{01} + \omg_{2} \covZ_{02}). 
\end{align*}
Recalling the definition of $\lap_{A, \calH_{\hT}}$, the lemma follows. \qedhere
\end{proof}

With Lemma \ref{lem:covLapOnH} in hand, we are ready to prove Proposition \ref{prop:ODE}.
\begin{proof} [Proof of Proposition \ref{prop:ODE}]
We begin by expanding
\begin{align*}
	\covBox \phi \m \phi
	= & \m \frac{1}{\hT^{2}} \covD_{\hT} (\hT^{2} \covD_{\hT} \phi) \m \phi  \p \lap_{A, \calH_{\hT}} \phi.
\end{align*} 

Then by Lemma \ref{lem:covLapOnH}, we have
\begin{align*}
& \hskip-2em
\np \covD_{\hT}^{2} (\hT \cosh \hY \phi) \p (\hT \cosh \hY \phi) \\
=& 	\m \frac{\sinh^{2} \hY}{\hT \cosh \hY} \bb( \omg_{1}^{2} \covZ_{02}^{2} + \omg_{2}^{2} \covZ_{01}^{2}
	\m \omg_{1} \omg_{2} (\covZ_{01} \covZ_{02} + \covZ_{02} \covZ_{01}) \bb)\phi \\
&  	\p \frac{\cosh \hY}{\hT} (\covZ_{01}^{2} + \covZ_{02}^{2} )\phi 
	\m \frac{\sinh \hY}{\hT} (\omg_{1} \covZ_{01} + \omg_{2} \covZ_{02})\phi
	- (\hT \cosh \hY) (\covBox \phi \m \phi).
\end{align*}
Taking the inner product with $\covD_{\hT} (\hT \cosh \hY \phi)$, the left-hand side becomes 
\begin{equation*}
\frac{1}{2} \rd_{\hT} \bb( \abs{\covD_{\hT} (\hT \cosh \hY \phi)}^{2} + \abs{\hT \cosh \hY \phi}^{2} \bb).
\end{equation*}
Integrating in $\hT$ from $\hT_{0}$ and using Cauchy-Schwarz, \eqref{eq:ODE} follows.
\end{proof}

\subsection{Gauge invariant interpolation inequalities with weights}
In this subsection, we derive various interpolation inequalities involving $\covZ$ and weights of the form $\cosh \hY$. 

\begin{lemma} \label{lem:noZ}
Let $\phi$ be a smooth compactly supported $V$-valued function on $\calH_{\hT}$. 
Then for $1 \leq r \leq p \leq q \leq \infty$ and $0 \leq \vartht \leq 1$ defined by $\frac{1}{p} =  \frac{1-\vartht}{q} + \frac{\vartht}{r}$, we have
\begin{equation} \label{eq:noZ}
	\nrm{\cosh \hY  \phi}_{L^{p}(\calH_{\hT}, \frac{\ud \Vol}{\cosh \hY})}
	\leq C \nrm{\cosh \hY \phi}^{1- \vartht}_{L^{q}(\calH_{\hT}, \frac{\ud \Vol}{\cosh \hY})} 
		\nrm{\cosh \hY \phi}^{\vartht}_{L^{r}(\calH_{\hT}, \frac{\ud \Vol}{\cosh \hY})} 
\end{equation}
\end{lemma}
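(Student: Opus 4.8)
The plan is to strip off the weight and reduce \eqref{eq:noZ} to the classical interpolation (log-convexity) of Lebesgue norms against a single fixed measure. Concretely, I would set $\psi := \cosh\hY\,\phi$ and $\ud\mu := \frac{\ud\Vol}{\cosh\hY}$ on $\calH_{\hT}$, so that each of the three norms appearing in \eqref{eq:noZ} is literally $\nrm{\psi}_{L^{s}(\calH_{\hT}, \ud\mu)}$ for the corresponding exponent $s \in \set{p, q, r}$. After this substitution, \eqref{eq:noZ} becomes the assertion that $1/s \mapsto \log \nrm{\psi}_{L^{s}(\ud\mu)}$ is convex, i.e. $\nrm{\psi}_{L^{p}(\ud\mu)} \leq \nrm{\psi}_{L^{q}(\ud\mu)}^{1-\vartht}\,\nrm{\psi}_{L^{r}(\ud\mu)}^{\vartht}$ whenever $\tfrac1p = \tfrac{1-\vartht}{q} + \tfrac{\vartht}{r}$; in particular one may take $C = 1$. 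Since $\phi$ is smooth and compactly supported, all the norms are finite, so no $\infty \leq \infty$ issue arises.

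Then I would prove this elementary inequality by H\"older's inequality. First I would dispose of the degenerate cases: $\vartht = 0$ gives $p = q$, $\vartht = 1$ gives $p = r$, and $r = \infty$ forces $p = q = \infty$ via $r \leq p \leq q$; hence one may assume $0 < \vartht < 1$ and $r < \infty$. In the main case $r \leq p \leq q < \infty$, I would split $\abs{\psi}^{p} = \abs{\psi}^{(1-\vartht)p}\,\abs{\psi}^{\vartht p}$ and apply H\"older with exponents $a = \frac{q}{(1-\vartht)p}$ and $b = \frac{r}{\vartht p}$; the only thing to check is $a, b \geq 1$ and $\tfrac1a + \tfrac1b = 1$, both of which follow at once from the defining relation for $\vartht$ together with $r \leq p \leq q$. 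Taking $p$-th roots of the resulting bound $\int \abs{\psi}^{p}\,\ud\mu \leq \left(\int\abs{\psi}^{q}\,\ud\mu\right)^{(1-\vartht)p/q}\left(\int\abs{\psi}^{r}\,\ud\mu\right)^{\vartht p/r}$ finishes this case. For the remaining endpoint $q = \infty$, the relation forces $p = r/\vartht$, and I would estimate $\int \abs{\psi}^{p}\,\ud\mu \leq \nrm{\psi}_{L^{\infty}(\ud\mu)}^{(1-\vartht)p}\int \abs{\psi}^{r}\,\ud\mu$ directly, again taking $p$-th roots.

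There is no substantial obstacle here: the lemma is a reformulation of H\"older's inequality, and the only care required is the bookkeeping of the degenerate/endpoint exponents and the verification that the chosen H\"older exponents are admissible. The single feature that matters for the downstream use of this lemma (interpolating between the weighted norms $\nrm{\cosh\hY\,\covZ^{(m)}\phi}_{L^{p}(\calH_{\hT}, \frac{\ud\Vol}{\cosh\hY})}$ occurring in Proposition~\ref{prop:main}) is that the constant is independent of $\hT$, and this is automatic since the whole argument takes place on a single hypersurface $\calH_{\hT}$ equipped with the fixed measure $\ud\mu$.
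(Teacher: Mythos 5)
Your proof is correct and is essentially the paper's own argument: the paper likewise observes that \eqref{eq:noZ} is just H\"older's inequality (log-convexity of $L^{s}$ norms) applied to $\cosh \hY \, \phi$ with respect to the fixed measure $(\cosh \hY)^{-1} \ud \Vol_{\calH_{\hT}}$, the only cosmetic difference being that the paper first rescales to $\hT = 1$, which your direct treatment on $\calH_{\hT}$ makes unnecessary.
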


By scaling, we may take $\hT = 1$. Then this lemma is an easy consequence of H\"older's inequality with respect to the measure $(\cosh \hY)^{-1} \ud \Vol_{\calH_{1}}$. 

\begin{lemma}[Covariant Gagliardo-Nirenberg inequality with weights] \label{lem:Z}
Let $\phi$ be a smooth compactly supported function on $\calH_{\hT}$. 
Then for $2 \leq p, q, r \leq \infty$ and $\frac{2}{p} = \frac{1}{q} + \frac{1}{r}$, we have
\begin{equation} \label{eq:Z}
\begin{aligned}
&	\nrm{\cosh \hY \covZ \phi}_{L^{p}(\calH_{\hT}, \frac{\ud \Vol}{\cosh \hY})} \\
& \quad \leq C \nrm{\cosh \hY  \phi}_{L^{q}(\calH_{\hT}, \frac{\ud \Vol}{\cosh \hY})}^{\frac{1}{2}} 
	\bb( \sum_{k: 0 \leq k \leq 2} \nrm{\cosh \hY \covZ^{(k)} \phi}_{L^{r}(\calH_{\hT}, \frac{\ud \Vol}{\cosh \hY})} \bb)^{\frac{1}{2}}
\end{aligned}
\end{equation}
\end{lemma}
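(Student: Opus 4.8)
The plan is to reduce to the scale $\hT = 1$ by scaling and then establish the weighted covariant Gagliardo--Nirenberg inequality on $\calH_1$. On $\calH_1$ the weight $\cosh \hY$ is comparable to $t$, and the volume element $\frac{\ud \sgm}{\cosh \hY}$ is the natural measure appearing in the energy identity. The key reduction is the diamagnetic (Kato) inequality, Lemma~\ref{lem:diamag}, which allows us to pass from $\covZ \phi$ to $\rd \abs{\phi}$ (and more generally to bound derivatives of amplitudes of covariant derivatives by the next-order covariant quantities via \eqref{eq:dY}, \eqref{eq:wdTht}, \eqref{eq:dTht}, \eqref{eq:Z01}, \eqref{eq:Z02}). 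Thus, modulo these relations, it suffices to prove the \emph{scalar} weighted Gagliardo--Nirenberg inequality
\begin{equation*}
	\nrm{\cosh \hY \, \rd u}_{L^{p}(\calH_{1}, \frac{\ud \Vol}{\cosh \hY})}
	\leq C \nrm{\cosh \hY \, u}_{L^{q}(\calH_{1}, \frac{\ud \Vol}{\cosh \hY})}^{1/2}
	\bb( \sum_{k \leq 2} \nrm{\cosh \hY \, \rd^{(k)} u}_{L^{r}(\calH_{1}, \frac{\ud \Vol}{\cosh \hY})} \bb)^{1/2}
\end{equation*}
for compactly supported $u$ on $\calH_1$, where $\rd$ denotes the $\covZ$-type vector fields $\rd_{\hY}, \frac{\cosh\hY}{\sinh\hY}\rd_{\tht}$ in the $\hY \geq 1$ region and ordinary derivatives near the axis.

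The main step is then an integration-by-parts/Cauchy--Schwarz argument of the standard Gagliardo--Nirenberg type, adapted to carry the weight. Concretely, I would write $\int (\cosh\hY)^{p-1} \abs{\rd u}^p \, \ud\sgm$ and integrate by parts in one of the good vector fields. One derivative lands on $\abs{\rd u}^{p-1}$, producing $\abs{\rd u}^{p-2} \abs{\rd^{(2)} u}$, another on $u$ itself (after reorganizing), and the remaining factors must be controlled; the weight $\cosh\hY$ is essentially parallel with respect to these vector fields up to acceptable lower-order terms (since $\rd_{\hY} \cosh\hY = \sinh\hY \aleq \cosh\hY$ and $\rd_\tht \cosh \hY = 0$), so the weights distribute as $(\cosh\hY)^{p/2}$ onto the $u$-factor and the same onto the second-derivative factor. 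Applying Hölder with the exponents $q, r$ and the interpolation relation $\frac{2}{p} = \frac1q + \frac1r$ then yields the claimed bound, after absorbing the boundary terms (which vanish by compact support). The cases $p = \infty$ (so $q = r = \infty$ and the inequality is essentially trivial) and the handling of the region $\set{\hY \leq 1}$ (where everything reduces to the classical Euclidean Gagliardo--Nirenberg inequality on a ball in $\bbR^2$, as in the proof of Proposition~\ref{prop:KlSob}) should be treated separately but are routine.

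I expect the main obstacle to be the careful bookkeeping of the weights under integration by parts: one must verify that commuting $\cosh\hY$ past the vector fields and past the absolute values (via the diamagnetic inequality applied at each stage) produces only terms that are either of the desired form or are strictly lower order and can be absorbed. A secondary subtlety is that the natural commuting fields $\covZ_{01}, \covZ_{02}$ are not an orthonormal frame on $\calH_{\hT}$ in the region $r \sim t$, so one should work with $\rd_{\hY}$ and $\frac{\cosh\hY}{\sinh\hY}\rd_{\tht}$ (the normalized tangential fields) and convert back to $\covZ$ at the end using \eqref{eq:Z01}--\eqref{eq:Z02}, noting that the extra factor $\frac{\cosh\hY}{\sinh\hY}$ is bounded on $\set{\hY \geq 1}$. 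Once these are organized, the proof is a direct weighted adaptation of the one-line Gagliardo--Nirenberg argument, so I would present it in schematic form and refer to the classical statement for the endpoint Sobolev inputs, exactly as is done for Lemmas~\ref{lem:stdSob} and~\ref{lem:diamag}.
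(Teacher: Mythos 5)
Your reduction step has a directional gap. The diamagnetic inequality (Lemma~\ref{lem:diamag}) gives $\abs{\rd_{X}\abs{\phi}} \leq \abs{\covD_{X}\phi}$, i.e.\ it lets you dominate derivatives of the \emph{amplitude} by covariant derivatives — which is exactly why it works in the proof of Proposition~\ref{prop:KlSob}, where the covariant quantities sit on the right-hand side. Here, however, the quantity you must estimate is $\nrm{\cosh\hY\,\covZ\phi}_{L^{p}}$ itself, and $\abs{\covZ\phi}$ can be strictly larger than $\abs{\rd_{Z}\abs{\phi}}$ (e.g.\ when $\abs{\phi}$ is constant but the ``phase'' varies, all of $\covZ\phi$ is invisible to the amplitude). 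So proving the scalar weighted Gagliardo--Nirenberg inequality for $u=\abs{\phi}$ controls only $\nrm{\cosh\hY\,\rd\abs{\phi}}_{L^{p}}$, which is a weaker statement than \eqref{eq:Z}; and you cannot instead take $u=\abs{\covZ\phi}$, since $\abs{\covZ\phi}$ is not the derivative of any scalar, so the integration-by-parts step of the classical argument has nothing to act on. Your later suggestion to integrate by parts ``applying the diamagnetic inequality at each stage'' runs into the same problem: the inequality only goes one way and cannot be used to move a derivative off $\covZ\phi$.

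The fix — and this is what the paper does — is to run the Gagliardo--Nirenberg integration-by-parts argument entirely at the covariant level, with no scalar reduction. The three ingredients are: (i) the gauge-covariant Leibniz rule \eqref{eq:leibniz-V}, which holds with \emph{equality} because the connection is unitary, so $Z\brk{\phi,\covZ\phi} = \brk{\covZ\phi,\covZ\phi} + \brk{\phi,\covZ^{2}\phi}$; (ii) the integration-by-parts identity $\int_{\calH_{1}} (Zf)\,g\,\ud\sgm = -\int_{\calH_{1}} f\,Zg\,\ud\sgm$, valid because the $Z_{\mu\nu}$ are Killing in $\bbR^{1+2}$ and tangent to $\calH_{1}$, hence divergence-free there; and (iii) the weight bound $\abs{Z(\cosh\hY)} \leq \cosh\hY$ (from $\cosh\hY = t/\hT$). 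Writing $p = 2+2b$ and integrating by parts once in $\int (\cosh\hY)^{p-1}\brk{\covZ\phi,\covZ\phi}^{1+b}\,\ud\sgm$ produces terms of the schematic form $\phi\cdot\covZ^{(\leq 2)}\phi\cdot\abs{\covZ\phi}^{p-2}$ with the correct weights, and H\"older with $\tfrac{2}{p}=\tfrac{1}{q}+\tfrac{1}{r}$ closes the estimate after dividing by $\wnrm{\cosh\hY\covZ\phi}_{L^{p}}^{p-2}$. Note that working directly with the $Z_{\mu\nu}$ also removes the issues you anticipated: there is no need to split into the regions $\hY\leq 1$ and $\hY\geq 1$, and no need to pass to the normalized frame $\rd_{\hY}$, $\tfrac{\cosh\hY}{\sinh\hY}\rd_{\tht}$ and back.
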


\begin{proof} 
To simplify the exposition, we will use the following notation: For $1 \leq r \leq \infty$ and $k \geq 0$ an integer, we will write
\begin{equation*}
\nrm{\cosh \hY \covZ^{(\leq k)} \phi}_{L^{r}(\calH_{\hT}, \frac{\ud \Vol}{\cosh \hY})}
:= \sum_{0 \leq k' \leq k} \nrm{\cosh \hY \covZ^{(k')} \phi}_{L^{r}(\calH_{\hT}, \frac{\ud \Vol}{\cosh \hY})}.
\end{equation*}
Also, by scaling, it suffices to consider the case $\hT = 1$. Below, we will omit $\calH_{1}$ and simply write $L^{p} = L^{p}(\calH_{1}, \frac{\ud \Vol}{\cosh \hY})$.

Before we embark on the proof, we will introduce a few necessary ingredients. Our first ingredient is the following integration by parts formula on $\calH_{1}$: For $Z = Z_{\mu \nu}$ ($\mu, \nu = 0, 1, 2$) and $f, g$ smooth compactly supported real-valued functions on $\calH_{1}$, it holds that
\begin{equation} \label{eq:intbyparts}
	\int_{\calH_{1}} (Z f) g \, \ud \sgm_{\calH_{1}} = - \int_{\calH_{1}} f Z g \, \ud \sgm_{\calH_{1}}.
\end{equation}
This identity, which is equivalent to saying that the orthogonal projection of $Z_{\mu \nu}$ to $\calH_{1}$ is divergence-free, is an immediate consequence of the fact that $Z_{\mu \nu}$ is a Killing vector field in the ambient Minkowski space $\bbR^{1+2}$ that is tangent to $\calH_{1}$.
Observe also that
\begin{equation*}
	\abs{Z (\cosh \hY)} \leq \cosh \hY. 
\end{equation*}
A quick way to verify this inequality is to note that $\cosh \hY = \frac{t}{\hT}$ and 
\begin{equation*}
\abs{Z_{\mu \nu} (\frac{t}{\hT})} = \frac{1}{\hT}\abs{x_{\mu} \dlt_{\nu}^{0}  - x_{\nu} \dlt_{\mu}^{0} } \leq \frac{t}{\hT}
\hbox{ on } \calH_{1}.
\end{equation*}

With these preparations, we are now ready to prove \eqref{eq:Z}. With \eqref{eq:intbyparts}, as well as the Leibniz rule \eqref{eq:leibniz-V}, this inequality can be easily shown using H\"{o}lder's inequality as follows: Writing $p = 2+ 2b$, where $b \geq 0$ since $p \geq 2$, we have
\begin{align*}
\wnrm{\cosh \hY \covZ \phi}_{L^{p}}^{p}
=& \int (\cosh \hY)^{p-1} \brk{\covZ \phi, \covZ\phi} \brk{\covZ \phi, \covZ \phi}^{b} \, \ud \Vol \\
= & - \int (\cosh \hY)^{p-1} \brk{\phi, \covZ^{2} \phi} \brk{\covZ \phi, \covZ \phi}^{b} \, \ud \Vol \\
&	+ \int (\cosh \hY)^{p-1} Z\brk{\phi, \covZ \phi} \brk{\covZ \phi, \covZ \phi}^{b} \, \ud \Vol \\
= 	& - \int (\cosh \hY)^{p} \brk{\phi, \covZ^{2} \phi} \brk{\covZ \phi, \covZ \phi}^{b} \, \frac{\ud \Vol}{\cosh \hY} \\
 & - 2 b \int (\cosh \hY)^{p} \brk{\phi, \covZ \phi} \brk{\covZ \phi, \covZ \phi}^{b-1} \brk{\covZ \phi, \covZ^{2} \phi} \, \frac{\ud \Vol}{\cosh \hY} \\
	& - (p-1) \int (\cosh \hY)^{p-1} \abs{Z (\cosh \hY)} \brk{\phi, \covZ \phi} \brk{\covZ \phi, \covZ \phi}^{b} \, \frac{\ud \Vol}{\cosh \hY}.
\end{align*}
Then the absolute value of the last expression is bounded from the above by
\begin{equation*}
\leq C  \wnrm{\cosh \hY \phi}_{L^{q}} 
		\wnrm{\cosh \hY\covZ^{(\leq 2)} \phi}_{L^{r}}
		\wnrm{\cosh \hY \covZ \phi}_{L^{p}}^{p-2},
\end{equation*}
from which \eqref{eq:Z} follows.  \qedhere
\end{proof}

\begin{lemma} \label{lem:GN}
Let $\phi$ be a smooth compactly supported function on $\calH_{\hT}$. For $0 \leq k \leq m-1$, we have
\begin{equation} \label{eq:GN}
\begin{aligned}
& \sum_{0 \leq \ell \leq k} \| \cosh \hY \, \covZ^{(\ell)} \phi \|_{L^{p}(\calH_{\hT}, \frac{\ud \Vol}{\cosh \hY})}   \\
& \qquad \leq C  \|  \cosh \hY \phi  \|^{1- \frac{k}{m}}_{L^{\infty}(\calH_{\hT}, \frac{\ud \Vol}{\cosh \hY})} 
\bb( \sum_{0 \leq \ell \leq m} \| \cosh \hY \, \covZ^{(\ell)} \phi\|_{L^2(\calH_{\hT}, \frac{\ud \Vol}{\cosh \hY})}  \bb)^{\frac{k}{m}} 
\end{aligned}
\end{equation}
where $\frac{1}{p} =  \frac{k}{2m}$.
\end{lemma}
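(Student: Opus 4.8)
The plan is to prove Lemma~\ref{lem:GN} as a weighted Gagliardo--Nirenberg inequality, built from the one-derivative estimate of Lemma~\ref{lem:Z} and the H\"older-type interpolation of Lemma~\ref{lem:noZ}. First I would reduce to $\hT = 1$ by the same scaling argument used for Proposition~\ref{prop:KlSob}, abbreviate $L^{p} = L^{p}(\calH_{1},\frac{\ud\Vol}{\cosh\hY})$, and set $B := \nrm{\cosh\hY\,\phi}_{L^{\infty}}$, $D := \sum_{0\le j\le m}\nrm{\cosh\hY\,\covZ^{(j)}\phi}_{L^{2}}$ (both assumed positive, else the statement is trivial). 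For $0\le j\le m$ introduce the ``diagonal'' exponent $p_{j}$ with $\frac{1}{p_{j}} = \frac{j}{2m}$, so that $p_{0}=\infty$ and $p_{m}=2$.

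The core of the argument is the family of diagonal estimates $\nrm{\cosh\hY\,\covZ^{(j)}\phi}_{L^{p_{j}}} \le C\, B^{1-j/m}\, D^{j/m}$ for $0\le j\le m$. These hold trivially for $j=0$ and $j=m$. For $1\le j\le m-1$ I would apply Lemma~\ref{lem:Z} to $\covZ^{(j-1)}\phi$ with $q=p_{j-1}$ and $r=p_{j+1}$ (consistent with $\frac{2}{p_{j}} = \frac{1}{p_{j-1}}+\frac{1}{p_{j+1}}$), which bounds $\nrm{\cosh\hY\,\covZ^{(j)}\phi}_{L^{p_{j}}}$ by $\nrm{\cosh\hY\,\covZ^{(j-1)}\phi}_{L^{p_{j-1}}}^{1/2}$ times the square root of $\sum_{i=j-1}^{j+1}\nrm{\cosh\hY\,\covZ^{(i)}\phi}_{L^{p_{j+1}}}$. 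The term $i=j+1$ is at its natural exponent; the lower-order terms $i=j-1,j$ live at the smaller exponent $p_{j+1}\le p_{i}$, and I would bring them back to the natural exponents via Lemma~\ref{lem:noZ} (interpolating $L^{p_{j+1}}$ against $L^{p_{i}}$ and $L^{2}$), which costs only a power of $D$ and, crucially, carries a universal constant. Setting $a_{j} := \log\nrm{\cosh\hY\,\covZ^{(j)}\phi}_{L^{p_{j}}}$, the outcome is a discrete ``convexity up to an additive constant'' inequality, essentially $a_{j}\le \tfrac12 a_{j-1}+\tfrac12 a_{j+1}+K$ (the $L^{2}$-type error terms only improve this, pulling toward $\log D\ge a_{m}$, and any term re-producing $a_{j}$ does so with coefficient $<1$ and is absorbed). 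Since $a_{0}=\log B$ and $a_{m}\le \log D$, the elementary fact that $a_{j}\le\tfrac12 a_{j-1}+\tfrac12 a_{j+1}+K$ for $1\le j\le m-1$ forces $a_{j}\le\tfrac{m-j}{m}a_{0}+\tfrac{j}{m}a_{m}+Kj(m-j)$ (proved by subtracting the discrete quadratic $Kj(m-j)$ and invoking genuine convexity) yields the diagonal estimates.

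Finally I would deduce the stated inequality: for $0\le\ell\le k$ one has $2\le p\le p_{\ell}$ with $\frac{1}{p}=\frac{k}{2m}$, so Lemma~\ref{lem:noZ} interpolates $\nrm{\cosh\hY\,\covZ^{(\ell)}\phi}_{L^{p}}$ between $\nrm{\cosh\hY\,\covZ^{(\ell)}\phi}_{L^{2}}\le D$ and $\nrm{\cosh\hY\,\covZ^{(\ell)}\phi}_{L^{p_{\ell}}}$, with interpolation parameter $\vartht = \frac{k-\ell}{m-\ell}\in[0,1]$; plugging in the diagonal estimate gives a bound $C\,D^{\vartht}(B^{1-\ell/m}D^{\ell/m})^{1-\vartht}$, and one checks the exponents combine precisely to $B^{1-k/m}D^{k/m}$, after which summing over $\ell$ finishes the proof. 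I expect the main obstacle to be the bookkeeping in the middle step: confirming that every error term produced by Lemma~\ref{lem:Z} really fits the convexity scheme with a constant independent of $\supp\phi$ (so the final $C$ is genuinely universal), and running the ``convexity up to a constant'' argument uniformly in $j$ despite the slightly different shapes of the inequalities at different levels; once that is set up, all the remaining manipulations are routine H\"older and interpolation estimates.
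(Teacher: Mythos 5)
Your proposal is correct in substance, and it is built on the same two ingredients as the paper (the weighted H\"older interpolation of Lemma~\ref{lem:noZ} and the one-derivative estimate of Lemma~\ref{lem:Z}, run along the ladder $\frac{1}{p_{j}}=\frac{j}{2m}$), but the architecture is genuinely different. The paper proves the one-step inequality \eqref{eq:Zk}, $\wnrm{\cosh \hY\, \covZ^{(\leq k)}\phi}_{L^{r_{k}}}\leq C\, B^{\frac{1}{k+1}}\wnrm{\cosh\hY\,\covZ^{(\leq k+1)}\phi}_{L^{r_{k+1}}}^{\frac{k}{k+1}}$, by induction on $k$: Lemma~\ref{lem:Z} is applied at exponents $(r_{k-1},r_{k+1})$, the $r_{k-1}$ factor is handled by the induction hypothesis, and the same-level sum reappears on the right with power $\frac{k-1}{2k}<1$ and is simply absorbed; telescoping \eqref{eq:Zk} from $k$ to $m$ then gives \eqref{eq:GN} directly for the sum at the common exponent $p=r_{k}$, so no further interpolation is needed and the constants are tracked trivially. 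You instead aim at the ``diagonal'' estimates $\wnrm{\cosh\hY\,\covZ^{(j)}\phi}_{L^{p_{j}}}\leq C\,B^{1-\frac{j}{m}}D^{\frac{j}{m}}$ via a discrete log-convexity system solved by a supersolution/maximum-principle argument with the correction $K'j(m-j)$, and then interpolate back down to $L^{p}$ at the end (your exponent bookkeeping there, $\vartht=\frac{k-\ell}{m-\ell}$ recombining to $B^{1-\frac{k}{m}}D^{\frac{k}{m}}$, is correct). This route does close: the lower-order terms produced by Lemma~\ref{lem:Z} at exponent $p_{j+1}$, once interpolated against $L^{2}$ via Lemma~\ref{lem:noZ}, turn out to be \emph{exactly} consistent with the linear profile $\frac{m-j}{m}\log B+\frac{j}{m}\log D$ (zero slack), and the self-term in $a_{j}$ carries coefficient $\frac{1}{2}(1-\frac{1}{m-j})<\frac{1}{2}$, so the quadratic correction and a maximum-principle over $1\leq j\leq m-1$ (with $a_{0}=\log B$, $a_{m}\leq\log D$) finish the job. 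Two caveats: your heuristic that the error terms ``only improve'' the convexity inequality is not justified as stated---since no relation between $B$ and $D$ is known, you genuinely need the exact cancellation above rather than a monotonicity argument---and the log formulation requires an $\eps$-perturbation (or a multiplicative version of the maximum principle) to cover the case where some intermediate norm vanishes. The paper's absorption-based induction avoids all of this machinery and yields the stated form of \eqref{eq:GN} in one pass; your scheme is longer but produces the classical Nirenberg-type diagonal estimates as a byproduct.
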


\begin{proof} 
We will use the same notation and convention as the previous proof. Fix $m \geq 1$. We claim that the following holds: For $1 \leq k \leq m-1$ and $r_{k} := \frac{2m}{k}$,
\begin{equation} \label{eq:Zk}
	\wnrm{\cosh \hY \covZ^{(\leq k)} \phi}_{L^{r_{k}}}
	\leq C \wnrm{\cosh \hY \phi}_{L^{\infty}}^{\frac{1}{k+1}}
			\wnrm{\cosh \hY \covZ^{(\leq k+1)} \phi}_{L^{r_{k+1}}}^{\frac{k}{k+1}}\, .
\end{equation}
Indeed, \eqref{eq:GN} would follow from \eqref{eq:Zk} by induction on $k$.

To prove the inequality \eqref{eq:Zk}, we use a separate induction argument on $k$. The $k=1$ case follows from \eqref{eq:noZ} and \eqref{eq:Z} by taking $p = 2m$, $q = \infty$ and $r =m$. Next, assume that \eqref{eq:Zk} holds for some integer $k-1$ such that $1\leq k-1 \leq m-1$. Then for every integer $\ell$ satisfying $1 \leq \ell \leq k$, by \eqref{eq:Z} and the induction hypothesis, we have
\begin{align} \label{eq:GN:pf:1}
	&\wnrm{\cosh \hY \covZ^{(\ell)} \phi}_{L^{r_{k}}} \notag \\
	& \quad \leq C \wnrm{\cosh \hY \covZ^{(\ell+1)} \phi}_{L^{r_{k+1}}}^{\frac{1}{2}}
		\wnrm{\cosh \hY \covZ^{(\ell-1)} \phi}_{L^{r_{k-1}}}^{\frac{1}{2}} \\
	& \quad \leq C \wnrm{\cosh \hY \covZ^{(\leq k+1)} \phi}_{L^{r_{k+1}}}^{\frac{1}{2}}
		\bb( \wnrm{\cosh \hY \covZ^{(\leq k)} \phi}_{L^{r_{k}}}^{\frac{k-1}{k}} \wnrm{\cosh \hY \phi}_{L^{\infty}}^{\frac{1}{k}} \bb)^{\frac{1}{2}}. \notag
\end{align}
For $\ell = 0$, we have
\begin{equation} \label{eq:GN:pf:2}
	\wnrm{\cosh \hY \phi}_{L^{r_{k}}}
	\leq C \wnrm{\cosh \hY \phi}_{L^{\infty}}^{\frac{1}{k+1}} \wnrm{\cosh \hY \phi}_{L^{r_{k+1}}}^{\frac{k}{k+1}}
\end{equation}
by \eqref{eq:noZ}. Summing up \eqref{eq:GN:pf:1} for $1 \leq \ell \leq k$ and \eqref{eq:GN:pf:2}, we arrive at
\begin{align*}
& \wnrm{\cosh \hY \covZ^{(\leq k)} \phi}_{L^{r_{k}}} \\
& \quad \leq C \wnrm{\cosh \hY \covZ^{(\leq k+1)} \phi}_{L^{r_{k+1}}}^{\frac{1}{2}}
		\bb( \wnrm{\cosh \hY \covZ^{(\leq k)} \phi}_{L^{r_{k}}}^{\frac{k-1}{k}} \wnrm{\cosh \hY \phi}_{L^{\infty}}^{\frac{1}{k}} \bb)^{\frac{1}{2}}, 
\end{align*}
which then implies the desired estimate for $k$.
\end{proof}

\section{Commutation relations and structure of the equations} \label{sec:comm}
The purpose of this section is to compute the equation satisfied by $\covZ^{(k)} \phi$, using the commutation properties of \eqref{eq:CSH} and \eqref{eq:CSD} with respect to $\covZ_{\mu \nu}$. This is the main algebraic ingredient of our proof of Theorems~\ref{thm:CSH} and \ref{thm:CSD}. 

The main tool for our computation is the formalism of exterior differential calculus for $V$- and $\LieAlg$-valued forms introduced in Section~\ref{subsec:extr-calc}, which is summarized and further developed in Section~\ref{subsec:extr-calc-2}. Then in Section~\ref{subsec:comm-covBox}, we compute the commutator between $\covBox-1$ and $\covZ_{\mu \nu}$, under the Chern--Simons equation $F = \star J$. The (general order) commutator can be expressed in terms of covariant Lie derivatives of the current $J$ (i.e., $\covLD_{Z}^{(k)} J$); the latter is computed for \eqref{eq:CSH} and \eqref{eq:CSD} in Sections~\ref{subsec:comm-CSH} and \ref{subsec:comm-CSD}, respectively. 
In Section~\ref{subsec:comm-U}, we establish commutation properties of the $V$-valued potential $U(\phi)$. 
Finally, in Section~\ref{subsec:ptwise}, we provide rudimentary pointwise bounds for various expressions introduced in this section, which will be basic to the analysis performed in Section~\ref{sec:BA}.

\subsection{More on the exterior differential calculus} \label{subsec:extr-calc-2}
This section is a continuation of Section~\ref{subsec:extr-calc}. We begin by summarizing the key formulae of the exterior differential calculus of real-valued differential forms.
\begin{lemma}[Exterior differential calculus] \label{lem:extr-calc}
Given a real-valued $k$-form $\omg$ and vector fields $X$, $Y$, the following identities hold.
\begin{align}
	[\LD_{X}, \iota_{Y}] \omg = & \iota_{[X,Y]} \omg ,\\
	[\LD_{X}, \LD_{Y}] \omg = & \LD_{[X,Y]} \omg , \\
	[\LD_{X}, \ud] \omg =& 0, \\
	\ud^{2} \omg =& 0.
\end{align}
The following identity, called \emph{Cartan's formula}, also holds.
\begin{align} 
	\iota_{X} \ud \omg + \ud \iota_{X} \omg =& \LD_{X} \omg. \label{eq:cartan-eq}
\end{align}
Moreover, given a real-valued $\ell$-form $\omg'$, the following Leibniz rules hold.
\begin{align}
	\LD_{X} (\omg \wedge \omg') =& (\LD_{X} \omg) \wedge \omg' + \omg \wedge \LD_{X} \omg'			\\
	\iota_{X} (\omg \wedge \omg') = & (\iota_{X} \omg) \wedge \omg' + (-1)^{k} \omg \wedge \iota_{X} \omg'	\\
	\ud (\omg \wedge \omg') = & (\ud \omg) \wedge \omg' + (-1)^{k} \omg \wedge \ud \omg'.
\end{align}
\end{lemma}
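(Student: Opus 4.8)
The plan is to establish the identities in a logical cascade, starting from the ones that follow directly from the coordinate formula and bootstrapping to the rest. Throughout I would work in the rectilinear coordinates $(x^{0},x^{1},x^{2})$, in which a real-valued $k$-form is a finite linear combination of monomials $f\,\ud x^{\mu_{1}}\wedge\cdots\wedge\ud x^{\mu_{k}}$ with $f$ smooth, and $\ud(f\,\ud x^{\mu_{1}}\wedge\cdots\wedge\ud x^{\mu_{k}})=\rd_{\nu}f\,\ud x^{\nu}\wedge\ud x^{\mu_{1}}\wedge\cdots\wedge\ud x^{\mu_{k}}$. From this expression the nilpotency $\ud^{2}\omg=0$ is immediate (the symmetry of second partials against the antisymmetry $\ud x^{\nu}\wedge\ud x^{\lambda}=-\ud x^{\lambda}\wedge\ud x^{\nu}$), and so is the graded Leibniz rule $\ud(\omg\wedge\omg')=(\ud\omg)\wedge\omg'+(-1)^{k}\omg\wedge\ud\omg'$. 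The Leibniz rule for $\iota_{X}$ is equally elementary: $\iota_{X}$ is the unique antiderivation of degree $-1$ with $\iota_{X}f=0$ on functions and $\iota_{X}\ud x^{\mu}=\ud x^{\mu}(X)$, so the rule $\iota_{X}(\omg\wedge\omg')=(\iota_{X}\omg)\wedge\omg'+(-1)^{k}\omg\wedge\iota_{X}\omg'$ reduces to a direct sign count on monomials.

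Next I would prove Cartan's formula \eqref{eq:cartan-eq}. Since $\LD_{X}$ is defined via the one-parameter group of diffeomorphisms generated by $X$, the clean route is the standard derivation argument: both $\LD_{X}$ and $\iota_{X}\ud+\ud\iota_{X}$ are degree-zero derivations of the exterior algebra $\Omega^{\bullet}(\bbR^{1+2})$; both commute with $\ud$ (for $\LD_{X}$ because pullback by a diffeomorphism commutes with $\ud$; for $\iota_{X}\ud+\ud\iota_{X}$ because $\ud(\iota_{X}\ud+\ud\iota_{X})=\ud\iota_{X}\ud=(\iota_{X}\ud+\ud\iota_{X})\ud$ by $\ud^{2}=0$); and both agree on functions, since $\LD_{X}f=Xf=\iota_{X}\ud f$ while $\ud\iota_{X}f=0$. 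A degree-zero derivation of $\Omega^{\bullet}(\bbR^{1+2})$ that commutes with $\ud$ is determined by its action on $\Omega^{0}$ (every $1$-form is locally a sum of $f\,\ud g$, and higher forms are wedges of these), so the two operators coincide. With Cartan in hand, $[\LD_{X},\ud]\omg=0$ follows since $\LD_{X}\ud=(\iota_{X}\ud+\ud\iota_{X})\ud=\ud\iota_{X}\ud=\ud(\iota_{X}\ud+\ud\iota_{X})=\ud\LD_{X}$, and the Leibniz rule for $\LD_{X}$ follows by plugging Cartan's decomposition of $\LD_{X}$ into the already-established Leibniz rules for $\iota_{X}$ and $\ud$.

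It then remains to handle the two commutator identities. For $[\LD_{X},\iota_{Y}]\omg=\iota_{[X,Y]}\omg$ I would observe that both sides are antiderivations of degree $-1$ in $\omg$, so by the same ``check on generators'' principle it suffices to verify equality on functions (where both vanish) and on exact $1$-forms: $\LD_{X}\iota_{Y}\ud f-\iota_{Y}\LD_{X}\ud f=X(Yf)-\iota_{Y}\ud(Xf)=X(Yf)-Y(Xf)=[X,Y]f=\iota_{[X,Y]}\ud f$, using $\LD_{X}\ud=\ud\LD_{X}$. Finally, for $[\LD_{X},\LD_{Y}]\omg=\LD_{[X,Y]}\omg$ I would expand $\LD_{Y}=\iota_{Y}\ud+\ud\iota_{Y}$ and use the bookkeeping
\[
[\LD_{X},\LD_{Y}]=[\LD_{X},\,\iota_{Y}\ud+\ud\iota_{Y}]=[\LD_{X},\iota_{Y}]\,\ud+\ud\,[\LD_{X},\iota_{Y}]=\iota_{[X,Y]}\ud+\ud\iota_{[X,Y]}=\LD_{[X,Y]},
\]
where the terms containing $[\LD_{X},\ud]$ drop out and the last equality is Cartan's formula applied to $[X,Y]$.

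The whole argument is routine and classical (see \cite[Chapter~1]{Kobayashi:1963uh}). The only step that needs genuine care, rather than mere symbol pushing, is deducing Cartan's formula from the flow definition of $\LD_{X}$, i.e.\ making precise the statement that a degree-zero derivation commuting with $\ud$ is pinned down by its values on functions; I expect that to be the main — though still standard — obstacle, with everything else being short manipulations once $\ud^{2}=0$ and the two Leibniz rules are available.
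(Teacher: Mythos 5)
Your proof is correct, and it is essentially the classical argument: the paper itself states Lemma~\ref{lem:extr-calc} without proof, referring to \cite[Chapter~1]{Kobayashi:1963uh}, and your cascade (coordinate verification of $\ud^{2}=0$ and the graded Leibniz rules, Cartan's formula by the ``two derivations commuting with $\ud$ and agreeing on functions'' principle, then the two commutator identities by checking antiderivations on functions and exact $1$-forms) is exactly the standard route that reference takes. One presentational point: in your Cartan step you already invoke that $\LD_{X}$ is a degree-zero derivation (which indeed follows directly from differentiating $\phi_{t}^{*}(\omg\wedge\omg')=\phi_{t}^{*}\omg\wedge\phi_{t}^{*}\omg'$ along the flow), so the later remark that the Leibniz rule for $\LD_{X}$ ``follows by plugging in Cartan'' is redundant and, as written, mildly circular; you should either prove the Leibniz rule for $\LD_{X}$ from the flow definition up front and drop that sentence, or present it explicitly as an alternative derivation. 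The same comment applies to $[\LD_{X},\ud]=0$, which you establish via pullbacks inside the Cartan argument and then re-derive from Cartan afterwards; neither redundancy is a gap.
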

Along with the facts that $\ud f$ is the usual differential on functions and $\iota_{X} \ud f = X f$, these identities completely characterize the operations $\LD_{X}$, $\iota_{X}$ and $\ud$.

Analogous calculus rules hold for $V$-valued differential forms.
\begin{lemma} \label{lem:extr-calc-V}
Given a $V$-valued $k$-form $v$ and a real-valued $\ell$-form $\omg$, we have
\begin{equation}
	\omg \wedge v = (-1)^{k \ell} v \wedge \omg.
\end{equation}
Let $A$ be a connection 1-form and $F$ the associated curvature 2-form. For any vector fields $X, Y$, the following identities hold.
\begin{align}
	[\covLD_{X}, \iota_{Y}] v = & \iota_{[X,Y]} v ,\label{eq:covLDiX}\\
	[\covLD_{X}, \covLD_{Y}] v = & \covLD_{[X,Y]} v + (\iota_{Y} \iota_{X} F) v, \\
	[\covLD_{X}, \covud] v =& (\iota_{X} F) \wedge v, \label{eq:covLDcovud},\\
	\covud^{2} v =& F \wedge v.		\label{eq:covud-covud}
\end{align}
The following version of Cartan's formula also holds.
\begin{align}
	\iota_{X} \covud v + \covud \iota_{X} v =& \covLD_{X} v. \label{eq:cartan-eq-V} 
\end{align}
Finally, given an additional real-valued $\ell$-form $\omg$, the following Leibniz rules hold.
\begin{align}
	\covLD_{X} (v \wedge \omg) =& (\covLD_{X} v) \wedge \omg + v \wedge \LD_{X} \omg
	\label{eq:leibniz-LD}	\\
	\iota_{X} (v \wedge \omg) = & (\iota_{X} v) \wedge \omg + (-1)^{k} v \wedge \iota_{X} \omg	\\
	\covud (v \wedge \omg) = & (\covud v) \wedge \omg + (-1)^{k} v \wedge \ud \omg. \label{eq:leibniz-covud}
\end{align}
\end{lemma}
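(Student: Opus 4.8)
\textbf{Proof plan for Lemma~\ref{lem:extr-calc-V}.} The plan is to reduce each identity to the corresponding real-valued statement from Lemma~\ref{lem:extr-calc}, exploiting the decomposition of an arbitrary $V$-valued $k$-form as a finite linear combination of tensors $\phi \otimes \omg$ with $\phi$ a $V$-valued function and $\omg$ a real-valued $k$-form. Since all operators in sight ($\covud = \ud + A \wedge \cdot$, $\covLD_X = \LD_X + (\iota_X A)\cdot$, $\iota_X$, and wedging with a real-valued form) are $\bbR$-linear, it suffices to check every identity on such decomposable forms. First I would dispose of the sign rule: $\omg \wedge v = (-1)^{k\ell} v \wedge \omg$ follows from the definition $(\psi \otimes \omg')\wedge(\phi\otimes\omg'') $ versus $(\phi \otimes \omg'')\wedge(\psi\otimes\omg')$ and the analogous graded-commutativity for real-valued forms, together with the fact that wedging a $V$-valued form by a real-valued form acts only on the form factor.

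Next I would establish the commutator identities \eqref{eq:covLDiX}--\eqref{eq:covud-covud} by direct expansion. For \eqref{eq:covud-covud}, compute $\covud^2 v = (\ud + A\wedge)(\ud v + A \wedge v) = \ud^2 v + \ud(A\wedge v) + A \wedge \ud v + A \wedge (A \wedge v)$; using $\ud^2 = 0$, the Leibniz rule $\ud(A \wedge v) = (\ud A)\wedge v - A \wedge \ud v$ (the sign coming from $A$ being a $1$-form), and the identity $A \wedge (A \wedge v) = \frac12 [A \wedge A] \wedge v$, which holds because the non-associativity is exactly compensated by antisymmetrization — one verifies this on $A = b\otimes \omg^1$, $v = \phi \otimes \omg^2$ using the Jacobi identity for $\LieAlg$ acting on $V$ — one obtains $\covud^2 v = (\ud A + \frac12[A\wedge A])\wedge v = F \wedge v$ by \eqref{eq:curv-eq1}. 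For \eqref{eq:covLDcovud}, expand $\covLD_X \covud v$ and $\covud \covLD_X v$ and subtract, using $[\LD_X, \ud] = 0$, the Leibniz rules \eqref{eq:leibniz-LD}--\eqref{eq:leibniz-covud}, Cartan's formula for the real-valued case, and $\LD_X A = \iota_X \ud A + \ud \iota_X A$; the algebra collapses to $(\iota_X \ud A + \frac12 \iota_X[A\wedge A])\wedge v = (\iota_X F)\wedge v$. The identity \eqref{eq:covLDiX} is immediate since $\iota_X$ commutes with wedging by $A$ up to the term $\iota_X A$, which is a scalar (element of $\LieAlg$) and hence passes through; the only genuine content is $[\LD_X,\iota_Y] = \iota_{[X,Y]}$ from Lemma~\ref{lem:extr-calc}. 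The second commutator $[\covLD_X,\covLD_Y]v = \covLD_{[X,Y]}v + (\iota_Y\iota_X F)v$ is then deduced either from \eqref{eq:covLDcovud} and \eqref{eq:covLDiX} via the covariant Cartan formula \eqref{eq:cartan-eq-V}, or by directly expanding and using $[\LD_X,\LD_Y] = \LD_{[X,Y]}$ together with $\iota_Y\iota_X F = \iota_Y(\LD_X A - \ud\iota_X A) - \ldots = X(\iota_Y A) - Y(\iota_X A) - \iota_{[X,Y]}A - (\iota_X A)(\iota_Y A) + (\iota_Y A)(\iota_X A)$, matching the Lie-algebra commutator term that appears.

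The covariant Cartan formula \eqref{eq:cartan-eq-V} follows by adding $\iota_X A \wedge$ terms to the real-valued Cartan formula: $\iota_X \covud v + \covud \iota_X v = \iota_X \ud v + \iota_X(A\wedge v) + \ud \iota_X v + A \wedge \iota_X v = \LD_X v + \iota_X(A \wedge v) + A \wedge \iota_X v$, and $\iota_X(A\wedge v) + A \wedge \iota_X v = (\iota_X A) v$ (again because $A$ is a $1$-form and its interior product is a scalar), which equals $\covLD_X v - \LD_X v$. Finally the three Leibniz rules are checked on decomposable forms: they reduce to the real-valued Leibniz rules of Lemma~\ref{lem:extr-calc}, with \eqref{eq:leibniz-covud} picking up the extra $(A\wedge v)\wedge\omg$ term that combines correctly because $A\wedge(v\wedge\omg) = (A\wedge v)\wedge\omg$ when $\omg$ is real-valued (associativity holds here since $\omg$ acts only on form factors). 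I expect the main obstacle to be bookkeeping the signs and the non-associativity of the $\LieAlg$-valued wedge product — in particular verifying $A\wedge(A\wedge v) = \frac12[A\wedge A]\wedge v$ and the analogous $\iota_X[A\wedge A] = 2(\iota_X A)\cdot A - 2 A\wedge(\iota_X A)\cdot(\ )$-type identities cleanly — all of which ultimately rest on the Jacobi identity; once these are recorded the rest is routine substitution, so I would state these sub-identities as a preliminary computation and then assemble the five claims.
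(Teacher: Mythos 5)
Your plan is sound and all the asserted identities are true, but your route to \eqref{eq:covud-covud} differs from the paper's. The paper observes that for a $0$-form $\phi$ the identity $\covud^{2} \phi = F \wedge \phi$ is nothing but the defining relation \eqref{eq:curv-def} of the curvature (the standard formula for the exterior covariant derivative of the $1$-form $\covud\phi$ produces exactly $\covD_{X}\covD_{Y}\phi - \covD_{Y}\covD_{X}\phi - \covD_{[X,Y]}\phi$), and then extends to $k$-forms by applying the Leibniz rule \eqref{eq:leibniz-covud} to $\phi \otimes \omg$, where the two cross terms $\covud\phi \wedge \ud\omg$ cancel. You instead expand $(\ud + A\wedge)^{2}$ and invoke the coordinate formula \eqref{eq:curv-eq1}, which forces you through the auxiliary quadratic identity $A \wedge (A \wedge v) = \tfrac{1}{2}[A \wedge A] \wedge v$. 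Both arguments are legitimate; the paper's is marginally cleaner because it bypasses that identity and never touches the non-associativity issue, while yours has the advantage of making the role of \eqref{eq:curv-eq1} explicit. The remaining items ($[\covLD_{X},\covud]$, the covariant Cartan formula, the commutator of covariant Lie derivatives, the Leibniz rules, the sign rule) are omitted by the paper as routine, and your expansions for them are correct, with the second commutator indeed deducible from \eqref{eq:covLDcovud}, \eqref{eq:covLDiX} and \eqref{eq:cartan-eq-V} as you suggest.

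One point to repair: you propose to verify $A \wedge (A \wedge v) = \tfrac{1}{2}[A \wedge A] \wedge v$ "on $A = b \otimes \omg^{1}$, $v = \phi \otimes \omg^{2}$", but for a decomposable $A$ both sides vanish identically (each contains $\omg^{1} \wedge \omg^{1} = 0$), so this check is vacuous; since the identity is quadratic in $A$, its content lies entirely in the cross terms. Either polarize (check $A_{1} \wedge (A_{2} \wedge v) + A_{2} \wedge (A_{1} \wedge v) = [A_{1} \wedge A_{2}] \wedge v$ for two decomposables) or simply compute in rectilinear coordinates with $A = A_{\mu}\, \ud x^{\mu}$, where the antisymmetrization of $\ud x^{\mu} \wedge \ud x^{\nu}$ reduces the claim to $\LieBr{a}{b} \cdot \phi = a \cdot (b \cdot \phi) - b \cdot (a \cdot \phi)$. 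Note also that this last fact is the statement that $\ud\rho\restriction_{I}$ is a Lie algebra homomorphism (unitarity of $\rho$ is not needed, and it is the Jacobi identity only in the special case $V = \LieAlg$ with the adjoint action), so citing "the Jacobi identity for $\LieAlg$ acting on $V$" is slightly off; with that wording fixed the preliminary sub-identities you list do carry the rest of the argument as claimed.
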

The key difference from the real-valued case is that $\covud^{2} \neq 0$, but instead \eqref{eq:covud-covud} holds. When $v$ is a $0$-form (i.e., a $V$-valued function), this is precisely the definition of the curvature 2-form $F$. Then the general case of a $k$-form follows from \eqref{eq:leibniz-covud}, which in turn is straightforward. The proof of the rest of the lemma is more routine, using the definitions \eqref{eq:covud} and \eqref{eq:covLD}, as well as Lemma~\ref{lem:extr-calc}; we omit the details.

For a $\LieAlg$-valued $k$-form $a$, the covariant differential $\covud a$ and the covariant Lie derivative $\covLD_{X} a$ are defined using the adjoint action. The formulae in Lemma~\ref{lem:extr-calc-V} in hold verbatim for $\LieAlg$-valued differential forms. Moreover, it is clear that the following additional Leibniz rules hold.
\begin{lemma} \label{lem:leibniz-gV}
Let $A$ be a connection 1-form. Given a $\LieAlg$-valued $k$-form $a$ and a $V$-valued $\ell$-form $v$, the following Leibniz rules hold.
\begin{align}
	\covLD_{X} (a \wedge v) =& (\covLD_{X} a) \wedge v + a \wedge \covLD_{X} v			\\
	\iota_{X} (a \wedge v) = & (\iota_{X} a) \wedge v + (-1)^{k} a \wedge (\iota_{X} v)	\\
	\covud (a \wedge v) = & (\covud a) \wedge v + (-1)^{k} a \wedge (\covud v).
\end{align}
In particular, these Leibniz rules hold in the case $V = \LieAlg$, where $v = b$ is a $\LieAlg$-valued $\ell$-form. In this case, we have
\begin{equation}
	[a \wedge b] = (-1)^{k \ell+1} [b \wedge a].
\end{equation}
\end{lemma}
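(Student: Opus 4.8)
The plan is to prove all three Leibniz rules, together with the graded anti-symmetry in the special case $V = \LieAlg$, by reducing everything to decomposable forms and then invoking the componentwise calculus of Lemmas~\ref{lem:extr-calc} and \ref{lem:extr-calc-V}; the only substantive input will be the axiom that $\LieAlg$ acts on $V$ by a genuine Lie algebra representation. Since $\iota_{X}$, $\covud$, $\covLD_{X}$ are $\bbR$-linear and the wedge product $a \wedge v$ is $\bbR$-bilinear, it suffices to treat $a = \bfb \otimes \omg$ (with $\bfb$ a $\LieAlg$-valued function, $\omg$ a real-valued $k$-form) and $v = \phi \otimes \omg'$ (with $\phi$ a $V$-valued function, $\omg'$ a real-valued $\ell$-form), in which case $a \wedge v = (\bfb \cdot \phi)\, \omg \wedge \omg'$ by the definition of the wedge product in Section~\ref{subsec:extr-calc}.

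The interior product identity carries no connection and is purely algebraic: since $\iota_{X}$ acts componentwise, $\iota_{X}(a \wedge v) = (\bfb \cdot \phi)\, \iota_{X}(\omg \wedge \omg')$, and the graded Leibniz rule for real-valued forms from Lemma~\ref{lem:extr-calc} gives $\iota_{X}(a \wedge v) = (\iota_{X} a) \wedge v + (-1)^{k} a \wedge (\iota_{X} v)$ at once. For the covariant exterior derivative I would expand $\covud(a \wedge v) = \ud(a \wedge v) + A \wedge (a \wedge v)$. Because $\ud$ acts componentwise and $(\bfb, \phi) \mapsto \bfb \cdot \phi$ is $\bbR$-bilinear with constant structure constants, the ordinary graded Leibniz rule yields $\ud(a \wedge v) = (\ud a) \wedge v + (-1)^{k} a \wedge \ud v$, so in view of $\covud a = \ud a + [A \wedge a]$ and $\covud v = \ud v + A \wedge v$ it remains only to verify the connection term
\begin{equation*}
	A \wedge (a \wedge v) = [A \wedge a] \wedge v + (-1)^{k} a \wedge (A \wedge v).
\end{equation*}
Writing $A = \alpha \otimes \mu$ with $\mu$ a real-valued $1$-form and using graded commutativity to reorder the real forms (which accounts for all the signs), this reduces precisely to $\alpha \cdot (\bfb \cdot \phi) = [\alpha, \bfb] \cdot \phi + \bfb \cdot (\alpha \cdot \phi)$ for $\alpha, \bfb \in \LieAlg$, $\phi \in V$ --- the representation property, which for the adjoint action relevant to $V = \LieAlg$ is the Jacobi identity. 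This is the differential-form version of \eqref{eq:leibniz-gV-0}.

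The covariant Lie derivative identity goes the same way: decompose $\covLD_{X} = \calL_{X} + (\iota_{X} A)$, acting by the appropriate action on $\LieAlg$- or $V$-valued forms. The $\calL_{X}$ part obeys $\calL_{X}(a \wedge v) = (\calL_{X} a) \wedge v + a \wedge \calL_{X} v$ by the componentwise argument (on function parts $\calL_{X}$ is just $X$, and $X(\bfb \cdot \phi) = (X\bfb) \cdot \phi + \bfb \cdot (X \phi)$ by bilinearity), while for the zeroth-order term $\iota_{X} A \in \LieAlg$ one has $(\iota_{X} A) \cdot (a \wedge v) = ((\iota_{X} A) \cdot a) \wedge v + a \wedge ((\iota_{X} A) \cdot v)$ by the same representation identity with $\alpha$ replaced by $\iota_{X} A$; summing gives the claim. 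Finally, the graded anti-symmetry $[a \wedge b] = (-1)^{k \ell + 1} [b \wedge a]$ follows by writing $a = \alpha \otimes \omg$, $b = \bt \otimes \omg'$: then $[a \wedge b] = [\alpha, \bt]\, \omg \wedge \omg'$ and $[b \wedge a] = [\bt, \alpha]\, \omg' \wedge \omg = (-1)^{k \ell} [\bt, \alpha]\, \omg \wedge \omg' = -(-1)^{k \ell} [\alpha, \bt]\, \omg \wedge \omg'$, using antisymmetry of the Lie bracket and graded commutativity of real-valued forms.

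The one point that genuinely needs care --- and the reason the reductions to decomposable forms are worthwhile --- is that the bracketed wedge product of $\LieAlg$-valued forms is \emph{not} associative, so tempting manipulations like ``$A \wedge (a \wedge v) = (A \wedge a) \wedge v$'' are simply false in general; the correct bookkeeping of this associativity defect is exactly what the representation/Jacobi identity supplies. Apart from this, the proof is routine tracking of Koszul signs, which is why I would, like the authors, omit the remaining computation.
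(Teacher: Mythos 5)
Your proposal is correct, and it supplies a verification that the paper simply omits (the lemma is introduced with ``it is clear that the following additional Leibniz rules hold'' and no proof is given). Your route --- reduce to decomposable forms, use the componentwise rules of Lemmas~\ref{lem:extr-calc} and \ref{lem:extr-calc-V} for the real-form factors, and absorb the connection terms via the infinitesimal representation identity $\alpha \cdot (\bfb \cdot \phi) = [\alpha,\bfb]\cdot \phi + \bfb \cdot (\alpha \cdot \phi)$ (Jacobi identity in the adjoint case) --- is exactly the standard argument the authors presumably had in mind, and your identity $A \wedge (a \wedge v) = [A \wedge a] \wedge v + (-1)^{k}\, a \wedge (A \wedge v)$ is precisely the differential-form analogue of \eqref{eq:leibniz-gV-0}. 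You are also right to flag that the only genuine pitfall is the non-associativity of the bracketed wedge, which is what the representation identity compensates for; the sign bookkeeping in your decomposable computations checks out.
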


Next, we introduce some useful definitions for computations concerning the current $J$, performed in Sections~\ref{subsec:comm-CSH} and \ref{subsec:comm-CSD}. For a pair $\phi^{1}, \phi^{2} \in V$, let
\begin{equation} \label{eq:bbrk-def}
	\bbrk{\phi^{1}, \phi^{2}} = \frac{1}{2} \bb( \brk{\calT \phi^{1}, \phi^{2}} + \brk{\phi^{2}, \calT \phi^{1}} \bb).
\end{equation}
Observe that $\bbrk{\phi^{1}, \phi^{2}}$ is a $\LieAlg$-valued bilinear (over $\bbR$) form in $\phi^{1}, \phi^{2}$, which is anti-symmetric thanks to the anti-hermitian property of $\calT$. It obeys the following important Leibniz rule.
\begin{lemma} \label{lem:leibniz-bbrk}
Let $A$ be a connection 1-form. Given $V$-valued functions $\phi^{1}, \phi^{2}$ and a vector field $X$, the following Leibniz rule holds.
\begin{equation} \label{eq:leibniz-bbrk}
	\covD_{X} \bbrk{\phi^{1}, \phi^{2}} =  \bbrk{ \covD_{X} \phi^{1}, \phi^{2}} + \bbrk{\phi^{1}, \covD_{X} \phi^{2}}.
\end{equation}
\end{lemma}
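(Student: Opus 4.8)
The plan is to reduce the claimed Leibniz rule \eqref{eq:leibniz-bbrk} to the two Leibniz rules already established in the excerpt, namely \eqref{eq:leibniz-V} for $V$-valued functions and \eqref{eq:leibniz-g} for $\LieAlg$-valued functions, together with the compatibility between the action of $\LieAlg$ on $V$ and the covariant derivative. The definition \eqref{eq:bbrk-def} writes $\bbrk{\phi^1,\phi^2}$ in terms of the $\LieAlg$-valued pairings $\brk{\calT \phi^1, \phi^2}$ and $\brk{\phi^2, \calT \phi^1}$, so it suffices to prove the Leibniz rule for each of these separately:
\begin{equation*}
	\covD_X \brk{\calT \phi^1, \phi^2} = \brk{\calT \covD_X \phi^1, \phi^2} + \brk{\calT \phi^1, \covD_X \phi^2},
\end{equation*}
and the analogous identity with the arguments of $\brk{\cdot,\cdot}$ swapped; adding half the sum of these gives \eqref{eq:leibniz-bbrk}.

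First I would recall from the excerpt the key identity relating the two pairings: $\brk{a \cdot v, w} = \brk{a, \brk{\calT v, w}}_{\LieAlg}$ for $a \in \LieAlg$ and $v,w \in V$. This lets me transfer any statement about $\brk{\calT \cdot, \cdot} \in \LieAlg$ into a statement about the scalar pairing $\brk{\cdot,\cdot}_V$ after testing against an arbitrary fixed $a \in \LieAlg$. Testing the desired identity against $a$ and using bilinearity, it becomes
\begin{equation*}
	\brk{a, \covD_X \brk{\calT \phi^1, \phi^2}}_{\LieAlg}
	= X \brk{a \cdot \phi^1, \phi^2}
\end{equation*}
on the left (after noting that $a$ is parallel, being constant, so $\covD_X$ of a $\LieAlg$-valued function pairs with $a$ via $X$ applied to the $\LieAlg$-valued inner product, using \eqref{eq:leibniz-g}), and
\begin{equation*}
	\brk{a \cdot \covD_X \phi^1, \phi^2} + \brk{a \cdot \phi^1, \covD_X \phi^2}
\end{equation*}
on the right. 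Now I would apply the ordinary Leibniz rule \eqref{eq:leibniz-V} for $V$-valued functions to $\brk{a \cdot \phi^1, \phi^2}$, obtaining $\brk{\covD_X(a \cdot \phi^1), \phi^2} + \brk{a \cdot \phi^1, \covD_X \phi^2}$, and then apply \eqref{eq:leibniz-gV-0} (with the constant, hence covariantly constant, $\LieAlg$-valued function $a$, so $\covD_X a = 0$) to rewrite $\covD_X(a \cdot \phi^1) = a \cdot \covD_X \phi^1$. This matches the right-hand side exactly.

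Since $a \in \LieAlg$ was arbitrary and $\brk{\cdot,\cdot}_{\LieAlg}$ is positive-definite (nondegenerate suffices), the identity for $\brk{\calT \cdot,\cdot}$ follows. An identical argument, using instead $\brk{w, a \cdot v} = \brk{\brk{w, \calT v}, a}_{\LieAlg}$ (the conjugate-symmetric counterpart, which follows from anti-hermiticity of the $\LieAlg$-action and reality of $\brk{\cdot,\cdot}_{\LieAlg}$), handles the term $\brk{\phi^2, \calT \phi^1}$. Averaging the two yields \eqref{eq:leibniz-bbrk}. I do not expect any genuine obstacle here; the only point requiring a moment of care is the bookkeeping that a fixed $a \in \LieAlg$ is parallel for $\covD$ when regarded via the adjoint action — i.e., that constants are covariantly constant — so that pairing $\covD_X$ of a $\LieAlg$-valued function against $a$ reduces to $X$ acting on the scalar $\brk{\cdot, a}_{\LieAlg}$; this is exactly \eqref{eq:leibniz-g} with $a^2 = a$ constant. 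The rest is a direct chain of the already-proved Leibniz rules.
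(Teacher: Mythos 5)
Your reduction to the single identity $\covD_{X} \brk{\calT \phi^{1}, \phi^{2}} = \brk{\calT \covD_{X}\phi^{1}, \phi^{2}} + \brk{\calT \phi^{1}, \covD_{X}\phi^{2}}$ and the idea of testing it against a fixed $a \in \LieAlg$ via $\brk{a \cdot v, w} = \brk{a, \brk{\calT v, w}}_{\LieAlg}$ are fine, but the step you yourself single out as the key point of care is false: a constant $a \in \LieAlg$ is \emph{not} covariantly constant for the adjoint action. Indeed $\covD_{X} a = \nb_{X} a + \LieBr{A(X)}{a} = \LieBr{A(X)}{a}$, which vanishes for all $a$ only when $\LieAlg$ is abelian. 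Consequently both of your intermediate identities fail as stated: \eqref{eq:leibniz-g} with $a^{2} = a$ constant gives $X\brk{a^{1}, a}_{\LieAlg} = \brk{\covD_{X} a^{1}, a}_{\LieAlg} + \brk{a^{1}, \LieBr{A(X)}{a}}_{\LieAlg}$, so the correct version of your first step is
\begin{equation*}
	\brk{a, \covD_{X}\brk{\calT \phi^{1}, \phi^{2}}}_{\LieAlg}
	= X\brk{a \cdot \phi^{1}, \phi^{2}} - \brk{\LieBr{A(X)}{a} \cdot \phi^{1}, \phi^{2}},
\end{equation*}
not $X\brk{a \cdot \phi^{1}, \phi^{2}}$; and \eqref{eq:leibniz-gV-0} gives $\covD_{X}(a \cdot \phi^{1}) = \LieBr{A(X)}{a} \cdot \phi^{1} + a \cdot \covD_{X}\phi^{1}$, not $a \cdot \covD_{X}\phi^{1}$.

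As it happens, these two omissions are equal and cancel when the chain is assembled: inserting the corrected \eqref{eq:leibniz-gV-0} into \eqref{eq:leibniz-V} produces exactly the extra term $+\brk{\LieBr{A(X)}{a}\cdot\phi^{1},\phi^{2}}$ that compensates the one above, and one lands on $\brk{a\cdot\covD_{X}\phi^{1},\phi^{2}}+\brk{a\cdot\phi^{1},\covD_{X}\phi^{2}}$, after which nondegeneracy of $\brk{\cdot,\cdot}_{\LieAlg}$ and the conjugate identity for $\brk{\phi^{2},\calT\phi^{1}}$ finish the proof as you intend. So your conclusion is salvageable, but the proof must be rewritten to carry both commutator terms and exhibit the cancellation; the justification "constants are $\covD$-parallel" cannot stand. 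Once repaired, your argument is a legitimate coordinate-free dual of the paper's proof: the paper expands in an orthonormal basis and reduces the claim to the structure-constant identity \eqref{eq:str-const}, which is bi-invariance, whereas in your route bi-invariance enters through \eqref{eq:leibniz-g} (whose proof is precisely ad-invariance of $\brk{\cdot,\cdot}_{\LieAlg}$), with \eqref{eq:leibniz-gV-0} supplying the homomorphism property of the infinitesimal representation. The content is the same; your version trades the index bookkeeping for the bookkeeping of the two $\LieBr{A(X)}{a}$ terms, which is exactly the bookkeeping that went missing.
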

\begin{proof} 
By symmetry, it suffices to prove 
\begin{equation} \label{eq:leibniz-bbrk-key}
	\covD_{X} \brk{\calT \phi^{1}, \phi^{2}} =  \brk{ \calT \covD_{X} \phi^{1}, \phi^{2}} + \brk{\calT \phi^{1}, \covD_{X} \phi^{2}}.
\end{equation}

Recall that $\covD_{X} \phi = \nb_{X} \phi + A(X) \cdot \phi$ on a $V$- (or $\LieAlg$-)valued function $\phi$. We introduce the shorthand $a = A(X)$. Fix an orthonormal basis $\set{e_{A}}$ of $\LieAlg$, so that $\calT^{A'} \varphi = \dlt^{A'A} e_{A} \cdot \varphi$ and $a = a^{A} e_{A}$. We denote the structure constants by $\set{c_{AB}^{C}} \subseteq \bbR$, where $\LieBr{e_{A}} {e_{B}} = c_{AB}^{C} e_{C}$.

Using \eqref{eq:leibniz-V} and the above conventions, the left-hand side of \eqref{eq:leibniz-bbrk-key} equals
\begin{align*}
& \hskip-2em
	\dlt^{A A'} \brk{e_{A} \cdot \phi^{1}, \phi^{2}} [a, e_{A'}]
	+ \dlt^{A A'} \bb( \brk{\covD_{X} (e_{A} \cdot \phi^{1}), \phi^{2}}  + \brk{e_{A} \cdot \phi^{1}, \covD_{X} \phi^{2}} \bb) e_{A'} \\
= 	& \dlt^{A A'} \brk{e_{A} \cdot \phi^{1}, \phi^{2}} [a, e_{A'}]
		+ \dlt^{A A'} \brk{[a, e_{A}] \cdot \phi^{1}, \phi^{2}} e_{A'} \\
	& + \dlt^{A A'} \bb( \brk{e_{A} \cdot \covD_{X} \phi^{1}, \phi^{2}}  + \brk{e_{A} \cdot \phi^{1}, \covD_{X} \phi^{2}} \bb) e_{A'}.
\end{align*}
Note that the last line is exactly the right-hand side of \eqref{eq:leibniz-bbrk-key}.
Hence the difference between the left- and the right-hand sides of \eqref{eq:leibniz-bbrk-key} is equal to
\begin{align*}
& \hskip-2em
	\dlt^{A A'} \brk{e_{A} \cdot \phi^{1}, \phi^{2}} \LieBr{a}{e_{A'}} + \dlt^{A A'} \brk{\LieBr{a}{e_{A}} \cdot \phi^{1}, \phi^{2}} e_{A'} \\
	= & a^{C} c_{CA'}^{D} \dlt^{A A'} \brk{e_{A} \cdot \phi^{1}, \phi^{2}} e_{D} + a^{C}  c_{CA}^{D} \dlt^{A A'} \brk{e_{D} \cdot \phi^{1}, \phi^{2}} e_{A'} \\
	= & (c_{CD}^{A'} \dlt^{D A} + c_{CD}^{A} \dlt^{D A'}) a^{C}  \brk{e_{A} \cdot \phi^{1}, \phi^{2}} e_{A'}.
\end{align*}
Therefore, to establish \eqref{eq:leibniz-bbrk-key}, it suffices to show
\begin{equation} \label{eq:str-const}
c_{CD}^{A'} \dlt^{D A} + c_{CD}^{A} \dlt^{D A'} = 0.
\end{equation}
The identity \eqref{eq:str-const} is a consequence of the bi-invariance of $\brk{\cdot, \cdot}_{\LieAlg}$, i.e., 
\begin{equation*}
	\brk{[e_{C}, e_{A'}], e_{A}}_{\LieAlg} +\brk{[e_{C}, e_{A}], e_{A'}}_{\LieAlg} = 0.
\end{equation*}
This completes the proof. 
 \qedhere
\end{proof}
\begin{remark}
In the case of abelian Chern--Simons--Higgs case (Example \ref{ex:a-CSH}), Lemma \ref{lem:leibniz-bbrk} is verified by 
simply computing
\[ \covD_X ( i\phi^1 \overline{ \phi^2} - i\overline{ \phi^1} \phi^2) 
=i \covD_X \phi^1 \overline{ \phi^2} - i\overline  {\covD_X \phi^1}\phi^2  
+ i \phi^1 \overline{ \covD_X\phi^2 }- i\overline{ \phi^1}   {\covD_X \phi^2}.\] 
\end{remark}

The anti-symmetric form $\bbrk{\cdot, \cdot}$ induces a $\LieAlg$-valued wedge product $\bbrk{v^{1} \wedge v^{2}}$ of $V$-valued forms, characterized by the relation
\begin{equation}	\label{eq:bbrk-wedge}
	\bbrk{(\phi^{1} \otimes \omg^{1}) \wedge (\phi^{2} \otimes \omg^{2})}
	= \bbrk{\phi^{1}, \phi^{2}} \otimes (\omg^{1} \wedge \omg^{2})
\end{equation}
for $V$-valued functions $\phi^{1}, \phi^{2}$ and real-valued differential forms $\omg^{1}, \omg^{2}$.

\begin{lemma} \label{lem:extr-calc-bbrk}
Let $v, w$ be $V$-valued $k$- and $\ell$-forms, respectively. Then we have
\begin{equation}	\label{eq:bbrk-comm}
	\bbrk{v \wedge w} = (-1)^{k \ell + 1} \bbrk{w \wedge v}. 
\end{equation}
Moreover, let $A$ be a connection 1-form. For any vector field $X$, the following Leibniz rules hold.
\begin{align}
	\covLD_{X} \bbrk{v \wedge w}
	= & \bbrk{\covLD_{X} v \wedge w} + \bbrk{v \wedge \covLD_{X} w},	\label{eq:leibniz-bbrk-LD} \\
	\iota_{X} \bbrk{v \wedge w}	
	= & \bbrk{\iota_{X} v \wedge w} + (-1)^{k} \bbrk{v \wedge \iota_{X} w}, 	\label{eq:leibniz-bbrk-iota} \\
	\covud \bbrk{v \wedge w}
	= & \bbrk{\covud v \wedge w} + (-1)^{k} \bbrk{v \wedge \covud w}.		\label{eq:leibniz-bbrk-d}
\end{align}
\end{lemma}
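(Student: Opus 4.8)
The plan is to reduce every assertion to the case of \emph{simple} $V$-valued forms $v = \phi^{1} \otimes \omg^{1}$ and $w = \phi^{2} \otimes \omg^{2}$, where $\phi^{1}, \phi^{2}$ are $V$-valued functions and $\omg^{1}, \omg^{2}$ are real-valued $k$- and $\ell$-forms. This reduction is legitimate because $\bbrk{\cdot \wedge \cdot}$ is $\bbR$-bilinear by its defining relation \eqref{eq:bbrk-wedge}, and all of the operators involved ($\covLD_{X}$, $\iota_{X}$, $\covud$) are $\bbR$-linear; hence it suffices to check the four identities on simple tensors and extend by linearity.

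For \eqref{eq:bbrk-comm} I would simply compute $\bbrk{w \wedge v} = \bbrk{\phi^{2}, \phi^{1}} \otimes (\omg^{2} \wedge \omg^{1})$, then use the antisymmetry of $\bbrk{\cdot, \cdot}$ — which holds because $\calT$ acts by anti-hermitian operators, as noted just after \eqref{eq:bbrk-def} — together with the graded commutativity $\omg^{2} \wedge \omg^{1} = (-1)^{k \ell} \omg^{1} \wedge \omg^{2}$ from Lemma~\ref{lem:extr-calc-V}. These two sign changes combine to give $\bbrk{w \wedge v} = (-1)^{k \ell + 1} \bbrk{v \wedge w}$ directly.

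For the three Leibniz rules the common strategy is: write $\bbrk{v \wedge w} = \bbrk{\phi^{1}, \phi^{2}} \otimes (\omg^{1} \wedge \omg^{2})$ and apply the operator, remembering that $\bbrk{\phi^{1}, \phi^{2}}$ is $\LieAlg$-valued, so that $\covLD_{X}$ and $\covud$ act on this factor through the adjoint action. One then splits the result: on the real-valued factor $\omg^{1} \wedge \omg^{2}$ apply the ordinary Leibniz rules of Lemma~\ref{lem:extr-calc}, and on the factor $\bbrk{\phi^{1}, \phi^{2}}$ apply Lemma~\ref{lem:leibniz-bbrk}, which describes $\covD_{X} \bbrk{\phi^{1}, \phi^{2}}$ (and, by $\bbR$-linearity in each slot, also the paired $1$-forms $\bbrk{\covud \phi^{1} \wedge \phi^{2}}$ and $\bbrk{\phi^{1} \wedge \covud \phi^{2}}$). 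Using the expansions $\covLD_{X}(\phi \otimes \omg) = \covD_{X}\phi \otimes \omg + \phi \otimes \LD_{X}\omg$, $\iota_{X}(\phi \otimes \omg) = \phi \otimes \iota_{X}\omg$, and $\covud(\phi \otimes \omg) = \covud\phi \wedge \omg + \phi \otimes \ud\omg$, one regroups the resulting terms via \eqref{eq:bbrk-wedge} to recover the right-hand sides of \eqref{eq:leibniz-bbrk-LD}, \eqref{eq:leibniz-bbrk-iota}, and \eqref{eq:leibniz-bbrk-d}.

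The step I expect to be the main obstacle — though it is really careful bookkeeping rather than a genuine difficulty — is tracking the grading signs in \eqref{eq:leibniz-bbrk-iota} and especially \eqref{eq:leibniz-bbrk-d}. The factor $(-1)^{k}$ there must be seen to emerge consistently from two sources: from commuting $\iota_{X}$ (resp. $\ud$, resp. the $1$-form component of $\covud \phi^{2}$) past the $k$-form $\omg^{1}$ when applying the real-valued Leibniz rule to $\omg^{1} \wedge \omg^{2}$, and from the analogous commutation hidden inside the $\bbrk{\phi^{1} \wedge \covud\phi^{2}}$ term produced by Lemma~\ref{lem:leibniz-bbrk}. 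Conceptually, the only substantive input beyond routine exterior calculus is the compatibility of covariant differentiation on $V$-valued objects with covariant differentiation on $\LieAlg$-valued objects via the adjoint action; this is exactly what Lemma~\ref{lem:leibniz-bbrk} supplies, and once it is invoked the remaining verifications are purely mechanical.
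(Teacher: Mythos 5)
Your proposal is correct and follows essentially the same route as the paper: the paper also deduces \eqref{eq:bbrk-comm} and \eqref{eq:leibniz-bbrk-iota} directly from the defining relation \eqref{eq:bbrk-wedge} (reduction to simple tensors), and obtains \eqref{eq:leibniz-bbrk-LD} and \eqref{eq:leibniz-bbrk-d} from Lemma~\ref{lem:extr-calc} together with Lemma~\ref{lem:leibniz-bbrk}, omitting the bookkeeping you carry out. Your sign-tracking for the $(-1)^{k}$ factors is the part the paper calls ``routine details,'' and it checks out.
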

\begin{proof} 
Identities \eqref{eq:bbrk-comm} and \eqref{eq:leibniz-bbrk-iota} are immediate from the defining relation \eqref{eq:bbrk-wedge}, whereas
\eqref{eq:leibniz-bbrk} and \eqref{eq:leibniz-bbrk-d} follow from Lemmas~\ref{lem:extr-calc}, \ref{lem:leibniz-bbrk}. We omit the routine details. 	\qedhere
\end{proof}

To summarize, we have defined $V$-valued wedge products $v \wedge w$, $a \wedge v $ and 
$\LieAlg$-valued wedge products  $[a \wedge a']$, $\bbrk{v\wedge w}$ for $v, w \in V$ and $ a, a' \in \LieAlg$,
which obey appropriate Leibniz rules with respect to $\covLD _X$, $\iota_X$ and $\covud$. Note that in Lemma \ref{lem:leibniz-bbrk} and Lemma \ref{lem:extr-calc-bbrk}, such rules hold with covariant derivatives on both sides (in contrast to, say, \eqref{eq:leibniz-V} and \eqref{eq:leibniz-g}). 


We now turn to the properties concerning the Hodge star operator $\star$. In order to state the properties, we need a few definitions. First, we define the \emph{(exterior) codifferential} operator $\dlt$ for real-valued differential forms by the formula
\begin{align}
	\int \eta^{-1}(\ud \omg^{1}, \omg^{2}) \, \ud \Vol_{\bbR^{1+2}}
	=&	\int \eta^{-1}(\omg^{1}, \dlt \omg^{2}) \, \ud \Vol_{\bbR^{1+2}}, \label{eq:dlt}
\end{align}
where $\omg^{1}$ and $\omg^{2}$ are real-valued $k$- and $k+1$-forms, respectively.
The \emph{covariant codifferential} $\covdlt$ of a $V$-valued differential form $v$ is defined similarly using $\covud$ and the Minkowski metric for $V$-valued differential forms, which is naturally defined using $\brk{\cdot, \cdot}_{V}$ (for explicit alternative formulae for $\dlt$ and $\covdlt$, see \eqref{eq:star-dlt} below).
Given a vector field $X$, we will denote its metric dual 1-form by $X^{\flat}$, i.e., $X^{\flat}_{b} := X^{a} \met_{ab}$.

In the following two lemmas, we record some useful properties of $\star$.
\begin{lemma} \label{lem:star}
Let the base manifold be $\bbR^{1+2}$ with the Minkowski metric with signature $(-1, +1, +1)$. 
Given a real-valued $k$-form $\omg$ and a vector field $X$, we have
\begin{align}
	\star \star \omg =& - \omg,	 \label{eq:star-star} \\
	\iota_{X} \star \omg =& \star (\omg \wedge X^{\flat}),  \label{eq:star-iX}\\
	\dlt \omg =&  	(-1)^{k+1} \star \ud \star \omg.		\label{eq:star-dlt}
\end{align}
Moreover, if $Z$ is a Killing vector field, then $\LD_{Z}$ commutes with $\star$ and $\flat$, i.e., 
\begin{align}
	\LD_{Z} \star \omg =& \star \LD_{Z} \omg, \label{eq:star-LDZ}\\
	\LD_{Z} X^{\flat} =& (\LD_{Z} X)^{\flat}	
	= [Z, X]^{\flat}.						\label{eq:star-flat}
\end{align}
Finally, the formulae \eqref{eq:star-star}--\eqref{eq:star-LDZ} hold for any $V$- or $\LieAlg$-valued $k$-form, where $\dlt$, $\ud$ and $\LD_{Z}$ are replaced by the covariant counterparts $\covdlt$, $\covud$ and $\covLD_{Z}$.
\end{lemma}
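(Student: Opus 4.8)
The statement to prove is Lemma~\ref{lem:star}, a collection of standard identities for the Hodge star operator on $\bbR^{1+2}$ with signature $(-1,+1,+1)$, together with their covariant analogues. Here is how I would organize the proof.

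\textbf{Plan of proof.} The strategy is to verify each identity first for real-valued forms by a direct computation in the rectilinear orthonormal coframe $\{\ud x^0, \ud x^1, \ud x^2\}$, exploiting multilinearity to reduce to basis elements, and then to observe that all operations involved ($\star$, $\ud$, $\dlt$, $\iota_X$, $\flat$) extend componentwise to $V$- and $\LieAlg$-valued forms once $\ud$ is replaced by $\covud$, so the covariant statements follow formally from the real ones together with the Leibniz rules already recorded in Lemmas~\ref{lem:extr-calc}--\ref{lem:leibniz-gV}. Concretely, I would proceed in the following order.

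\emph{Step 1: $\star\star\omg = -\omg$.} By linearity it suffices to check this on the basis $k$-forms $e^{i_1}\wedge\cdots\wedge e^{i_k}$ where $e^\mu = \ud x^\mu$. Using the defining relation \eqref{eq:star-def} and the fact that the induced metric $\eta^{-1}$ on $k$-forms is diagonal in this basis with a single $-1$ entry (coming from the timelike direction $e^0$) and $+1$'s otherwise, one computes $\star(e^0\wedge\cdots) $ and $\star$ of the purely spatial forms explicitly; squaring produces the factor $(-1)^{k(3-k)}\cdot(\text{sign of }\det\eta^{-1}) = (-1)^{k(3-k)}\cdot(-1) = -1$ for every $k\in\{0,1,2,3\}$ in three spacetime dimensions. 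This is the cleanest starting point because several later identities are proved by applying $\star$ to both sides and using $\star\star = -1$.

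\emph{Step 2: $\iota_X\star\omg = \star(\omg\wedge X^\flat)$.} Again reduce to $\omg$ a basis $k$-form and $X = \rd_\mu$ a coordinate vector field, so $X^\flat = \eta_{\mu\nu}e^\nu = \pm e^\mu$. One checks both sides on basis elements; the interior product convention stated in the footnote (contraction in the leftmost slot) must be tracked carefully, and the sign $\eta_{\mu\mu}$ appears symmetrically on both sides, so the identity holds. This is the place where I expect the sign bookkeeping to be most delicate, so I would do this case with some care rather than asserting it.

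\emph{Step 3: $\dlt\omg = (-1)^{k+1}\star\ud\star\omg$.} This is essentially the definition-chasing step: starting from the defining relation \eqref{eq:dlt} for $\dlt$, integrate by parts using $\ud(\alpha\wedge\star\beta) = \ud\alpha\wedge\star\beta + (-1)^{|\alpha|}\alpha\wedge\ud\star\beta$ (the Leibniz rule from Lemma~\ref{lem:extr-calc}) and Stokes' theorem on $\bbR^{1+2}$ (boundary terms vanish by compact support), then compare with \eqref{eq:star-def}. The factor $(-1)^{k+1}$ emerges from combining the Leibniz sign with the two applications of $\star\star = -1$. Alternatively, one can simply take \eqref{eq:star-dlt} as the definition of $\covdlt$ as the excerpt allows ("for explicit alternative formulae...see \eqref{eq:star-dlt}"), in which case this line is immediate.

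\emph{Step 4: commutation with $\LD_Z$ for $Z$ Killing.} Since $Z$ generates a one-parameter group of isometries $\Phi_s$, both $\Phi_s^*$ and $\star$ are determined by the metric, so $\Phi_s^*\circ\star = \star\circ\Phi_s^*$; differentiating at $s=0$ gives $\LD_Z\star = \star\LD_Z$, which is \eqref{eq:star-LDZ}. Identity \eqref{eq:star-flat} is the statement that $\flat$ is metric-natural: $\LD_Z(X^\flat) = (\LD_Z X)^\flat$ for $Z$ Killing, again by differentiating the pullback relation $\Phi_s^*(X^\flat) = ((\Phi_{-s})_*X)^\flat$, or by the direct formula $\LD_Z(X^\flat)_b = Z(X_b) - X_a\nb_b Z^a$ which equals $(\LD_Z X)^\flat_b = Z(X_b) - X^a\nb_a Z_b$ precisely when $\nb_a Z_b + \nb_b Z_a = 0$, i.e. $Z$ Killing.

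\emph{Step 5: the vector-valued case.} For a $V$- or $\LieAlg$-valued $k$-form $v$, write $v = \sum \phi_\alpha\otimes\omg^\alpha$ with $\phi_\alpha$ constant basis vectors of $V$ (or $\LieAlg$) and $\omg^\alpha$ real-valued. Then $\star v = \sum\phi_\alpha\otimes\star\omg^\alpha$ and $\covud v = \ud v + A\wedge v$; identities \eqref{eq:star-star}, \eqref{eq:star-iX} involve only $\star$ and $\iota_X$, which act componentwise, so they transfer immediately. For \eqref{eq:star-dlt} with $\covdlt$ and for \eqref{eq:star-LDZ} with $\covLD_Z$, the point is that the extra terms $A\wedge v$ (in $\covud$) and $(\iota_Z A)v$ (in $\covLD_Z$) are themselves of the form "real operation applied to a wedge/product", and one checks that $\star(A\wedge v) = $ the corresponding term on the other side using \eqref{eq:star-iX} in the form $\star(A\wedge v)$-manipulations — more precisely, for \eqref{eq:star-LDZ} one uses that $\iota_Z A$ is a scalar ($\LieAlg$-valued function) commuting with $\star$, and for $\covdlt = (-1)^{k+1}\star\covud\star$ one simply adopts this as the definition, consistent with the unstarred case.

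\textbf{Main obstacle.} The only genuine subtlety is the sign/orientation bookkeeping in Steps 1--3, particularly getting $\star\star = -1$ (rather than $+1$) correct in the Lorentzian three-dimensional setting and propagating the timelike $-1$ through \eqref{eq:star-iX}. Everything else is either a formal consequence of naturality of the metric (Step 4) or of the componentwise extension to vector-valued forms (Step 5). I would therefore present Steps 1--3 with explicit basis computations and treat Steps 4--5 briefly.
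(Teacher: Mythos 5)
Your proposal is correct and follows essentially the same route as the paper: verify the identities for real-valued forms directly from the defining relation \eqref{eq:star-def} (the paper dispatches \eqref{eq:star-iX} via the duality $\eta^{-1}(\iota_{X}\omg^{1},\omg^{2})=\eta^{-1}(\omg^{1},X^{\flat}\wedge\omg^{2})$ and \eqref{eq:star-LDZ} by Lie-differentiating \eqref{eq:star-def} using $\LD_{Z}\eta^{-1}=\LD_{Z}\eps=0$, where you instead use basis computations and the isometry flow --- cosmetic differences only), obtain \eqref{eq:star-dlt} from the adjointness relation \eqref{eq:dlt} by integration by parts, and note that the covariant case goes through componentwise (your observation that $\iota_{Z}A$ is a function commuting with $\star$ is exactly the relevant point). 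No gaps.
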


\begin{proof} 
The identity \eqref{eq:star-star} is a quick consequence of the definition \eqref{eq:star-def}, whereas \eqref{eq:star-iX} follows from the fact that $\iota_{X}$ is dual to $X^{\flat} \wedge$, i.e.,
\begin{equation*}
	\eta^{-1}(\iota_{X} \omg^{1}, \omg^{2}) = \eta^{-1}(\omg^{1}, X^{\flat} \wedge \omg^{2}).
\end{equation*}
The identity \eqref{eq:star-dlt} follows from \eqref{eq:dlt}. For \eqref{eq:star-LDZ} and \eqref{eq:star-flat}, note that if $Z$ is Killing then
\begin{equation*}
	\LD_{Z} \eta = 0, \quad \LD_{Z} \eta^{-1} = 0, \quad \LD_{Z} \eps = 0.
\end{equation*}
From these facts, \eqref{eq:star-flat} follows immediately. To prove \eqref{eq:star-LDZ}, we compute
\begin{align*}
	\LD_{Z} \big( \eta^{-1} (\omg^{1} , \omg^{2}) \eps \big)
	= & \eta^{-1}(\LD_{Z} \omg^{1}, \omg^{2}) \eps + \eta^{-1} (\omg^{1}, \LD_{Z} \omg^{2}) \eps \\
	= & \LD_{Z} \omg^{1} \wedge \star \omg^{2} + \omg^{1} \wedge \star \LD_{Z} \omg^{2}, \\
	\LD_{Z} \big( \omg^{1} \wedge \star \omg^{2} \big)
	= & \LD_{Z} \omg^{1} \wedge \star \omg^{2} + \omg^{1} \wedge \LD_{Z} \star \omg^{2},
\end{align*}
and observe that the two left-hand sides are equal by \eqref{eq:star-def}.

Finally, note that the same proof goes through for $V$- or $\LieAlg$-valued differential forms.
\end{proof}

\begin{lemma} \label{lem:star-aux}
Let the base manifold be $\bbR^{1+2}$ with the Minkowski metric with signature $(-1, +1, +1)$. 
Given a $\LieAlg$-valued $k$-form $a$ and a $V$-valued $k$-form $v$ $(0 \leq k \leq 3)$, we have
\begin{equation} \label{eq:star-aux:k-form}
	a \wedge \star v = \star a \wedge v.
\end{equation}
Moreover, if $\phi$ is a $V$-valued 0-form (i.e., a $V$-valued function), then
\begin{equation} \label{eq:star-aux:0-form}
	(\star a) \wedge \phi = \star (a \wedge \phi).
\end{equation}
\end{lemma}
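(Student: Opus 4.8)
\textbf{Proof proposal for Lemma~\ref{lem:star-aux}.}

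The plan is to reduce both identities to the defining characterization of the $\LieAlg$-valued wedge product $a \wedge v$ given in Section~\ref{subsec:extr-calc}, namely that it is $\bbR$-bilinear and satisfies $(b \otimes \omg^{1}) \wedge (\phi \otimes \omg^{2}) = (b \cdot \phi)\, \omg^{1} \wedge \omg^{2}$, together with the componentwise characterization of $\star$, namely $\star(\phi \otimes \omg) = \phi \otimes \star \omg$ (and likewise $\star(b \otimes \omg) = b \otimes \star \omg$ for $\LieAlg$-valued forms). Since every $\LieAlg$-valued $k$-form $a$ decomposes as a finite linear combination of terms $b \otimes \omg$ with $b \in \LieAlg$ and $\omg$ a real-valued $k$-form, and similarly every $V$-valued form decomposes into terms $\phi \otimes \omg'$, by $\bbR$-bilinearity of all the operations involved it suffices to verify each identity on pure tensors.

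First I would prove \eqref{eq:star-aux:k-form}. Take $a = b \otimes \omg$ and $v = \phi \otimes \omg'$, where $\omg, \omg'$ are real-valued $k$-forms. Then on the right-hand side, $\star a \wedge v = (b \otimes \star \omg) \wedge (\phi \otimes \omg') = (b \cdot \phi)\, (\star \omg) \wedge \omg'$, while on the left-hand side, $a \wedge \star v = (b \otimes \omg) \wedge (\phi \otimes \star \omg') = (b \cdot \phi)\, \omg \wedge \star \omg'$. So the identity reduces to the real-valued statement $(\star \omg) \wedge \omg' = \omg \wedge \star \omg'$ for two real-valued $k$-forms, which is immediate from the symmetry of the characterization \eqref{eq:star-def}: both sides equal $\eta^{-1}(\omg, \omg')\, \eps = \eta^{-1}(\omg', \omg)\, \eps$. (One should note that $(b \cdot \phi)$ is a $V$-valued quantity and the scalar real-valued $3$-form coefficient multiplies it unambiguously, so no ordering subtlety arises.)

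For \eqref{eq:star-aux:0-form}, the point is that $\phi$ is a $V$-valued $0$-form, so wedging with it is just the pointwise action combined with scalar multiplication by the form part; there is no sign or ordering issue. Again writing $a = b \otimes \omg$ with $\omg$ a real-valued $k$-form, we have $(\star a) \wedge \phi = (b \otimes \star \omg) \wedge \phi = (b \cdot \phi) \otimes \star \omg$ — here I am using that $\phi$ carries the trivial $0$-form $1$, so $(b \otimes \star\omg)\wedge(\phi \otimes 1) = (b\cdot\phi)\otimes(\star\omg \wedge 1) = (b\cdot\phi)\otimes \star\omg$ — while $\star(a \wedge \phi) = \star\big((b \cdot \phi) \otimes \omg\big) = (b \cdot \phi) \otimes \star \omg$ by the componentwise characterization of $\star$ on $V$-valued forms. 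The two agree, and extending by $\bbR$-bilinearity in $a$ finishes the proof.

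The argument is essentially bookkeeping, so there is no real obstacle; the only point requiring a little care is keeping straight which $\star$ is being used (the one on real-valued forms versus its componentwise extensions to $\LieAlg$- and $V$-valued forms) and confirming that the bilinear action $\LieAlg \times V \to V$ interacts trivially with multiplication by a real scalar $3$-form, so that the reduction to pure tensors is legitimate. I would present the computation for pure tensors and remark that the general case follows by linearity, omitting further detail as routine. \qedhere
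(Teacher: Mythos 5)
Your proof is correct and follows essentially the same route as the paper: reduce to pure tensors via the componentwise definition of $\star$ and the defining relation of the mixed wedge product, so that both identities become the real-valued counterparts $\omg \wedge \star \omg' = (\star\omg)\wedge\omg'$ and $(\star\omg)\wedge f = \star(\omg\wedge f)$. Your version just spells out the bookkeeping (including the sign $(-1)^{k(3-k)}=+1$ in three dimensions implicit in commuting the wedge) that the paper dismisses as "easily seen."
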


\begin{proof} 
This lemma follows from the real-valued counterparts
\begin{equation*}
	\omg_{1} \wedge \star \omg_{2} = \star \omg_{1} \wedge \omg_{2}, \quad
	(\star \omg_{1}) \wedge f = \star (\omg_{1} \wedge f),
\end{equation*}
which is easily seen to hold for real-valued $k$-forms $\omg_{1}, \omg_{2}$ and a $0$-form (i.e., a real-valued function) $f$. \qedhere
\end{proof}

In view of the characterizing relation \eqref{eq:star-def} of $\star$, we \emph{define} the real-valued bilinear form $\eta^{-1}(a \cdot v)$ for a $\LieAlg$-valued $k$-form $a$ and a $V$-valued $k$-form $v$ $(0 \leq k \leq 2)$ so that 
\begin{align}\label{eq:star-gV}
a \wedge \star v = \star a \wedge v = \eta^{-1}(a\cdot v) \eps.
\end{align}

The d'Alembertian operator $\Box$ can be expressed in terms of $\ud$ and $\star$ as follows.
Recalling the definition of the divergence, \eqref{eq:dlt} and \eqref{eq:star-dlt}, for any real-valued 1-form $\omg$, we have
\begin{equation*}
	\mathrm{div} \, \omg = - \dlt \omg = - \star \ud \star \omg.
\end{equation*}
Hence, it follows that $\Box f = \mathrm{div} (\ud f) = - \star \ud \star f$. By an entirely analogous computation, the covariant d'Alembertian operator can be expressed in terms of $\covud$ and $\star$ as well. We record this result as a lemma.
\begin{lemma} \label{lem:covBox}
Given a $V$-valued function $\phi$, we have
\begin{equation} \label{eq:covBox}
	\covBox \phi = - \star \covud \star \covud \phi.
\end{equation}
\end{lemma}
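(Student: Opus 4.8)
The plan is to imitate the scalar (real-valued) computation $\Box f = -\star\,\ud\star\,\ud f$ verbatim, using only the definitions $\covud\phi = \ud\phi + A\wedge\phi$ on $0$-forms and $\covud v = \ud v + A\wedge v$ on $1$-forms, together with the Hodge star identities collected in Lemma~\ref{lem:star} and Lemma~\ref{lem:star-aux}, and the definition $\covBox = \covD^\mu\covD_\mu$. First I would recall the real-valued identity: for a real-valued $1$-form $\omg$ one has $\dlt\omg = \star\ud\star\omg$ (from \eqref{eq:star-dlt} with $k=1$, since $(-1)^{k+1}=1$), hence for a function $f$ one has $\Box f = \mathrm{div}(\ud f) = -\dlt\,\ud f = -\star\ud\star\ud f$; this is already spelled out in the excerpt right before the lemma. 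So the task is to track the gauge correction terms $A\wedge(\cdot)$.

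Concretely, write $\covud\phi = \ud\phi + A\wedge\phi$, which is a $V$-valued $1$-form with $(\covud\phi)(T_\mu) = \covD_\mu\phi$. Apply $\star$ to get a $V$-valued $2$-form, then $\covud$ again: $\covud\star\covud\phi = \ud\star\covud\phi + A\wedge\star\covud\phi$. Apply $\star$ once more, landing on a $V$-valued $0$-form. The claim $\covBox\phi = -\star\covud\star\covud\phi$ then reduces to a pointwise identity of $V$-valued functions, which can be checked by expanding everything in the rectilinear coframe $\{\ud x^\mu\}$. I would use $\star\,\ud x^\mu = \tfrac12\,(\eta^{-1})^{\mu\mu'}\eps_{\mu'\nu\lmb}\,\ud x^\nu\wedge\ud x^\lmb$ (no sum intended beyond the indicated contraction) and the general fact that for a real $1$-form $\omg = \omg_\mu\,\ud x^\mu$ one has $-\star\ud\star\omg = (\eta^{-1})^{\mu\nu}\rd_\mu\omg_\nu = \mathrm{div}\,\omg$, applied componentwise to the $V$-valued object. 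Carrying the $A$-terms through, the cross terms reassemble into $\covD^\mu\covD_\mu\phi = \nb^\mu\nb_\mu\phi + \nb^\mu(A_\mu\phi) + A^\mu\nb_\mu\phi + A^\mu A_\mu\phi$, i.e.\ exactly $\covBox\phi$, provided the "divergence of $A$'' contribution from $\star\ud\star(A\wedge\phi)$ matches the $\nb^\mu(A_\mu\phi)$ term produced by the product rule — which it does, since $\ud$ and $\star$ on the real-form factor $\ud x^\mu$ behave just as in the scalar case and the $V$-valued factor rides along passively (cf.\ $\star(\phi\otimes\omg)=\phi\otimes\star\omg$ and the Leibniz rules \eqref{eq:leibniz-covud}).

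An alternative, slightly cleaner route avoids coordinates: define the covariant codifferential $\covdlt$ on $V$-valued $1$-forms by $\covdlt v = -\star\covud\star v$ (the sign matching \eqref{eq:star-dlt} with $k=1$, using $\star\star = -1$ on the appropriate degree), observe that $-\covdlt$ is the covariant divergence, i.e.\ $-\covdlt v = \covD^\mu v_\mu$ for $v = v_\mu\,\ud x^\mu$ — this is the covariant analogue of $\mathrm{div}\,\omg = -\dlt\omg$ and follows from the definition of $\covdlt$ via $\brk{\cdot,\cdot}_V$ exactly as in the real-valued case — and then note $-\star\covud\star\covud\phi = -\covdlt(\covud\phi) = \covD^\mu(\covud\phi)(T_\mu) = \covD^\mu\covD_\mu\phi = \covBox\phi$. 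Here one uses $\star\star = -1$ (valid degreewise on $\bbR^{1+2}$, from \eqref{eq:star-star}) to rewrite $\star\covud\star\covud\phi = -\star\covud\star\covud\phi$'s sign correctly, and one uses that $(\covud\phi)(T_\mu) = \covD_{T_\mu}\phi$, which is the observation recorded right after \eqref{eq:covLD} in the excerpt.

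I expect the main obstacle to be purely bookkeeping: getting the sign right and confirming that the "extra'' term $\star\covud\star(A\wedge\phi)$ combines with the $A$-terms in $\ud\star\covud\phi$ to exactly reproduce $\covD^\mu(A_\mu\phi) + A^\mu\covD_\mu\phi$ rather than something off by a factor or a sign. This is most safely handled by the coordinate-free argument through $\covdlt$, where one only needs (i) $\star\star = \pm 1$ in the right degrees, (ii) the identity $-\covdlt = \text{covariant divergence}$ on $V$-valued $1$-forms — which is literally the defining property of $\covdlt$ transported through the definition \eqref{eq:dlt} of $\dlt$ and the relation \eqref{eq:star-dlt} — and (iii) $(\covud\phi)(T_\mu) = \covD_\mu\phi$. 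Since all three are already available in or immediately before the statement, the proof is short; I would present the $\covdlt$ version and relegate the coordinate verification of $-\covdlt = \covD^\mu(\cdot)_\mu$ to a parenthetical remark or skip it as "an entirely analogous computation,'' mirroring the phrasing the authors themselves use just before the lemma.

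\begin{proof}
This is the covariant analogue of the scalar identity $\Box f = -\star\ud\star\ud f$ recalled above, and the proof is identical modulo carrying the connection terms. Define the covariant codifferential $\covdlt$ acting on a $V$-valued $1$-form $v$ by $\covdlt v = -\star\,\covud\star v$; this is consistent with \eqref{eq:star-dlt} (with $k=1$, so $(-1)^{k+1}=1$) and the convention $\star\star = -1$ recorded in \eqref{eq:star-star}. Exactly as in the real-valued case (where $\mathrm{div}\,\omg = -\dlt\omg = -\star\ud\star\omg$ for a real-valued $1$-form $\omg$), one checks in the rectilinear coframe $\{\ud x^\mu\}$ that $-\covdlt$ is the covariant divergence: writing $v = v_\mu\,\ud x^\mu$ with $V$-valued coefficients $v_\mu$, the computation of $-\star\,\covud\star v$ proceeds precisely as for $-\star\,\ud\star\omg = (\eta^{-1})^{\mu\nu}\rd_\mu\omg_\nu$, with every occurrence of $\ud$ on the real-form factor producing the same combinatorial coefficients and the connection term $A\wedge(\cdot)$ contributing the extra pieces that assemble the covariant derivative, so that
\begin{equation*}
	-\covdlt v = (\eta^{-1})^{\mu\nu}\,\covD_\mu v_\nu = \covD^\mu v_\mu.
\end{equation*}
Now apply this to $v = \covud\phi$. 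By the remark following \eqref{eq:covLD}, one has $(\covud\phi)(T_\mu) = \covD_\mu\phi$, i.e.\ $\covud\phi = (\covD_\mu\phi)\,\ud x^\mu$. Hence
\begin{equation*}
	-\star\,\covud\star\,\covud\phi = -\covdlt(\covud\phi) = \covD^\mu\covD_\mu\phi = \covBox\phi,
\end{equation*}
which is \eqref{eq:covBox}.
\end{proof}
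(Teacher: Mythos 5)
Your argument is in substance the paper's own: the paper justifies the lemma only by noting that the computation $\Box f=\mathrm{div}(\ud f)=-\star \ud \star \ud f$ carries over with $\ud$ replaced by $\covud$, and your reduction through the covariant codifferential is exactly that argument, so the route matches and the final identity you reach is correct.

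However, as written one step is internally inconsistent and should be repaired. You define $\covdlt v=-\star\covud\star v$ on $V$-valued $1$-forms while citing \eqref{eq:star-dlt} with $k=1$, for which $(-1)^{k+1}=+1$; the paper's convention (extended to $V$-valued forms by the last sentence of Lemma~\ref{lem:star}) is therefore $\covdlt v=+\star\covud\star v$. Under the definition you actually wrote, the claim $-\covdlt v=\covD^{\mu}v_{\mu}$ and the first equality of your final display, $-\star\covud\star\covud\phi=-\covdlt(\covud\phi)$, are each off by a sign; the two slips cancel, which is why you still land on \eqref{eq:covBox}. The fix is a single sign: set $\covdlt v=\star\covud\star v$, so that, exactly as in the real-valued case $\mathrm{div}\,\omg=-\dlt\omg=-\star\ud\star\omg$, minus the codifferential is the covariant divergence, and then $-\star\covud\star\covud\phi=-\covdlt(\covud\phi)=\covD^{\mu}\covD_{\mu}\phi=\covBox\phi$. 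If you want to pin down $-\star\covud\star v=\covD^{\mu}v_{\mu}$ without redoing the coordinate bookkeeping, note that in rectilinear coordinates $\covud\star v=\big((\eta^{-1})^{\mu\nu}\covD_{\mu}v_{\nu}\big)\,\eps$ (the $\ud$-part is the classical divergence computation done componentwise, and the connection part is $A\wedge\star v=\big((\eta^{-1})^{\mu\nu}A_{\mu}\cdot v_{\nu}\big)\,\eps$, cf.\ \eqref{eq:star-gV}), while $\star\eps=-1$; hence $\star\covud\star v=-\covD^{\mu}v_{\mu}$, with the $A$-terms entering exactly where you say they do.
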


Because of the Chern--Simons equation $F = \star J$, expressions of the form $\iota_{Z} \star$ often need to be considered. Our final lemma in this subsection is a technical result, which can be used to compute the commutator between $\covud$ (or $\covdlt$) and $\iota_{Z} \star$.
\begin{lemma} \label{lem:d-i-star}
Let $Z$ be a Killing vector field and $v$ be a $V$-valued $k$-form.
\begin{align}
	\covud \, \iota_{Z} \star v
	=& (-1)^{k+1} \iota_{Z} \star \covdlt v + \star \covLD_{Z} v, \label{eq:d-i-star}\\
	\covdlt \, \iota_{Z} \star v
	=& (-1)^{k} \iota_{Z} \star \covud v + \star (v \wedge \ud Z^{\flat}). \label{eq:dlt-i-star}
\end{align}
\end{lemma}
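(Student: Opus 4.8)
The plan is to obtain both identities purely formally, by reducing everything to the covariant Hodge-star formulae recorded in Lemma~\ref{lem:star} (extended to $V$-valued forms), the covariant Cartan formula \eqref{eq:cartan-eq-V}, and the Leibniz rule \eqref{eq:leibniz-covud}; no new geometry enters, only careful bookkeeping of the degree-dependent signs.

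First I would establish two auxiliary consequences of \eqref{eq:star-star} and the covariant version of \eqref{eq:star-dlt}. Writing $\covdlt$ on an $m$-form as $(-1)^{m+1} \star \covud \star$ and using $\star \star = -\mathrm{id}$, one gets, for a $V$-valued $k$-form $v$,
\begin{equation*}
	\covud \star v = (-1)^{k} \star \covdlt v ,
\end{equation*}
and dually, for a $V$-valued $\ell$-form $w$, $\covdlt \star w = (-1)^{\ell+1} \star \covud w$. The degenerate top- and bottom-degree cases (where $\covud \star v$ or $v \wedge \ud Z^{\flat}$ would be a form of degree $> 3$, or $\covdlt$ would drop below degree $0$) are vacuous and cause no trouble.

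For \eqref{eq:d-i-star}, I would apply the covariant Cartan formula \eqref{eq:cartan-eq-V} to the $(3-k)$-form $\star v$ and the vector field $Z$, giving $\covud \iota_{Z} \star v = \covLD_{Z} \star v - \iota_{Z} \covud \star v$. Since $Z$ is Killing, the covariant version of \eqref{eq:star-LDZ} turns the first term into $\star \covLD_{Z} v$, while the first auxiliary identity turns the second into $(-1)^{k} \iota_{Z} \star \covdlt v$; collecting terms yields \eqref{eq:d-i-star} exactly. For \eqref{eq:dlt-i-star}, I would instead start from $\iota_{Z} \star v = \star(v \wedge Z^{\flat})$ (covariant \eqref{eq:star-iX}), apply the second auxiliary identity with $w = v \wedge Z^{\flat}$ to rewrite $\covdlt \iota_{Z} \star v$ as $(-1)^{k} \star \covud(v \wedge Z^{\flat})$, expand $\covud(v \wedge Z^{\flat}) = (\covud v) \wedge Z^{\flat} + (-1)^{k} v \wedge \ud Z^{\flat}$ by \eqref{eq:leibniz-covud}, and re-contract using $\star(\alpha \wedge Z^{\flat}) = \iota_{Z} \star \alpha$; the prefactor $(-1)^{k}$ squares away on the $v \wedge \ud Z^{\flat}$ term and survives on the $\covud v$ term, producing \eqref{eq:dlt-i-star}. (Note that the Killing hypothesis is genuinely used only in \eqref{eq:d-i-star}.) The only real obstacle is ensuring that each sign $(-1)^{k}$, $(-1)^{\ell+1}$, etc., is attached to a form of the correct degree, since a slip there propagates through the whole computation; I would therefore track the degree of every intermediate form explicitly.
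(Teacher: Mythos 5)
Your proposal is correct and follows essentially the same route as the paper: identity \eqref{eq:d-i-star} via the covariant Cartan formula \eqref{eq:cartan-eq-V} applied to $\star v$, the Killing commutation $\covLD_{Z} \star = \star \covLD_{Z}$, and the covariant form of \eqref{eq:star-dlt}; and identity \eqref{eq:dlt-i-star} via $\covdlt = \pm \star \covud \star$, $\iota_{Z} \star v = \star(v \wedge Z^{\flat})$, and the Leibniz rule \eqref{eq:leibniz-covud}. Your packaging of the sign bookkeeping into two auxiliary identities (and the remark that the Killing hypothesis enters only in \eqref{eq:d-i-star}) is a harmless reorganization of the same computation, with all signs checking out.
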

\begin{proof} 
For \eqref{eq:d-i-star}, we compute using Lemmas~\ref{lem:extr-calc-V} and \ref{lem:star} as follows:
\begin{align*}
	\covud \, \iota_{Z} \star v
	=& - \iota_{Z} \covud \star v + \covLD_{Z} \star v \\
	=& (-1)^{k+1} \iota_{Z} \star \covdlt v + \star \covLD_{Z} v.
\end{align*}
Similarly, for \eqref{eq:dlt-i-star}, we again use Lemmas~\ref{lem:extr-calc-V} and \ref{lem:star} to compute
\begin{align*}
	\covdlt \, \iota_{Z} \star v
	=& (-1)^{3 - k} \star \covud \star \iota_{Z} \star v \\
	=& (-1)^{k} \star \covud (v \wedge Z^{\flat}) \\
	=& (-1)^{k} \iota_{Z} \star \covud v + \star (v \wedge \ud Z^{\flat}).	\qedhere
\end{align*}
\end{proof}

\subsection{Commutation relation for the covariant Klein--Gordon operator} \label{subsec:comm-covBox} 
Our goal here is to compute the commutator between the covariant Klein--Gordon operator $\covBox - 1$ and $\covZ^{(k)}$.
The basic computation is contained in the following lemma.
\begin{lemma} \label{lem:comm-covBox}
Let $J$ be a $\LieAlg$-valued 1-form, and $A$ be a connection 1-form satisfying the Chern--Simons equation $F = \star J$. Then given any $V$-valued function (viewed as a 0-form) $\phi$ and a Killing vector field $Z$, we have
\begin{equation}	\label{eq:comm-covBox}
	[\covZ, \covBox] \phi
	= \iota_{Z} \star \covud (J \wedge \phi)
		- \iota_{Z} \star (J \wedge \covud \phi)
	- \star (J \wedge \phi \wedge \ud Z^{\flat}).
\end{equation}
Moreover, given in addition a $\LieAlg$-valued 1-form $\Gmm$ and Killing vector fields $Z_{1}$ and $Z_{2}$, we have
\begin{align} 
	\covZ_{2} (\iota_{Z_{1}} \star \covud (\Gmm \wedge \phi))
	= &	\iota_{Z_{1}} \star \covud(\covLD_{Z_{2}} \Gmm \wedge \phi) 
		+ \iota_{Z_{1}} \star \covud(\Gmm \wedge \covZ_{2} \phi) \label{eq:comm-covBox-1} \\
	&  	+ \iota_{[Z_{2}, Z_{1}]} \star \covud (\Gmm \wedge \phi)
		+ \eta^{-1}(J \wedge Z_{2}^{\flat} \cdot (\Gmm \wedge \phi \wedge Z_{1}^{\flat})), \notag \\
	\covZ_{2} (- \iota_{Z_{1}} \star (\Gmm \wedge \covud \phi))
	= &	- \iota_{Z_{1}} \star (\covLD_{Z_{2}} \Gmm \wedge \covud \phi)
		- \iota_{Z_{1}} \star (\Gmm \wedge \covud (\covZ_{2} \phi)) \label{eq:comm-covBox-2}\\
	&	- \iota_{[Z_{2}, Z_{1}]} \star(\Gmm \wedge \covud \phi) 
		+ \eta^{-1}(\Gmm \wedge Z_{1}^{\flat} \cdot (J \wedge Z_{2}^{\flat} \wedge \phi)), \notag \\
	\covZ_{2} \star (\Gmm \wedge \phi \wedge \ud Z_{1}^{\flat})
	= &	\star (\covLD_{Z_{2}} \Gmm \wedge \phi \wedge \ud Z_{1}^{\flat})	\label{eq:comm-covBox-3}
		+ \star (\Gmm \wedge \covZ_{2} \phi \wedge \ud Z_{1}^{\flat})\\
	&	+ \star (\Gmm \wedge \phi \wedge \ud [Z_{2}, Z_{1}]^{\flat}).		\notag
\end{align}
\end{lemma}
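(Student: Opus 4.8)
The plan is to prove the four identities of Lemma~\ref{lem:comm-covBox} by systematic application of the exterior differential calculus developed in Section~\ref{subsec:extr-calc-2}, starting with \eqref{eq:comm-covBox} and then the three commutator formulae \eqref{eq:comm-covBox-1}--\eqref{eq:comm-covBox-3}, which are of the same flavour but more routine.

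\textbf{Step 1: the master identity \eqref{eq:comm-covBox}.} First I would use Lemma~\ref{lem:covBox}, which expresses $\covBox \phi = - \star \covud \star \covud \phi$. Since $\covZ \phi = \covLD_{Z} \phi$ for a $V$-valued function $\phi$ and $Z$ is Killing, the strategy is to push $\covLD_{Z}$ through the string $-\star \covud \star \covud$ using the commutation formulae: \eqref{eq:covLDcovud} to commute $\covLD_{Z}$ past $\covud$ (producing a term $(\iota_{Z} F) \wedge (\cdot)$), \eqref{eq:star-LDZ} to commute past $\star$ (no cost, since $Z$ is Killing). Concretely, $\covLD_{Z}(\covBox \phi) = -\star \covud \star \covud (\covLD_Z \phi) + (\text{commutator terms})$, and the commutator terms, after using $F = \star J$ and cleaning up with \eqref{eq:star-iX}, \eqref{eq:star-star} and Lemma~\ref{lem:d-i-star}, should collapse to the three terms on the right-hand side of \eqref{eq:comm-covBox}. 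The key intermediate moves are: $\covud \star \covud \phi$ involves $\covdlt$ (via \eqref{eq:star-dlt}); when $\covLD_Z$ hits the inner $\covud \phi$ we get $(\iota_Z F)\wedge\covud\phi = (\iota_Z \star J)\wedge \covud\phi$, and by \eqref{eq:star-iX} this is $\star(J\wedge Z^\flat)\wedge\covud\phi$; when $\covLD_Z$ hits the outer $\covud$ we get a term $(\iota_Z F)\wedge(\star\covud\phi)$; applying $-\star$ to the whole thing and using \eqref{eq:star-aux:0-form}, \eqref{eq:star-star}, and Cartan's formula \eqref{eq:cartan-eq-V} to re-express $\iota_Z$ applied to the various products, these should reorganize into $\iota_Z \star \covud(J\wedge\phi) - \iota_Z\star(J\wedge\covud\phi) - \star(J\wedge\phi\wedge\ud Z^\flat)$. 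The bookkeeping of signs (degrees $k=0,1,2,3$ appear) and the appearance of $\ud Z^\flat$ — which comes from $\iota_Z$ not commuting freely with $\covud$, cf.\ \eqref{eq:dlt-i-star} — is where care is needed.

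\textbf{Step 2: the three auxiliary commutator identities.} For \eqref{eq:comm-covBox-1}, I would apply $\covZ_2 = \covLD_{Z_2}$ to $\iota_{Z_1}\star\covud(\Gmm\wedge\phi)$ and move it inward: past $\iota_{Z_1}$ using \eqref{eq:covLDiX} (which generates the $\iota_{[Z_2,Z_1]}$ term), past $\star$ using \eqref{eq:star-LDZ} (free, Killing), past $\covud$ using \eqref{eq:covLDcovud} (generating $(\iota_{Z_2}F)\wedge(\cdot) = (\iota_{Z_2}\star J)\wedge(\cdot)$, which after \eqref{eq:star-iX} and the definition \eqref{eq:star-gV} of $\eta^{-1}(a\cdot v)$ becomes the $\eta^{-1}(J\wedge Z_2^\flat \cdot(\Gmm\wedge\phi\wedge Z_1^\flat))$ term), and finally past the wedge using the Leibniz rule \eqref{eq:leibniz-LD} and Lemma~\ref{lem:leibniz-gV} to split $\covLD_{Z_2}(\Gmm\wedge\phi)$ into $\covLD_{Z_2}\Gmm\wedge\phi + \Gmm\wedge\covZ_2\phi$. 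Identity \eqref{eq:comm-covBox-2} is analogous but with the inner operator being $\Gmm\wedge\covud\phi$ instead of $\covud(\Gmm\wedge\phi)$: here $\covLD_{Z_2}$ hitting $\covud\phi$ again produces $(\iota_{Z_2}F)\wedge\phi$, i.e.\ a $J$-dependent term, yielding $\eta^{-1}(\Gmm\wedge Z_1^\flat\cdot(J\wedge Z_2^\flat\wedge\phi))$ after rearrangement via \eqref{eq:star-iX}, \eqref{eq:star-aux:k-form} and \eqref{eq:star-gV}. Identity \eqref{eq:comm-covBox-3} is the easiest: it involves no $\covud$ acting after the first step, only Leibniz rules \eqref{eq:leibniz-LD}, \eqref{eq:star-LDZ}, \eqref{eq:star-flat}, and the fact that $\covLD_{Z_2}\ud Z_1^\flat = \ud \LD_{Z_2} Z_1^\flat = \ud[Z_2,Z_1]^\flat$ (using $[\LD_{Z_2},\ud]=0$ and \eqref{eq:star-flat}).

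\textbf{The main obstacle.} I expect the hardest part to be Step~1 — specifically, the correct matching of the $\iota_Z\star\covud$-type terms and tracking the $\ud Z^\flat$ contribution. The subtlety is that $\iota_Z$ and $\covud$ do not commute, and the various ways of organizing a triple product $J \wedge \phi \wedge (\text{form})$ under $\star$ and $\iota_Z$ must be reconciled; Lemma~\ref{lem:d-i-star} (with its $\star(v\wedge\ud Z^\flat)$ remainder) is precisely the tool that makes the $\ud Z^\flat$ term appear, and the sign factors $(-1)^k$ across degrees $k=0,\dots,3$ must be checked against the Leibniz conventions (right-to-left wedge ordering) fixed earlier. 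Once \eqref{eq:comm-covBox} is established cleanly, the three auxiliary identities are essentially parallel bookkeeping exercises with the same set of lemmas, so the bulk of the risk is concentrated in the first identity.
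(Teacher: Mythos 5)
Your proposal follows essentially the same route as the paper's proof: write $\covBox \phi = -\star \covud \star \covud \phi$, commute $\covLD_{Z}$ past $\star$ (since $Z$ is Killing) and past each $\covud$ via \eqref{eq:covLDcovud}, substitute $F = \star J$, and simplify with \eqref{eq:star-iX}, \eqref{eq:star-star}, Lemma~\ref{lem:star-aux} and the Leibniz rules, treating \eqref{eq:comm-covBox-1}--\eqref{eq:comm-covBox-3} by the same moves. The only cosmetic differences are that the paper obtains the $\ud Z^{\flat}$ term directly from the Leibniz rule applied to $\covud(J \wedge Z^{\flat} \wedge \phi)$ rather than by citing Lemma~\ref{lem:d-i-star}, and that the inner commutator actually produces $(\iota_{Z} F)\wedge \phi$ (wrapped in $\covud\star$), not $(\iota_{Z} F)\wedge \covud\phi$ — a harmless slip in your sketch of the same computation.
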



We postpone the proof until the end of this section, and proceed to the computation of $[\covZ^{(k)}, \covBox - 1]$.
Given a $\LieAlg$-valued 1-form $\Gmm$, a $V$-valued function $\phi$ and a vector field $Z$, define
\begin{align}
	\frkN_{1}[\Gmm, \phi; Z] = & \iota_{Z} \star ( (\covud \Gmm) \wedge \phi ),		\label{eq:frkN-1} \\ 
	\frkN_{2}[\Gmm, \phi; Z] = & - 2 \iota_{Z} \star (\Gmm \wedge \covud \phi) ,	\label{eq:frkN-2}\\
	\frkN_{3}[\Gmm, \phi; Z] = & - \star (\Gmm \wedge \phi \wedge \ud Z^{\flat}) .		\label{eq:frkN-3}
\end{align}
Note that, by the Leibniz rule (Lemma~\ref{lem:leibniz-gV}), the sum $\frkN_{1} + \frkN_{2}$ satisfies
\begin{equation} \label{eq:N12-leibniz}
\iota_{Z} \star \covud (\Gmm \wedge \phi)
	- \iota_{Z} \star (\Gmm \wedge \covud \phi)		
	= \frkN_{1}[\Gmm, \phi; Z] + \frkN_{2}[\Gmm, \phi; Z].		
\end{equation}
The reason why we split \eqref{eq:N12-leibniz} into $\frkN_{1}$ and $\frkN_{2}$, rather than $\iota_{Z} \star \covud (\Gmm \wedge \phi)$ and $- \iota_{Z} \star (\Gmm \wedge \covud \phi)$, is simply because the former pair turns out to be more convenient to estimate.

For $\LieAlg$-valued 1-forms $\Gmm^{1}, \Gmm^{2}$, a $V$-valued function $\phi$ and vector fields $Z_{1}, Z_{2}$, we also define
\begin{equation} \label{eq:frkN-4}
\begin{aligned}
	\frkN_{4}[\Gmm^{1}, \Gmm^{2}, \phi; Z_{1}, Z_{2}] 
	=& \eta^{-1}(\Gmm^{2} \wedge Z_{2}^{\flat} \cdot (\Gmm^{1} \wedge \phi \wedge Z_{1}^{\flat})) \\
	& + \eta^{-1}(\Gmm^{1} \wedge Z_{1}^{\flat} \cdot (\Gmm^{2} \wedge \phi \wedge Z_{2}^{\flat})) .
\end{aligned}
\end{equation}

In application, keeping track of the exact Killing vector field $Z$ (or $Z_{1}, Z_{2}$) involved in these formulae is not important. Accordingly, in what follows we often use the simple schematic notation
\begin{equation*}
	\frkN_{j}[\Gmm, \phi] = \frkN_{j}[\Gmm, \phi; Z] \quad (j=1,2,3), \quad
	\frkN_{4}[\Gmm^{1}, \Gmm^{2}, \phi] = \frkN_{4}[\Gmm^{1}, \Gmm^{2}, \phi; Z_{1}, Z_{2}]
\end{equation*}
where $Z$, $Z_{1}$ and $Z_{2}$ are understood to be one of the vector fields $Z_{\mu \nu}$.

With this convention in mind, we are finally able to state the main result of this section in a fairly compact form.
\begin{proposition} \label{prop:comm-covKG}
Let $J$ be a $\LieAlg$-valued 1-form, and $A$ a connection 1-form satisfying the Chern--Simons equation $F = \star J$.
Let $\phi$ be a $V$-valued function. Then for $m \geq 1$, the following schematic commutation formula holds:
\begin{equation} \label{eq:comm-covKG}
\begin{aligned}
{[\covZ^{(m)}, \covBox \m 1]} \phi 
=& \sum_{k_{1}+k_{2} \leq m-1} \frkN_{1}[\covLD_{Z}^{(k_{1})} J, \covZ^{(k_{2})} \phi] 
	+ \sum_{k_{1}+k_{2} \leq m-1} \frkN_{2}[\covLD_{Z}^{(k_{1})} J, \covZ^{(k_{2})} \phi] \\
&	+ \sum_{k_{1}+k_{2} \leq m-1} \frkN_{3}[\covLD_{Z}^{(k_{1})} J, \covZ^{(k_{2})} \phi] 
	+ \sum_{k_{1}+k_{2}+k_{3} \leq m-2} \frkN_{4}[\covLD_{Z}^{(k_{1})} J, \covLD_{Z}^{(k_{2})} J, \covZ^{(k_{3})} \phi]
\end{aligned}
\end{equation}
where the last sum should be omitted in the case $m = 1$.
\end{proposition}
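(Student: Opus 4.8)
The plan is to prove \eqref{eq:comm-covKG} by induction on $m$, taking Lemma~\ref{lem:comm-covBox} as the algebraic engine (we assume that lemma, whose proof is the real labor, resting on the Leibniz rules of Section~\ref{subsec:extr-calc-2} and the Chern--Simons equation $F = \star J$). Since the identity operator commutes with $\covZ$, throughout we may replace $[\covZ^{(m)}, \covBox \m 1]$ by $[\covZ^{(m)}, \covBox]$. For the base case $m = 1$, identity \eqref{eq:comm-covBox} reads $[\covZ, \covBox]\phi = \iota_{Z}\star\covud(J\wedge\phi) - \iota_{Z}\star(J\wedge\covud\phi) - \star(J\wedge\phi\wedge\ud Z^{\flat})$; rewriting the first two terms via the Leibniz identity \eqref{eq:N12-leibniz} as $\frkN_{1}[J,\phi;Z] + \frkN_{2}[J,\phi;Z]$ and recognizing the third as $\frkN_{3}[J,\phi;Z]$ gives exactly \eqref{eq:comm-covKG} for $m=1$ (only $k_{1}=k_{2}=0$ contributes, and the $\frkN_{4}$-sum is empty).

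For the inductive step it is cleanest to carry the induction with the three ``raw'' building blocks of type (I) $\iota_{Z}\star\covud(\Gmm\wedge\phi)$, type (II) $-\iota_{Z}\star(\Gmm\wedge\covud\phi)$, and type (III) $-\star(\Gmm\wedge\phi\wedge\ud Z^{\flat})$, instead of $\frkN_{1}+\frkN_{2}$ and $\frkN_{3}$ (the dictionary between the two descriptions is \eqref{eq:N12-leibniz} and the identity $-\star(\Gmm\wedge\phi\wedge\ud Z^{\flat}) = \frkN_{3}[\Gmm,\phi]$). These three families are precisely the objects whose $\covZ_{2}$-derivatives are computed in \eqref{eq:comm-covBox-1}, \eqref{eq:comm-covBox-2}, \eqref{eq:comm-covBox-3}: each relation expresses $\covZ_{2}$ of a type-$(\ast)$ block with data $(\Gmm,\phi;Z_{1})$ as the sum of the same type-$(\ast)$ block with $\Gmm \mapsto \covLD_{Z_{2}}\Gmm$, the same block with $\phi \mapsto \covZ_{2}\phi$, the same block with $Z_{1}\mapsto [Z_{2},Z_{1}]$, plus --- for types (I) and (II) only --- a quartic remainder of the form $\eta^{-1}(\cdots)$ whose type-(I) and type-(II) contributions combine (using $F = \star J$) into one symmetrized $\frkN_{4}$-term. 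Now writing $\covZ^{(m+1)} = \covZ\,\covZ^{(m)}$ schematically, $[\covZ^{(m+1)},\covBox]\phi = \covZ\big([\covZ^{(m)},\covBox]\phi\big) + [\covZ,\covBox](\covZ^{(m)}\phi)$: the second summand is the base case applied to $\covZ^{(m)}\phi$, producing the $k_{1}=0$, $k_{2}=m$ terms; for the first summand we substitute the inductive hypothesis and differentiate each building block by \eqref{eq:comm-covBox-1}--\eqref{eq:comm-covBox-3}, and each $\frkN_{4}$-term by the Leibniz rule for $\covLD_{Z}$ over the pairing $\eta^{-1}(\,\cdot\,)$ (legitimate because $Z$ is Killing, hence $\eta$, $\star$, $\eps$ are $\LD_{Z}$-invariant, and because $\covD$ is compatible with the $\LieAlg$-action on $V$, cf.\ \eqref{eq:leibniz-gV-0}).

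The bookkeeping then closes: a building block carrying data $(\covLD_{Z}^{(k_{1})}J, \covZ^{(k_{2})}\phi)$ with $k_{1}+k_{2}\leq m-1$ produces under $\covZ$ blocks with index sums $(k_{1}+1)+k_{2}$, $k_{1}+(k_{2}+1)$, or $k_{1}+k_{2}$, all $\leq m = (m+1)-1$; the new $\frkN_{4}$-terms it produces have total index $k_{1}+0+k_{2}\leq m-1 = (m+1)-2$; and the pre-existing $\frkN_{4}$-terms with index $\leq m-2$ move to index $\leq m-1$. Moreover $[Z_{2},Z_{1}]$, being a commutator of two of the $Z_{\mu\nu}$'s, again lies in the span of the $Z_{\mu\nu}$'s (these span the subalgebra $\mathrm{so}(1,2)$ of the Poincar\'e algebra), so no translation fields $T_{\mu}$ are generated and the schematic $\covZ$-notation is preserved. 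Finally, converting back via \eqref{eq:N12-leibniz} --- grouping type (I)$+$type (II) into $\frkN_{1}+\frkN_{2}$ and type (III) into $\frkN_{3}$ --- turns the formula into the stated \eqref{eq:comm-covKG}.

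The only genuine subtlety inside this proof (as opposed to inside Lemma~\ref{lem:comm-covBox} itself) is making sure the type-(I) and type-(II) quartic remainders really recombine into an honest symmetrized $\frkN_{4}$ rather than two loose terms --- which is exactly the reason the statement bundles $\frkN_{1}$ and $\frkN_{2}$ together through \eqref{eq:N12-leibniz} rather than tracking the building blocks separately --- together with the routine but attention-demanding sign and index bookkeeping hidden in the schematic notation. The heavy algebra, namely the derivation of the commutation identities \eqref{eq:comm-covBox}, \eqref{eq:comm-covBox-1}--\eqref{eq:comm-covBox-3}, is packaged in Lemma~\ref{lem:comm-covBox} and is what I would prove first, repeatedly invoking Lemmas~\ref{lem:extr-calc-V}, \ref{lem:leibniz-gV}, \ref{lem:star}, \ref{lem:d-i-star} and substituting $F = \star J$ wherever a curvature appears.
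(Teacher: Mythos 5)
Your proposal is correct and follows essentially the same route as the paper, which proves Proposition~\ref{prop:comm-covKG} exactly as "an immediate consequence of Lemma~\ref{lem:comm-covBox}, the definitions \eqref{eq:frkN-1}--\eqref{eq:frkN-4}, and the Lie algebra relation among $\set{Z_{\mu\nu}}$" -- i.e.\ the induction on $m$ you spell out, with \eqref{eq:comm-covBox} as base case and \eqref{eq:comm-covBox-1}--\eqref{eq:comm-covBox-3} driving the inductive step. Your extra care in keeping the type-(I) and type-(II) blocks paired so that their quartic remainders assemble into the symmetrized $\frkN_{4}$, and in checking that $[Z,Z]$ stays in the span of the $Z_{\mu\nu}$, simply makes explicit the bookkeeping the paper leaves to the reader.
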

We remind the reader that by a schematic formula, we mean that the left-hand side equals a linear combination of terms on the right-hand side.

This proposition is an immediate consequence of Lemma~\ref{lem:comm-covBox}, the definitions \eqref{eq:frkN-1}--\eqref{eq:frkN-4}, and the Lie algebra relation among $\set{Z_{\mu \nu}}$.
Hence it is only left to establish Lemma~\ref{lem:comm-covBox}; this is done using the tools developed in Section~\ref{subsec:extr-calc-2}.
\begin{proof} [Proof of Lemma~\ref{lem:comm-covBox}]
We begin by establishing \eqref{eq:comm-covBox}.
On $V$-valued functions, the differential operator $\covZ$ is identical to $\covLD_{Z}$. Using \eqref{eq:covBox} and the fact that $Z$ is a Killing vector field (hence $\covLD_{Z}$ commutes with $\star$), the left-hand side of \eqref{eq:comm-covBox} is equal to
\begin{equation*}
-  \star \bb( [\covLD_Z, \covud] \star \covud \phi \bb) 
- \star \covud \star [ \covLD_Z, \covud] \phi.
\end{equation*} 
Using \eqref{eq:covLDcovud} and the Chern--Simons equation $F = \star J$, we may replace the commutator by $(\iota _Z  \star J) \wedge$. Applying \eqref{eq:star-iX}, the preceding expression is then equal to
\begin{align*}
& \hskip-2em
	- \star \bb( \star (J \wedge Z^{\flat}) \wedge \star \covud \phi \bb) - \star \covud \star \bb( \star (J \wedge Z^{\flat}) \wedge \phi \bb) \\
& =
	- \star \bb( \star \star (J \wedge Z^{\flat}) \wedge \covud \phi \bb) - \star \covud \star \star (J  \wedge Z^{\flat} \wedge \phi)  \\
& =
	\star (J \wedge Z^{\flat} \wedge \covud \phi ) + \star \covud (J \wedge Z^{\flat} \wedge \phi),
\end{align*}
where we have used Lemma~\ref{lem:star-aux} and \eqref{eq:star-star}. The desired identity \eqref{eq:comm-covBox} now follows from Lemma~\ref{lem:leibniz-gV} and \eqref{eq:star-iX}. 

The identities \eqref{eq:comm-covBox-1}--\eqref{eq:comm-covBox-3} follow from routine computation, as in the preceding proof of \eqref{eq:comm-covBox}; hence we only sketch the proofs and leave the details to the reader. All these identities are proved by first replacing $\covZ_{2}$ by $\covLD_{Z_{2}}$, applying \eqref{eq:star-iX} and then using the Leibniz rule \eqref{eq:leibniz-LD}. Since $Z_{2}$ is Killing, note that $\covLD_{Z_{2}}$ commutes with $\star$. We remark that the last terms in \eqref{eq:comm-covBox-1} and \eqref{eq:comm-covBox-2} arise from the commutator $[\covLD_{Z_{2}}, \covud]$, \eqref{eq:covLDcovud}, the Chern--Simons equation $F = \star J$, and the definition \eqref{eq:star-gV}.  \qedhere
\end{proof}

\subsection{Covariant Lie derivatives of $J_{\CSH}$} \label{subsec:comm-CSH}
In Proposition~\ref{prop:comm-covKG}, the commutator between $\covZ^{(m)}$ and $\covBox - 1$ was computed in terms of $\covZ^{(k)} \phi$ and the covariant Lie derivatives of $J$. In this subsection, we compute the latter in terms of $\varphi$ in the case of \eqref{eq:CSH}.

We begin by defining the following $\LieAlg$-valued differential form, which is bilinear (over $\bbR$) in the $V$-valued functions $\varphi^{1}, \varphi^{2}$:
\begin{equation} \label{eq:Gmm-CSH-0}
	\Gmm_{\CSH}^{(0)}[\varphi^{1}, \varphi^{2}]
	=  2 \bbrk{\varphi^{1} \wedge \covud \varphi^{2}},
\end{equation}
where $\bbrk{\cdot \wedge \cdot}$ has been defined in \eqref{eq:bbrk-def} and \eqref{eq:bbrk-wedge}. Recall that
\[ J_{\CSH}(\varphi) = \brk{ \calT \varphi, \covud \varphi} + \brk{ \covud \varphi, \calT \varphi}.\]
Hence, we may write
\begin{equation*}
J_{\CSH}(\varphi) = \Gmm_{\CSH}^{(0)}[\varphi, \varphi] = 2 \bbrk{\varphi \wedge \covud \varphi}.
\end{equation*} 
We also define
\begin{equation} \label{eq:Gmm-CSH-1}
	\Gmm_{\CSH}^{(1)}[\varphi^{1}, \varphi^{2}, \varphi^{3}, \varphi^{4}; Z]
	=  2 \bbrk{\varphi^{1} \wedge (\iota_{Z}  \star \Gmm_{\CSH}^{(0)}[\varphi^{3}, \varphi^{4}] \wedge \varphi^{2})},
\end{equation}
where each $\varphi^{j}$ is a $V$-valued function and $Z$ is a vector field. The relevance of $\Gmm_{\CSH}^{(1)}$ is made clear by the following lemma, in which the commutation relation of $\Gmm_{\CSH}^{(0)}$ is computed.
\begin{lemma} \label{lem:comm-J-CSH-1}
Let $A$ be a connection 1-form and $\varphi$ a $V$-valued function, which satisfy the Chern--Simons equation $F = \star J_{\CSH}(\varphi)$. 
Then for any $V$-valued functions $\varphi^{1}, \varphi^{2}$ and a Killing vector field $Z$, we have
\begin{equation*}
	\covLD_{Z} \Gmm_{\CSH}^{(0)}[\varphi^{1}, \varphi^{2}]
	= \Gmm_{\CSH}^{(0)}[\covZ \varphi^{1}, \varphi^{2}] + \Gmm_{\CSH}^{(0)}[\varphi^{1}, \covZ \varphi^{2}]
	+ \Gmm_{\CSH}^{(1)}[\varphi^{1}, \varphi^{2}, \varphi, \varphi; Z]
\end{equation*}
\end{lemma}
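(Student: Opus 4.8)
The plan is to compute $\covLD_Z \Gmm_{\CSH}^{(0)}[\varphi^1,\varphi^2] = 2\covLD_Z \bbrk{\varphi^1 \wedge \covud \varphi^2}$ directly, using the Leibniz rule \eqref{eq:leibniz-bbrk-LD} for $\bbrk{\cdot \wedge \cdot}$ together with the commutation identity \eqref{eq:covLDcovud} for $[\covLD_Z, \covud]$. The point is that the ``error'' produced when commuting $\covLD_Z$ past $\covud$ involves the curvature $F$, which is then replaced by $\star J_{\CSH}(\varphi) = \star \Gmm_{\CSH}^{(0)}[\varphi,\varphi]$ using the Chern--Simons equation; this is exactly the term that gets packaged into $\Gmm_{\CSH}^{(1)}$.

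Concretely, I would first apply \eqref{eq:leibniz-bbrk-LD}:
\begin{equation*}
	\covLD_Z \bbrk{\varphi^1 \wedge \covud \varphi^2}
	= \bbrk{\covLD_Z \varphi^1 \wedge \covud \varphi^2} + \bbrk{\varphi^1 \wedge \covLD_Z \covud \varphi^2}.
\end{equation*}
The first term is $\bbrk{\covZ \varphi^1 \wedge \covud \varphi^2}$ since $\covLD_Z = \covZ = \covD_Z$ on $V$-valued functions, contributing $\tfrac12 \Gmm_{\CSH}^{(0)}[\covZ\varphi^1,\varphi^2]$. For the second term, I would write $\covLD_Z \covud \varphi^2 = \covud \covLD_Z \varphi^2 + [\covLD_Z,\covud]\varphi^2 = \covud(\covZ \varphi^2) + (\iota_Z F)\wedge \varphi^2$ using \eqref{eq:covLDcovud}, and then substitute $F = \star J_{\CSH}(\varphi) = \star \Gmm_{\CSH}^{(0)}[\varphi,\varphi]$, and $\iota_Z \star \Gmm_{\CSH}^{(0)}[\varphi,\varphi] = \star(\Gmm_{\CSH}^{(0)}[\varphi,\varphi] \wedge Z^\flat)$ by \eqref{eq:star-iX} — though since the final statement is only schematic, carrying $\iota_Z \star$ as a single block suffices and matches the definition \eqref{eq:Gmm-CSH-1}. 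This gives $\bbrk{\varphi^1 \wedge \covud(\covZ\varphi^2)}$, contributing $\tfrac12\Gmm_{\CSH}^{(0)}[\varphi^1,\covZ\varphi^2]$, plus $\bbrk{\varphi^1 \wedge ((\iota_Z \star \Gmm_{\CSH}^{(0)}[\varphi,\varphi])\wedge \varphi^2)}$, which is exactly $\tfrac12\Gmm_{\CSH}^{(1)}[\varphi^1,\varphi^2,\varphi,\varphi;Z]$ by \eqref{eq:Gmm-CSH-1}. Multiplying through by $2$ yields the claimed identity.

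The one place requiring a small amount of care — and the main (minor) obstacle — is the bookkeeping of the wedge/bracket orderings and the use of the ``right-to-left'' convention on iterated wedge products, so that the expression $(\iota_Z F)\wedge\varphi^2$ is correctly identified with the innermost factor $\iota_Z \star \Gmm_{\CSH}^{(0)}[\varphi^3,\varphi^4]\wedge\varphi^2$ appearing inside \eqref{eq:Gmm-CSH-1} with $\varphi^3 = \varphi^4 = \varphi$. Here one should note that $\iota_Z F$ is a $\LieAlg$-valued $1$-form acting on the $V$-valued $0$-form $\varphi^2$, so $(\iota_Z F)\wedge\varphi^2$ is the $V$-valued $1$-form $(\iota_Z F)\cdot\varphi^2$, which is exactly the type of object the second slot of $\bbrk{\cdot\wedge\cdot}$ expects. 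Since Lemma~\ref{lem:comm-J-CSH-1} is stated schematically (equality up to a linear combination — indeed here with explicit coefficients all equal to what one computes), there is no need to track signs precisely, and the routine verification can be left to the reader. This establishes the lemma.
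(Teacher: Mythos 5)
Your proposal is correct and is essentially the paper's own argument: the paper proves this lemma in one line by invoking the Leibniz rule \eqref{eq:leibniz-bbrk-LD} together with \eqref{eq:covLDcovud}, and your computation (commute $\covLD_{Z}$ past $\covud$, replace $\iota_{Z}F$ by $\iota_{Z}\star\Gmm_{\CSH}^{(0)}[\varphi,\varphi]$ via the Chern--Simons equation, and match with the definition \eqref{eq:Gmm-CSH-1}) simply makes that explicit. The only tiny quibble is your remark that the identity is ``schematic'': as stated it is an exact identity, but your own computation produces it with the exact coefficients, so nothing is lost.
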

\begin{proof} 
This identity  follows from the Leibniz rule 
\eqref{eq:leibniz-bbrk-LD} and \eqref{eq:covLDcovud}.
\end{proof}

Based on the observation that $\Gmm_{\CSH}^{(0)}$ occurs in $\Gmm_{\CSH}^{(1)}$, higher order covariant Lie derivative $\covLD_{Z}^{(m)} J_{\CSH}(\varphi)$ can also be computed using Lemma~\ref{lem:comm-J-CSH-1} and the Leibniz rule \eqref{eq:leibniz-bbrk-LD}. We begin by making the following recursive definition of $\Gmm_{\CSH}^{(m)}$ for all integers $m \geq 1$:
\begin{equation*}
\begin{aligned}
& \hskip-2em
	\Gmm_{\CSH}^{(m)}[\varphi^{1}, \varphi^{2}, \ldots, \varphi^{2m+2}; Z_{1}, Z_{2} \ldots, Z_{m}] \\ 
	= & 2 \bbrk{\varphi^{1} \wedge (\iota_{Z_{1}}  \star \Gmm_{\CSH}^{(m-1)}[\varphi^{3}, \ldots, \varphi^{2m+2}; Z_{2}, \ldots, Z_{m}] \wedge \varphi^{2})}.
\end{aligned}
\end{equation*}
Here, each $\varphi^{j}$ is a $V$-valued function, and $Z_{j}$ is a vector field. 

Using this definition, it is not difficult to write down a formula for $\covLD_{Z_{1}} \cdots \covLD_{Z_{m}} J_{\CSH}(\varphi)$ for any $m$. However, the exact formula is rather long and unwieldy. As discussed earlier, we need not keep track of each $Z_{j}$, so we may simply write 
\begin{equation*}
\Gmm_{\CSH}^{(m)}[\varphi^{1}, \varphi^{2}, \ldots, \varphi^{2m+2}] 
= \Gmm_{\CSH}^{(m)}[\varphi^{1}, \varphi^{2}, \ldots, \varphi^{2m+2}; Z_{1}, Z_{2} \ldots, Z_{m}], 
\end{equation*}
with the understanding that each $Z_{j}$ is one of the Killing vector fields $Z_{\mu \nu}$. 
With this convention, we may write down the following compact schematic formula, which suffices for our use.
\begin{proposition} \label{prop:comm-J-CSH}
Let $A$ be a connection 1-form and $\varphi$ a $V$-valued function, which satisfy the Chern--Simons equation $F = \star J_{\CSH}(\varphi)$. 
Then for any $m \geq 1$, the following schematic formula holds:
\begin{equation} \label{eq:comm-J-CSH}
	\covLD^{(m)}_{Z} J_{\CSH}(\varphi)
 	=  \sum_{\ell=0}^{m} \bb( \sum_{k_{1} + \cdots k_{2\ell+2} \leq m - \ell} \Gmm_{\CSH}^{(\ell)}[\covZ^{(k_{1})} \varphi, \cdots, \covZ^{(k_{\ell}+2)} \varphi] \bb).
\end{equation}
\end{proposition}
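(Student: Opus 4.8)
\textbf{Proof plan for Proposition~\ref{prop:comm-J-CSH}.}
The plan is to prove the schematic formula \eqref{eq:comm-J-CSH} by induction on $m$, using Lemma~\ref{lem:comm-J-CSH-1} as the base case and the recursive structure of the $\Gmm_{\CSH}^{(\ell)}$ together with the Leibniz rule \eqref{eq:leibniz-bbrk-LD} for the inductive step. First I would record the base case $m=1$: since $J_{\CSH}(\varphi) = \Gmm_{\CSH}^{(0)}[\varphi,\varphi]$, applying a single covariant Lie derivative $\covLD_Z$ and invoking Lemma~\ref{lem:comm-J-CSH-1} gives
\[
	\covLD_Z J_{\CSH}(\varphi) = \Gmm_{\CSH}^{(0)}[\covZ\varphi, \varphi] + \Gmm_{\CSH}^{(0)}[\varphi, \covZ\varphi] + \Gmm_{\CSH}^{(1)}[\varphi,\varphi,\varphi,\varphi; Z],
\]
which matches the right-hand side of \eqref{eq:comm-J-CSH} with $m=1$ (the $\ell=0$ term producing the two $\Gmm_{\CSH}^{(0)}$ pieces with one derivative distributed, and the $\ell=1$ term producing $\Gmm_{\CSH}^{(1)}$ with no derivatives).

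For the inductive step, I would assume \eqref{eq:comm-J-CSH} holds for $m$ and apply $\covLD_Z$ once more to both sides. On a generic summand $\Gmm_{\CSH}^{(\ell)}[\covZ^{(k_1)}\varphi,\dots,\covZ^{(k_{2\ell+2})}\varphi]$ one must compute $\covLD_Z$ of an expression built by iterating the operation $w \mapsto 2\bbrk{u \wedge (\iota_{Z'} \star w \wedge v)}$. The key algebraic inputs are: (i) $Z$ Killing, so $\covLD_Z$ commutes with $\star$ by \eqref{eq:star-LDZ}; (ii) $[\covLD_Z, \iota_{Z'}] = \iota_{[Z,Z']}$ from \eqref{eq:covLDiX}, and $[Z,Z']$ is again a Killing vector field in the span of the $Z_{\mu\nu}$ (by the Lie algebra relations in Section~\ref{subsec:KillingVF}); (iii) the Leibniz rule \eqref{eq:leibniz-bbrk-LD} for $\bbrk{\cdot\wedge\cdot}$, which has covariant derivatives on \emph{both} sides — this is exactly what makes the computation close without producing bare gauge potentials; and (iv) when $\covLD_Z$ hits the innermost $\Gmm_{\CSH}^{(0)} = 2\bbrk{\varphi\wedge\covud\varphi}$, the term $\covLD_Z\,\covud\varphi = \covud\covZ\varphi + (\iota_Z F)\wedge\varphi$ from \eqref{eq:covLDcovud}, with $F = \star J_{\CSH}(\varphi)$, generates precisely one extra factor of $\Gmm_{\CSH}^{(0)}$ via an $\iota_Z\star$ — i.e. it raises $\ell$ by one. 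Carrying $\covLD_Z$ through by the Leibniz rules, every term it produces is either: a summand of the same order $\ell$ with one more covariant $\covZ$ applied to one of the $\varphi$-slots (so $\sum k_j$ increases by $1$), or a summand of order $\ell+1$ with the derivative budget unchanged (the new $\Gmm_{\CSH}^{(0)}$ carrying no $\covZ$). In both cases the bookkeeping condition $\sum k_j \le m-\ell$ is promoted to $\le (m+1)-\ell$ at the appropriate $\ell$, which is exactly the $m+1$ instance of \eqref{eq:comm-J-CSH}. Since we are only after a schematic identity, the precise constants and the precise assignment of which $Z_{\mu\nu}$ sits in which slot are irrelevant, which is what keeps the argument manageable.

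The main obstacle is purely organizational: verifying that the recursive definition of $\Gmm_{\CSH}^{(m)}$ is compatible with differentiating a nested expression, i.e.\ that $\covLD_Z$ applied to $2\bbrk{\varphi^1 \wedge (\iota_{Z_1}\star\Gmm_{\CSH}^{(m-1)}[\dots]\wedge\varphi^2)}$ redistributes into terms of the claimed shape. I would handle this by an auxiliary sub-induction (or just a direct Leibniz expansion) showing that $\covLD_Z \iota_{Z_1}\star\Gmm_{\CSH}^{(\ell)}[\cdots] = \iota_{Z_1}\star\covLD_Z\Gmm_{\CSH}^{(\ell)}[\cdots] + \iota_{[Z,Z_1]}\star\Gmm_{\CSH}^{(\ell)}[\cdots]$, then feed in the inductive hypothesis for $\covLD_Z\Gmm_{\CSH}^{(\ell)}$. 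Once this "one derivative through one level of nesting" lemma is in place, the full formula \eqref{eq:comm-J-CSH} follows by a routine counting argument on how the derivative budget and the level index $\ell$ evolve, and I would leave the explicit (long) unfolded formula unstated, exactly as the proposition's remark anticipates.
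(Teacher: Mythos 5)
Your plan is correct and is essentially the proof the paper has in mind: the paper itself states that Proposition~\ref{prop:comm-J-CSH} follows by induction on $m$ using the Leibniz rule \eqref{eq:leibniz-bbrk-LD}, the commutation formulae \eqref{eq:covLDiX} and \eqref{eq:covLDcovud}, and the Lie algebra relations among the $Z_{\mu\nu}$, which is exactly your argument (with Lemma~\ref{lem:comm-J-CSH-1} as the base case and \eqref{eq:star-LDZ} to pass $\covLD_{Z}$ through $\iota_{Z_{1}}\star$ in the nested $\Gmm_{\CSH}^{(\ell)}$). Your bookkeeping of how the derivative budget and the level index $\ell$ evolve, including the curvature term $F=\star J_{\CSH}$ raising $\ell$ by one with undifferentiated slots, is the right accounting and closes the induction.
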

This proposition may be proved by induction on $m$, using the Leibniz rule \eqref{eq:leibniz-bbrk-LD}, the commutation formulae \eqref{eq:covLDiX} and \eqref{eq:covLDcovud}, and the Lie algebra relation among $\set{Z_{\mu \nu}}$. We omit the straightforward details.

For specific values $m = 2, 3$, $\Gmm_{\CSH}^{(m)}$ takes the following form:
\begin{align}
\Gmm_{\CSH}^{(2)}[\varphi^{1}, \varphi^{2}, \ldots, \varphi^{6}]  \label{eq:Gmm-CSH-2}
= &  2^{2} \bbrk{\varphi^{1} \wedge (\bbrk{\varphi^{3} \wedge ((\iota_{Z} \star)^{2} \Gmm_{\CSH}^{(0)}[\varphi^{5}, \varphi^{6}] \wedge \varphi^{4}) } \wedge\varphi^{2})}, \\
\Gmm_{\CSH}^{(3)}[\varphi^{1}, \varphi^{2}, \ldots, \varphi^{8}]
=&	2^{3} \bbrk{\varphi^{1} \wedge (\bbrk{\varphi^{3} \wedge (\bbrk{\varphi^{5} \wedge ((\iota_{Z} \star)^{3} \Gmm_{\CSH}^{(0)}[\varphi^{7}, \varphi^{8}] \wedge \varphi^{6})} \wedge \varphi^{4}) }\wedge \varphi^{2})}.
	\label{eq:Gmm-CSH-3}
\end{align}
These turn out to be only cases needed for our application in Section~\ref{sec:BA}.

We end with computation of $\covud \Gmm_{\CSH}^{(0)}$ and $\covdlt \Gmm_{\CSH}^{(0)}$; it will be used in conjunction with Lemma~\ref{lem:d-i-star} and Proposition~\ref{prop:comm-J-CSH} to compute $\covud \covLD_{Z}^{(m)} J_{\CSH}$.
\begin{lemma} \label{lem:covud-covdlt-CSH}
Let $A$ be a connection 1-form and $\varphi$ a $V$-valued function, which satisfy the Chern--Simons equation $F = \star J_{\CSH}(\varphi)$. Then for any $V$-valued functions $\varphi^{1}$ and $\varphi^{2}$, we have
\begin{align}
	\covud \Gmm_{\CSH}^{(0)}[\varphi^{1}, \varphi^{2}]
	=& \bbrk{\covud \varphi^{1} \wedge \covud \varphi^{2}}
		+ \bbrk{\varphi^{1} \wedge (\star \Gmm_{\CSH}^{(0)}[\varphi, \varphi] \wedge \varphi^{2})}, \label{eq:covud-CSH} \\
	\covdlt \Gmm_{\CSH}^{(0)}[\varphi^{1}, \varphi^{2}]
	=& \star \bbrk{\covud \varphi^{1} \wedge \star \covud \varphi^{2}} - \bbrk{\varphi^{1}, (\covBox \m 1) \varphi^{2}} \m \bbrk{\varphi^{1}, \varphi^{2}}. 			\label{eq:covdlt-CSH}
\end{align}
\end{lemma}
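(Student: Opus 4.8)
The plan is to establish the two identities \eqref{eq:covud-CSH} and \eqref{eq:covdlt-CSH} by direct computation, using only the Leibniz rules for the bracket wedge product (Lemma~\ref{lem:extr-calc-bbrk}), the curvature identity $\covud^{2} = F \wedge \cdot$ from \eqref{eq:covud-covud}, the Chern--Simons equation $F = \star J_{\CSH}(\varphi)$, and the Hodge-star formulae of Lemma~\ref{lem:star}. Recall $\Gmm_{\CSH}^{(0)}[\varphi^{1}, \varphi^{2}] = 2 \bbrk{\varphi^{1} \wedge \covud \varphi^{2}}$, where $\varphi^{1}$ is a $0$-form and $\covud \varphi^{2}$ is a $1$-form.

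For \eqref{eq:covud-CSH}: apply $\covud$ and use the Leibniz rule \eqref{eq:leibniz-bbrk-d} with $k = 0$ (so the sign $(-1)^{k}$ is $+1$):
\begin{equation*}
	\covud \bbrk{\varphi^{1} \wedge \covud \varphi^{2}}
	= \bbrk{\covud \varphi^{1} \wedge \covud \varphi^{2}} + \bbrk{\varphi^{1} \wedge \covud \covud \varphi^{2}}.
\end{equation*}
By \eqref{eq:covud-covud}, $\covud \covud \varphi^{2} = F \wedge \varphi^{2}$; substituting $F = \star J_{\CSH}(\varphi) = \star \Gmm_{\CSH}^{(0)}[\varphi, \varphi]$ gives the second term. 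After multiplying by $2$ this is precisely the right-hand side of \eqref{eq:covud-CSH}. This step is entirely routine.

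For \eqref{eq:covdlt-CSH}: use $\covdlt = (-1)^{k+1} \star \covud \star$ on $V$-valued $k$-forms (the covariant analogue of \eqref{eq:star-dlt}, valid by the last sentence of Lemma~\ref{lem:star}). Here $\Gmm_{\CSH}^{(0)}$ is a $1$-form, so $\covdlt \Gmm_{\CSH}^{(0)} = \star \covud \star \Gmm_{\CSH}^{(0)}$. First compute $\star \Gmm_{\CSH}^{(0)}[\varphi^{1}, \varphi^{2}] = 2 \bbrk{\varphi^{1} \wedge \star \covud \varphi^{2}}$ using the fact that $\star$ acts componentwise and $\varphi^{1}$ is a scalar-type $0$-form (via $\star(\phi \otimes \omg) = \phi \otimes \star \omg$ and the definition \eqref{eq:bbrk-wedge}). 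Then apply $\covud$ and the Leibniz rule \eqref{eq:leibniz-bbrk-d}, again with $k = 0$:
\begin{equation*}
	\covud \bbrk{\varphi^{1} \wedge \star \covud \varphi^{2}}
	= \bbrk{\covud \varphi^{1} \wedge \star \covud \varphi^{2}} + \bbrk{\varphi^{1} \wedge \covud \star \covud \varphi^{2}}.
\end{equation*}
For the first term, applying the outer $\star$ and using \eqref{eq:bbrk-comm} reproduces $\star \bbrk{\covud \varphi^{1} \wedge \star \covud \varphi^{2}}$. For the second term, $\star \covud \star \covud \varphi^{2} = - \covBox \varphi^{2}$ by Lemma~\ref{lem:covBox}, so it contributes $-\bbrk{\varphi^{1}, \covBox \varphi^{2}}$; rewriting $\covBox \varphi^{2} = (\covBox \m 1)\varphi^{2} + \varphi^{2}$ splits this into the last two terms of \eqref{eq:covdlt-CSH}. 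Multiplying by $2$ and tracking the overall sign from $\covdlt = \star \covud \star$ gives the claimed identity.

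The only genuine subtlety — and the step I expect to require the most care — is the sign bookkeeping and the interplay between $\star$, the bracket wedge, and the reordering identities \eqref{eq:bbrk-comm} and \eqref{eq:star-star}: one must verify that after pulling the outer $\star$ inside via $\star(\phi \otimes \omg) = \phi \otimes \star\omg$ and using $\star\star = -1$, the term $\star \bbrk{\covud \varphi^{1} \wedge \star \covud \varphi^{2}}$ emerges with the correct sign (here $\covud\varphi^{1}$ and $\star\covud\varphi^{2}$ are a $1$-form and a $2$-form, so \eqref{eq:bbrk-comm} contributes $(-1)^{1\cdot 2 + 1} = -1$, which must cancel against the sign from $\star\star$ acting on the $3$-form). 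Once these signs are checked, both identities follow mechanically; I would present the computation as a short displayed chain for each, citing \eqref{eq:leibniz-bbrk-d}, \eqref{eq:covud-covud}, Lemma~\ref{lem:covBox}, and the Chern--Simons equation at the appropriate places.
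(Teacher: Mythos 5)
Your proposal follows essentially the same route as the paper's proof: the Leibniz rule \eqref{eq:leibniz-bbrk-d} together with $\covud^{2} = F \wedge (\cdot)$ and $F = \star J_{\CSH}$ for \eqref{eq:covud-CSH}, and $\covdlt = \star \covud \star$ on 1-forms, the identity $\star \bbrk{\varphi \wedge v} = \bbrk{\varphi \wedge \star v}$ for a $0$-form $\varphi$, and Lemma~\ref{lem:covBox} for \eqref{eq:covdlt-CSH}. The only remark is that the sign subtlety you flag is not actually present: after applying the outer $\star$, the first term is already literally $\star \bbrk{\covud \varphi^{1} \wedge \star \covud \varphi^{2}}$ as in \eqref{eq:covdlt-CSH}, so no reordering via \eqref{eq:bbrk-comm} or use of $\star\star = -1$ is needed there (and the factor-of-$2$ bookkeeping is elided in the paper's schematic conventions as well).
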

\begin{remark} 
Observe that $\bbrk{\covud \varphi^{1} \wedge \covud \varphi^{2}}$ has a structure similar to the classical null form $Q_{\mu \nu}(f, g) = \rd_{\mu} f \rd_{\nu} g - \rd_{\nu} f \rd_{\mu} g$. Moreover, a further computation using the definition of $\star$ shows that 
\begin{equation*}
\star \bbrk{\covud \varphi^{1} \wedge \star \covud \varphi^{2}} = - \eta^{\mu \nu} \bbrk{\covT_{\mu} \varphi^{1}, \covT_{\nu} \varphi^{2}}.
\end{equation*}
which resembles the classical null form $Q_{0}(f, g) = \rd^{\mu} f \rd_{\mu} g$. These structures do not play any role in the analysis that follows, due to the fast enough pointwise decay rate of the Klein--Gordon equation. However, they should be essential in the case of \emph{massless} Chern--Simons--Higgs equation.
\end{remark}

\begin{proof} 
Using \eqref{eq:leibniz-bbrk-d}, the Leibniz rule and \eqref{eq:covud-covud}, we have
\begin{align*}
	\covud \Gmm_{\CSH}^{(0)}[\varphi^{1}, \varphi^{2}]
	=& \bbrk{\covud \varphi^{1} \wedge \covud \varphi^{2}}
		+ \bbrk{\varphi^{1} \wedge (\star \Gmm_{\CSH}^{(0)}[\varphi, \varphi] \wedge \varphi^{2})},
\end{align*}
which proves \eqref{eq:covud-CSH}.

To prove \eqref{eq:covdlt-CSH}, we begin by writing out
\begin{align*}
	\covdlt \Gmm_{\CSH}^{(0)}[\varphi^{1}, \varphi^{2}]
	=& \star \covud \star \bbrk{\varphi^{1} \wedge \covud \varphi^{2}}.
\end{align*}
Recalling the definition \eqref{eq:bbrk-wedge} of $\bbrk{\cdot \wedge \cdot}$, it is immediate that
\begin{align*}
	\star \bbrk{\varphi \wedge v} = \bbrk{\varphi \wedge \star v}
\end{align*}
when $\varphi$ is a $V$-valued $0$-form (i.e., a $V$-valued function) and $v$ is a $V$-valued $k$-form. Using \eqref{eq:leibniz-bbrk-d}, the Leibniz rule and \eqref{eq:covBox}, we have
\begin{align*}
	\covdlt \Gmm_{\CSH}^{(0)}[\varphi^{1}, \varphi^{2}]
	=& \star \covud \bbrk{\varphi^{1} \wedge \star \covud \varphi^{2}} \\
	=& \star \bbrk{\covud \varphi^{1} \wedge \star \covud \varphi^{2}} + \star \bbrk{\varphi^{1} \wedge (\covud \star \covud \varphi^{2})} \\
	=& \star \bbrk{\covud \varphi^{1} \wedge \star \covud \varphi^{2}} - \bbrk{\varphi^{1}, \covBox \varphi^{2}},	
\end{align*} 
which finishes the proof.
\end{proof}

\subsection{Covariant Lie derivatives of $J_{\CSD}$} \label{subsec:comm-CSD}
Here we compute $\covLD_{Z}^{(m)} J$ in the case of \eqref{eq:CSD}.
We remind the reader that 
\[J_{\CSD} (\psi)= - \brk{i \alp \calT \psi,  \psi},\] where 
$\alp = \eta_{\mu \nu} \alp^{\mu} \ud x^{\nu}$ is a $2 \times 2$ matrix-valued 1-form with $ \alp^{\mu} = \ga^0 \ga^{\mu}$. 
The matrix $\alp^{\mu}$ acts on $\calT \psi$ naturally, i.e., $\alp^{\mu} \calT \psi = \sum_{A} e_{A} \otimes \alp^{\mu} \calT^{A} v$ for any orthonormal basis $\set{e_{A}}$ of $\LieAlg$. 

Since $\gmm^{0}$ is hermitian, $\gmm^{j}$ is anti-hermitian and $\gmm^{0} \gmm^{j} + \gmm^{j} \gmm^{0} = 0$ $(j=1,2)$, it follows that $\alp^{\mu}$ is \emph{hermitian} for $\mu=0, 1, 2$. On the other hand, each $\calT^{A}$ is \emph{anti-hermitian}. Finally, since $\alp^{\mu}$ and $\calT^{A}$ commute, we have $\alp \calT \psi = \calT \alp \psi$. Putting these observations together, we may write
\begin{equation*}
J_{\CSD} (\psi)= \frac{1}{2} \brk{\calT \psi, i \alp \psi} + \frac{1}{2} \brk{i \alp \psi, \calT \psi}
= \bbrk{\psi \wedge i \alp \psi}.
\end{equation*}

Motivated by the preceding computation, we define $\LieAlg$-valued differential forms $\Gmm_{\CSD}^{(k)}$, which are multilinear (over $\bbR$) in the inputs:
\begin{align*}
	\Gmm_{\CSD}^{(0)}[\psi^{1}, \psi^{2}] = & \bbrk{\psi^{1} \wedge i \alp \psi^{2}} \\
	\Gmm_{\CSD}^{(k)}[\psi^{1}, \psi^{2}; Z_{1}, \ldots, Z_{k}] = & \bbrk{\psi^{1} \wedge i (\calL_{Z_{k}} \cdots \calL_{Z_{1}}\alp) \psi^{2}}.
\end{align*}
Here $\psi^{1}, \psi^{2}$ are $V = \Dlt \otimes W$-valued functions, and each $Z_{j}$ is a vector field. 
By the above computation and definition, we have 
\begin{equation*}
 J_{\CSD}(\psi)= \Gmm_{\CSD}^{(0)}[\psi, \psi]  = \bbrk{\psi \wedge i \alp \psi}.
\end{equation*}
As in the case of \eqref{eq:CSH}, we often do not keep track of each $Z_{j}$ and simply write
\begin{equation*}
	\Gmm_{\CSD}^{(k)}[\psi^{1}, \psi^{2}] = \Gmm_{\CSD}^{(k)}[\psi^{1}, \psi^{2}; Z_{1}, \ldots, Z_{k}],
\end{equation*}
where each $Z_{j}$ is understood to be one of the Killing vector fields $Z_{\mu \nu}$.

The following analogue of Lemma~\ref{lem:comm-J-CSH-1} holds.
\begin{lemma} \label{lem:comm-J-CSD-1}
Let $A$ be a connection 1-form and $\psi$ a $V$-valued function, which satisfy the Chern--Simons equation $F = \star J_{\CSD}(\psi)$. 
Then for any $V$-valued functions $\psi^{1}, \psi^{2}$ and a Killing vector field $Z$, we have
\begin{equation} \label{eq:comm-J-CSD-1}
	\covLD_{Z} \Gmm_{\CSD}^{(0)}[\psi^{1}, \psi^{2}]
	= \Gmm_{\CSD}^{(0)}[\covZ \psi^{1}, \psi^{2}] + \Gmm_{\CSD}^{(0)}[\psi^{1}, \covZ \psi^{2}]
	+ \Gmm_{\CSD}^{(1)}[\psi^{1}, \psi^{2}]	
\end{equation}
\end{lemma}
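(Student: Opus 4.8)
The plan is to mimic the proof of Lemma~\ref{lem:comm-J-CSH-1}, using the Leibniz rule for the $\LieAlg$-valued wedge product $\bbrk{\cdot\wedge\cdot}$ together with the commutation identity for $\covLD_{Z}$ against the covariant exterior derivative. The key point is that $\Gmm_{\CSD}^{(0)}[\psi^{1},\psi^{2}] = \bbrk{\psi^{1}\wedge i\alp\psi^{2}}$ involves the \emph{fixed} (constant-coefficient, in rectilinear coordinates) matrix-valued $1$-form $\alp = \eta_{\mu\nu}\alp^{\mu}\ud x^{\nu}$, so the only genuine derivatives that appear are those hitting $\psi^{1}$, $\psi^{2}$, and $\alp$ itself; there is no curvature term of the kind that arose in the Higgs case from $\covud^{2}\varphi = F\wedge\varphi$.

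First I would apply the Leibniz rule \eqref{eq:leibniz-bbrk-LD} for $\covLD_{Z}$ acting on $\bbrk{v\wedge w}$ with $v = \psi^{1}$ (a $0$-form) and $w = i\alp\psi^{2}$ (a $1$-form), obtaining
\begin{equation*}
	\covLD_{Z}\bbrk{\psi^{1}\wedge i\alp\psi^{2}}
	= \bbrk{\covLD_{Z}\psi^{1}\wedge i\alp\psi^{2}} + \bbrk{\psi^{1}\wedge \covLD_{Z}(i\alp\psi^{2})}.
\end{equation*}
On $V$-valued $0$-forms $\covLD_{Z}$ coincides with $\covZ$, so the first term is exactly $\Gmm_{\CSD}^{(0)}[\covZ\psi^{1},\psi^{2}]$. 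For the second term, I would expand $\covLD_{Z}(i\alp\psi^{2})$. Here I must be careful: $i\alp\psi^{2}$ is a $V$-valued $1$-form obtained by acting with the matrix $1$-form $i\alp$ on the $V$-valued function $\psi^{2}$; since $i\alp$ acts only on the spinorial factor $\Dlt$ (commuting with the $\LieGrp$-action on $W$ and with $\calT$), the gauge-covariant Lie derivative distributes as $\covLD_{Z}(i\alp\psi^{2}) = i(\LD_{Z}\alp)\psi^{2} + i\alp\,\covLD_{Z}\psi^{2} = i(\LD_{Z}\alp)\psi^{2} + i\alp\,\covZ\psi^{2}$, using that ordinary and covariant Lie derivatives agree on the $0$-form $\psi^{2}$ and that $\alp$ has no gauge content. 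Substituting back gives the three asserted terms: $\bbrk{\psi^{1}\wedge i\alp\,\covZ\psi^{2}} = \Gmm_{\CSD}^{(0)}[\psi^{1},\covZ\psi^{2}]$ and $\bbrk{\psi^{1}\wedge i(\LD_{Z}\alp)\psi^{2}} = \Gmm_{\CSD}^{(1)}[\psi^{1},\psi^{2};Z]$, which is $\Gmm_{\CSD}^{(1)}[\psi^{1},\psi^{2}]$ in the schematic notation.

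The main obstacle — more a bookkeeping subtlety than a real difficulty — is justifying the Leibniz-type splitting $\covLD_{Z}(i\alp\psi^{2}) = i(\LD_{Z}\alp)\psi^{2} + i\alp\,\covZ\psi^{2}$ cleanly, i.e., checking that the matrix factor $i\alp$ commutes past the gauge connection. This follows from the tensor-product structure $V = \Dlt\otimes_{\bbC}W$ spelled out in Section~\ref{subsec:CSD}: the $\gmm$-matrices (hence $\alp^{\mu}$) act on the $\Dlt$ factor and $A$ acts on the $W$ factor, so $\covD_{Z}(i\alp\psi^{2}) = i\alp\,\covD_{Z}\psi^{2} + i(\nb_{Z}\alp)\psi^{2}$, and $\nb_{Z}\alp = \LD_{Z}\alp$ since the Levi-Civita connection is trivial in rectilinear coordinates and $\alp$ is covariant-constant there. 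One should also note that $Z$ being Killing is not needed for \emph{this} lemma (unlike the curvature-producing step in Lemma~\ref{lem:comm-J-CSH-1}, where the Chern--Simons equation $F=\star J$ entered); it is listed in the hypothesis only for uniformity with the later iterated statement. The remainder is routine, and I would omit the verification that $\bbrk{\cdot\wedge\cdot}$ interacts with this matrix action as claimed, referring to Lemma~\ref{lem:extr-calc-bbrk}.
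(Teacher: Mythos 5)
Your overall route is the same as the paper's: apply the Leibniz rule \eqref{eq:leibniz-bbrk-LD} to $\bbrk{\psi^{1}\wedge i\alp\psi^{2}}$ and use the formula $\covLD_{Z}(\alp\psi^{2}) = (\LD_{Z}\alp)\psi^{2} + \alp\,\covLD_{Z}\psi^{2}$; your main displayed computation is correct, and you are also right that neither the Killing property of $Z$ nor the Chern--Simons equation is genuinely used here (no curvature term arises, unlike in Lemma~\ref{lem:comm-J-CSH-1}). However, the paragraph in which you justify the key formula contains an error: you write $\covD_{Z}(i\alp\psi^{2}) = i\alp\,\covD_{Z}\psi^{2} + i(\nb_{Z}\alp)\psi^{2}$ and then claim $\nb_{Z}\alp = \LD_{Z}\alp$ ``since $\alp$ is covariant-constant.'' Since $\alp$ has constant components in rectilinear coordinates, $\nb_{Z}\alp = 0$, whereas $\LD_{Z}\alp = \iota_{Z}\ud\alp + \ud\,\iota_{Z}\alp = \ud\big(\alp(Z)\big)$ is nonzero for the boosts and rotations $Z_{\mu\nu}$ (for instance $\LD_{Z_{\mu\nu}}\alp = \alp_{\nu}\,\ud x_{\mu} - \alp_{\mu}\,\ud x_{\nu}$); if your claimed identity held, the term $\Gmm_{\CSD}^{(1)}[\psi^{1},\psi^{2}] = \bbrk{\psi^{1}\wedge i(\LD_{Z}\alp)\psi^{2}}$ would vanish identically, which it does not. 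The slip comes from conflating the covariant derivative $\covD_{Z}$ with the covariant Lie derivative $\covLD_{Z}$, which differ on $1$-forms; relatedly, your parenthetical ``ordinary and covariant Lie derivatives agree on the $0$-form $\psi^{2}$'' should say that the \emph{covariant} Lie derivative agrees with the covariant derivative $\covZ$ on $0$-forms (both differ from the ordinary Lie derivative by the term $A(Z)\cdot\psi^{2}$).

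The fix is shorter than your detour: use the definition \eqref{eq:covLD} directly. Since $\psi^{2}$ is a $0$-form, the ordinary Lie derivative obeys $\LD_{Z}(i\alp\psi^{2}) = i(\LD_{Z}\alp)\psi^{2} + i\alp\,(Z\psi^{2})$, and since $A$ acts on the $W$ factor while $\alp$ acts on the $\Dlt$ factor, $(\iota_{Z}A)\cdot(i\alp\psi^{2}) = i\alp\,\big((\iota_{Z}A)\cdot\psi^{2}\big)$; adding the two terms gives $\covLD_{Z}(i\alp\psi^{2}) = i(\LD_{Z}\alp)\psi^{2} + i\alp\,\covZ\psi^{2}$, which is exactly the formula the paper cites, and the rest of your argument then goes through verbatim.
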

\begin{proof} 
This identity holds thanks to the Leibniz rule \eqref{eq:leibniz-bbrk-LD} and the formula
$\covLD_Z (\alp \psi^2) = (\LD_Z \alp) \psi^2 + \alp \covLD_{Z} \psi^2$.  \qedhere
\end{proof}
As a simple consequence of the previous lemma, the following formula for $\covLD_{Z}^{(m)} J_{CSD}(\psi)$ holds.
\begin{proposition} \label{prop:comm-J-CSD}
Let $A$ be a connection 1-form and $\psi$ a $V$-valued function, which satisfy the Chern--Simons equation $F = \star J_{\CSD}(\psi)$. 
Then for any $m \geq 1$, the following schematic formula holds.
\begin{equation} \label{eq:comm-J-CSD}
	\covLD_{Z}^{(m)} J_{\CSD}(\psi)
	= \sum_{\ell=0}^{m} \bb( \sum_{k_{1} + k_{2} \leq m-\ell} \Gmm_{\CSD}^{(\ell)}[\covZ^{(k_{1})} \psi, \covZ^{(k_{2})}\psi] \bb).
\end{equation}
\end{proposition}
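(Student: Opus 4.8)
\textbf{Proof proposal for Proposition~\ref{prop:comm-J-CSD}.}

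The plan is to prove the schematic formula \eqref{eq:comm-J-CSD} by induction on $m$, exactly parallel to the (omitted) proof of Proposition~\ref{prop:comm-J-CSH}, but taking advantage of the fact that in the Dirac case the algebraic situation is simpler: iterating $\covLD_{Z}$ on $J_{\CSD}(\psi) = \bbrk{\psi \wedge i \alp \psi}$ does not reintroduce copies of $J$ (and hence of $F$, via the Chern--Simons equation), but only produces higher Lie derivatives $\LD_{Z_{k}}\cdots\LD_{Z_{1}}\alp$ of the fixed matrix-valued $1$-form $\alp$, which are absorbed into the definition of $\Gmm_{\CSD}^{(k)}$. In particular the ``$\ell$-counting'' here tracks how many derivatives have landed on $\alp$ rather than on $\psi$, and there is no recursive nesting as in $\Gmm_{\CSH}^{(m)}$.

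First I would record the base case $m = 1$, which is precisely Lemma~\ref{lem:comm-J-CSD-1}: expanding the right-hand side of \eqref{eq:comm-J-CSD} for $m=1$ gives the $\ell = 0$ terms $\Gmm_{\CSD}^{(0)}[\covZ\psi,\psi] + \Gmm_{\CSD}^{(0)}[\psi,\covZ\psi]$ and the $\ell = 1$ term $\Gmm_{\CSD}^{(1)}[\psi,\psi]$, matching \eqref{eq:comm-J-CSD-1}. Next, for the inductive step, I would apply $\covLD_{Z}$ to the formula \eqref{eq:comm-J-CSD} at level $m$ and analyze $\covLD_{Z} \Gmm_{\CSD}^{(\ell)}[\covZ^{(k_{1})}\psi, \covZ^{(k_{2})}\psi]$ for each term. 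By the Leibniz rule \eqref{eq:leibniz-bbrk-LD} together with the identity $\covLD_{Z}(a\,\psi) = (\LD_{Z} a)\,\psi + a\,\covLD_{Z}\psi$ applied to $a = i(\LD_{Z_{\ell}}\cdots\LD_{Z_{1}}\alp)$ (the matrix-valued form on which $\LD$ acts, not $\covLD$, since $\alp$ carries no gauge structure), one gets three kinds of terms: the derivative $\covLD_{Z}$ hits the first slot, producing $\Gmm_{\CSD}^{(\ell)}[\covZ^{(k_{1}+1)}\psi, \covZ^{(k_{2})}\psi]$; it hits the second $\psi$-factor, producing $\Gmm_{\CSD}^{(\ell)}[\covZ^{(k_{1})}\psi, \covZ^{(k_{2}+1)}\psi]$; or the $\LD_{Z}$ hits $\alp$, raising $\ell$ to $\ell+1$ and producing $\Gmm_{\CSD}^{(\ell+1)}[\covZ^{(k_{1})}\psi, \covZ^{(k_{2})}\psi]$. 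In the first two cases $k_{1}+k_{2}$ increases by $1$ while $\ell$ is unchanged, so $k_{1}+k_{2}+\ell \le (m-\ell)+1+\ell = m+1-\ell$, i.e.\ $k_{1}+k_{2} \le (m+1)-\ell$; in the third case $\ell$ increases by $1$ while $k_{1}+k_{2}$ is unchanged, so $k_{1}+k_{2} \le m-(\ell) = (m+1)-(\ell+1)$. In all cases the index constraint of \eqref{eq:comm-J-CSD} at level $m+1$ is satisfied, and collecting all contributions and using that each $Z_{j}$ is one of the $Z_{\mu\nu}$ (so that commutators $[\covZ, \covZ']$, by the Lie algebra relations among $\set{Z_{\mu\nu}}$ and \eqref{eq:covLDiX}, only generate further terms of the same schematic type) closes the induction.

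The main obstacle — though it is bookkeeping rather than a genuine difficulty — is to be careful that no curvature term $F$ (equivalently $\star J_{\CSD}$) is generated along the way: this is where the Dirac case genuinely differs from the Higgs case and is precisely why the recursion here is non-nested. The only place a curvature could appear would be from commuting two covariant derivatives, $[\covD_{X}, \covD_{Y}]\psi = \iota_{Y}\iota_{X}F\cdot\psi$, but since we only ever commute $\covZ_{\mu\nu}$ with each other and with $\covud$-type operations, and since $\alp$ is gauge-trivial, every such commutator either vanishes by the Lie algebra relations or is lower order and re-expressible via $\Gmm_{\CSD}^{(\ell)}$ with a lower or equal value of $\ell$ — no new structure arises. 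I would remark that the straightforward details of this induction are omitted, exactly as for Proposition~\ref{prop:comm-J-CSH}.
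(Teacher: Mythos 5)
Your proof is correct and follows essentially the same route as the paper, which treats Proposition~\ref{prop:comm-J-CSD} as an immediate induction on $m$ from Lemma~\ref{lem:comm-J-CSD-1} via the Leibniz rule \eqref{eq:leibniz-bbrk-LD} and $\covLD_{Z}(a\,\psi)=(\LD_{Z}a)\psi + a\,\covLD_{Z}\psi$ applied to $a = i\,\LD_{Z_{\ell}}\cdots\LD_{Z_{1}}\alp$. Your concern about curvature terms from reordering commutators is moot (and harmlessly so): the schematic notation $\covZ^{(k)}\psi$ does not fix the order of the $Z_{\mu\nu}$-derivatives, so the inductive step never requires commuting two $\covZ$'s and no $F$-terms can arise.
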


As in the case of \eqref{eq:CSH}, we end with a lemma that computes $\covud \Gmm_{\CSH}^{(k)}$. Combined with Proposition~\ref{prop:comm-J-CSD}, this lemma allows us to compute $\covud \covLD^{(m)} J_{\CSD}$.
\begin{lemma} \label{lem:covud-CSD}
Let $A$ be a connection 1-form. Then for any $V$-valued functions $\psi^{1}$ and $\psi^{2}$, we have
\begin{equation} \label{eq:covud-CSD}
\begin{aligned}
& \hskip-2em
	\covud \Gmm_{\CSD}^{(k)}[\psi^{1}, \psi^{2}; Z_{1}, \ldots, Z_{k}]  \\
	=& \bbrk{\covud \psi^{1} \wedge i (\calL_{Z_{k}} \cdots \calL_{Z_{1}} \alp) \psi^{2}}
	- \bbrk{\psi^{1} \wedge (i (\calL_{Z_{k}} \cdots \calL_{Z_{1}} \alp) \wedge \covud \psi^{2})}.
\end{aligned}
\end{equation}
\end{lemma}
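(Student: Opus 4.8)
\textbf{Proof proposal for Lemma~\ref{lem:covud-CSD}.}

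The plan is to compute $\covud \Gmm_{\CSD}^{(k)}[\psi^{1}, \psi^{2}; Z_{1}, \ldots, Z_{k}]$ directly from its definition $\Gmm_{\CSD}^{(k)}[\psi^{1}, \psi^{2}; Z_{1}, \ldots, Z_{k}] = \bbrk{\psi^{1} \wedge i (\calL_{Z_{k}} \cdots \calL_{Z_{1}}\alp) \psi^{2}}$, using the Leibniz rule for $\covud$ with respect to the $\LieAlg$-valued wedge product $\bbrk{\cdot \wedge \cdot}$, namely \eqref{eq:leibniz-bbrk-d}. First I would apply \eqref{eq:leibniz-bbrk-d} with the $0$-form $\psi^{1}$ in the left slot and the $V$-valued $1$-form $i(\calL_{Z_{k}} \cdots \calL_{Z_{1}} \alp) \psi^{2}$ in the right slot; since $\psi^{1}$ is a $0$-form the sign $(-1)^{k}$ in \eqref{eq:leibniz-bbrk-d} is $+1$, giving
\begin{equation*}
\covud \Gmm_{\CSD}^{(k)}[\psi^{1}, \psi^{2}]
= \bbrk{\covud \psi^{1} \wedge i(\calL_{Z_{k}} \cdots \calL_{Z_{1}} \alp) \psi^{2}}
+ \bbrk{\psi^{1} \wedge \covud\bb( i(\calL_{Z_{k}} \cdots \calL_{Z_{1}} \alp) \psi^{2} \bb)}.
\end{equation*}

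The second step is to expand $\covud$ of the $V$-valued $1$-form $i (\calL_{Z_{k}} \cdots \calL_{Z_{1}} \alp) \psi^{2}$. Here I would write this form as the wedge product $\omg \wedge v$ (or rather a sum of tensor-product terms of the form $(\text{matrix-valued }1\text{-form}) \cdot (\text{function})$) where the matrix-valued $1$-form $i \calL_{Z_{k}} \cdots \calL_{Z_{1}}\alp$ has \emph{constant} coefficients after the Lie derivatives are evaluated — indeed $\alp = \eta_{\mu\nu}\alp^{\mu}\ud x^{\nu}$ has constant coefficients and each $\calL_{Z_{j}}$ of a constant-coefficient $1$-form is again a constant-coefficient $1$-form (this is the schematic observation already used in Section~\ref{subsec:uni}). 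Consequently $\ud(i \calL_{Z_{k}} \cdots \calL_{Z_{1}}\alp) = 0$, and the Leibniz rule \eqref{eq:leibniz-covud} (applied componentwise in the matrix entries, together with the fact that the $\gmm$-matrices are parallel so they commute with $A \wedge$) gives
\begin{equation*}
\covud\bb( i(\calL_{Z_{k}} \cdots \calL_{Z_{1}} \alp) \psi^{2} \bb)
= - i(\calL_{Z_{k}} \cdots \calL_{Z_{1}} \alp) \wedge \covud \psi^{2},
\end{equation*}
the minus sign coming from the $(-1)^{1}$ in \eqref{eq:leibniz-covud} since the matrix $1$-form has degree $1$. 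Substituting back and using bilinearity of $\bbrk{\cdot \wedge \cdot}$ yields exactly \eqref{eq:covud-CSD}.

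The only place requiring care — and the step I expect to be the main (minor) obstacle — is bookkeeping of signs and of the interaction between the fixed matrix structure (the $\gmm^{0}\gmm^{\mu}$ factors in $\alp$) and the operations $\covud$, $\bbrk{\cdot\wedge\cdot}$: one must check that the $2\times 2$ matrices pass through the covariant exterior derivative (true because $\gmm$-matrices have constant entries, and $A$ acts only on the $W$-factor while $\alp$ acts only on the $\Dlt$-factor, so $\covud$ and $\alp\cdot$ commute) and that $\bbrk{\cdot,\cdot}$, which involves $\calT$ acting on the $W$-factor, is unaffected by the $\Dlt$-factor matrices. Once this compatibility is observed, the computation is identical in structure to the first display in the proof of Lemma~\ref{lem:covud-covdlt-CSH} (the formula \eqref{eq:covud-CSH}), with the curvature term dropping out here because the relevant exterior derivative $\ud(\calL_{Z_{k}}\cdots\calL_{Z_{1}}\alp)$ vanishes rather than producing a $\covud^{2}$. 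I would therefore present the argument briefly, citing Lemmas~\ref{lem:extr-calc-V} and \ref{lem:extr-calc-bbrk} for the Leibniz rules and leaving the routine sign verification to the reader, in keeping with the style of the surrounding lemmas.
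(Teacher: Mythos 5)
Your argument is correct and is essentially the proof the paper has in mind: the paper omits the details, citing precisely the Leibniz rule \eqref{eq:leibniz-bbrk-d} together with the wedge Leibniz rule and the fact that $\ud \alp = 0$ (equivalently, since $\ud$ commutes with $\calL_{Z_{j}}$, that $\ud(\calL_{Z_{k}}\cdots\calL_{Z_{1}}\alp)=0$), which is exactly your computation. Your extra remarks on the compatibility of the constant $\gmm$-matrix factors with $\covud$ and with $\bbrk{\cdot,\cdot}$ (the matrices acting on $\Dlt$, the connection on $W$) are the right points to check and are consistent with the paper's setup.
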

We omit the proof, which is a straightforward application of \eqref{eq:leibniz-bbrk-d} and the Leibniz rule, combined with the fact that $\ud \alp = 0$.

\subsection{Commutation relation for $U(\phi)$}	\label{subsec:comm-U}
In this subsection, we establish the commutation properties of the $V$-valued potential $U(\phi)$.

We begin with the case of \eqref{eq:CSH}. By \eqref{eq:CSH-ptnl} and the convention $v = \kpp = 1$, $U_{\CSH}$ can be decomposed into
\begin{equation*}
	U_{\CSH}(\varphi) 
	= U_{3}[\varphi, \varphi, \varphi] + U_{5}[\varphi, \varphi, \varphi, \varphi, \varphi]
\end{equation*}
where
\begin{align*}
	U_{3}[\varphi^{1}, \varphi^{2}, \varphi^{3}]
	= & \dlt_{AA'} \brk{\calT^{A} \varphi^{1}, \varphi^{2}} \calT^{A'} \varphi^{3}, \\
	U_{5}[\varphi^{1}, \varphi^{2}, \varphi^{3}, \varphi^{4}, \varphi^{5}]
	= & \dlt_{AA'} \dlt_{BB'} \bb( \brk{\calT^{A} \varphi^{1}, \varphi^{2}} \brk{(\calT^{A'} \calT^{B'} + \calT^{B'} \calT^{A'}) \varphi^{3}, \varphi^{4}} \calT^{B} \varphi^{5} \\
	& \phantom{\dlt_{AA'} \dlt_{BB'} \bb(} + \brk{\calT^{A} \varphi^{1}, \varphi^{2}} \brk{\calT^{B} \varphi^{3}, \varphi^{4}} \calT^{A'} \calT^{B'} \varphi^{5} \bb).
\end{align*}
\begin{lemma} \label{lem:comm-U-CSH}
Let $X$ be any vector field, and $\varphi^{1}, \ldots, \varphi^{5}$ be $V$-valued functions. The multilinear forms $U_{3}$ and $U_{5}$ obey the following Leibniz rules.
\begin{align}
\covD_{X} U_{3}[\varphi^{1}, \varphi^{2}, \varphi^{3}]
=& U_{3}[\covD_{X} \varphi^{1}, \varphi^{2}, \varphi^{3}]
+ \cdots
+ U_{3}[\varphi^{1}, \varphi^{2}, \covD_{X} \varphi^{3}] \label{eq:comm-U-CSH-3} \\
\covD_{X} U_{5}[\varphi^{1}, \varphi^{2}, \varphi^{3}, \varphi^{4}, \varphi^{5}]
=& U_{5}[\covD_{X} \varphi^{1}, \varphi^{2}, \varphi^{3}, \varphi^{4}, \varphi^{5}]
+ \cdots \label{eq:comm-U-CSH-5} \\
& + U_{5}[\varphi^{1}, \varphi^{2}, \varphi^{3}, \varphi^{4}, \covD_{X} \varphi^{5}].  \notag
\end{align}
\end{lemma}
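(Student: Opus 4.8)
The plan is to prove Lemma~\ref{lem:comm-U-CSH} by reducing each claimed Leibniz rule to the single bilinear building block $\brk{\calT^{A} \varphi^{1}, \varphi^{2}}$, which is the only place where the covariant derivative interacts nontrivially with the operators $\calT^{A}$ and $\calT^{A'}$. Concretely, I would first observe that $U_{3}$ and $U_{5}$ are built by $\bbR$-multilinear operations out of three kinds of factors: (i) the $\bbC$-bilinear pairing $(\varphi^{1}, \varphi^{2}) \mapsto \brk{\calT^{A} \varphi^{1}, \varphi^{2}}$, whose real and imaginary parts feed into the contraction with $\dlt_{AA'}$; (ii) application of a single operator $\calT^{A'}$ to a $V$-valued function; and (iii) application of a composition $\calT^{A'} \calT^{B'}$ (or the symmetrized version $\calT^{A'}\calT^{B'} + \calT^{B'}\calT^{A'}$). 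For (ii) and (iii) the relevant identity is simply that $\covD_{X}$ commutes with the fixed linear maps $\calT^{A'}$ up to the connection term, i.e.\ $\covD_{X}(\calT^{A'} \varphi) = \calT^{A'} \covD_{X} \varphi + [A(X), e_{A'}] \cdot \varphi = \calT^{A'}\covD_{X}\varphi + \dlt^{A'B}c_{CB}^{D} a^{C} \calT^{D}\varphi$ with $a = A(X)$, and likewise for compositions; these extra commutator terms will be handled below together with the contribution from (i).

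The key step is the analogue of \eqref{eq:leibniz-bbrk-key} for the \emph{sesquilinear} pairing: I claim that, after contracting with $\dlt_{AA'}$ (equivalently, after summing over the orthonormal basis), the apparent failure of a naive Leibniz rule cancels. Precisely, following the computation in the proof of Lemma~\ref{lem:leibniz-bbrk} verbatim, one finds that
\begin{equation*}
\covD_{X}\bb( \dlt_{AA'} \brk{\calT^{A}\varphi^{1},\varphi^{2}}\calT^{A'}\varphi^{3} \bb)
- \dlt_{AA'}\bb( \brk{\calT^{A}\covD_{X}\varphi^{1},\varphi^{2}} + \brk{\calT^{A}\varphi^{1},\covD_{X}\varphi^{2}}\bb)\calT^{A'}\varphi^{3}
- \dlt_{AA'}\brk{\calT^{A}\varphi^{1},\varphi^{2}}\calT^{A'}\covD_{X}\varphi^{3}
\end{equation*}
equals $(c_{CD}^{A'}\dlt^{DA} + c_{CD}^{A}\dlt^{DA'})\, a^{C}\brk{\calT^{A}\varphi^{1},\varphi^{2}}\, \calT^{A'}\varphi^{3}$, which vanishes by the structure-constant identity \eqref{eq:str-const}, a consequence of bi-invariance of $\brk{\cdot,\cdot}_{\LieAlg}$. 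This handles $U_{3}$ and proves \eqref{eq:comm-U-CSH-3}. For $U_{5}$ one repeats the same bookkeeping: there are now four "slots" carrying a $\calT$ (two in pairings, one or two in the leftover $\calT^{A'}\calT^{B'}$ factor), and for each slot the commutator term produced when $\covD_{X}$ crosses a $\calT$ gets paired against the contraction symbols $\dlt_{AA'}$ or $\dlt_{BB'}$; grouping the four such terms in complementary pairs (the $A\leftrightarrow A'$ pair and the $B\leftrightarrow B'$ pair) and invoking \eqref{eq:str-const} twice shows the total commutator contribution vanishes, which is exactly \eqref{eq:comm-U-CSH-5}.

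The main obstacle — really a bookkeeping one rather than a conceptual one — is making sure that in $U_{5}$ the extra commutator term generated when $\covD_{X}$ passes through the \emph{composite} operator $\calT^{A'}\calT^{B'}$ is correctly distributed: passing $\covD_{X}$ through $\calT^{A'}\calT^{B'}\varphi^{5}$ yields $[a,e_{A'}]\cdot(\calT^{B'}\varphi^{5}) + \calT^{A'}([a,e_{B'}]\cdot\varphi^{5})$, i.e.\ one commutator term attached to the $A'$ index and one to the $B'$ index, and each of these must be matched with the commutator terms coming from the corresponding pairing factors. Once one checks that the $A'$-commutators all cancel among themselves via \eqref{eq:str-const} applied to the $(A,A')$ contraction and the $B'$-commutators cancel via \eqref{eq:str-const} applied to the $(B,B')$ contraction, the proof is complete. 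I would present the $U_{3}$ case in full (citing the proof of Lemma~\ref{lem:leibniz-bbrk}) and then remark that \eqref{eq:comm-U-CSH-5} follows by the identical mechanism applied index-pair by index-pair, omitting the routine details.

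\begin{proof}
All of $U_{3}$ and $U_{5}$ are $\bbR$-multilinear expressions built out of (a) the pairing $(\varphi,\varphi')\mapsto \brk{\calT^{A}\varphi,\varphi'}$ contracted against $\dlt_{AA'}$ (or $\dlt_{BB'}$), and (b) application of $\calT^{A'}$, or of the composite $\calT^{A'}\calT^{B'}$ (and its symmetrization), to a $V$-valued function. For a $V$-valued function $\varphi$, writing $a = A(X)$ and fixing an orthonormal basis $\set{e_{A}}$ with $\LieBr{e_{A}}{e_{B}} = c_{AB}^{C} e_{C}$ and $a = a^{C}e_{C}$, the Leibniz rule \eqref{eq:leibniz-V} together with $\covD_{X}\varphi = \nb_{X}\varphi + a\cdot\varphi$ gives
\begin{equation*}
	\covD_{X}(\calT^{A'}\varphi) = \calT^{A'}\covD_{X}\varphi + \LieBr{a}{e_{A'}}\cdot\varphi,
	\qquad
	\covD_{X}\brk{\calT^{A}\varphi^{1},\varphi^{2}} = \brk{\calT^{A}\covD_{X}\varphi^{1},\varphi^{2}} + \brk{\calT^{A}\varphi^{1},\covD_{X}\varphi^{2}} + \brk{\LieBr{a}{e_{A}}\cdot\varphi^{1},\varphi^{2}}.
\end{equation*}

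Consider first $U_{3}[\varphi^{1},\varphi^{2},\varphi^{3}] = \dlt_{AA'}\brk{\calT^{A}\varphi^{1},\varphi^{2}}\calT^{A'}\varphi^{3}$. Applying $\covD_{X}$ and the two identities above, the terms in which $\covD_{X}$ has been pushed onto one of the $\varphi^{j}$'s reproduce exactly the right-hand side of \eqref{eq:comm-U-CSH-3}, and the remaining terms sum to
\begin{equation*}
	\dlt_{AA'}\brk{\LieBr{a}{e_{A}}\cdot\varphi^{1},\varphi^{2}}\calT^{A'}\varphi^{3} + \dlt_{AA'}\brk{\calT^{A}\varphi^{1},\varphi^{2}}\LieBr{a}{e_{A'}}\cdot\varphi^{3}
	= \bb(c_{CD}^{A'}\dlt^{DA} + c_{CD}^{A}\dlt^{DA'}\bb)\, a^{C}\brk{\calT^{A}\varphi^{1},\varphi^{2}}\,\calT^{A'}\varphi^{3},
\end{equation*}
which vanishes by the structure-constant identity \eqref{eq:str-const} (a consequence of the bi-invariance of $\brk{\cdot,\cdot}_{\LieAlg}$), exactly as in the proof of Lemma~\ref{lem:leibniz-bbrk}. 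This proves \eqref{eq:comm-U-CSH-3}.

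For $U_{5}$ we argue in the same manner. Applying $\covD_{X}$ to
\begin{equation*}
	U_{5}[\varphi^{1},\ldots,\varphi^{5}] = \dlt_{AA'}\dlt_{BB'}\bb(\brk{\calT^{A}\varphi^{1},\varphi^{2}}\brk{(\calT^{A'}\calT^{B'}+\calT^{B'}\calT^{A'})\varphi^{3},\varphi^{4}}\calT^{B}\varphi^{5} + \brk{\calT^{A}\varphi^{1},\varphi^{2}}\brk{\calT^{B}\varphi^{3},\varphi^{4}}\calT^{A'}\calT^{B'}\varphi^{5}\bb),
\end{equation*}
and using, in addition to the identities above, the composite rule $\covD_{X}(\calT^{A'}\calT^{B'}\varphi) = \calT^{A'}\calT^{B'}\covD_{X}\varphi + \LieBr{a}{e_{A'}}\cdot(\calT^{B'}\varphi) + \calT^{A'}(\LieBr{a}{e_{B'}}\cdot\varphi)$, one obtains the five terms on the right-hand side of \eqref{eq:comm-U-CSH-5} together with a collection of extra commutator terms. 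These extra terms split into two groups: those attached to the $A$--$A'$ indices, which are contracted by $\dlt_{AA'}$ and cancel by \eqref{eq:str-const} applied to that contraction, and those attached to the $B$--$B'$ indices, which are contracted by $\dlt_{BB'}$ and cancel by \eqref{eq:str-const} applied there. Hence \eqref{eq:comm-U-CSH-5} follows. We omit the routine index bookkeeping, which is identical in structure to the $U_{3}$ computation performed above.
\end{proof}
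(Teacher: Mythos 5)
Your proposal is correct and follows essentially the same route as the paper: push $\covD_{X}$ through each $\calT$-slot, collect the commutator terms $\LieBr{a}{e_{A}}$, and cancel them pairwise against the contractions $\dlt_{AA'}$, $\dlt_{BB'}$ using the bi-invariance identity \eqref{eq:str-const}, exactly as in the proof of Lemma~\ref{lem:leibniz-bbrk}. The only cosmetic difference is that the paper exemplifies the computation on a representative piece of $U_{5}$ while you spell out $U_{3}$ and sketch $U_{5}$; the mechanism is identical.
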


\begin{proof} 
The idea of the proof is similar to that of Lemma~\ref{lem:leibniz-bbrk}. To exemplify, we will show that
\begin{equation*}
\tilde{U}_{5}[\varphi^{1}, \varphi^{2}, \varphi^{3}, \varphi^{4}, \varphi^{5}]
= \dlt_{AA'} \dlt_{BB'} \brk{\calT^{A} \varphi^{1}, \varphi^{2}} \brk{\calT^{A'} \calT^{B'} \varphi^{3}, \varphi^{4}} \calT^{B} \varphi^{5},
\end{equation*} 
which is a part of the quintilinear form $U_{5}$, obeys the Leibniz rule 
\begin{equation} \label{eq:comm-U-CSH-ex}
\covD_{X} \tilde{U}_{5}[\varphi^{1}, \varphi^{2}, \varphi^{3}, \varphi^{4}, \varphi^{5}]
=\tilde{U}_{5}[\covD_{X} \varphi^{1}, \varphi^{2}, \varphi^{3}, \varphi^{4}, \varphi^{5}]
+ \cdots
+ \tilde{U}_{5}[\varphi^{1}, \varphi^{2}, \varphi^{3}, \varphi^{4}, \covD_{X} \varphi^{5}].
\end{equation}
The desired identities \eqref{eq:comm-U-CSH-3} and \eqref{eq:comm-U-CSH-5} may be proved by a similar argument.

As in the proof of Lemma~\ref{lem:leibniz-bbrk}, we introduce the shorthand $a = A(X)$, and fix an orthonormal basis $\set{e_{A}}$ of $\LieAlg$ so that $\calT^{A} \varphi = \dlt^{AA'} e_{A'} \cdot \varphi$. We define the structure constants $c_{AB}^{C}$ by $\LieBr{e_{A}} {e_{B}} = c_{AB}^{C} e_{C}$. The difference between the left- and right-hand sides of \eqref{eq:comm-U-CSH-ex} can then be computed as follows:
\begin{align*}
& a^{C} c_{CA}^{D} \dlt^{A A'} \dlt^{B B'} \brk{e_{D} \cdot \varphi^{1}, \varphi^{2}} \brk{e_{A'} \cdot (e_{B'} \cdot \varphi^{3}), \varphi^{4}} e_{B} \cdot \varphi^{5} \\
 & + a^{C} c_{CA'}^{D} \dlt^{A A'} \dlt^{B B'} \brk{e_{A} \cdot \varphi^{1}, \varphi^{2}} \brk{e_{D} \cdot (e_{B'} \cdot \varphi^{3}), \varphi^{4}} e_{B} \cdot \varphi^{5} \\
 & + a^{C} c_{CB'}^{D} \dlt^{A A'} \dlt^{B B'} \brk{e_{A} \cdot \varphi^{1}, \varphi^{2}} \brk{e_{A'} \cdot (e_{D} \cdot \varphi^{3}), \varphi^{4}} e_{B} \cdot \varphi^{5} \\
 & + a^{C} c_{CB}^{D} \dlt^{A A'} \dlt^{B B'} \brk{e_{A} \cdot \varphi^{1}, \varphi^{2}} \brk{e_{A'} \cdot (e_{B'} \cdot \varphi^{3}), \varphi^{4}} e_{D} \cdot \varphi^{5} 
\end{align*}
Relabeling the indices, we see that this expression vanishes, and hence \eqref{eq:comm-U-CSH-ex} follows, if 
\begin{equation*}
	c_{CD}^{A} \dlt^{DA'} 
	+ c_{CD}^{A'} \dlt^{A D}  = 0, \quad
	c_{CD}^{B'}  \dlt^{BD}
	+ c_{CD}^{B} \dlt^{DB'} = 0.
\end{equation*}
However, these are precisely \eqref{eq:str-const}. \qedhere
\end{proof}

Next we turn to the case of \eqref{eq:CSD}, where $V = \Dlt \otimes W$. 
Recall the definition of $U_{\CSD}(\psi)$ given in \eqref{eq:U-CSD}. 
%
Using the notation $\bbrk{\cdot, \cdot}$, the $V$-valued potential $U_{\CSD}(\psi)$ may be rewritten as
\begin{equation*}
U_{\CSD} (\psi) 
= \tilde{U}_{3}[\psi, \psi, \psi],
\end{equation*}
where
\begin{equation*}
	\tilde{U}_{3}[\psi^{1}, \psi^{2}, \psi^{3}]
	= \frac{1}{2} \eps(T_{\mu}, T_{\nu}, T_{\lmb}) \gmm^{\mu} \gmm^{\nu} \bbrk{\psi^{1}, i \alp^{\lmb} \psi^{2}} \psi^{3}.
\end{equation*}

\begin{lemma} \label{lem:comm-U-CSD}
Let $X$ be any vector field, and $\psi^{1}, \ldots, \psi^{3}$ be $V$-valued functions. The multilinear form $\tilde{U}_{3}$ obeys the following Leibniz rule.
\begin{align}
\covD_{X} \tilde{U}_{3}[\psi^{1}, \psi^{2}, \psi^{3}]
=& \tilde{U}_{3}[\covD_{X} \psi^{1}, \psi^{2}, \psi^{3}]
+ \cdots
+ \tilde{U}_{3}[\psi^{1}, \psi^{2}, \covD_{X} \psi^{3}]. \label{eq:comm-U-CSD} 
\end{align}
\end{lemma}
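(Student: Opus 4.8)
\textbf{Proof plan for Lemma~\ref{lem:comm-U-CSD}.}

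The plan is to imitate the argument used for Lemmas~\ref{lem:leibniz-bbrk} and \ref{lem:comm-U-CSH}, adapting it to the additional tensor structure $V = \Dlt \otimes W$ present in the Dirac case. First I would observe that the only factor of $\tilde{U}_{3}[\psi^{1}, \psi^{2}, \psi^{3}] = \tfrac{1}{2} \eps(T_{\mu}, T_{\nu}, T_{\lmb}) \gmm^{\mu} \gmm^{\nu} \bbrk{\psi^{1}, i \alp^{\lmb} \psi^{2}} \psi^{3}$ on which $\covD_{X}$ acts nontrivially as a \emph{covariant} derivative is the triple of spinor multiplet fields $\psi^{1}, \psi^{2}, \psi^{3}$; the constant coefficients $\eps(T_{\mu}, T_{\nu}, T_{\lmb})$, the constant gamma matrices $\gmm^{\mu}\gmm^{\nu}$, and the constant matrices $\alp^{\lmb}$ are all parallel with respect to $\nb$ (they have constant components in the rectilinear frame), so $\LD_{X}$ and hence $\covD_{X}$ annihilate them. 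Thus, as in the proof of Lemma~\ref{lem:leibniz-bbrk}, upon writing $\covD_{X} = \nb_{X} + a\cdot$ with $a = A(X) \in \LieAlg$, the ordinary Leibniz rule for $\nb_{X}$ distributes over the three $\psi$'s, and the content of the lemma is that the ``extra'' terms generated by the $\LieAlg$-action $a\cdot$ cancel.

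The key point is that $\bbrk{\cdot,\cdot}$ already satisfies the covariant Leibniz rule by Lemma~\ref{lem:leibniz-bbrk}: $\covD_{X}\bbrk{\psi^{1}, i\alp^{\lmb}\psi^{2}} = \bbrk{\covD_{X}\psi^{1}, i\alp^{\lmb}\psi^{2}} + \bbrk{\psi^{1}, \covD_{X}(i\alp^{\lmb}\psi^{2})}$, and since $\alp^{\lmb}$ commutes with the $\LieAlg$-action (as noted in the excerpt, $\alp\calT\psi = \calT\alp\psi$), we have $\covD_{X}(i\alp^{\lmb}\psi^{2}) = i\alp^{\lmb}\covD_{X}\psi^{2}$. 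The remaining subtlety is the overall factor $(\cdots)\psi^{3}$ in front: $\tilde{U}_{3}$ is not literally of the form ``scalar times $\psi^{3}$'' but rather ``$\LieAlg$-valued object $\bbrk{\psi^{1}, i\alp^{\lmb}\psi^{2}}$ acting on $\psi^{3}$''. For this piece I would invoke the Leibniz rule \eqref{eq:leibniz-gV-0}, $\covD(a\cdot\phi) = (\covD a)\cdot\phi + a\cdot\covD\phi$, with $a = \bbrk{\psi^{1}, i\alp^{\lmb}\psi^{2}}$ an $\LieAlg$-valued function and $\phi = \psi^{3}$; here the $\covD a$ is the adjoint-action covariant derivative, which matches the covariant derivative appearing in Lemma~\ref{lem:leibniz-bbrk}. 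Combining these two observations, $\covD_{X}$ distributes over all three slots, which is exactly \eqref{eq:comm-U-CSD}.

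The only place where something could go wrong — and hence what I expect to be the main (minor) obstacle — is making precise that the $\LieAlg$-action coming from $\bbrk{\cdot,\cdot}$ in the ``outer'' slot and the representation action of $\LieAlg$ on $V$ in the ``inner'' slots are compatible, i.e., that the structure-constant identity \eqref{eq:str-const} (equivalently, bi-invariance of $\brk{\cdot,\cdot}_{\LieAlg}$) is what makes the cross terms cancel, just as in Lemmas~\ref{lem:leibniz-bbrk} and \ref{lem:comm-U-CSH}. If one prefers a fully hands-on verification, one can instead fix an orthonormal basis $\set{e_{A}}$ of $\LieAlg$, write $\alp^{\lmb}\calT\psi^{2} = \sum_{A}e_{A}\otimes\alp^{\lmb}\calT^{A}\psi^{2}$, expand everything with $a = a^{C}e_{C}$ and $\LieBr{e_{C}}{e_{A}} = c_{CA}^{D}e_{D}$, collect the three types of extra terms (one from each $\psi$-slot), and check that their sum vanishes by the same antisymmetry $c_{CD}^{A}\dlt^{DA'} + c_{CD}^{A'}\dlt^{DA} = 0$ used before; the spinorial factors $\gmm^{\mu}\gmm^{\nu}$ and $\alp^{\lmb}$ ride along untouched because they commute with the $\LieAlg$-action. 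I would present the short conceptual version and remark that the computational version is identical in spirit to the proof of Lemma~\ref{lem:comm-U-CSH}.
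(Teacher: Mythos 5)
Your proposal is correct and follows essentially the same route as the paper: since $\eps(T_{\mu},T_{\nu},T_{\lmb})$, $\gmm^{\mu}\gmm^{\nu}$ and $\alp^{\lmb}$ are constant and commute with the $\LieAlg$-action, the result follows by combining the Leibniz rule \eqref{eq:leibniz-gV-0} for the outer $\LieAlg$-valued factor acting on $\psi^{3}$ with Lemma~\ref{lem:leibniz-bbrk} for the inner bracket, exactly as in the paper's one-line proof. The compatibility you worry about at the end is already encapsulated in Lemma~\ref{lem:leibniz-bbrk} (whose proof used bi-invariance via \eqref{eq:str-const}), so no further verification is needed.
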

\begin{proof} 
Since $\eps(T_{\mu}, T_{\nu}, T_{\lmb})$, $\gmm^{\mu}$ and $\alp^{\mu}$ are constant, the desired conclusion follows from Lemmas~\ref{lem:leibniz-gV} and \ref{lem:leibniz-bbrk}. \qedhere
\end{proof}

\subsection{Pointwise bounds} \label{subsec:ptwise}
Let $\Omg$ be a $V$-, $\LieAlg$- or real-valued differential form. Recall that the norm $\abs{\Omg}$ is defined by the formula
\begin{align*}
	\abs{\Omg}^{2} = \sum_{\mu_{1} < \cdots < \mu_{k}} \abs{\Omg(T_{\mu_{1}}, \ldots, T_{\mu_{k}})}^{2}.
\end{align*}
where we use $\brk{\cdot, \cdot}$ [resp. $\brk{\cdot, \cdot}_{\LieAlg}$] on the right-hand side when $\Omg$ is $V$- [resp. $\LieAlg$-] valued.
The following bounds are obvious, yet useful:
\begin{equation} \label{eq:ptwise-easy}
\begin{aligned}
	\abs{\star \Omg} 
	\leq & \abs{\Omg}, \\
	\abs{\iota_{T_{\mu}} \Omg}
	\leq & \abs{\Omg}, \\
	\abs{\iota_{Z_{\mu \nu}} \Omg}
	\leq &\hT \cosh \hY \abs{\Omg} ,\\
	\abs{\iota_{S} \Omg}
	\leq & \hT \cosh \hY \abs{\Omg}, \\
	\abs{\iota_{N} \Omg}
	\leq & \cosh \hY \abs{\Omg}.
\end{aligned}
\end{equation}
Let $\Omg^{1}, \Omg^{2}$ be $V$-, $\LieAlg$- or real-valued forms for which the wedge product $\Omg^{1} \wedge \Omg^{2}$ can be defined as in Section~\ref{subsec:extr-calc-2} (e.g., $\Omg^{1}$ is $\LieAlg$-valued and $\Omg^{2}$ is $V$-valued). Then we have the inequality
\begin{align*}
	\abs{\Omg^{1} \wedge \Omg^{2}} \leq C \abs{\Omg^{1}} \abs{\Omg^{2}}.
\end{align*}
Similarly, for a $\LieAlg$-valued form $a$ and a $V$-valued form $v$, we have
\begin{align*}
	\abs{\bbrk{a \wedge v}} \leq C \abs{a} \abs{v}.
\end{align*}

Recall the multilinear expressions $\frkN_{1}, \ldots, \frkN_{4}$, which were defined in Section~\ref{subsec:comm-covBox} to facilitate the computation of $[\covZ^{(m)}, \covBox \m 1]$. The following pointwise bounds hold for these expressions.
\begin{lemma} \label{lem:ptwise-N}
Let $\Gmm$, $\Gmm^{j}$ $(j=1,2)$ be $\LieAlg$-valued 1-forms and $\phi$ be a $V$-valued function. Then we have
\begin{align}
	\abs{\frkN_{1}[\Gmm, \phi]}
	\leq & C  \hT \abs{\iota_{N} \covud \Gmm} \abs{\phi} \label{eq:ptwise-N1} \\
	\leq & C \hT \cosh \hY \abs{\covud \Gmm} \abs{\phi},  \label{eq:ptwise-N1-easy} \\
	\abs{\frkN_{2}[\Gmm, \phi]} 
	\leq & C \hT \bb( \abs{\Gmm} \abs{\covN \phi} + \abs{\iota_{N} \Gmm} \abs{\covT \phi} \bb)\label{eq:ptwise-N2}  \\
	\leq & C \hT \cosh \hY \abs{\Gmm} \abs{\covT \phi}, \label{eq:ptwise-N2-easy} \\
	\abs{\frkN_{3}[\Gmm, \phi]}
	\leq & C \abs{\Gmm} \abs{\phi}, \label{eq:ptwise-N3}\\
	\abs{\frkN_{4}[\Gmm^{1}, \Gmm^{2}, \phi]}
	\leq & C \hT^{2} \cosh^{2} \hY \abs{\Gmm^{1}} \abs{\Gmm^{2}} \abs{\phi}. \label{eq:ptwise-N4}
\end{align}
\end{lemma}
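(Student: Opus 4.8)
The plan is to unwind each $\frkN_j$ back to its definition in \eqref{eq:frkN-1}--\eqref{eq:frkN-4}, rewrite any interior product $\iota_Z \star$ via \eqref{eq:star-iX} so that the Killing vector field $Z_{\mu\nu}$ (or $Z_1,Z_2$) appears only through the 1-form $Z^{\flat}$, and then apply the elementary pointwise inequalities collected at the start of Section~\ref{subsec:ptwise}: the sub-multiplicativity $\abs{\Omg^1\wedge\Omg^2}\le C\abs{\Omg^1}\abs{\Omg^2}$, the bound $\abs{\bbrk{a\wedge v}}\le C\abs a\abs v$, $\abs{\star\Omg}\le\abs\Omg$, and the weighted interior-product bounds in \eqref{eq:ptwise-easy}. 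The only genuinely nontrivial point, which is the source of the sharp (rather than crude) bounds \eqref{eq:ptwise-N1} and \eqref{eq:ptwise-N2}, is that when $\iota_{Z_{\mu\nu}}$ is applied to a $k$-form of the shape $\star(\text{stuff})$, the result only ever contracts the \emph{normal} part of the remaining form relative to $\calH_{\hT}$, so one should decompose $Z_{\mu\nu}=-\hT\,\omg(N) \cdots$-type relations implicitly encoded in the geometry, i.e. one gains a factor $\hT\abs{\iota_N(\cdot)}$ rather than $\hT\cosh\hY\abs{\cdot}$. I would make this precise and then note that \eqref{eq:ptwise-N1-easy}, \eqref{eq:ptwise-N2-easy} follow from \eqref{eq:ptwise-N1}, \eqref{eq:ptwise-N2} by the trivial estimate $\abs{\iota_N\Omg}\le\cosh\hY\abs\Omg$ from \eqref{eq:ptwise-easy}.

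Concretely: for $\frkN_1[\Gmm,\phi]=\iota_Z\star((\covud\Gmm)\wedge\phi)$, since $\covud\Gmm$ is a $2$-form and $\phi$ a $0$-form, $(\covud\Gmm)\wedge\phi$ is a $2$-form, so $\star$ of it is a $1$-form; applying $\iota_Z$ to a $1$-form produces a scalar, and the key observation is that $\iota_{Z_{\mu\nu}}\star\omg = \star(\omg\wedge Z_{\mu\nu}^{\flat})$ by \eqref{eq:star-iX}, which picks out the component of $\omg$ dual to the span generated by $Z_{\mu\nu}^{\flat}$; a short computation (using that $Z_{\mu\nu}$ is tangent to $\calH_{\hT}$ so $Z_{\mu\nu}^{\flat}$ annihilates $N$) shows the result is controlled by $\hT$ times the normal component $\abs{\iota_N\covud\Gmm}$ times $\abs\phi$. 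For $\frkN_2[\Gmm,\phi]=-2\iota_Z\star(\Gmm\wedge\covud\phi)$, $\Gmm\wedge\covud\phi$ is a $2$-form, and I split $\covud\phi$ into its $\calH_{\hT}$-tangential part (which, after contracting with $Z_{\mu\nu}$, gives the $\hT\abs\Gmm\abs{\covN\phi}$ term because the tangential derivatives are $\hT^{-1}\covZ$) and its $N$-component (which gives the $\hT\abs{\iota_N\Gmm}\abs{\covT\phi}$ term), obtaining \eqref{eq:ptwise-N2}. For $\frkN_3[\Gmm,\phi]=-\star(\Gmm\wedge\phi\wedge\ud Z^{\flat})$, the decisive point is that $\ud Z_{\mu\nu}^{\flat}$ is a \emph{constant-coefficient} $2$-form — indeed $Z_{\mu\nu}^{\flat}=x_\mu\,\ud x^\nu-x_\nu\,\ud x^\mu$ so $\ud Z_{\mu\nu}^{\flat}=2\,\ud x^\mu\wedge\ud x^\nu$ has $\abs{\ud Z^{\flat}}\le C$ with no weight — hence $\abs{\frkN_3}\le C\abs\Gmm\abs\phi$ with no powers of $\hT$ or $\cosh\hY$, giving \eqref{eq:ptwise-N3}. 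Finally $\frkN_4$ in \eqref{eq:frkN-4} is a sum of terms $\eta^{-1}(\Gmm^2\wedge Z_2^{\flat}\cdot(\Gmm^1\wedge\phi\wedge Z_1^{\flat}))$; here the two factors $Z_1^{\flat},Z_2^{\flat}$ each carry a factor $\hT\cosh\hY$ by \eqref{eq:ptwise-easy} (since $\abs{Z_{\mu\nu}^{\flat}}=\abs{Z_{\mu\nu}}\le\hT\cosh\hY$, or more precisely $\abs{\omg\wedge Z^{\flat}}\le\hT\cosh\hY\abs\omg$), and combined with $\abs{\Gmm^1},\abs{\Gmm^2},\abs\phi$ and the bound for $\eta^{-1}(a\cdot v)$ implicit in \eqref{eq:star-gV} this yields \eqref{eq:ptwise-N4}.

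The main obstacle is not any single estimate but getting the geometric bookkeeping right for $\frkN_1$ and $\frkN_2$: naively one only sees $\abs{\iota_{Z_{\mu\nu}}\star\Omg}\le\hT\cosh\hY\abs\Omg$, and to extract the sharper statement with $\abs{\iota_N(\cdot)}$ in place of $\cosh\hY\abs{\cdot}$ one must exploit \eqref{eq:star-iX} together with the fact that $Z_{\mu\nu}$ is tangent to $\calH_{\hT}$, i.e. that $\star$ converts a contraction against a tangent vector into a wedge against the dual $1$-form which then only ``sees'' the normal component of the star of the argument. I would carry this out by writing, at a fixed point, the orthonormal coframe $\{N^{\flat}, e^1, e^2\}$ adapted to $\calH_{\hT}$, expressing $Z_{\mu\nu}^{\flat}$ in the span of $e^1,e^2$, and reading off which components of the argument survive. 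Everything else is a mechanical application of the pointwise product bounds already listed. I would present the proof as four short paragraphs, one per $\frkN_j$, each beginning by substituting the definition and \eqref{eq:star-iX}, then invoking \eqref{eq:ptwise-easy} and the sub-multiplicative wedge bounds, and closing with the crude consequence \eqref{eq:ptwise-N1-easy}, \eqref{eq:ptwise-N2-easy} where relevant.
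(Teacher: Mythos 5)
Your treatment of the easy bounds matches the paper's (which in fact omits \eqref{eq:ptwise-N3}, \eqref{eq:ptwise-N4} as routine), and your observation that $\ud Z_{\mu\nu}^{\flat}$ is a constant-coefficient $2$-form is exactly the right point for \eqref{eq:ptwise-N3}. The gap is in the one step you yourself flag as nontrivial: the sharp bounds \eqref{eq:ptwise-N1}, \eqref{eq:ptwise-N2}, i.e.\ the estimate $\abs{\iota_{Z_{\mu\nu}} \star \Omg} \leq C \hT \abs{\iota_{N} \Omg}$ for a $2$-form $\Omg$. Tangency of $Z_{\mu\nu}$ to $\calH_{\hT}$ (equivalently $Z_{\mu\nu}^{\flat}(N) = 0$) only shows the \emph{qualitative} fact that the scalar $\star(\Omg \wedge Z_{\mu\nu}^{\flat})$ sees $\iota_{N}\Omg$ alone; it does not deliver the factor $\hT$ in place of $\hT \cosh \hY$. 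If you run your adapted-coframe bookkeeping naively -- bounding the pairings of $\iota_{N}\Omg$ against the boost frame vector $E_{1} = \hT^{-1} \rd_{\hY}$ (whose rectilinear/Euclidean size is $\sim \cosh \hY$) and the components of $Z_{\mu\nu}^{\flat}$ by the size of $Z_{\mu\nu}$ (which is $\sim \hT \cosh \hY$ for the boosts) -- you obtain $C \hT \cosh\hY \, \abs{\iota_{N}\Omg}$ or worse, not the stated bound. The clean factor $\hT$ rests on a finer cancellation, which the paper extracts by a rectilinear computation: the Schouten-type identity $x_{\mu}\eps_{\nu\kpp\lmb} - x_{\nu}\eps_{\mu\kpp\lmb} = \eps_{\mu\nu\kpp}x_{\lmb} - \eps_{\mu\nu\lmb}x_{\kpp}$ recombines the large coefficients $x_{\mu}$ of $Z_{\mu\nu}$ into $x^{\lmb}$ contracting $\Omg$, i.e.\ into $\hT\, \iota_{N}\Omg$ (since $x^{\lmb} T_{\lmb} = \hT N$), with the leftover index landing on the bounded constant $\eps_{\mu\nu\kpp}$; the two sharp bounds then follow from the Leibniz rule for $\iota_{N}$. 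If you prefer to stay in your invariant framework, the missing ingredient can be supplied by noting $Z_{\mu\nu}^{\flat} = \iota_{S}(\ud x_{\mu} \wedge \ud x_{\nu})$, so that, the $4$-form $\Omg \wedge \ud x_{\mu} \wedge \ud x_{\nu}$ being zero, $\Omg \wedge Z_{\mu\nu}^{\flat} = - (\iota_{S}\Omg) \wedge \ud x_{\mu} \wedge \ud x_{\nu}$ and $\iota_{S}\Omg = \hT\, \iota_{N}\Omg$; alternatively, your frame argument closes only after you also verify that (a) any covector annihilating $N$ has its $E_{1}$-component suppressed by $(\cosh\hY)^{-1}$ relative to its Euclidean norm, and (b) the $E_{1}$-component of $Z_{\mu\nu}$ is only $O(\hT)$. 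None of these appears in your write-up, so the decisive estimate is asserted rather than proved.

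A second, smaller issue: your derivation of \eqref{eq:ptwise-N2} is garbled. The two terms there arise from the Leibniz rule $\iota_{N}(\Gmm \wedge \covud\phi) = (\iota_{N}\Gmm)\, \covud\phi - \Gmm\, \covN\phi$ applied \emph{after} the sharp bound, together with $\abs{\covud \phi} \leq C \abs{\covT\phi}$; in particular the $\abs{\Gmm}\abs{\covN\phi}$ term comes from the \emph{normal} component of $\covud\phi$, not its tangential part, and no $\hT^{-1}\covZ\phi$ contribution should occur. As written, your splitting would produce an $\abs{\Gmm}\abs{\covZ\phi}$-type term that is not in the statement of the lemma.
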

\begin{proof}
Recalling the definitions \eqref{eq:frkN-1} and \eqref{eq:frkN-2} of $\frkN_{1}$ and $\frkN_{2}$, we see that we need to bound $\abs{\iota_{Z} \star \Omg}$ where
$\Omg = \covud \Gmm \wedge \phi$ or $\Gmm \wedge \covud \phi$. In rectilinear coordinates, it can be checked that 
\begin{equation*}
	x_{\mu} \eps_{\nu \kpp \lmb} - x_{\nu} \eps_{\mu \kpp \lmb}
	= \eps_{\mu \nu \kpp} x_{\lmb} - \eps_{\mu \nu \lmb} x_{\kpp}.
\end{equation*} 
Let $\Omg$ be a ($V$-, $\LieAlg$- or real-valued) 2-form $\Omg$. By the preceding identity, we have
\begin{equation*}
	\iota_{Z_{\mu \nu}} \star \Omg = 2 \hT \sum_{\kpp} \eps_{\mu \nu \kpp} \iota_{N} \iota_{T_{\kpp}} \Omg.
\end{equation*}
Recalling the definition of $\abs{\Omg}$, we see that
\begin{equation*}
	\iota_{Z_{\mu \nu}} \star \Omg \leq C \hT \abs{\iota_{N} \Omg}.
\end{equation*}
The bounds \eqref{eq:ptwise-N1} and \eqref{eq:ptwise-N2} now follow, using the Leibniz rule for $\iota_{N}$. 

The inequalities \eqref{eq:ptwise-N1-easy} and \eqref{eq:ptwise-N2-easy} are immediate consequences of \eqref{eq:ptwise-easy}. Moreover, \eqref{eq:ptwise-N3} and \eqref{eq:ptwise-N4} are straightforward to establish; hence we omit their proofs. \qedhere
\end{proof}

For $\Gmm_{\CSH}^{(k)}$ [resp. $\Gmm_{\CSD}^{(k)}$], which was defined in Section~\ref{subsec:comm-CSH} and arise in the computation of $\covLD_{Z}^{(m)} J_{\CSH}$ [resp. $\covLD_{Z}^{(m)} J_{\CSD}$], the following pointwise bounds hold. 
\begin{lemma} \label{lem:ptwise-CSH}
Let $\phi^{j}$ $(j=1,2,\ldots)$ be $V$-valued functions. Then for any integer $k = 0, 1, \ldots$, we have
\begin{align}
	\abs{\Gmm_{\CSH}^{(k)}[\phi^{1}, \phi^{2}, \ldots, \phi^{2k+2}]} 
	\leq & C_{k} (\hT \cosh \hY)^{k} \abs{\phi^{1}} \cdots \abs{\phi^{2k+1}} \abs{\covT \phi^{2k+2}}. \label{eq:ptwise-CSH}
\end{align}
In the case $k = 0$, for any vector $X$ we have the following refined bound:
\begin{equation} \label{eq:ptwise-CSH:X}
	\abs{\iota_{X} \Gmm_{\CSH}^{(0)}[\phi^{1}, \phi^{2}]} 
	\leq C \abs{\phi^{1}} \abs{\covD_{X} \phi^{2}}.
\end{equation}
\end{lemma}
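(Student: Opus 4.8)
The plan is to establish both \eqref{eq:ptwise-CSH} and \eqref{eq:ptwise-CSH:X} by induction on $k$, using only the crude pointwise inequalities collected in \eqref{eq:ptwise-easy} together with the submultiplicativity bounds $\abs{\Omg^{1}\wedge\Omg^{2}}\leq C\abs{\Omg^{1}}\abs{\Omg^{2}}$ and $\abs{\bbrk{v\wedge w}}\leq C\abs{v}\abs{w}$ recorded immediately above the lemma; no analytic input is required.

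For the base case $k=0$, recall $\Gmm_{\CSH}^{(0)}[\phi^{1},\phi^{2}]=2\bbrk{\phi^{1}\wedge\covud\phi^{2}}$. Since $\phi^{1}$ is a $0$-form and $\iota_{X}$ annihilates $0$-forms, the Leibniz rule \eqref{eq:leibniz-bbrk-iota} gives $\iota_{X}\Gmm_{\CSH}^{(0)}[\phi^{1},\phi^{2}]=2\bbrk{\phi^{1}\wedge\iota_{X}\covud\phi^{2}}=2\bbrk{\phi^{1}\wedge\covD_{X}\phi^{2}}$, whence $\abs{\iota_{X}\Gmm_{\CSH}^{(0)}[\phi^{1},\phi^{2}]}\leq C\abs{\phi^{1}}\abs{\covD_{X}\phi^{2}}$, which is \eqref{eq:ptwise-CSH:X}. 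Taking $X=T_{\mu}$ and using the definition of the norm of a $1$-form, $\abs{\covud\phi^{2}}^{2}=\sum_{\mu}\abs{\covT_{\mu}\phi^{2}}^{2}\aleq\abs{\covT\phi^{2}}^{2}$, so $\abs{\Gmm_{\CSH}^{(0)}[\phi^{1},\phi^{2}]}\leq C\abs{\phi^{1}}\abs{\covT\phi^{2}}$, i.e.\ \eqref{eq:ptwise-CSH} at $k=0$ with $(\hT\cosh\hY)^{0}=1$.

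For the inductive step, assume \eqref{eq:ptwise-CSH} holds for $k-1$ with some constant $C_{k-1}$ and any admissible choice of the $2k$ inputs. By the recursive definition of $\Gmm_{\CSH}^{(k)}$,
\[
\Gmm_{\CSH}^{(k)}[\phi^{1},\ldots,\phi^{2k+2}]
=2\bbrk{\phi^{1}\wedge(\iota_{Z_{1}}\star\Gmm_{\CSH}^{(k-1)}[\phi^{3},\ldots,\phi^{2k+2};Z_{2},\ldots,Z_{k}]\wedge\phi^{2})},
\]
where each $Z_{j}$ is one of the $Z_{\mu\nu}$. Applying $\abs{\bbrk{v\wedge w}}\leq C\abs{v}\abs{w}$ and then $\abs{\Omg^{1}\wedge\Omg^{2}}\leq C\abs{\Omg^{1}}\abs{\Omg^{2}}$ (the $0$-forms $\phi^{1},\phi^{2}$ cause no sign or ordering trouble) we obtain
\[
\abs{\Gmm_{\CSH}^{(k)}[\phi^{1},\ldots,\phi^{2k+2}]}
\leq C\abs{\phi^{1}}\abs{\phi^{2}}\,\abs{\iota_{Z_{1}}\star\Gmm_{\CSH}^{(k-1)}[\phi^{3},\ldots,\phi^{2k+2}]}.
\]
Since $\Gmm_{\CSH}^{(k-1)}$ is a $1$-form, $\star\Gmm_{\CSH}^{(k-1)}$ is a $2$-form, and the $\iota_{Z_{\mu\nu}}$ and $\star$ lines of \eqref{eq:ptwise-easy} give $\abs{\iota_{Z_{1}}\star\Gmm_{\CSH}^{(k-1)}}\leq\hT\cosh\hY\,\abs{\star\Gmm_{\CSH}^{(k-1)}}\leq\hT\cosh\hY\,\abs{\Gmm_{\CSH}^{(k-1)}}$. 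Inserting the induction hypothesis $\abs{\Gmm_{\CSH}^{(k-1)}[\phi^{3},\ldots,\phi^{2k+2}]}\leq C_{k-1}(\hT\cosh\hY)^{k-1}\abs{\phi^{3}}\cdots\abs{\phi^{2k+1}}\abs{\covT\phi^{2k+2}}$ — which uses exactly $2k-1$ undifferentiated factors, giving $2k+1$ of them after combining with $\phi^{1},\phi^{2}$ — yields \eqref{eq:ptwise-CSH} with $C_{k}=C\,C_{k-1}$.

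The only point requiring care, and the nearest thing to an obstacle, is the bookkeeping of form degrees: one must check that $\iota_{X}$ annihilates the $0$-forms $\phi^{j}$ so that \eqref{eq:leibniz-bbrk-iota} contributes a single term, and that the $\iota_{Z_{\mu\nu}}$ bound in \eqref{eq:ptwise-easy} is applied to the genuine $2$-form $\star\Gmm_{\CSH}^{(k-1)}$; everything else is a routine unwinding of the definitions.
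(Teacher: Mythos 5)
Your proof is correct, and it is exactly the routine unwinding the authors had in mind: the paper omits the proof of this lemma as "straightforward," and your argument — the base case via \eqref{eq:leibniz-bbrk-iota} and $\iota_{X}\covud\phi = \covD_{X}\phi$, then induction on $k$ through the recursive definition of $\Gmm_{\CSH}^{(k)}$ combined with $\abs{\iota_{Z}\star\Omg}\leq \hT\cosh\hY\,\abs{\Omg}$ from \eqref{eq:ptwise-easy} and the submultiplicativity of the wedge/bracket norms — is precisely that intended argument. The form-degree bookkeeping you flag is handled correctly, so nothing further is needed.
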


\begin{lemma} \label{lem:ptwise-CSD}
Let $\phi^{1}, \phi^{2}$ be $V$-valued functions. Then for any integer $k = 0, 1, \ldots$, we have
\begin{align}
	\abs{\Gmm_{\CSD}^{(k)}[\phi^{1}, \phi^{2}]} 
	\leq & C_{k} \abs{\phi^{1}} \abs{\phi^{2}}. \label{eq:ptwise-CSD}
\end{align}
\end{lemma}

Finally, the following pointwise bounds for the multilinear forms $U_{3}$, $U_{5}$ and $\tilde{U}_{3}$ (defined in Section~\ref{subsec:comm-U}) hold.

\begin{lemma} \label{lem:ptwise-U}
Let $\phi^{j}$ $(j=1, \ldots, 5)$ be $V$-valued functions. Then we have
\begin{align}
	\abs{U_{3}[\phi^{1}, \phi^{2}, \phi^{3}]} \leq & C \abs{\phi^{1}} \abs{\phi^{2}} \abs{\phi^{3}}, \label{eq:ptwise-U-CSH-3} \\
	\abs{U_{5}[\phi^{1}, \phi^{2}, \phi^{3}, \phi^{4}, \phi^{5}]} \leq & C \abs{\phi^{1}} \abs{\phi^{2}} \abs{\phi^{3}} \abs{\phi^{4}} \abs{\phi^{5}}, \label{eq:ptwise-U-CSH-5} \\
	\abs{\tilde{U}_{3}[\phi^{1}, \phi^{2}, \phi^{3}]} \leq & C \abs{\phi^{1}} \abs{\phi^{2}} \abs{\phi^{3}}. \label{eq:ptwise-U-CSD}
\end{align}
\end{lemma}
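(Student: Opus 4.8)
The plan is to derive all three estimates \eqref{eq:ptwise-U-CSH-3}, \eqref{eq:ptwise-U-CSH-5} and \eqref{eq:ptwise-U-CSD} from the single observation that each of $U_{3}$, $U_{5}$ and $\tilde U_{3}$ is assembled from (i) inner products $\brk{\cdot,\cdot}_{V}$, to which Cauchy--Schwarz applies, and (ii) a fixed finite collection of linear maps on $V$ --- namely the operators $\calT^{A}$ and their compositions $\calT^{A'}\calT^{B'}$, the gamma matrices $\gmm^{\mu}$, and the matrices $\alp^{\lmb} = \gmm^{0}\gmm^{\lmb}$ --- each of which is bounded on $V$ with a constant independent of the point of $\bbR^{1+2}$. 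Once these facts are in place, the bounds follow by repeated use of Cauchy--Schwarz and the triangle inequality, with the finitely many index sums (over $A, A', B, B' = 1, \dots, \dim\LieAlg$ and $\mu, \nu, \lmb = 0, 1, 2$) absorbed into $C$.

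\textbf{The CSH forms.} First I would record that, since $\rho$ is unitary, $\LieAlg$ acts on $V$ by anti-hermitian operators (cf.\ Section~\ref{subsec:gauge-str}); in particular, for the fixed orthonormal basis $\set{e_{A}}$ of $\LieAlg$ there is $M < \infty$ with $\abs{\calT^{A} v} = \abs{e_{A} \cdot v} \leq M \abs{v}$ for every index $A$ and every $v \in V$, hence $\abs{\calT^{A'}\calT^{B'} v} \leq M^{2} \abs{v}$. Combining this with Cauchy--Schwarz gives $\abs{\brk{\calT^{A} \varphi^{1}, \varphi^{2}}} \leq M \abs{\varphi^{1}}\abs{\varphi^{2}}$, and for the quintilinear term $\abs{\brk{(\calT^{A'}\calT^{B'} + \calT^{B'}\calT^{A'})\varphi^{3}, \varphi^{4}}} \leq 2 M^{2} \abs{\varphi^{3}}\abs{\varphi^{4}}$. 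Inserting these bounds into the explicit formulae for $U_{3}$ and $U_{5}$ in Section~\ref{subsec:comm-U} and summing over the finitely many indices $A, A', B, B'$ yields \eqref{eq:ptwise-U-CSH-3} and \eqref{eq:ptwise-U-CSH-5}, with $C$ depending only on $\dim\LieAlg$.

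\textbf{The CSD form.} The same scheme applies to $\tilde U_{3}$ once we note that on $V = \Dlt \otimes_{\bbC} W$ the matrices $\gmm^{\mu}$, $\alp^{\lmb}$ act only on the $\Dlt$ factor and the operators $\calT^{A}$ only on the $W$ factor, so each is a bounded operator on $V$ with a fixed constant, and $\eps(T_{\mu}, T_{\nu}, T_{\lmb}) = \eps_{\mu\nu\lmb} \in \set{-1, 0, 1}$. Writing out $\bbrk{\psi^{1}, i\alp^{\lmb}\psi^{2}} = \tfrac12\big(\brk{\calT\psi^{1}, i\alp^{\lmb}\psi^{2}} + \brk{i\alp^{\lmb}\psi^{2}, \calT\psi^{1}}\big)$ and applying the $\calT^{A}$-bound together with $\abs{\alp^{\lmb}w} \leq C\abs{w}$ and Cauchy--Schwarz gives $\abs{\bbrk{\psi^{1}, i\alp^{\lmb}\psi^{2}}} \leq C\abs{\psi^{1}}\abs{\psi^{2}}$. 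Since $\bbrk{\psi^{1}, i\alp^{\lmb}\psi^{2}} \in \LieAlg$ acts on $\psi^{3}$ with $\abs{a \cdot \psi^{3}} \leq C\abs{a}\abs{\psi^{3}}$ (the relation $\brk{a \cdot v, w} = \brk{a, \brk{\calT v, w}}_{\LieAlg}$ and Cauchy--Schwarz), and $\abs{\gmm^{\mu}\gmm^{\nu}w} \leq C\abs{w}$, summing over $\mu, \nu, \lmb$ produces \eqref{eq:ptwise-U-CSD}.

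\textbf{Expected difficulty.} There is essentially no analytic obstacle here: the content is purely algebraic bookkeeping, and the only points requiring care are that all the index sums are finite --- so that the constant depends only on $\dim\LieAlg$ in the CSH case and additionally on the fixed gamma matrices in the CSD case --- and that the operator bounds above are uniform over $\bbR^{1+2}$ (which they are, being properties of fixed linear maps on $V$). Accordingly, I expect the proof to amount to writing out the definitions, inserting Cauchy--Schwarz and the uniform operator bounds, and collecting constants; the modest ``hard part'' is merely making the dependence of $C$ explicit.
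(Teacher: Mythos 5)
Your argument is correct and is exactly the routine multilinear-algebra estimate the paper has in mind: the paper omits the proof of Lemma~\ref{lem:ptwise-U} as straightforward, and your combination of Cauchy--Schwarz with uniform bounds for the finitely many fixed operators $\calT^{A}$, $\gmm^{\mu}$, $\alp^{\lmb}$ (valid since $V$ is finite-dimensional) plus the finite index sums is precisely that intended argument. No gaps; nothing further is needed.
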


We omit the straightforward proofs of the preceding lemmas.

\section{Proof of the main a priori estimates} \label{sec:BA}
In this section, we carry out the proof of the main a priori estimates (Proposition~\ref{prop:main}). In Section~\ref{subsec:BAs}, we reduce the proof of Proposition~\ref{prop:main} to a bootstrap argument. In particular, we list the bootstrap assumptions, and introduce a few conventions that will simplify the further presentation. In the remainder of the section, we show that the bootstrap assumptions can be improved provided that $\dlt_{\star}(R)$ in the hypothesis of Proposition~\ref{prop:main} is chosen sufficiently small.

\subsection{Reduction to a bootstrap argument} \label{subsec:BAs}
Throughout this section, we assume that $(A, \phi)$ is a solution to \eqref{eq:CS-uni} satisfying the hypotheses of Proposition~\ref{prop:main}, where $\dlt_{\ast} = \dlt_{\ast}(R)$ is to be specified below. 

We begin with a bound on the initial hyperboloid $\calH_{2R}$, which is the starting point of the proof of Proposition~\ref{prop:main}.
\begin{lemma} \label{lem:BA:ini}
If $\eps$ is sufficiently small depending on $R$, then we have
\begin{equation} \label{eq:BA:ini}
	\sum_{k =0}^{4} \bb( \int_{\calH_{2R}} \ed_{\calH_{2R}}[\covZ^{(m)} \phi] \, \ud \Vol_{\calH_{2R}}\bb)^{1/2}
	\leq C(R) \eps,
\end{equation}
where the energy density $\ed_{\calH_{2R}}$ was defined in \eqref{eq:ed}.
\end{lemma}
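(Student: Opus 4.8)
The plan is to derive \eqref{eq:BA:ini} from the bound on the constant-time slice $\Sgm_{2R}$ established in Proposition~\ref{prop:initial-hyp} (namely \eqref{eq:initial-hyp:est}), using the comparison of energies in Lemma~\ref{lem:ini-en} applied to each commuted field $\covZ^{(m)} \phi$ with $m \leq 4$. First I would observe that the solution $(A, \phi)$ on the region $\calO_T$ (in particular in a neighborhood of $\calH_{2R}$) was constructed by patching the solution from Proposition~\ref{prop:initial-hyp}, so the support property \eqref{eq:initial-hyp:fsp} holds and all the fields under consideration are compactly supported on each $\Sgm_t$ with $t \in [2R, \tfrac52 R]$; moreover the region $\calR_{t=2R}^{\hT=2R}$ of \eqref{eq:ini-en:region} is contained in $\set{2R \le t \le \tfrac52 R} \cap \calC_R$, a compact set on which \eqref{eq:initial-hyp:est} gives uniform control of $\covT^{(k)}\phi$ for $0 \le k \le 5$ in $L^2$.

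The key steps, in order, are: (1) rewrite $\covZ^{(m)}\phi$ on $\Sgm_t$ in terms of $\covT^{(k)}\phi$ with $k \le m$: since $\covZ_{\mu\nu} = x_\mu \covT_\nu - x_\nu \covT_\mu$ and the coefficients $x_\mu$ are bounded by $C(R)$ on $\set{2R \le t \le \tfrac52 R} \cap \calC_R$, one gets $\sum_{m\le 4}\nrm{\covZ^{(m)}\phi}_{L^2(\Sgm_t)} \le C(R) \sum_{k\le 4}\nrm{\covT^{(k)}\phi}_{L^2(\Sgm_t)} \le C(R)\eps$ by \eqref{eq:initial-hyp:est}; hence $\int_{\Sgm_{2R}} \ed_{\Sgm_{2R}}[\covZ^{(m)}\phi] \le C(R)\eps^2$. (2) Apply Lemma~\ref{lem:ini-en} with $t_0 = \hT_0 = 2R$ to $\covZ^{(m)}\phi$ in place of $\phi$: this bounds the energy on $\calH_{2R}$ by the energy on $\Sgm_{2R}$ plus a spacetime integral over $\calR_{t=2R}^{\hT=2R}$ of $\abs{(\covBox-1)\covZ^{(m)}\phi}\abs{\covT_0 \covZ^{(m)}\phi} + \sum_\mu \abs{F(T_\mu,T_0)}\abs{\covT_\mu \covZ^{(m)}\phi}$. (3) Estimate the error integral: the factors $\abs{\covT \covZ^{(m)}\phi}$ are controlled in $L^2(\Sgm_t)$ (uniformly in $t\in[2R,\tfrac52 R]$) by $C(R)\eps$ as in step (1); $F = \star J$ is quadratic in $\phi$ and $\covud\phi$, so $\abs{F(T_\mu,T_0)} \le C\abs{\phi}\abs{\covT\phi}$, which together with the Sobolev embedding $H^2 \hookrightarrow L^\infty$ on the compact slices and \eqref{eq:initial-hyp:est} gives $\nrm{F}_{L^\infty} \le C(R)\eps^2$; and $(\covBox-1)\covZ^{(m)}\phi$ is computed from the commutation formula in Proposition~\ref{prop:comm-covKG} plus the equation \eqref{eq:CS-uni} for $\phi$, and using the pointwise bounds of Lemma~\ref{lem:ptwise-N}, Lemmas~\ref{lem:ptwise-CSH}--\ref{lem:ptwise-CSD}, Lemma~\ref{lem:ptwise-U}, the weight factors $\hT\cosh\hY \aleq C(R)$ on the compact region, and Sobolev embedding, one bounds it in $L^2(\Sgm_t)$ by $C(R)\eps^2$ (it is at least quadratic). (4) Integrate in $t$ over the bounded interval $[2R, \tfrac52 R]$, whose length is $\tfrac12 R$, to conclude the error integral is $\le C(R)\eps^3$, and combine with step (2) to get $\int_{\calH_{2R}}\ed_{\calH_{2R}}[\covZ^{(m)}\phi] \le C(R)\eps^2 + C(R)\eps^3 \le C(R)\eps^2$ for $\eps$ small; taking square roots and summing over $m\le 4$ yields \eqref{eq:BA:ini}.

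The main obstacle I anticipate is the bookkeeping in step (3), specifically controlling $(\covBox-1)\covZ^{(m)}\phi$ for $m$ up to $4$: this requires invoking the full commutation machinery of Section~\ref{sec:comm} and carefully checking that every term that appears is at least quadratic in the fields and involves at most $\covT^{(\le 5)}\phi$ (or $\covZ^{(\le 5)}\phi$), so that it is controlled by \eqref{eq:initial-hyp:est} after Sobolev embedding — in particular one must confirm that the weights $\hT$, $\cosh\hY$, and $x_\mu$ appearing in $\frkN_1,\dots,\frkN_4$ and in the $\Gmm^{(k)}$ are all bounded on the compact region $\calR_{t=2R}^{\hT=2R}$, which they are since $t \in [2R,\tfrac52 R]$ and $r < t+R \le \tfrac72 R$ there. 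Since this is the \emph{initial} slice, no smallness-with-decay is needed, only crude compactness-based estimates, so the argument should go through without difficulty once the terms are enumerated; I would therefore present it as a routine consequence of Proposition~\ref{prop:initial-hyp}, Lemma~\ref{lem:ini-en}, and the pointwise bounds of Section~\ref{subsec:ptwise}, and omit the lengthy enumeration.
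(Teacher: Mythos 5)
Your proposal follows essentially the same route as the paper: control of $\covZ^{(m)}\phi$ by $\covT^{(\leq m)}\phi$ on the compact region $\calR_{t=2R}^{\hT=2R}$ (via the support property \eqref{eq:initial-hyp:fsp}), the slice bounds \eqref{eq:initial-hyp:est}, Lemma~\ref{lem:ini-en} applied to the commuted fields, and the commutation formulae of Section~\ref{sec:comm} together with Sobolev embedding to bound the error integrals. The paper omits the same bookkeeping you defer, so your argument is a correct fleshing-out of its intended proof.
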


This lemma is a consequence of \eqref{eq:initial-hyp:est}, Lemma~\ref{lem:ini-en} and the formula for the commutator $[\covZ^{(m)}, \covBox \m 1]$ derived in Section~\ref{sec:comm}. Observe that $\abs{\covZ^{(m)} \phi} \leq C(m, R) \sum_{k=0}^{m} \abs{\covT^{(k)} \phi}$ in the region $\calR_{t=2R}^{\hT = 2R}$ defined in \eqref{eq:ini-en:region}, thanks to the support property \eqref{eq:initial-hyp:fsp}. This observation allows us to use \eqref{eq:initial-hyp:est} (combined with the Sobolev inequality) to bound the error terms in Lemma~\ref{lem:ini-en}. We omit further details.

Next, we turn to the statement of the central bootstrap assumptions. In order to proceed, we introduce the following notation: By $\hT^{\alp +}$ [resp. $\hT^{\alp-}$] for some $\alp \in \bbR$, we mean $\hT^{\alp + \dlt}$ [resp. $\hT^{\alp - \dlt}$] for a fixed absolute constant $0 < \dlt \ll 1$. 

For $2R \leq \hT \leq T'$ (where $T' \leq T$), the following \emph{bootstrap assumptions} will be made:
\begin{itemize}
\item {\bf $L^{2}$ bounds with growth.} For $0 \leq m \leq 4$,
\begin{equation} \label{eq:BA:L2}
	\wnrm{\cosh \hY \covZ^{(m)} \phi}_{L^{2}_{\hT}}
	+ \wnrm{\covT \covZ^{(m)} \phi}_{L^{2}_{\hT}}
	\leq 10 \eps_{1}  \log^{m} (1+\hT) .
\end{equation}
For $0 \leq m \leq 3$,
\begin{equation} \label{eq:BA:L2:S}
\wnrm{\cosh \hY \covZ^{(m)} \covN \phi}_{L^{2}_{\hT}} 
\leq 10 \eps_{1}  \log^{m+1} (1+\hT) .
\end{equation}
\item {\bf Sharp $L^{\infty}$ decay.}
\begin{equation} 	\label{eq:BA:Linfty}
	\wnrm{\cosh \hY \phi}_{L^{\infty}_{\hT}} 
	+ \wnrm{\cosh \hY \covN \phi}_{L^{\infty}_{\hT}} 
	+ \wnrm{\covT \phi}_{L^{\infty}_{\hT}} 
	\leq  10 \eps_{1} \hT^{-1}.
%
\end{equation}
\item {\bf Nonlinearity estimates.} 
For $0 \leq m \leq 4$,
\begin{equation} \label{eq:BA:KG}
	\wnrm{\cosh \hY (\covBox \m 1) \covZ^{(m)} \phi}_{L^{2}_{\hT}} 
	\leq 10 \eps_{1}^{3-} \hT^{-1} \log^{m-1} (1+\hT).
%
\end{equation}
\end{itemize}

Here, $\eps_{1} = B_{0} \eps$ and $B_{0}$ is a large absolute constant to be chosen later; recall from the hypothesis of Proposition~\ref{prop:main} that $\eps_{1} = B_{0} \eps \leq B_{0} \dlt_{\ast}$. 

From Lemma~\ref{lem:BA:ini} and a simple computation involving the Klainerman--Sobolev inequality (Proposition~\ref{prop:KlSob}) and the formula for $[\covZ^{(m)}, \covBox \m 1]$ in Section~\ref{subsec:comm-covBox}, it follows that \eqref{eq:BA:L2}--\eqref{eq:BA:KG} hold on the initial hyperboloid $\hT = 2R$ \emph{without} the factor of 10 on the right-hand side if $B_{0}$ is chosen sufficiently large. In the rest of Section~\ref{sec:BA}, our goal is to show that if $B_{0}$ is large enough and $\dlt_{\ast}$ is sufficiently small, then the above bootstrap assumptions may be improved, in the sense that \eqref{eq:BA:L2}--\eqref{eq:BA:KG} hold for $\hT \in [2R, T']$ \emph{without} the factor of 10 on the right-hand side. By a routine continuity argument in $T'$, we may conclude that \eqref{eq:BA:L2}--\eqref{eq:BA:KG} hold for all $\hT \in [2R, T]$; Proposition~\ref{prop:main} would then follow.

We end with a few conventions that will be in effect for the rest of this section. First, in view of the fact that $\dlt_{\ast}$ would be chosen very small at the end, {\bf we assume that $0 < \eps_{1} \leq 1$}. Second, unless otherwise stated, {\bf all the estimates are for $\hT \in [2R, T']$.} Finally, since $R$ is fixed, {\bf we suppress specifying dependence of constants on $R$.} 

\subsection{Consequences of the bootstrap assumptions} \label{subsec:BA-conseq}
Henceforth, our goal is to improve the bootstrap assumptions \eqref{eq:BA:L2}--\eqref{eq:BA:KG}.
We begin by deriving some quick consequences of the bootstrap assumptions in Section~\ref{subsec:BAs}. We start with some decay estimates for $\covZ^{(m)} \phi$ and $\covZ^{(m)} \covN \phi$, which follow from the Klainerman--Sobolev inequality (Proposition~\ref{prop:KlSob}).

\begin{lemma} \label{lem:BA:KlSob}
Suppose that $(A, \phi)$ satisfies the bootstrap assumptions \eqref{eq:BA:L2} and \eqref{eq:BA:L2:S}.
Then for $0 \leq m \leq 2$, we have
\begin{equation} \label{eq:weakLinfty}
	\wnrm{\cosh \hY \covZ^{(m)} \phi}_{L^{\infty}_{\hT}}
	+ \wnrm{\cosh \hY \covZ^{(m-1)} \covN \phi}_{L^{\infty}_{\hT}}
	\leq C \eps_{1} \, \hT^{-1} \log^{m+2} (1+\hT).
\end{equation}
where the last term on the left-hand side should be omitted in the case $m = 0$.

For $m = 3$, we have
\begin{equation} \label{eq:weakLp}
	\wnrm{\cosh \hY \covZ^{(3)} \phi}_{L^{4}_{\hT}}
	+ \wnrm{\cosh \hY \covZ^{(2)} \covN \phi}_{L^{4}_{\hT}}
	\leq C \eps_{1} \, \hT^{-1+\frac{2}{p}} \log^{4} (1+\hT).
\end{equation}
\end{lemma}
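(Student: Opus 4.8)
The plan is to deduce Lemma~\ref{lem:BA:KlSob} directly from the gauge invariant Klainerman--Sobolev inequality (Proposition~\ref{prop:KlSob}) and its $L^{p}$ analogues obtainable from the covariant Gagliardo--Nirenberg inequalities (Lemmas~\ref{lem:noZ}, \ref{lem:Z}), applied to the quantities controlled by the $L^{2}$ bootstrap assumptions \eqref{eq:BA:L2} and \eqref{eq:BA:L2:S}. The point is that these are purely functional-analytic statements on a fixed hyperboloid $\calH_{\hT}$, so no equation is needed beyond the a priori $L^{2}$ bounds.

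\textbf{Proof of the $L^{\infty}$ bound \eqref{eq:weakLinfty}.} First I would apply Proposition~\ref{prop:KlSob} to $\phi$ replaced by $\covZ^{(m)} \phi$ (for $0 \leq m \leq 2$), which gives
\begin{equation*}
	\hT \wnrm{\cosh \hY \covZ^{(m)} \phi}_{L^{\infty}_{\hT}}
	\leq C \sum_{k=0}^{2} \wnrm{\cosh \hY \covZ^{(k)} \covZ^{(m)} \phi}_{L^{2}_{\hT}}
	\leq C \sum_{k=0}^{m+2} \wnrm{\cosh \hY \covZ^{(k)} \phi}_{L^{2}_{\hT}},
\end{equation*}
where in the last step I absorb the extra $\covZ$ derivatives into a higher-order $\covZ^{(k)}$ with $k \leq m+2 \leq 4$, which is exactly the range in which \eqref{eq:BA:L2} is available. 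Plugging in \eqref{eq:BA:L2} yields the bound $C \eps_{1} \hT^{-1} \log^{m+2}(1+\hT)$. For the term $\covZ^{(m-1)} \covN \phi$ with $1 \leq m \leq 2$, I apply Proposition~\ref{prop:KlSob} with $\phi \mapsto \covZ^{(m-1)} \covN \phi$, getting $\hT$ times the $L^{\infty}$ norm bounded by $\sum_{k=0}^{2} \wnrm{\cosh \hY \covZ^{(k)} \covZ^{(m-1)} \covN \phi}_{L^{2}_{\hT}} \leq \sum_{k=0}^{m+1} \wnrm{\cosh \hY \covZ^{(k)} \covN \phi}_{L^{2}_{\hT}}$, and since $m+1 \leq 3$ this is controlled by \eqref{eq:BA:L2:S}, giving $C \eps_{1} \hT^{-1} \log^{m+2}(1+\hT)$ as well. (Here one uses that $\covZ$ and $\covN$ commute up to lower order terms of the same schematic type, which is implicit in the structure of \eqref{eq:BA:L2:S}; I would note this minor point but not belabor it.)

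\textbf{Proof of the $L^{4}$ bound \eqref{eq:weakLp}.} For $m = 3$ the pointwise $L^{\infty}$ estimate is no longer available with only $5$ levels of regularity, so instead I would use an $L^{4}$ Sobolev-type embedding on $\calH_{\hT}$. The cleanest route is to invoke the covariant Gagliardo--Nirenberg inequality (Lemma~\ref{lem:Z} with $p = 4$, $q = \infty$, $r = 2$, or equivalently Lemma~\ref{lem:GN}): one obtains, after scaling to $\hT = 1$ and then scaling back,
\begin{equation*}
	\wnrm{\cosh \hY \covZ^{(3)} \phi}_{L^{4}_{\hT}}
	\leq C \wnrm{\cosh \hY \covZ^{(2)} \phi}_{L^{\infty}_{\hT}}^{1/2}
		\bb( \sum_{k \leq 4} \wnrm{\cosh \hY \covZ^{(k)} \phi}_{L^{2}_{\hT}} \bb)^{1/2}
\end{equation*}
(with the appropriate power of $\hT$ produced by the scaling, which matches $\hT^{-1+2/p} = \hT^{-1/2}$ for $p = 4$). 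Now feed in the already-proved \eqref{eq:weakLinfty} with $m = 2$ for the first factor and \eqref{eq:BA:L2} for the second; the result is $C \eps_{1} \hT^{-1/2} \log^{4}(1+\hT)$ after multiplying out the logarithmic powers $\log^{4} \cdot \log^{4}$ and taking the half-power, which is $\leq C \eps_{1}\hT^{-1+\frac{2}{p}}\log^4(1+\hT)$. The same argument applied to $\covZ^{(2)} \covN \phi$, using \eqref{eq:BA:L2:S} in place of \eqref{eq:BA:L2} (with $m = 3$ on the right), gives the companion bound. I would keep track of the exact log exponents carefully since the statement records $\log^{4}$, but this is a routine bookkeeping matter.

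\textbf{Main obstacle.} The only genuine subtlety is the commutation between $\covZ$ and $\covN = \hT^{-1} \covS$: these do not commute on the nose, and one must verify that rearranging $\covZ^{(k)} \covN$ into the schematic form controlled by \eqref{eq:BA:L2:S} (together with lower-order $\covZ^{(\leq k)} \phi$ terms from \eqref{eq:BA:L2}) introduces no uncontrolled loss. This is where I'd spend the most care; everything else is a direct substitution into the inequalities of Section~\ref{sec:covVF}.
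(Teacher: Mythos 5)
Your proof is correct and follows essentially the same route as the paper's: Proposition~\ref{prop:KlSob} applied to $\covZ^{(m)}\phi$ and $\covZ^{(m-1)}\covN\phi$ together with \eqref{eq:BA:L2}, \eqref{eq:BA:L2:S} for the $L^{\infty}$ bound, and Lemma~\ref{lem:Z} (with $p=4$, $q=\infty$, $r=2$) combined with the $m=2$ case of \eqref{eq:weakLinfty} and the $L^{2}$ bounds for the $L^{4}$ bound. The only small corrections: no $[\covZ,\covN]$ commutation is needed at all, since \eqref{eq:BA:L2:S} controls $\covZ^{(k)}\covN\phi$ in exactly the form produced by the Klainerman--Sobolev inequality (the commutator issue is deferred to Lemma~\ref{lem:BA:comm-NZ}), and the factor $\hT^{-1/2}$ comes from the half-power of the $L^{\infty}_{\hT}$ bound rather than from any rescaling in Lemma~\ref{lem:Z}, which contains no explicit $\hT$ weight.
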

\begin{proof} 
The inequality \eqref{eq:weakLinfty} follows from \eqref{eq:BA:L2}, \eqref{eq:BA:L2:S} and the Klainerman--Sobolev inequality (Proposition~\ref{prop:KlSob}). Then \eqref{eq:weakLp} follows by application of Lemma~\ref{lem:Z}. \qedhere
\end{proof}

Our argument below requires bounds for $\covN \covZ^{(m)} \phi$. The following estimate for the commutator $[\covZ^{(m)}, \covN] \phi$ may be used to show that such estimates follow from the corresponding bounds for $\covZ^{(m)} \covN \phi$.
\begin{lemma} \label{lem:BA:comm-NZ} 
Suppose that $(A, \phi)$ satisfies the bootstrap assumptions \eqref{eq:BA:L2} and \eqref{eq:BA:L2:S}. 
Then for $1 \leq m \leq 3$ and $1 \leq p \leq \infty$, we have
\begin{equation} \label{eq:comm-NZ}
\begin{aligned}
& \hskip-2em
	\wnrm{\cosh \hY [\covZ^{(m)}, \covN] \phi}_{L^{p}_{\hT}} \\
	\leq & C \eps_{1}^{2} \hT^{-1+} \sum_{k \leq m-1} 
		\bb( \wnrm{\cosh \hY \covZ^{(k)} \phi}_{L^{p}_{\hT}} 
			+ \wnrm{\cosh \hY \covN \covZ^{(k)} \phi}_{L^{p}_{\hT}}
			+ \wnrm{\covT \covZ^{(k)} \phi}_{L^{p}_{\hT}} \bb).
\end{aligned}
\end{equation}
\end{lemma}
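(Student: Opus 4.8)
\textbf{Proof plan for Lemma~\ref{lem:BA:comm-NZ}.} The basic idea is to unravel the commutator $[\covZ^{(m)}, \covN]$ using the algebraic identity $\covN = \hT^{-1} \covS = \hT^{-1} x^{\mu} \covT_{\mu}$ together with the Lie-algebra structure of the vector fields $\set{T_{\mu}, Z_{\mu\nu}, S}$ recorded in Section~\ref{subsec:KillingVF}. Since $\covS = x^{\mu} \covT_{\mu}$ and $[Z_{\mu\nu}, S] = 0$ while $[Z_{\mu\nu}, T_{\lmb}]$ is again one of the $T$'s, the \emph{non-covariant} part of the commutator $[\covZ^{(m)}, \covN]$ is essentially zero modulo reshuffling of $\covZ$'s and $\covT$'s and the $\hT$-weights; the only genuinely new contributions come from the curvature terms that appear whenever two covariant derivatives are exchanged, i.e., from $[\covLD_{Z_{\mu\nu}}, \covD_{X}] = \covD_{[Z_{\mu\nu}, X]} + (\iota_{X}\iota_{Z_{\mu\nu}} F)\cdot$, as in \eqref{eq:covLDcovud} and the commutation lemmas of Section~\ref{subsec:extr-calc-2}. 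Using the Chern--Simons equation $F = \star J$, each such curvature factor is controlled pointwise by $J$, which in turn is quadratic in $\phi$ and $\covud\phi$ (see $J_{\CSH}$, $J_{\CSD}$ and Lemmas~\ref{lem:ptwise-CSH}, \ref{lem:ptwise-CSD}). Thus, schematically, after iterating,
\begin{equation*}
	\cosh\hY\,[\covZ^{(m)}, \covN]\phi
	= \sum \hT^{-1}\cdot(\hbox{weight})\cdot \iota_{?}\iota_{?}\,\covLD_{Z}^{(k_{1})} J \cdot \covN^{a}\covZ^{(k_{2})}\phi,
\end{equation*}
where $k_{1} + k_{2} \leq m-1$, $a \in \set{0,1}$, and the interior products bring in at most a factor $\hT\cosh\hY$ by \eqref{eq:ptwise-easy}.

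The key steps, in order, would be: (i) prove the base case $m=1$ directly by computing $[\covZ_{\mu\nu}, \covN]\phi = [\covZ_{\mu\nu}, \hT^{-1}\covS]\phi$, noting $Z_{\mu\nu}\hT = 0$ so the $\hT^{-1}$ passes through, then using $[\covLD_{Z_{\mu\nu}}, \covS] = \covD_{[Z_{\mu\nu}, S]} + (\iota_{S}\iota_{Z_{\mu\nu}} F)\cdot = (\iota_{S}\iota_{Z_{\mu\nu}} F)\cdot$ since $[Z_{\mu\nu}, S]=0$; substitute $F = \star J_{\CSH}$ (resp. $J_{\CSD}$); bound $\abs{\iota_{S}\iota_{Z_{\mu\nu}}\star J} \leq C\hT^{2}\cosh^{2}\hY\,\abs{J}$ by \eqref{eq:ptwise-easy}, then $\abs{J}\aleq \abs{\phi}\abs{\covT\phi}$ by Lemmas~\ref{lem:ptwise-CSH}/\ref{lem:ptwise-CSD}; finally insert the $L^{\infty}$ decay $\wnrm{\cosh\hY\,\phi}_{L^{\infty}_{\hT}} + \wnrm{\covT\phi}_{L^{\infty}_{\hT}} \aleq \eps_{1}\hT^{-1}$ from \eqref{eq:BA:Linfty} to convert the $\hT$-weights into the stated $\eps_{1}^{2}\hT^{-1+}$ gain (the ``$+$'' absorbing the $\cosh\hY$ weights via $\cosh\hY\aleq (\hT\cosh\hY)^{\dlt}$ on the support, where one uses that $\phi$ is supported in $\calC_{R}$ so $\cosh\hY \aleq \hT^{0+}$ is \emph{not} quite true — rather one keeps $\cosh\hY$ factors and uses the Klainerman--Sobolev-type bound \eqref{eq:weakLinfty}); (ii) run an induction on $m$: write $\covZ^{(m)} = \covZ\,\covZ^{(m-1)}$, commute $\covN$ past the inner $\covZ^{(m-1)}$ using the inductive hypothesis, then commute it past the outermost $\covZ$ using the $m=1$ computation with $\phi$ replaced by $\covZ^{(m-1)}\phi$, and also account for the new curvature terms where the outer $\covZ$ must be moved across a factor of $J$ produced at an earlier stage; (iii) collect all terms, estimate each by Hölder into a product of one ``small'' factor (a copy of $J$ or its covariant Lie derivatives, bounded in $L^{\infty}$ using \eqref{eq:BA:Linfty} and \eqref{eq:weakLinfty} to produce $\eps_{1}^{2}\hT^{-1+}$) times one ``large'' factor of the form $\cosh\hY\,\covZ^{(k)}\phi$, $\cosh\hY\,\covN\covZ^{(k)}\phi$, or $\covT\covZ^{(k)}\phi$ with $k\leq m-1$, measured in $L^{p}_{\hT}$; this is exactly the right-hand side of \eqref{eq:comm-NZ}.

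The main obstacle I anticipate is bookkeeping rather than conceptual: one must carefully track how many factors of $\hT\cosh\hY$ (from interior products with $Z_{\mu\nu}$ or $S$) versus $\cosh\hY$ (from $N$) each term carries, and check that after pairing these weights with the decay rates from \eqref{eq:BA:Linfty} (which give $\hT^{-1}\cosh^{-1}\hY$ per undifferentiated factor) and the weak $L^{\infty}$ bounds \eqref{eq:weakLinfty} (which cost logarithms but are harmless for the stated ``$\hT^{-1+}$''), the net power of $\hT$ is $-1 + $ (something small), with the $\cosh\hY$ weights landing precisely so that one ``large'' factor of the allowed form survives. A secondary subtlety is that when $\covN$ is commuted past $\covZ$, the relation $\covN = \hT^{-1}\covS$ and $\covS = \covZ_{0j}\omg^{j}\sinh\hY + \cosh\hY\,\hT\,\covD_{\hT}$-type decompositions (cf. \eqref{eq:Z01}, \eqref{eq:Z02} and the hyperboloidal formulae) mean $\covN$ itself is a combination of $\hT^{-1}\covZ$ and $\covD_{\hT}$; one should express the commutator purely in terms of the $\covZ_{\mu\nu}$ and $\covT_{\mu}$ appearing on the right-hand side of \eqref{eq:comm-NZ}, which requires using $\abs{\covT\phi} \aleq \cosh\hY(\abs{\covN\phi} + \hT^{-1}\abs{\covZ\phi})$ and its iterates to convert freely between the three families of norms. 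Once the weight arithmetic is organized, each individual estimate is a one-line Hölder inequality, so I would present the $m=1$ case in full and then indicate the induction, omitting the (routine but lengthy) enumeration of terms.
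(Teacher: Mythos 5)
Your skeleton coincides with the paper's: since $[Z_{\mu\nu},S]=0$ and $Z_{\mu\nu}\hT=0$, the commutator reduces to the schematic formula $[\covN,\covZ^{(m)}]\phi=\hT^{-1}\sum_{k_{1}+k_{2}\le m-1}(\iota_{S}\iota_{Z}\star\covLD_{Z}^{(k_{1})}J)\,\covZ^{(k_{2})}\phi$, followed by H\"older with one factor in $L^{p}_{\hT}$ and the rest in $L^{\infty}$ via the Klainerman--Sobolev bounds \eqref{eq:weakLinfty} (and you are right to retreat from \eqref{eq:BA:Linfty} to \eqref{eq:weakLinfty}: the lemma's hypotheses are only \eqref{eq:BA:L2}, \eqref{eq:BA:L2:S}). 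For \eqref{eq:CSD} your argument closes exactly as you describe, because $J_{\CSD}$ contains no derivative of $\phi$, so every factor can carry a $\cosh\hY$ weight.

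For \eqref{eq:CSH}, however, the $m=1$ step as you wrote it --- the crude bound $\abs{\iota_{S}\iota_{Z}\star J}\le C\hT^{2}\cosh^{2}\hY\abs{J}$ from \eqref{eq:ptwise-easy} combined with $\abs{J_{\CSH}}\aleq\abs{\phi}\abs{\covT\phi}$ --- does not close, and this is a genuine gap, not mere bookkeeping. Counting weights one gets $\cosh\hY\abs{[\covN,\covZ]\phi}\aleq\hT\cosh^{3}\hY\abs{\phi}^{2}\abs{\covT\phi}$; but the right-hand side of \eqref{eq:comm-NZ} admits $\covT\covZ^{(k)}\phi$ only \emph{unweighted} in $L^{p}_{\hT}$, and neither \eqref{eq:weakLinfty} nor \eqref{eq:BA:Linfty} controls $\cosh\hY\abs{\covT\phi}$ or $\cosh^{3}\hY\abs{\phi}^{2}$ in $L^{\infty}$, so one factor of $\cosh\hY$ is left over; since $\cosh\hY\sim\hT$ near the boundary of the support of $\phi$ (where $t\approx r+R$), this costs a full power of $\hT$ and destroys the claimed $\eps_{1}^{2}\hT^{-1+}$ gain. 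The paper closes the estimate with two refinements you would need: (i) an exact computation of $\iota_{S}\iota_{Z_{\mu\nu}}\star$ in rectilinear coordinates, yielding $\abs{\iota_{S}\iota_{Z}\star\Gmm}\le C(\hT^{2}\cosh\hY\abs{\iota_{N}\Gmm}+\hT^{2}\abs{\Gmm})$ as in \eqref{eq:key-comm-NZ}, i.e.\ only one factor of $\cosh\hY$, and attached to $\iota_{N}\Gmm$; and (ii) the refined pointwise bound \eqref{eq:ptwise-CSH:X}, $\abs{\iota_{N}\Gmm_{\CSH}^{(0)}[\phi^{1},\phi^{2}]}\le C\abs{\phi^{1}}\abs{\covN\phi^{2}}$, so that the surviving weight lands on $\covN\phi$ and produces exactly the admissible norm $\wnrm{\cosh\hY\covN\covZ^{(k)}\phi}_{L^{p}_{\hT}}$; the crude bound \eqref{eq:crude-Gmm-N} is only safe for the higher-order terms $\Gmm_{\CSH}^{(\ell)}$, $\ell\ge1$, which carry enough additional decaying factors. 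Finally, for $m\ge2$ your induction must be run jointly with Lemma~\ref{lem:BA:NZ}: the lower-order factors $\covN\covZ^{(j)}\phi$ and $\covT\covZ^{(j)}\phi$ have to be placed in $L^{\infty}$, which requires the bounds of that lemma for smaller $m$ --- this is precisely the simultaneous induction the paper sets up.
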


In order to prove Lemma~\ref{lem:BA:comm-NZ}, it is convenient to establish certain bounds for $\covN \covZ^{(m)} \phi$ and $\covT \covZ^{(m)} \phi$ simultaneously, since these expressions arise in the commutator $[\covN, \covZ^{(m)}] \phi$. We record these bounds as a lemma:

\begin{lemma} \label{lem:BA:NZ}
Suppose that $(A, \phi)$ satisfies the bootstrap assumptions \eqref{eq:BA:L2} and \eqref{eq:BA:L2:S}. For $0 \leq m \leq 3$, we have
\begin{equation}  \label{eq:NZ-L2}
	\wnrm{\cosh \hY \covN \covZ^{(m)} \phi}_{L^{2}_{\hT}}  
	\leq C \eps_{1} \log^{m}(1+\hT). 
\end{equation}
Moreover, the following $L^{p}$ estimates also hold:
\begin{align} 
	\wnrm{\cosh \hY \covN \covZ^{(m)} \phi}_{L^{\infty}_{\hT}} + \wnrm{\covT \covZ^{(m)} \phi}_{L^{\infty}_{\hT}} 
	\leq & C \eps_{1} \hT^{-1} \log^{m+2}(1+\hT) \quad \hbox{ for } 0 \leq m \leq 1, \label{eq:NZ-Linfty} \\
	\wnrm{\cosh \hY \covN \covZ^{(m)} \phi}_{L^{4}_{\hT}} + \wnrm{\covT \covZ^{(m)} \phi}_{L^{4}_{\hT}}
	\leq & C \eps_{1} \hT^{-\frac{1}{2}} \log^{4}(1+\hT) \quad \hbox{ for } m = 2. \label{eq:NZ-L4}
\end{align}
\end{lemma}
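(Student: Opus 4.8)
\textbf{Proof strategy for Lemma~\ref{lem:BA:NZ}.} The plan is to prove the three bounds essentially by interchanging the order of $\covN$ and $\covZ^{(m)}$, thereby reducing to the bootstrap assumptions on $\covZ^{(m)} \covN \phi$ and $\covZ^{(m)} \phi$. First I would establish the $L^2$ bound \eqref{eq:NZ-L2}. Writing $\covN \covZ^{(m)} \phi = \covZ^{(m)} \covN \phi + [\covN, \covZ^{(m)}] \phi$, the first term is controlled directly by the bootstrap assumption \eqref{eq:BA:L2:S}, which gives $\wnrm{\cosh \hY \covZ^{(m)} \covN \phi}_{L^2_{\hT}} \leq 10 \eps_1 \log^{m+1}(1+\hT)$ --- already slightly worse than the target $\log^m$. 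The improvement comes from the commutator estimate of Lemma~\ref{lem:BA:comm-NZ}: the right-hand side of \eqref{eq:comm-NZ} carries an extra factor $\eps_1^2 \hT^{-1+}$, so even after inserting the bootstrap bounds for $\covZ^{(k)} \phi$, $\covN \covZ^{(k)} \phi$ and $\covT \covZ^{(k)} \phi$ with $k \leq m-1$, the commutator term is $\lesssim \eps_1^3 \hT^{-1+} \log^{m-1}(1+\hT)$, which is lower order. The subtlety is that \eqref{eq:comm-NZ} itself involves $\covN \covZ^{(k)} \phi$ for $k \leq m-1$ on the right, so this must be run as an induction on $m$: assuming \eqref{eq:NZ-L2} for all indices below $m$, one plugs those into Lemma~\ref{lem:BA:comm-NZ} to close the estimate at level $m$. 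Crucially, I must make sure this does not loop back on Lemma~\ref{lem:BA:comm-NZ}, whose proof in turn seems to require Lemma~\ref{lem:BA:NZ}; the resolution is that both are proved together by a joint induction on $m$, with \eqref{eq:NZ-L2} at level $m-1$ feeding \eqref{eq:comm-NZ} at level $m$, and \eqref{eq:comm-NZ} at level $m$ feeding \eqref{eq:NZ-L2} at level $m$.

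\textbf{The $L^p$ bounds.} For \eqref{eq:NZ-Linfty} and \eqref{eq:NZ-L4}, I would argue analogously but now feed in the pointwise/$L^4$ consequences of the bootstrap assumptions. The term $\covT \covZ^{(m)} \phi$ is handled by the pointwise inequality $\abs{\covT \phi} \leq C \cosh \hY (\abs{\covN \phi} + \hT^{-1} \abs{\covZ \phi})$ (used already in the proof of Proposition~\ref{prop:en}), applied to $\covZ^{(m)} \phi$ in place of $\phi$; this reduces $\covT \covZ^{(m)} \phi$ to $\covN \covZ^{(m)} \phi$ and $\hT^{-1} \covZ^{(m+1)} \phi$, the latter being controlled by \eqref{eq:BA:L2} at level $m+1$ with the $\hT^{-1}$ gain absorbing one logarithm. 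For the $L^\infty$ case $0 \leq m \leq 1$: split $\covN \covZ^{(m)} \phi = \covZ^{(m)} \covN \phi + [\covN, \covZ^{(m)}] \phi$. The first piece, after applying the Klainerman--Sobolev inequality (Proposition~\ref{prop:KlSob}) to $\covZ^{(m)} \covN \phi$, is bounded by $\hT^{-1} \sum_{k \leq 2} \wnrm{\cosh \hY \covZ^{(m+k)} \covN \phi}_{L^2_{\hT}}$, which by \eqref{eq:BA:L2:S} is $\lesssim \eps_1 \hT^{-1} \log^{m+3}(1+\hT)$ --- one logarithm too many; but since $m \leq 1$, $\log^{m+3} = \log^{(m+2)+1}$, so this is acceptable only if we are careful that the claim is $\log^{m+2}$. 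Here I would instead use \eqref{eq:weakLinfty} of Lemma~\ref{lem:BA:KlSob} directly, which already packages $\wnrm{\cosh \hY \covZ^{(m-1)} \covN \phi}_{L^\infty_{\hT}} \leq C\eps_1 \hT^{-1} \log^{m+2}(1+\hT)$ for $0 \leq m \leq 2$ --- so \eqref{eq:NZ-Linfty} is really just \eqref{eq:weakLinfty} combined with the commutator bound from Lemma~\ref{lem:BA:comm-NZ} (with $p = \infty$), where the commutator contributes $\eps_1^2 \hT^{-1+}$ times lower-order factors and is negligible. The $L^4$ case $m = 2$ is handled the same way using \eqref{eq:weakLp} of Lemma~\ref{lem:BA:KlSob} and Lemma~\ref{lem:BA:comm-NZ} with $p = 4$.

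\textbf{Main obstacle.} The delicate point is the circular dependence among Lemmas~\ref{lem:BA:comm-NZ} and \ref{lem:BA:NZ}: each is stated to follow from the bootstrap assumptions, but the natural proofs cross-reference one another. The cleanest way to present this is to prove them simultaneously by induction on $m$, the induction hypothesis being that \eqref{eq:NZ-L2}--\eqref{eq:NZ-L4} and \eqref{eq:comm-NZ} hold for all smaller values of $m$; the base case $m = 0$ of Lemma~\ref{lem:BA:NZ} is immediate since $\covN \covZ^{(0)} \phi = \covN \phi$ and $\covT \phi$ are controlled directly by \eqref{eq:BA:L2:S}, \eqref{eq:BA:Linfty} and the pointwise bound for $\covT$. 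Beyond that, the only other thing to watch is bookkeeping of the logarithmic powers: every time one trades a $\covZ$ for a $\covN$ via a commutator one picks up the harmless gain $\eps_1^2 \hT^{-1+}$, so the logarithmic count is governed entirely by the ``honest'' terms $\covZ^{(m)} \covN \phi$ and $\covZ^{(m+k)} \phi$, and one just needs to verify that the extra logarithm in \eqref{eq:BA:L2:S} relative to \eqref{eq:BA:L2} does not propagate --- which it does not, precisely because the Klainerman--Sobolev and commutator steps only ever cost a fixed number of extra logarithms that are already built into \eqref{eq:weakLinfty} and \eqref{eq:weakLp}.
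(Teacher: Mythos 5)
Your overall strategy is the paper's: Lemmas~\ref{lem:BA:comm-NZ} and \ref{lem:BA:NZ} are proved there in one simultaneous induction on $m$, with $\covN\covZ^{(m)}\phi$ split as $\covZ^{(m)}\covN\phi + [\covN,\covZ^{(m)}]\phi$, the main terms estimated by \eqref{eq:BA:L2:S} (for $L^{2}$) and by \eqref{eq:weakLinfty}, \eqref{eq:weakLp} (for $L^{\infty}$, $L^{4}$), the commutator by \eqref{eq:comm-NZ}, and the $\covT\covZ^{(m)}\phi$ terms reduced via the pointwise inequality \eqref{eq:T-N-Z} together with \eqref{eq:Z-lower-order}. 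So structurally you have reproduced the intended argument, including the resolution of the apparent circularity between the two lemmas.

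The genuine problem is your logarithm bookkeeping, specifically the sentence asserting that ``the improvement comes from the commutator estimate.'' The commutator enters additively: however small $[\covN,\covZ^{(m)}]\phi$ is, it cannot lower the power of $\log(1+\hT)$ carried by the main term $\covZ^{(m)}\covN\phi$, which \eqref{eq:BA:L2:S} only bounds by $\eps_{1}\log^{m+1}(1+\hT)$. As written, your argument therefore proves \eqref{eq:NZ-L2} with $\log^{m+1}$, not $\log^{m}$; the same slippage occurs already at your base case $m=0$ (where \eqref{eq:BA:L2:S} gives $\log^{1}$, not a constant), and in your $L^{\infty}$ step for $m=1$: after commuting, the relevant term is $\covZ^{(m)}\covN\phi$, for which \eqref{eq:weakLinfty} (applied at index $m+1$) gives $\hT^{-1}\log^{m+3}$, one power more than the stated $\log^{m+2}$. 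To be fair, the paper's own proof simply declares this step ``obvious,'' so you are in good company; but the deficit is not cosmetic, because the sharp power in \eqref{eq:NZ-L2} is exactly what is consumed later in the borderline cubic estimates \eqref{eq:CSH:N1:1} and \eqref{eq:CSH:N2:1}, whose target $\eps_{1}^{3}\hT^{-1}\log^{m-1}(1+\hT)$ has no slack in $\hT$. A complete write-up must either identify a different source for the gain of one logarithm on $\covN$-derivatives --- for instance the energy lower bound \eqref{eq:en-Nphi}, which trades the extra $\covZ$ for a decaying factor of $\hT^{-1}$, though that input goes beyond the literal hypotheses \eqref{eq:BA:L2} and \eqref{eq:BA:L2:S} --- or else verify that an extra power of $\log(1+\hT)$ in Lemma~\ref{lem:BA:NZ} can be tolerated at every point where the lemma is invoked; your proposal does neither.
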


\begin{proof} [Proof of Lemmas~\ref{lem:BA:comm-NZ} and \ref{lem:BA:NZ}]
We begin with a simple observation: Once \eqref{eq:comm-NZ} is established in the range $1 \leq m \leq m_{0}$ for some $m_{0} \leq 3$, then \eqref{eq:NZ-L2}, \eqref{eq:NZ-Linfty} and \eqref{eq:NZ-L4} for the same range of $m$ follow. Indeed, proceeding inductively, we may assume that \eqref{eq:comm-NZ}, \eqref{eq:NZ-L2}, \eqref{eq:NZ-Linfty} and \eqref{eq:NZ-L4} hold for up to some $m-1$, where $0 \leq m \leq m_{0}$ (in the case $m = 0$, we make no induction hypothesis). Then the claim for $m$ is obvious for \eqref{eq:NZ-L2} and the term involving $\covN \covZ^{(m)} \phi$ in \eqref{eq:NZ-Linfty} and \eqref{eq:NZ-L4}. To bound $\covT \covZ^{(m)}$, we use the pointwise inequality
\begin{equation} \label{eq:T-N-Z}
	\abs{\covT \tilde{\phi}} \leq C \cosh \hY ( \abs{\covN \tilde{\phi}} + \hT^{-1} \abs{\covZ \tilde{\phi}})
\end{equation}
with $\tilde{\phi} = \covZ^{(m)} \phi$. By the preceding discussion, the first term on the right-hand side is acceptable. On the other hand, the last term obeys (thanks to Lemma~\ref{lem:BA:KlSob}) a far better decay rate than needed:
\begin{equation} \label{eq:Z-lower-order}
	\wnrm{\cosh \hY \hT^{-1} \covZ^{(m+1)} \phi}_{L^{p}_{\hT}} \leq C \eps_{1} \hT^{-2 + \frac{2}{p}} \log^{4}(1+\hT) ,
\end{equation}
for any $1 \leq p \leq \infty$ when $0 \leq m \leq 1$ and $2 \leq p \leq 4$ when $m = 2$.

Consequently, not only would Lemma~\ref{lem:BA:NZ} follow once we prove \eqref{eq:comm-NZ} for all $m$, but we are also allowed to employ the bounds in Lemma~\ref{lem:BA:NZ} for $m$ for which \eqref{eq:comm-NZ} has already been proved. As discussed above, this will be useful because expressions of the form $\covN \covZ^{(k)} \phi$ and $\covT \covZ^{(k)} \phi$ with $k \leq m-1$ appear in the commutator $[\covN, \covZ^{(m)}] \phi$; see, in particular, the case of \eqref{eq:CSH} below.

Our next task is to compute $[\covN, \covZ^{(m)}] \phi$. The commutator between $\covZ_{\mu \nu}$ and $\covS = \hT \covN$ is 
\begin{align*}
	[\covS, \covZ_{\mu \nu}] \phi
=&	- (\iota_{S} \iota_{Z_{\mu \nu}} \star J )\phi,
\end{align*}
since $S$ and $Z_{\mu \nu}$ commute. From this computation, \eqref{eq:covLDiX}, \eqref{eq:star-LDZ} and the Lie algebra relations among $\set{S, Z_{\mu \nu}}$, we may derive the following schematic commutation formula for $[\covZ^{(m)}, \covN] \phi$:
\begin{align*}
	[\covN, \covZ^{(m)}] \phi
=&	\hT^{-1} \sum_{k_{1} + k_{2} \leq m-1} (\iota_{S} \iota_{Z} \star \covLD_{Z}^{(k_{1})} J) \covZ^{(k_{2})} \phi \quad \hbox{ for } m \geq 1.
\end{align*}
Given a ($V$-, $\LieAlg$- or real-valued) $1$-form $\Gmm$, we compute using the rectilinear coordinates $(t = x^{0}, x^{1}, x^{2})$
\begin{align*}
\iota_{S} \iota_{Z_{\mu \nu}} (\star \Gmm)
=& (\met^{-1})^{\alp \bt} (x^{\kpp} x_{\mu} \eps_{\nu \kpp \alp} - x^{\kpp} x_{\nu} \eps_{\mu \kpp \alp}) \Gmm(T_{\bt}) \\
=& (\met^{-1})^{\alp \bt} \eps_{\mu \nu \kpp} x^{\kpp} x_{\alp} \Gmm(T_{\bt})
	- (\met^{-1})^{\alp \bt} \eps_{\mu \nu \alp} x_{\kpp} x^{\kpp} \Gmm(T_{\bt}).
\end{align*}
Observe furthermore that $(\eta^{-1})^{\alp \bt} x_{\alp} T_{\bt} = x^{\bt} T_{\bt = }  \hT N$ and $x_{\kpp} x^{\kpp} = - \hT^{2}$.
Hence 
\begin{equation}
	\abs{\iota_{S} \iota_{Z} \star \Gmm}
	\leq C ( \hT^{2} \cosh \hY \abs{\iota_{N} \Gmm} + \hT^{2} \abs{\Gmm}).
\end{equation}
We therefore arrive at the pointwise bound
\begin{equation} \label{eq:key-comm-NZ}
\begin{aligned}
\cosh \hY \abs{[\covN, \covZ^{(m)}] \phi} 
\leq C \hT \cosh \hY \sum_{k_{1} + k_{2} \leq m-1} \bb(\cosh \hY \abs{\iota_{N}\covLD_{Z}^{(k_{1})} J} + \abs{\covLD_{Z}^{(k_{1})} J} \bb) \abs{\covZ^{(k_{2})} \phi}. 
\end{aligned}
\end{equation}

In order to proceed, we divide into two cases: $J = J_{\CSH}$ and $J = J_{\CSD}$. 
\paragraph*{\bfseries - Case 1: $J = J_{\CSH}$}
When $m = 1$, by \eqref{eq:ptwise-CSH:X} and \eqref{eq:key-comm-NZ}, we may estimate
\begin{align*}
	\cosh \hY  \abs{[\covN, \covZ] \phi} 
	\leq & C \hT \cosh \hY \bb( \abs{\iota_{N} \Gmm_{\CSH}^{(0)}[\phi, \phi]} + \abs{\Gmm_{\CSH}^{(0)}[\phi, \phi]} \bb) \abs{\phi} \\
	\leq & C \hT \cosh \hY \abs{\phi}^{2} (\cosh \hY \abs{\covN \phi} + \abs{\covT \phi} ).
\end{align*}
We now take the $\wnrm{\cdot}_{L^{p}_{\hT}}$ norm and apply H\"older's inequality, where we estimate $\cosh \hY \covN \phi$ and $\covT \phi$ using $\wnrm{\cdot}_{L^{p}_{\hT}}$ and $\cosh \hY \abs{\phi}^{2}$ using $\wnrm{\cdot}_{L^{\infty}_{\hT}}$. By Lemma~\ref{lem:BA:KlSob}, the desired inequality \eqref{eq:comm-NZ} follows. As a dividend (see the remarks at the beginning of the proof), we may now use the bounds in Lemma~\ref{lem:BA:NZ} up to $m = 1$.

For $m \geq 2$, using Proposition~\ref{prop:comm-J-CSH} and \eqref{eq:key-comm-NZ}, $\cosh \hY  \abs{[\covN, \covZ] \phi}$ is bounded from the above by
\begin{equation} \label{eq:comm-NZ:CSH}
\begin{aligned}
	& C_{m} \sum_{j_{1} + j_{2} + k_{2} \leq m-1} \hT \cosh \hY  \bb( \cosh \hY \abs{\iota_{N} \Gmm_{\CSH}^{(0)} [\covZ^{(j_{1})} \phi, \covZ^{(j_{2})} \phi]} + \abs{\Gmm_{\CSH}^{(0)} [\covZ^{(j_{1})} \phi, \covZ^{(j_{2})} \phi] } \bb) \abs{\covZ^{(k_{2})} \phi} \\
	& + C_{m} \sum_{\substack{k_{1}, k_{2}, \ell : \\ k_{1} + k_{2} \leq m-1 \\ 1 \leq \ell \leq k_{1}}} 
		\sum_{j_{1} + \cdots + j_{2 \ell + 2} \leq k_{1} - \ell} 
		\hT \cosh^{3} \hY 
		\abs{\Gmm_{\CSH}^{(\ell)}[\covZ^{(j_{1})} \phi, \ldots, \covZ^{(j_{2 \ell + 2})} \phi]} \abs{\covZ^{(k_{2})} \phi}, 
\end{aligned}
\end{equation}
where we used the crude bound 
\begin{equation} \label{eq:crude-Gmm-N}
\cosh \hY \abs{\iota_{N} \Gmm} + \abs{\Gmm} \leq C \cosh^{2} \hY \abs{\Gmm}
\end{equation}
on the second line\footnote{Such a simplifying procedure is possible in this case since there is enough factors of $\covZ^{(j)} \phi$ to absorb the extra weights of $\cosh \hY$; see the discussion following \eqref{eq:comm-NZ:CSH-nonmain}.}. In what follows, we treat the sum on each line separately.

Consider a summand from the first line of \eqref{eq:comm-NZ:CSH}, which can be bounded using \eqref{eq:ptwise-CSH:X} by
\begin{equation} \label{eq:comm-NZ:CSH-main}
C_{m} \hT \cosh \hY  \abs{\covZ^{(j_{1})} \phi} ( \cosh \hY \abs{\covN \covZ^{(j_{2})} \phi} + \abs{\covT \covZ^{(j_{2})} \phi}\bb) \abs{\covZ^{(k_{2})} \phi}.
\end{equation}
Taking the $\wnrm{\cdot}_{L^{p}_{\hT}}$ norm, we apply H\"older's inequality and bound the highest order factor with an appropriate weight of $\cosh \hY$ (i.e., either $\cosh \hY \covZ^{(k)} \phi$, $\cosh \hY \covN \covZ^{(k)} \phi$ or $\covT \covZ^{(k)} \phi$) using $\wnrm{\cdot}_{L^{p}_{\hT}}$ and the rest using $\wnrm{\cdot}_{L^{\infty}_{\hT}}$. Since we are only considering $m = 2, 3$, the non-highest order cannot exceed $1$; hence the non-highest order factors obey a pointwise upper bound $C \hT^{-1} \cosh^{-1} \hY \log^{3}(1+\hT)$ by Lemma~\ref{lem:BA:KlSob} and \eqref{eq:NZ-Linfty} for $0 \leq m \leq 1$.  From such a consideration, the contribution of \eqref{eq:comm-NZ:CSH-main} is easily seen to be acceptable for the proof of \eqref{eq:comm-NZ}.

Next, consider a summand from the second line of \eqref{eq:comm-NZ:CSH}, which can be bounded by
\begin{equation} \label{eq:comm-NZ:CSH-nonmain}
C_{m} \hT^{\ell+1} \cosh^{\ell+2} \hY \abs{\covZ^{(j_{1})} \phi} \cdots \abs{\covZ^{(j_{2 \ell + 1})} \phi} \abs{\covT \covZ^{(j_{2 \ell + 2}) } \phi} \abs{\covZ^{(k_{2})} \phi},
\end{equation}
using \eqref{eq:ptwise-CSH}. As before, we take the $\wnrm{\cdot}_{L^{p}_{\hT}}$ norm and apply H\"older's inequality to bound the highest order factor (with an appropriate weight of $\cosh \hY$) using $\wnrm{\cdot}_{L^{p}_{\hT}}$ and the rest using $\wnrm{\cdot}_{L^{\infty}_{\hT}}$. As in the preceding case, the non-highest order cannot exceed $1$, so the non-highest order factors are bounded from the above by $C \hT^{-1} \cosh^{-1} \hY \log^{3}(1+\hT)$. Since there are $2\ell+2$ such factors (where $\ell \geq 1$), it can be readily checked that the contribution of \eqref{eq:comm-NZ:CSH-nonmain} is acceptable as well.

\paragraph*{\bfseries - Case 2: $J = J_{\CSD}$}
In this case, we immediately apply \eqref{eq:key-comm-NZ} and the crude bound \eqref{eq:crude-Gmm-N} to estimate
\begin{equation*}
	\cosh \hY \abs{[\covN, \covZ^{(m)}] \phi}
	\leq C_{m} \hT \cosh^{3} \hY \sum_{k_{1} + k_{2} \leq m-1} \abs{\covLD^{(k_{1})}_{Z} J_{\CSD}(\phi)} \abs{\covZ^{(k_{2})} \phi}.
\end{equation*}
By Proposition~\ref{prop:comm-J-CSD} and Lemma~\ref{lem:ptwise-CSD}, it follows that
\begin{equation} \label{eq:comm-NZ:CSD}
	\cosh \hY \abs{[\covN, \covZ^{(m)}] \phi}
	\leq C_{m} \sum_{j_{1} + j_{2} + k_{2} \leq m-1} \hT \cosh^{3} \hY \abs{\covZ^{(j_{1})} \phi} \abs{\covZ^{(j_{2})} \phi} \abs{\covZ^{(k_{2})} \phi}.
\end{equation}
From this pointwise bound, which is far simpler than the case of \eqref{eq:CSH}, it is straightforward to prove \eqref{eq:comm-NZ} (hence Lemma~\ref{lem:BA:NZ} as well); we omit the details.
\end{proof}

The bounds derived so far are insufficient to close the bootstrap; in particular, they cannot be used to bound the commutator $[\covZ^{(m)}, \covBox \m 1] \phi$, because they grows too fast in $\hT$. The cost we incur is (at least) $\log^{2} (1+\hT)$, which arises from the loss of two $\covZ$ derivatives in application of the Klainerman--Sobolev inequality. To estimate the commutator $[\covZ^{(m)}, \covBox \m 1] \phi$, we use the sharp $L^{p}$ bounds established in the following lemma, which are proved by essentially interpolating between the $L^{2}$ bounds \eqref{eq:BA:L2}, \eqref{eq:BA:L2:S} and the sharp $L^{\infty}$ decay \eqref{eq:BA:Linfty}.
\begin{lemma} \label{lem:sharpLp}
Suppose that $(A, \phi)$ satisfies the bootstrap assumptions \eqref{eq:BA:L2}, \eqref{eq:BA:L2:S} and \eqref{eq:BA:Linfty}.
If $\eps_{1} > 0$ is sufficiently small, then the following inequalities hold: For $0 \leq m \leq 2$, we have
\begin{align} \label{eq:sharpL3}
	\wnrm{\cosh \hY \covZ^{(m)} \phi}_{L^{3}_{\hT}}
	+ \wnrm{\cosh \hY \, \covN \covZ^{(m)} \phi}_{L^{3}_{\hT}}
	+ \wnrm{\covT \covZ^{(m)} \phi}_{L^{3}_{\hT}}
	\leq & C \eps_{1} \, \hT^{-\frac{1}{3}} \log^{m} (1+\hT).
\end{align}
Moreover, for $0 \leq m \leq 1$, we have
\begin{align} 
	\wnrm{\cosh \hY \covZ^{(m)} \phi}_{L^{4}_{\hT}}
	+\wnrm{\cosh \hY \, \covN \covZ^{(m)} \phi}_{L^{4}_{\hT}}
	+ \wnrm{\covT \covZ^{(m)} \phi}_{L^{4}_{\hT}}
	\leq & C \eps_{1} \, \hT^{-\frac{1}{2}} \log^{m} (1+\hT) \label{eq:sharpL4}\\
	\wnrm{\cosh \hY \covZ^{(m)} \phi}_{L^{6}_{\hT}}
	+ \wnrm{\cosh \hY \, \covN \covZ^{(m)} \phi}_{L^{6}_{\hT}}
	+ \wnrm{\covT \covZ^{(m)} \phi}_{L^{6}_{\hT}}
	\leq & C \eps_{1} \, \hT^{-\frac{2}{3}} \log^{m} (1+\hT). \label{eq:sharpL6}
\end{align}
\end{lemma}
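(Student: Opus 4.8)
The inequalities in Lemma~\ref{lem:sharpLp} are all obtained by interpolating between the $L^{2}$-bounds of the bootstrap assumptions (together with the already-established $L^{2}$ commutator bound \eqref{eq:NZ-L2} of Lemma~\ref{lem:BA:NZ}) and the sharp $L^{\infty}$-decay \eqref{eq:BA:Linfty}. The key analytic input is the interpolation inequality \eqref{eq:noZ} of Lemma~\ref{lem:noZ} applied to each of the three quantities $\cosh \hY \, \covZ^{(m)} \phi$, $\cosh \hY \, \covN \covZ^{(m)} \phi$ and $\covT \covZ^{(m)} \phi$; note that \eqref{eq:noZ} is exactly H\"older's inequality with respect to the measure $(\cosh \hY)^{-1} \ud \Vol$, so it applies verbatim once the weight $\cosh \hY$ has been placed inside. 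The pattern is always: take $r = 2$, $q = \infty$, and choose $p \in \set{3, 4, 6}$; then $\vartht$ is determined by $\tfrac1p = \tfrac{1-\vartht}{\infty} + \tfrac{\vartht}{2}$, i.e. $\vartht = 2/p$, giving $1 - \vartht = 1 - 2/p = (p-2)/p$.

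\textbf{Carrying it out.} First I would treat $\cosh \hY \, \covZ^{(m)} \phi$ and $\covT \covZ^{(m)} \phi$. By \eqref{eq:noZ} with the parameters above,
\begin{equation*}
\wnrm{\cosh \hY \covZ^{(m)} \phi}_{L^{p}_{\hT}}
\leq C \wnrm{\cosh \hY \covZ^{(m)} \phi}_{L^{\infty}_{\hT}}^{\frac{p-2}{p}}
\wnrm{\cosh \hY \covZ^{(m)} \phi}_{L^{2}_{\hT}}^{\frac{2}{p}},
\end{equation*}
and similarly for $\covT \covZ^{(m)} \phi$. For the $L^{\infty}$ factor I use \eqref{eq:BA:Linfty} when $m = 0$; for $m = 1, 2$ I instead use the weaker Klainerman--Sobolev bound \eqref{eq:weakLinfty} of Lemma~\ref{lem:BA:KlSob}, which gives $\wnrm{\cosh \hY \covZ^{(m)} \phi}_{L^{\infty}_{\hT}} \leq C \eps_{1} \hT^{-1} \log^{m+2}(1+\hT)$, and likewise for $\covT \covZ^{(m)}\phi$ I invoke \eqref{eq:NZ-Linfty} when $m \le 1$. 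For the $L^{2}$ factor I use \eqref{eq:BA:L2}, which contributes $\eps_{1} \log^{m}(1+\hT)$. Multiplying, the $\hT$-exponent is $-\frac{p-2}{p} = -1 + \frac{2}{p}$, matching $-\tfrac13, -\tfrac12, -\tfrac23$ for $p = 3, 4, 6$; the $\eps_{1}$ powers combine to $\eps_{1}^{(p-2)/p} \cdot \eps_{1}^{2/p} = \eps_{1}$; and the logarithmic powers multiply to at most $\log^{m}(1+\hT)$ times a $\log$-power that is a bounded fraction — here one uses $\eps_{1}$ small (as permitted by the hypothesis), so that $\eps_{1}^{(p-2)/p} \log^{\#}(1+\hT)$ absorbs the surplus logarithms into a fixed constant, leaving $C \eps_{1} \hT^{-1+2/p} \log^{m}(1+\hT)$, as claimed. (A cleaner route that avoids even this small loss is: for $m=0$ all three factors use the genuinely sharp bounds, so no logarithmic surplus arises; for $m=1,2$ the extra $\log^{2}$ or $\log^{4}$ from Klainerman--Sobolev is harmless since we only need to keep $\log^{m}$ in the final bound and the hypothesis ``$\eps_{1}$ sufficiently small'' lets the fixed $\log$-power be dominated.) For $\cosh \hY \, \covN \covZ^{(m)} \phi$ the argument is identical, using \eqref{eq:NZ-L2} of Lemma~\ref{lem:BA:NZ} for the $L^{2}$ factor (valid for $0 \le m \le 3$) and \eqref{eq:BA:Linfty}, \eqref{eq:NZ-Linfty} or \eqref{eq:weakLinfty} for the $L^{\infty}$ factor.

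\textbf{Main obstacle.} The only point requiring care is the bookkeeping of logarithmic factors: interpolating through the $L^{\infty}$-bound that one obtains from Klainerman--Sobolev (rather than from the as-yet-unproved sharp decay for $\covZ$-derivatives) inevitably introduces a surplus $\log^{2}(1+\hT)$ or more. One must verify that the final exponent of $\log$ in \eqref{eq:sharpL3}--\eqref{eq:sharpL6} is allowed to be exactly $m$ and not smaller, and that the surplus is killed by a positive power of $\eps_{1}$; this is where the hypothesis ``$\eps_{1} > 0$ sufficiently small'' is used. A secondary, purely mechanical issue is ensuring the commutator terms $[\covZ^{(m)}, \covN]\phi$ needed to pass between $\covN \covZ^{(m)}\phi$ and $\covZ^{(m)} \covN \phi$ are handled — but these are already controlled in every relevant $L^{p}$ by Lemma~\ref{lem:BA:comm-NZ} with a gain of $\hT^{-1+}$, hence are strictly lower order and contribute nothing new. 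No other difficulty is expected; the estimate is a direct consequence of H\"older interpolation against the bootstrap hypotheses.
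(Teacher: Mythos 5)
Your plan is in the right family of ideas, but the specific interpolation you set up has a genuine gap. You interpolate the $L^{2}$ and $L^{\infty}$ norms of the \emph{same} quantity $\covZ^{(m)}\phi$ (and likewise $\covN\covZ^{(m)}\phi$, $\covT\covZ^{(m)}\phi$). For $m \geq 1$ the only available $L^{\infty}$ bound is the Klainerman--Sobolev bound \eqref{eq:weakLinfty} (or \eqref{eq:NZ-Linfty}), which carries a loss of $\log^{m+2}(1+\hT)$; H\"older interpolation then gives $C\eps_{1}\hT^{-1+\frac{2}{p}}\log^{m+\frac{2(p-2)}{p}}(1+\hT)$, i.e.\ a surplus power $\tfrac{2(p-2)}{p} > 0$ of $\log(1+\hT)$ beyond the claimed $\log^{m}$. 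Your assertion that this surplus ``is absorbed into a fixed constant'' by taking $\eps_{1}$ small is false: $\eps_{1}$ is fixed independently of $\hT$ and of $T'$, while $\log(1+\hT)$ is unbounded on $[2R, T']$ with $T'$ arbitrary, so no positive power of $\eps_{1}$ dominates it uniformly. Nor is the surplus harmless downstream: the exact powers $\log^{m}(1+\hT)$ in \eqref{eq:sharpL3}--\eqref{eq:sharpL6} are precisely what Propositions~\ref{prop:N-CSH} and \ref{prop:N-CSD} need to produce the nonlinearity bound $\eps_{1}^{3}\hT^{-1}\log^{m-1}(1+\hT)$, which after integration in the energy inequality reproduces the bootstrap growth $\log^{m}(1+\hT)$ at the top order $m=4$, where there is no room; with your extra logarithms the bootstrap would not close.

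The paper's proof avoids ever invoking an $L^{\infty}$ bound on a $\covZ$-differentiated quantity. It applies the weighted covariant Gagliardo--Nirenberg inequality \eqref{eq:GN} (Lemma~\ref{lem:GN}, built from \eqref{eq:noZ} and \eqref{eq:Z}), which bounds $\wnrm{\cosh\hY\,\covZ^{(\ell)}\phi}_{L^{p}_{\hT}}$, $\ell \leq k$, by $\wnrm{\cosh\hY\,\phi}_{L^{\infty}_{\hT}}^{1-\frac{k}{m}}\big(\sum_{\ell' \leq m}\wnrm{\cosh\hY\,\covZ^{(\ell')}\phi}_{L^{2}_{\hT}}\big)^{\frac{k}{m}}$: the $L^{\infty}$ factor involves only the \emph{undifferentiated} field and hence comes from the sharp decay \eqref{eq:BA:Linfty} with no logarithmic loss, while the logarithms enter only through the $L^{2}$ factor raised to the power $\tfrac{k}{m}$, yielding exactly $\log^{k}(1+\hT)$ (for instance $k=2$, $m=3$ gives the $L^{3}$ bound $\eps_{1}\hT^{-1/3}\log^{2}(1+\hT)$). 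The $\covN$ terms are treated the same way, interpolating \eqref{eq:BA:L2:S} against the sharp decay of $\covN\phi$ in \eqref{eq:BA:Linfty} to control $\covZ^{(m)}\covN\phi$, and then passing to $\covN\covZ^{(m)}\phi$ via Lemma~\ref{lem:BA:comm-NZ} (your remark on the commutator being lower order is correct). Finally, $\covT\covZ^{(m)}\phi$ is not interpolated directly at all: it is reduced pointwise by \eqref{eq:T-N-Z} to $\covN\covZ^{(m)}\phi$ plus a term controlled by the stronger bound \eqref{eq:Z-lower-order}. To repair your argument you would have to replace your same-quantity H\"older interpolation by this higher-derivative Gagliardo--Nirenberg scheme; the smallness of $\eps_{1}$ cannot substitute for it.
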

\begin{remark} 
Heuristically, the bootstrap assumptions \eqref{eq:BA:L2}, \eqref{eq:BA:L2:S}, \eqref{eq:BA:Linfty} and the sharp $L^{p}$ bounds \eqref{eq:sharpL3}, \eqref{eq:sharpL4}, \eqref{eq:sharpL6} can be conveniently summarized as follows: 
\begin{itemize}
\item $\wnrm{\cosh \hY \, \phi}_{L^{p}_{\hT}} \leq \hT^{-1+\frac{2}{p}}$ for $2 \leq p \leq \infty$;
\item every $\covZ$ costs $\log (1+\hT)$;
\item $\covN \covZ^{(m)} \phi$ and $(\cosh \hY)^{-1} \covT \covZ^{(m)} \phi$ obey the same bounds as $\covZ^{(m)} \phi$.
\end{itemize}
In reality, we only establish such bounds for certain exponents $p$ and $m$, but these suffice for our application below. 
\end{remark}

\begin{proof} 
First, by applying \eqref{eq:GN} to $\phi$ with \eqref{eq:BA:L2} and \eqref{eq:BA:Linfty}, the estimates \eqref{eq:sharpL3}, \eqref{eq:sharpL4} and \eqref{eq:sharpL6} for $\covZ^{(m)} \phi$ follow immediately. 
Next, applying the pointwise inequality \eqref{eq:T-N-Z} for $\tilde{\phi} = \covZ^{(m)} \phi$, we obtain
\begin{equation*} 
	\abs{\covT \covZ^{(m)} \phi}
	\leq C \cosh \hY (\abs{\covN \covZ^{(m)} \phi} +  \hT^{-1} \abs{\covZ^{(m+1)} \phi}).
\end{equation*}
Recall that the last term on the right obeys the bound \eqref{eq:Z-lower-order}, which is stronger than what we need to prove \eqref{eq:sharpL3}, \eqref{eq:sharpL4} and \eqref{eq:sharpL6} for $\covT \covZ^{(m)} \phi$. Therefore, to prove the lemma, it only remains to establish the estimates \eqref{eq:sharpL3}, \eqref{eq:sharpL4} and \eqref{eq:sharpL6} for $\covN \covZ^{(m)} \phi$.

Using \eqref{eq:GN} to interpolate the $L^{2}_{\hT}$ bound \eqref{eq:BA:L2:S} and the sharp decay estimate \eqref{eq:BA:Linfty} for $\covN \phi$, it follows that
\begin{align*}
	\wnrm{\cosh \hY \covZ^{(m)} \covN \phi}_{L^{p}_{\hT}}
	\leq & C \eps_{1} \, \hT^{-1+\frac{2}{p}} \log^{m} (1+\hT)
\end{align*}
for $p = 3, 4, 6$ when $0 \leq m \leq 1$, and $p = 3$ when $m = 2$.
Applying Lemma~\ref{lem:BA:comm-NZ}, and using \eqref{eq:BA:L2} and Lemma~\ref{lem:BA:NZ} to bound the right-hand side of \eqref{eq:comm-NZ}, it follows that the commutator $[\covN, \covZ^{(m)}] \phi$ obeys stronger estimates than needed to establish \eqref{eq:sharpL3}, \eqref{eq:sharpL4} and \eqref{eq:sharpL6}. This completes the proof of the lemma. \qedhere
\end{proof}

We separately state the following lemma, which is a trivial consequence of Lemma \ref{lem:sharpLp} and H\"older's inequality, since it will be used a number of times below.
\begin{lemma} \label{lem:BA:bilin}
For non-negative integers $k_{1}, k_{2}$ such that $k_{1} + k_{2} \leq 2$, we have
\begin{equation} \label{eq:bilinL2}
	\wnrm{\, \abs{\covT \covZ^{(k_{1})} \phi} \abs{\covT \covZ^{(k_{2})} \phi} \, }_{L^{2}_{\hT}}
	\leq C \eps_{1}^{2} \hT^{-1} \log^{k_{1}+k_{2}} (1+\hT).
\end{equation}
\end{lemma}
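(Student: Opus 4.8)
The plan is to deduce Lemma~\ref{lem:BA:bilin} directly from Lemma~\ref{lem:sharpLp} (together with the bootstrap assumptions already in force) by H\"older's inequality in the norm $\wnrm{\cdot}_{L^{p}_{\hT}} = \nrm{\cdot}_{L^{p}(\calH_{\hT}, \frac{\ud \Vol}{\cosh \hY})}$. By the symmetry of the left-hand side in $k_{1}, k_{2}$ I may assume $k_{1} \leq k_{2}$, so that $k_{1} \leq 1$ and $k_{2} \leq 2$, and then split into two cases according to the value of $k_{2}$.

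When $k_{2} \leq 1$ (so both indices lie in $\{0,1\}$) the sharp $L^{4}$ estimate \eqref{eq:sharpL4} is available for both factors, and one writes, using H\"older with exponents $4$ and $4$ followed by \eqref{eq:sharpL4},
\[
	\wnrm{\, \abs{\covT \covZ^{(k_{1})} \phi} \abs{\covT \covZ^{(k_{2})} \phi} \, }_{L^{2}_{\hT}}
	\leq \wnrm{\covT \covZ^{(k_{1})} \phi}_{L^{4}_{\hT}} \wnrm{\covT \covZ^{(k_{2})} \phi}_{L^{4}_{\hT}}
	\leq C \eps_{1}^{2} \hT^{-1} \log^{k_{1}+k_{2}}(1+\hT).
\]
When $k_{2} = 2$ (and hence $k_{1} = 0$) I would instead use H\"older with exponents $6$ and $3$, combining the $L^{6}$ bound for $\covT \phi$ from \eqref{eq:sharpL6} (with $m = 0$) with the $L^{3}$ bound for $\covT \covZ^{(2)} \phi$ from \eqref{eq:sharpL3} (with $m = 2$):
\[
	\wnrm{\, \abs{\covT \phi} \abs{\covT \covZ^{(2)} \phi} \, }_{L^{2}_{\hT}}
	\leq \wnrm{\covT \phi}_{L^{6}_{\hT}} \wnrm{\covT \covZ^{(2)} \phi}_{L^{3}_{\hT}}
	\leq \big( C \eps_{1} \hT^{-2/3} \big) \big( C \eps_{1} \hT^{-1/3} \log^{2}(1+\hT) \big) \leq C \eps_{1}^{2} \hT^{-1} \log^{2}(1+\hT).
\]
(Equivalently, one could pair $\wnrm{\covT \phi}_{L^{\infty}_{\hT}} \aleq \eps_{1} \hT^{-1}$ from \eqref{eq:BA:Linfty} with $\wnrm{\covT \covZ^{(2)} \phi}_{L^{2}_{\hT}} \aleq \eps_{1} \log^{2}(1+\hT)$ from \eqref{eq:BA:L2}.) These two cases exhaust all pairs $(k_{1}, k_{2})$ with $k_{1} + k_{2} \leq 2$, which finishes the proof.

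I expect no genuine obstacle here; the only point requiring a moment's thought is that Lemma~\ref{lem:sharpLp} provides sharp $L^{p}$ decay for the second-order quantity $\covT \covZ^{(2)} \phi$ only for $p \in \{3, 4\}$ (not $p = \infty$), so in the case $k_{2} = 2$ one cannot split symmetrically and must balance the weaker integrability of $\covT \covZ^{(2)} \phi$ against the full $\hT^{-1}$ pointwise decay carried by the undifferentiated factor $\covT \phi$.
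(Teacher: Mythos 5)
Your argument is correct and is exactly what the paper intends: the paper dismisses this lemma as "a trivial consequence of Lemma~\ref{lem:sharpLp} and H\"older's inequality," and your case split ($L^{4}\times L^{4}$ when both indices are at most $1$, and $L^{6}\times L^{3}$ — or equivalently $L^{\infty}\times L^{2}$ via \eqref{eq:BA:Linfty} and \eqref{eq:BA:L2} — when $k_{2}=2$) is the natural way to carry that out, with the exponents and logarithmic powers matching \eqref{eq:bilinL2}. No gaps.
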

%
%
%

\subsection{Improving the nonlinearity estimates}
Our next goal is to obtain the following improvement of \eqref{eq:BA:KG}: For $0 \leq m \leq 4$ we wish to show that
\begin{equation} \label{eq:BA:KG:improved}
	\wnrm{\cosh \hY (\covBox \m 1) \covZ^{(m)} \phi}_{L^{2}_{\hT}} 
	\leq C \eps_{1}^{3}  \hT^{-1} \log^{m-1} (1+\hT).
\end{equation}
In particular, note that the power of $\eps_{1}$ is $3$ in \eqref{eq:BA:KG:improved}, in contrast to $3-$ in \eqref{eq:BA:KG}; hence for a sufficiently small $\eps_{1}$ (independent of $T$), \eqref{eq:BA:KG:improved} would imply
\begin{equation*}
	\wnrm{\cosh \hY (\covBox \m 1) \covZ^{(m)} \phi}_{L^{2}_{\hT}} 
	\leq \eps_{1}^{3-} \hT^{-1} \log^{m-1} (1+\hT).	
\end{equation*}
which improves \eqref{eq:BA:KG}.

By \eqref{eq:CS-uni} and the commutation formula \eqref{eq:comm-covKG}, we have the schematic formula
\begin{equation*}
\begin{aligned}
(\covBox \m 1) \covZ^{(m)} \phi 
=& \covZ^{(m)} U(\phi) + \sum_{k_{1}+k_{2} \leq m-1} \frkN_{1}[\covLD_{Z}^{(k_{1})} J, \covZ^{(k_{2})} \phi] \\
&	+ \sum_{k_{1}+k_{2} \leq m-1} \frkN_{2}[\covLD_{Z}^{(k_{1})} J, \covZ^{(k_{2})} \phi] 
	+ \sum_{k_{1}+k_{2} \leq m-1} \frkN_{3}[\covLD_{Z}^{(k_{1})} J, \covZ^{(k_{2})} \phi] \\
&	+ \sum_{k_{1}+k_{2}+k_{3} \leq m-2} \frkN_{4}[\covLD_{Z}^{(k_{1})} J, \covLD_{Z}^{(k_{2})} J, \covZ^{(k_{3})} \phi],
\end{aligned}
\end{equation*}
where the last four terms are dropped in the case $m = 0$, and the last term is dropped when $m = 1$.

Hence, in order to establish \eqref{eq:BA:KG:improved}, we need to bound $\covZ \itr{m} U(\phi)$ and each $\frkN_{i}$ in $\wnrm{\cosh \hY (\cdot)}_{L^{2}_{\hT}}$. We achieve this task for \eqref{eq:CSH} and \eqref{eq:CSD} separately. In what follows, we assume that $(A, \phi)$ obeys the bootstrap assumptions \eqref{eq:BA:L2}--\eqref{eq:BA:KG}.


\subsubsection{Chern--Simons--Higgs equations}
We begin by handling the contribution of the $V$-valued potential $U_{\CSH}$.
Recall from Section~\ref{subsec:comm-U} that we may write $U_{\CSH}(\phi)
= U_{3}[\phi, \phi, \phi] + U_{5}[\phi, \phi, \phi, \phi, \phi]$, where $U_{3}$ and $U_{5}$ obey the Leibniz rules in Lemma~\ref{lem:comm-U-CSH}, and the pointwise bounds in Lemma~\ref{lem:ptwise-U}. Using the simple bounds in Lemma~\ref{lem:BA:KlSob} and H\"older's inequality, the following improved bounds can be shown:

\begin{proposition} \label{prop:U-CSH}
Let $(A, \phi)$ obey the bootstrap assumptions in Section~\ref{subsec:BAs}.
Then for $0 \leq m \leq 4$, we have
\begin{align} 
	\wnrm{\cosh \hY \covZ^{(m)} U_{3}[\phi, \phi, \phi]}_{L^{2}_{\hT}}
	\leq & C \eps_{1}^{3} \hT^{-2+} \\
	\wnrm{\cosh \hY \covZ^{(m)} U_{5}[\phi, \phi, \phi, \phi, \phi]}_{L^{2}_{\hT}}
	\leq & C \eps_{1}^{5} \hT^{-4+}.
\end{align}
\end{proposition}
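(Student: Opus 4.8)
The plan is to prove Proposition~\ref{prop:U-CSH} by a straightforward bookkeeping argument, leveraging the Leibniz rules of Lemma~\ref{lem:comm-U-CSH} together with the pointwise bounds of Lemma~\ref{lem:ptwise-U} and the decay estimates already established in Section~\ref{subsec:BA-conseq}. First I would distribute the $m$ covariant vector fields $\covZ$ across the factors. Since $U_{3}$ and $U_{5}$ are multilinear and obey the Leibniz rule \eqref{eq:comm-U-CSH-3}, \eqref{eq:comm-U-CSH-5}, applying $\covZ^{(m)}$ produces a schematic linear combination of terms
\begin{equation*}
	U_{3}[\covZ^{(k_{1})} \phi, \covZ^{(k_{2})} \phi, \covZ^{(k_{3})} \phi], \qquad k_{1}+k_{2}+k_{3} \leq m,
\end{equation*}
and likewise for $U_{5}$ with five indices summing to at most $m$. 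By the pointwise bound \eqref{eq:ptwise-U-CSH-3}, each such cubic term is controlled by $\abs{\covZ^{(k_{1})} \phi} \abs{\covZ^{(k_{2})} \phi} \abs{\covZ^{(k_{3})} \phi}$, and the quintic term similarly by a product of five such factors.

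Next I would take the $\wnrm{\cosh \hY \, \cdot}_{L^{2}_{\hT}}$ norm and apply H\"older's inequality, placing the single highest-order factor (the one with the most $\covZ$'s, whose order is at most $m \leq 4$) in $\wnrm{\cosh \hY \, \cdot}_{L^{2}_{\hT}}$ and all remaining lower-order factors in $\wnrm{\cosh \hY \, \cdot}_{L^{\infty}_{\hT}}$. The point is that in the cubic term at most one factor can have order $> 1$ when $m \leq 4$ is distributed among three slots after accounting for the fact that we want the other two to be low order — more carefully, one arranges that the two non-highest-order factors each have order $\leq 2$, which is the range in which the weak $L^{\infty}$ bound \eqref{eq:weakLinfty} of Lemma~\ref{lem:BA:KlSob} applies and yields $\wnrm{\cosh \hY \covZ^{(k)} \phi}_{L^{\infty}_{\hT}} \leq C \eps_{1} \hT^{-1} \log^{4}(1+\hT)$. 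For the highest-order factor I would use \eqref{eq:BA:L2}, which gives $\wnrm{\cosh \hY \covZ^{(k)} \phi}_{L^{2}_{\hT}} \leq C \eps_{1} \log^{k}(1+\hT)$; note this factor has \emph{no} $\cosh \hY$ weight deficiency since $\abs{\covZ^{(k)} \phi} \leq (\cosh\hY)^{-1} \cdot \cosh\hY \abs{\covZ^{(k)}\phi}$ absorbs favorably. Multiplying the bounds, two factors of $\hT^{-1}$ from the two lower-order terms produce the claimed $\hT^{-2+}$ decay (the $+$ absorbing all the powers of $\log(1+\hT)$), and three factors of $\eps_{1}$ give $\eps_{1}^{3}$; the quintic case is identical but with four lower-order factors, yielding $\hT^{-4+}$ and $\eps_{1}^{5}$.

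The one place requiring a little care — and the main (minor) obstacle — is verifying the combinatorial claim that, whenever three (resp. five) non-negative integers sum to at most $m \leq 4$, one can always single out one index and keep the \emph{remaining two} (resp. four) indices all $\leq 2$, since the weak $L^{\infty}$ estimate \eqref{eq:weakLinfty} is only available for orders $0,1,2$. For the cubic term with $k_{1}+k_{2}+k_{3}\leq 4$, if the largest index is, say, $k_{1}$, then $k_{2}+k_{3} \leq 4$ but this does not immediately force $k_{2},k_{3}\leq 2$; however, if some lower index were $\geq 3$ then the largest would also be $\geq 3$ and the other two would sum to $\leq 1$, so at worst one of the "lower" factors has order $3$ and then I would instead use the $L^{4}$ or $L^{6}$ sharp bounds \eqref{eq:sharpL4}, \eqref{eq:sharpL6} (valid for $m\le 1$) and \eqref{eq:weakLp} (valid for $m=3$) together with a three-way H\"older split $2 = \tfrac{1}{p_{1}}+\tfrac{1}{p_{2}}+\tfrac{1}{p_{3}}$ tailored to the multi-index. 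In all cases the total $\hT$-decay is at least $\hT^{-2+}$ for $U_{3}$ and $\hT^{-4+}$ for $U_{5}$, and the $\eps_{1}$-count is exactly the homogeneity degree. Once the finitely many index configurations ($m \leq 4$) are checked, the proposition follows; since these are entirely routine, I would state the H\"older scheme and the choice of exponents and omit the exhaustive enumeration.
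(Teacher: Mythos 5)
Your proposal is correct and follows essentially the same route the paper intends (and omits): distribute $\covZ^{(m)}$ via the Leibniz rules of Lemma~\ref{lem:comm-U-CSH}, apply the pointwise bounds of Lemma~\ref{lem:ptwise-U}, and then H\"older with the highest-order factor in $\wnrm{\cosh \hY \, \cdot}_{L^{2}_{\hT}}$ via \eqref{eq:BA:L2} and the remaining factors in $L^{\infty}_{\hT}$ via \eqref{eq:weakLinfty}, using $\cosh \hY \geq 1$ to absorb the weights. The only remark is that the ``delicate'' case you hedge against is vacuous: if any non-largest index were $\geq 3$, the largest would also be $\geq 3$ and the total would exceed $4$, so for $m \leq 4$ all non-highest-order factors automatically have order $\leq 2$ and the fallback through \eqref{eq:sharpL4}, \eqref{eq:sharpL6}, \eqref{eq:weakLp} is unnecessary (though harmless).
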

We omit the straightforward details. These bounds show that the contribution of $U_{\CSH}$ is acceptable for the proof of \eqref{eq:BA:KG:improved}.


Next, we turn to the terms $\frkN_{j}$. To complete the proof of \eqref{eq:BA:KG:improved}, it suffices to establish the following bounds:
\begin{proposition} \label{prop:N-CSH}
Let $(A, \phi)$ obey the bootstrap assumptions in Section~\ref{subsec:BAs}. Then for $0 \leq m \leq 4$, we have
\begin{align}
	\sum_{k_{1}+k_{2} \leq m-1}
	\wnrm{\cosh \hY \, \frkN_{1}[\covLD_{Z}^{(k_{1})} J_{\CSH}, \covZ^{(k_{2})} \phi] }_{L^{2}_{\hT}} 
\leq& C \eps_{1}^{3}  \hT^{-1} \log^{m-1} (1+\hT) \label{eq:CSH:N1} \\
	\sum_{k_{1}+k_{2} \leq m-1}
	\wnrm{\cosh \hY \, \frkN_{2}[\covLD_{Z}^{(k_{1})} J_{\CSH}, \covZ^{(k_{2})} \phi] }_{L^{2}_{\hT}} 
\leq& C \eps_{1}^{3}  \hT^{-1} \log^{m-1} (1+\hT) \label{eq:CSH:N2} \\
	\sum_{k_{1}+k_{2} \leq 3}
	\wnrm{\cosh \hY \, \frkN_{3}[\covLD_{Z}^{(k_{1})} J_{\CSH}, \covZ^{(k_{2})} \phi] }_{L^{2}_{\hT}} 
\leq& C \eps_{1}^{3}  \hT^{-2+} \label{eq:CSH:N3} \\
	\sum_{k_{1}+k_{2}+k_{3} \leq 2}
	\wnrm{\cosh \hY \, \frkN_{4}[\covLD_{Z}^{(k_{1})} J_{\CSH}, \covLD_{Z}^{(k_{2})} J_{\CSH}, \covZ^{(k_{3})} \phi] }_{L^{2}_{\hT}} 
\leq& C \eps_{1}^{5}  \hT^{-2+} \label{eq:CSH:N4}
\end{align}
\end{proposition}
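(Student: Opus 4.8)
The proof of Proposition~\ref{prop:N-CSH} amounts to combining the pointwise bounds for $\frkN_{1},\dots,\frkN_{4}$ from Lemma~\ref{lem:ptwise-N}, the structural formula for $\covLD_Z^{(m)} J_{\CSH}$ from Proposition~\ref{prop:comm-J-CSH} (together with the pointwise bounds in Lemma~\ref{lem:ptwise-CSH}), and the sharp $L^p$ decay estimates of Lemma~\ref{lem:sharpLp} and Lemma~\ref{lem:BA:bilin}. The strategy in each case is: expand $\covLD_Z^{(k_1)} J_{\CSH}$ schematically via \eqref{eq:comm-J-CSH} into a sum of terms $\Gmm_{\CSH}^{(\ell)}[\covZ^{(j_1)}\phi,\dots,\covZ^{(j_{2\ell+2})}\phi]$ with $j_1+\cdots+j_{2\ell+2}\le k_1-\ell$; estimate the result pointwise, isolating the single factor of $\covud\varphi=\covud\phi$ hidden inside $\Gmm_{\CSH}^{(0)}$ and hence inside every $\Gmm_{\CSH}^{(\ell)}$; convert $\covud\phi$ into $\covT\phi$ and the scaling direction into $\cosh\hat Y\,\covN\phi$ where the finer bound \eqref{eq:ptwise-N2} is available; then take the $\wnrm{\cosh\hat Y\,(\cdot)}_{L^2_{\hat T}}$ norm and distribute via H\"older's inequality, placing the highest-order factor in $L^2_{\hat T}$ (controlled by the bootstrap assumptions \eqref{eq:BA:L2}, \eqref{eq:BA:L2:S}) and all remaining factors in $L^\infty_{\hat T}$ (controlled by \eqref{eq:BA:Linfty}, and Lemma~\ref{lem:BA:NZ} for the $\covN\covZ^{(k)}$, $\covT\covZ^{(k)}$ terms of order $\le 1$) or, when two factors of comparable order appear, using the sharp $L^3_{\hat T},L^4_{\hat T},L^6_{\hat T}$ bounds of Lemma~\ref{lem:sharpLp}.

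\textbf{The main terms \eqref{eq:CSH:N1} and \eqref{eq:CSH:N2}.} These are the delicate ones, since the $\hat T$-weight $\hat T\cosh\hat Y$ in \eqref{eq:ptwise-N1}, \eqref{eq:ptwise-N2} is borderline: it must be cancelled by exactly one factor decaying like $\hat T^{-1}$ coming from $\phi$ and one from $\covT\phi$ (or $\cosh\hat Y\,\covN\phi$), which is precisely the content of the sharp $L^\infty$ bound \eqref{eq:BA:Linfty}. For $\frkN_1$, I would use $\covud\covLD_Z^{(k_1)}J_{\CSH}$, computed from Lemma~\ref{lem:covud-covdlt-CSH}, Lemma~\ref{lem:d-i-star} and Proposition~\ref{prop:comm-J-CSH}; the point is that $\covud\Gmm_{\CSH}^{(0)}$ produces either a genuinely quadratic-in-$\covT\phi$ term $\bbrk{\covud\phi\wedge\covud\phi}$ or a quintic term, so $\iota_N\covud\Gmm_{\CSH}^{(k_1)}$ is bounded by a product containing two factors of $\covT\covZ^{(j)}\phi$ (plus lower-order $\phi$ factors), and then \eqref{eq:ptwise-N1} gives an extra $\hat T$ which is matched by \eqref{eq:bilinL2}. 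For $\frkN_2$, I would use \eqref{eq:ptwise-N2}: the term $\abs\Gmm\,\abs{\covN\phi}$ is good because $\Gmm_{\CSH}^{(k_1)}$ carries a factor $\covT\phi$, so the product $\abs{\covT\covZ^{(j)}\phi}\,\abs{\covN\covZ^{(k_2)}\phi}\,(\text{lower order }\phi)$ again decays like $\hat T^{-2}$ in $L^1_{\hat T}$-type norms; the term $\abs{\iota_N\Gmm}\,\abs{\covT\phi}$ is handled similarly using $\iota_N$ applied to the $\alp$-free slot of $\Gmm_{\CSH}^{(0)}$. Careful bookkeeping of the $\cosh\hat Y$ weights (using $\cosh\hat Y\cdot\frac{1}{\cosh\hat Y}$ to pair them against the weighted norms, as in the proof of Lemma~\ref{lem:BA:comm-NZ}, and the fact that each extra factor of $\covZ^{(j)}\phi$ with $j\le 2$ supplies a pointwise gain $C\hat T^{-1}\cosh^{-1}\hat Y\log^3(1+\hat T)$) ensures the $\log$-count adds up to $\log^{m-1}$: every $\covZ$ derivative costs one $\log$ and there are at most $m-1$ derivatives distributed, while the undifferentiated factors are $\log$-free.

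\textbf{The higher-order terms \eqref{eq:CSH:N3} and \eqref{eq:CSH:N4}.} These are easier: $\frkN_3$ carries no $\hat T$ weight at all (see \eqref{eq:ptwise-N3}), so $\abs{\frkN_3[\covLD_Z^{(k_1)}J_{\CSH},\covZ^{(k_2)}\phi]}\le C\abs{\covLD_Z^{(k_1)}J_{\CSH}}\,\abs{\covZ^{(k_2)}\phi}$ and plugging in Lemma~\ref{lem:ptwise-CSH} gives a product of at least three factors of $\phi$-type quantities (one of them $\covT\phi$), which in $\wnrm{\cosh\hat Y(\cdot)}_{L^2_{\hat T}}$ decays like $\hat T^{-2+}$ after distributing via H\"older with Lemma~\ref{lem:sharpLp}; the $\hat T\cosh\hat Y$ powers from $\Gmm_{\CSH}^{(\ell)}$ with $\ell\ge 1$ are absorbed by the additional $\phi$-factors exactly as in the treatment of \eqref{eq:comm-NZ:CSH-nonmain}. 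For $\frkN_4$, \eqref{eq:ptwise-N4} contributes $\hat T^2\cosh^2\hat Y$, but $\frkN_4$ is built from two copies of $J_{\CSH}$ and one $\phi$, hence is quintic or higher in $\phi$ with at least two $\covT\phi$ factors; the two powers of $\hat T$ are matched by two factors decaying like $\hat T^{-1}$, and the remaining $\ge 3$ factors force rapid decay, giving $\hat T^{-2+}$ with a power $\eps_1^5$. In all four cases, since $m\le 4$ and the terms $\frkN_j$ with $j\ge 2$ involve at most $m-1\le 3$ (resp. $m-2\le 2$) derivatives spread over several slots, no individual slot exceeds order $2$, so Lemma~\ref{lem:sharpLp} and Lemma~\ref{lem:BA:NZ} apply to every factor. \textbf{The principal obstacle} is the borderline $\hat T$-counting in \eqref{eq:CSH:N1}--\eqref{eq:CSH:N2}: one must verify that after extracting the $\covud\varphi$ slot there really are always \emph{two} factors enjoying the sharp $\hat T^{-1}$ decay (one undifferentiated $\phi$ or $\covud$ of it, and one differentiated factor of order $\le m-1$), and that the logarithmic losses from the Klainerman--Sobolev inequality applied to the highest-order factor combine to exactly $\log^{m-1}$ rather than $\log^{m}$ — this is where the split of \eqref{eq:N12-leibniz} into $\frkN_1$ (with $\covud\Gmm$) and $\frkN_2$ (with $\covud\phi$) is used decisively, since it keeps the derivative on the \emph{lower}-order object whenever possible.
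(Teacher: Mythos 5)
Your overall route is the paper's: expand $\covLD_{Z}^{(k_{1})} J_{\CSH}$ via Proposition~\ref{prop:comm-J-CSH}, use the refined pointwise bounds of Lemma~\ref{lem:ptwise-N} together with \eqref{eq:covud-CSH} and Lemma~\ref{lem:d-i-star}, and close with H\"older against the bootstrap norms; your treatment of \eqref{eq:CSH:N2}, of \eqref{eq:CSH:N3}--\eqref{eq:CSH:N4}, and of the quintic ($\ell \geq 1$) contributions is consistent with the paper's. However, the mechanism you describe for the borderline cubic ($\ell = 0$) part of \eqref{eq:CSH:N1} has a genuine gap. You assert that $\iota_{N} \covud \Gmm_{\CSH}^{(0)}$ is controlled by a product of \emph{two} factors $\abs{\covT \covZ^{(j)} \phi}$. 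This discards the decisive structure: $\iota_{N} \bbrk{\covud \phi^{1} \wedge \covud \phi^{2}}$ yields the combinations $\abs{\covT \phi^{1}} \abs{\covN \phi^{2}} + \abs{\covN \phi^{1}} \abs{\covT \phi^{2}}$, and it is the $\covN$ factor that carries the $\cosh \hY$ weight through $\wnrm{\cosh \hY \, \covN \covZ^{(k)} \phi}_{L^{2}_{\hT}}$, i.e.\ through \eqref{eq:NZ-L2} and Lemma~\ref{lem:sharpLp}. If instead one uses $\abs{\iota_{N} \Omg} \leq \cosh \hY \abs{\Omg}$ to produce two $\covT$ factors, the resulting pointwise bound $\hT \cosh^{2} \hY \, \abs{\covT \phi^{1}} \abs{\covT \phi^{2}} \abs{\phi^{3}}$ carries one more $\cosh \hY$ than the available norms ($\wnrm{\covT \cdot}$, $\wnrm{\cosh \hY \covN \cdot}$, $\wnrm{\cosh \hY \cdot}$) can absorb; on the support of $\phi$ one has $\cosh \hY \aleq \hT / R$, so the dangling weight destroys the $\hT^{-1}$ decay, and absorbing it pointwise via \eqref{eq:weakLinfty} costs an extra $\log^{2}(1+\hT)$ and ruins the $\log^{m-1}$ budget.

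Relatedly, "matching the extra $\hT$ by \eqref{eq:bilinL2}" does not cover the general derivative distribution: Lemma~\ref{lem:BA:bilin} requires the two differentiated slots to carry at most two $\covZ$'s in total, whereas in $\covud \Gmm_{\CSH}^{(0)}[\covZ^{(j_{1})} \phi, \covZ^{(j_{2})} \phi]$ one can have $j_{1} + j_{2} = 3$ when $m = 4$; and when the remaining slot $\covZ^{(k_{3})} \phi$ is differentiated, pairing \eqref{eq:bilinL2} with an $L^{\infty}_{\hT}$ bound of that factor forces \eqref{eq:weakLinfty} and yields $\log^{m+1}(1+\hT)$ instead of $\log^{m-1}(1+\hT)$. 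The correct closing --- and the paper's --- places the highest-order factor in $L^{2}_{\hT}$ via \eqref{eq:BA:L2} or \eqref{eq:NZ-L2}, undifferentiated factors in the sharp $L^{\infty}$ bound \eqref{eq:BA:Linfty}, and intermediate factors in the sharp $L^{3}_{\hT}, L^{4}_{\hT}, L^{6}_{\hT}$ bounds of Lemma~\ref{lem:sharpLp}, so that the three factors contribute $\hT^{-2}$ and exactly $\log^{k_{1}+k_{2}+k_{3}}(1+\hT) \leq \log^{m-1}(1+\hT)$. You do list Lemma~\ref{lem:sharpLp} in your plan, so the repair is within reach, but as written the key borderline estimate \eqref{eq:CSH:N1} does not close; you should also state explicitly that the commutator terms produced by Lemma~\ref{lem:d-i-star} for $\ell \geq 1$ (those containing $\covdlt \Gmm_{\CSH}^{(0)}$) are estimated through bilinear $L^{2}_{\hT}$ bounds of the type in Lemma~\ref{lem:CSH:weakBilin}, whose proof uses the bootstrap assumption \eqref{eq:BA:KG} on $(\covBox \m 1) \covZ^{(k)} \phi$.
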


To prove this proposition, it is useful to first establish the following preliminary bilinear estimates, which may be done only using the bounds in Lemmas~\ref{lem:BA:KlSob} and \ref{lem:BA:NZ}.
\begin{lemma} \label{lem:CSH:weakBilin}
Let $(A, \phi)$ obey the bootstrap assumptions in Section~\ref{sec:BA}. Then we have
\begin{align} 
	\sum_{k_{1}+k_{2} \leq 3} \wnrm{\cosh \hY \, \abs{\Gmm_{\CSH}^{(0)} [\covZ^{(k_{1})} \phi, \covZ^{(k_{2})} \phi]} \, }_{L^{2}_{\hT}}
	\leq & C \eps_{1}^{2} \hT^{-1+},	 \label{eq:CSH:Gmm0} \\
	\sum_{k_{1}+k_{2} \leq 2} \wnrm{\cosh \hY \, \abs{\covLD_{Z} \Gmm_{\CSH}^{(0)} [\covZ^{(k_{1})} \phi, \covZ^{(k_{2})} \phi]} \, }_{L^{2}_{\hT}}
	\leq&  C \eps_{1}^{2} \hT^{-1+},	 \label{eq:CSH:ZGmm0} \\
	\sum_{k_{1}+k_{2} \leq 2} \wnrm{\covud \Gmm_{\CSH}^{(0)} [\covZ^{(k_{1})} \phi, \covZ^{(k_{2})} \phi]}_{L^{2}_{\hT}}
	\leq&  C \eps_{1}^{2} \hT^{-1+},	 \label{eq:CSH:dGmm0} \\
	\sum_{k_{1}+k_{2} \leq 2} \wnrm{\covdlt \Gmm_{\CSH}^{(0)} [\covZ^{(k_{1})} \phi, \covZ^{(k_{2})} \phi]}_{L^{2}_{\hT}}
	\leq&  C \eps_{1}^{2} \hT^{-1+}.  \label{eq:CSH:dltGmm0} 
\end{align}
\end{lemma}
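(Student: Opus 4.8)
The plan is to reduce each of the four estimates, via the algebraic identities of Section~\ref{subsec:comm-CSH} together with the pointwise bounds of Section~\ref{subsec:ptwise}, to $L^{2}_{\hT}$ bounds for products of $\covZ^{(k)} \phi$, $\covT \covZ^{(k)} \phi$ and $(\covBox \m 1) \covZ^{(k)} \phi$ with small $k$; these are then closed by H\"older's inequality, placing the factor carrying the most derivatives (never more than three, so that \eqref{eq:BA:L2} applies) in $L^{2}_{\hT}$ and the remaining factors (carrying at most two derivatives) in $L^{\infty}_{\hT}$, where Lemmas~\ref{lem:BA:KlSob} and \ref{lem:BA:NZ}, together with \eqref{eq:BA:Linfty}, supply decay of the form $\eps_{1} \hT^{-1} \log^{O(1)}(1+\hT)$. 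Each such $L^{\infty}_{\hT}$ factor contributes a genuine power $\hT^{-1}$, which comfortably beats any fixed power of $\log(1+\hT)$ and yields the asserted $\hT^{-1+}$ rate. The index constraints $k_{1}+k_{2} \leq 3$ (resp. $\leq 2$) are precisely what ensure that an $L^{\infty}_{\hT}$ bound for $\covZ^{(3)}\phi$ is never required.

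First I would treat \eqref{eq:CSH:Gmm0}, using the pointwise bound $\abs{\Gmm_{\CSH}^{(0)}[\varphi^{1}, \varphi^{2}]} \aleq \abs{\varphi^{1}}\abs{\covT \varphi^{2}}$ from Lemma~\ref{lem:ptwise-CSH} applied with $\varphi^{j} = \covZ^{(k_{j})}\phi$; after splitting into the cases $k_{1} \geq k_{2}$ and $k_{1} < k_{2}$ and distributing the weight $\cosh \hY$, the estimate is immediate from \eqref{eq:BA:L2}, \eqref{eq:BA:Linfty} and \eqref{eq:NZ-Linfty}. Next, \eqref{eq:CSH:ZGmm0}: expand $\covLD_{Z} \Gmm_{\CSH}^{(0)}[\covZ^{(k_{1})}\phi, \covZ^{(k_{2})}\phi]$ by Lemma~\ref{lem:comm-J-CSH-1}; the first two terms have the form already estimated in \eqref{eq:CSH:Gmm0} (with indices summing to $\leq 3$), while the remaining $\Gmm_{\CSH}^{(1)}$ term is quartic and carries an extra factor $\hT \cosh \hY$ by the $k=1$ case of \eqref{eq:ptwise-CSH}. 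Placing one factor in $L^{2}_{\hT}$ and the other three (two of which are undifferentiated copies of $\phi$, decaying like $\eps_{1}\hT^{-1}$ with the weight $\cosh \hY$) in $L^{\infty}_{\hT}$ absorbs the $\hT$ weight with room to spare.

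Then, for \eqref{eq:CSH:dGmm0} and \eqref{eq:CSH:dltGmm0} I would use the identities \eqref{eq:covud-CSH} and \eqref{eq:covdlt-CSH} of Lemma~\ref{lem:covud-covdlt-CSH}. The ``null-form type'' terms $\bbrk{\covud \varphi^{1} \wedge \covud \varphi^{2}}$ and $\star \bbrk{\covud \varphi^{1} \wedge \star \covud \varphi^{2}}$ obey $\abs{\cdot} \aleq \abs{\covT \covZ^{(k_{1})}\phi}\,\abs{\covT \covZ^{(k_{2})}\phi}$, which is exactly the quantity controlled by Lemma~\ref{lem:BA:bilin} (note that \eqref{eq:CSH:dGmm0}--\eqref{eq:CSH:dltGmm0} carry no $\cosh \hY$ weight on the left, so no extra weight has to be absorbed here). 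The remaining terms $\bbrk{\varphi^{1} \wedge (\star \Gmm_{\CSH}^{(0)}[\phi, \phi] \wedge \varphi^{2})}$, $\bbrk{\varphi^{1}, (\covBox \m 1) \varphi^{2}}$ and $\bbrk{\varphi^{1}, \varphi^{2}}$ are dispatched as in the previous two steps; in particular, the second one uses the bootstrap bound \eqref{eq:BA:KG} on $(\covBox \m 1)\covZ^{(k_{2})}\phi$ (or Proposition~\ref{prop:U-CSH} when $k_{2}=0$), together with the $L^{\infty}_{\hT}$ bound for $\covZ^{(k_{1})}\phi$ available for $k_{1} \leq 2$ from Lemma~\ref{lem:BA:KlSob}.

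The one point requiring care, and the main obstacle to a clean write-up, is the bookkeeping of the $\cosh \hY$ weights: the weight $\cosh \hY$ on the left of \eqref{eq:CSH:Gmm0}--\eqref{eq:CSH:ZGmm0}, and the extra weight $\hT \cosh \hY$ thrown off by $\iota_{Z} \star$ inside $\Gmm_{\CSH}^{(1)}$, must be parcelled out among the factors so that each one is measured in a weighted norm for which a bootstrap bound is genuinely available. Since every undifferentiated factor of $\phi$ comes with a gain $\eps_{1}\hT^{-1}$ (from $\wnrm{\cosh \hY \phi}_{L^{\infty}_{\hT}} \aleq \eps_{1}\hT^{-1}$ and $\wnrm{\covT \phi}_{L^{\infty}_{\hT}} \aleq \eps_{1}\hT^{-1}$), there are always enough powers of $\hT^{-1}$ at hand to compensate any $\hT$-weight, and the resulting H\"older computations are routine and would be omitted.
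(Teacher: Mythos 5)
Your proposal is correct and follows essentially the same route as the paper: expand via the algebraic identities of Section~\ref{subsec:comm-CSH} (Lemmas~\ref{lem:comm-J-CSH-1} and \ref{lem:covud-covdlt-CSH}) and the pointwise bounds of Section~\ref{subsec:ptwise}, then close with H\"older, putting the highest-order factor in $\wnrm{\cdot}_{L^{2}_{\hT}}$ via \eqref{eq:BA:L2} (or \eqref{eq:BA:KG} for the $(\covBox \m 1)\covZ^{(k)}\phi$ term) and the low-order factors in $\wnrm{\cdot}_{L^{\infty}_{\hT}}$ via Lemmas~\ref{lem:BA:KlSob} and \ref{lem:BA:NZ}. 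Your use of Lemma~\ref{lem:BA:bilin} for the $\abs{\covT\covZ^{(k_{1})}\phi}\abs{\covT\covZ^{(k_{2})}\phi}$ terms, and the aside about Proposition~\ref{prop:U-CSH} when $k_{2}=0$, are harmless minor variants of the paper's argument.
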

\begin{proof} 
All four bounds are proved in the same way: First, apply formulae and pointwise bounds in Section~\ref{sec:comm}, then use H\"older's inequality, Lemmas~\ref{lem:BA:KlSob} and \ref{lem:BA:NZ}, as well as  \eqref{eq:BA:L2}. We give a detailed proof only for \eqref{eq:CSH:dltGmm0}, which is the most involved, and leave the rest to the reader.

Fix a pair of non-negative intergers $k_{1}, k_{2}$ such that $k_{1} + k_{2} \leq 2$. By \eqref{eq:covdlt-CSH} and the pointwise bounds in Section~\ref{subsec:ptwise}, we have
\begin{align*}
& \hskip-2em
	\abs{\covdlt \Gmm_{\CSH}^{(0)} [\covZ^{(k_{1})} \phi, \covZ^{(k_{2})} \phi]} \\
	\leq & C \bb( \abs{\covT \covZ^{(k_{1})} \phi} \abs{\covT \covZ^{(k_{2})} \phi} 
		+ \abs{\covZ^{(k_{1})} \phi} \abs{(\covBox \m 1) \covZ^{(k_{2})} \phi}
		+ \abs{\covZ^{(k_{1})} \phi} \abs{\covZ^{(k_{2})}\phi} \bb).
\end{align*}
We bound the $\wnrm{\cdot}_{L^{2}_{\hT}}$ norm of the preceding expression as follows: The contribution of the first term is treated by applying H\"older's inequality, bounding the higher order factor in $\wnrm{\cdot}_{L^{2}_{\hT}}$ and the other in $\wnrm{\cdot}_{L^{\infty}_{\hT}}$, and then appealing to \eqref{eq:BA:L2} and Lemma~\ref{lem:BA:NZ}. The third term is handled similarly, where we replace Lemma~\ref{lem:BA:NZ} by Lemma~\ref{lem:BA:KlSob}. Finally, for the second term, we bound $\covZ^{(k_{1})} \phi$ in $\wnrm{\cdot}_{L^{\infty}_{\hT}}$ and $(\covBox \m 1) \covZ^{(k_{2})} \phi$ in $\wnrm{\cdot}_{L^{2}_{\hT}}$, then use Lemma~\ref{lem:BA:KlSob} for the former and \eqref{eq:BA:KG} for the latter. \qedhere
\end{proof}

We are now ready to prove \eqref{eq:CSH:N1}--\eqref{eq:CSH:N4}, which would prove Proposition~\ref{prop:N-CSH}.

\begin{proof}[Proof of \eqref{eq:CSH:N1}]
In order to prove \eqref{eq:CSH:N1}, it suffices by \eqref{eq:comm-J-CSH} and the facts that $\eps_{1} \leq 1$, $\hT \geq 2R$ to establish the following bounds: For $1 \leq m \leq 4$,
\begin{align}
	\sum_{k_{1}+k_{2}+k_{3} \leq m-1}	\wnrm{\cosh \hY \,\frkN_{1}[\Gmm_{\CSH}^{(0)}[\covZ^{(k_{1})} \phi, \covZ^{(k_{2})} \phi], \covZ^{(k_{3})} \phi] }_{L^{2}_{\hT}} 	
	\leq& C \eps_{1}^{3}  \hT^{-1} \log^{m-1} (1+\hT) 	\label{eq:CSH:N1:1} \\
	\sum_{k_{1}+\cdots+k_{5} \leq 2}
	\wnrm{\cosh \hY \,\frkN_{1}[\Gmm_{\CSH}^{(1)}[\covZ^{(k_{1})} \phi, \ldots, \covZ^{(k_{4})} \phi], \covZ^{(k_{5})} \phi] }_{L^{2}_{\hT}} 
	\leq& C \eps_{1}^{5}  \hT^{-2+}		\label{eq:CSH:N1:2}\\
	\sum_{k_{1}+\cdots+k_{7} \leq 1}
	\wnrm{\cosh \hY \,\frkN_{1}[\Gmm_{\CSH}^{(2)}[\covZ^{(k_{1})} \phi, \ldots, \covZ^{(k_{6})} \phi], \covZ^{(k_{7})} \phi] }_{L^{2}_{\hT}} 
	\leq& C \eps_{1}^{7}  \hT^{-3+}  \label{eq:CSH:N1:3}\\
	\wnrm{\cosh \hY \,\frkN_{1}[\Gmm_{\CSH}^{(3)}[\phi, \phi, \ldots, \phi], \phi] }_{L^{2}_{\hT}} 
	\leq& C \eps_{1}^{9}  \hT^{-4+}	\label{eq:CSH:N1:4}
\end{align}
To simplify the notation, we will often use the shorthand $\phi^{j} = \covZ^{(k_{j})} \phi$ in what follows.

\paragraph*{\bfseries - Proof of \eqref{eq:CSH:N1:1}}
By \eqref{eq:ptwise-N1} and \eqref{eq:covud-CSH}, we first derive the following pointwise bound:
\begin{align*}
\cosh \hY \abs{\frkN_{1}[\Gmm_{\CSH}^{(0)}[\phi^{1}, \phi^{2}], \phi^{3}}
\leq & C \hT \cosh \hY \abs{\iota_{N} \covud \Gmm_{\CSH}^{(0)}[\phi^{1}, \phi^{2}]}\abs{\phi^{3}} \\
\leq & C \hT \cosh \hY \bb( \abs{\covT \phi^{1}} \abs{\covN \phi^{2}} + \abs{\covN \phi^{1}} \abs{\covT \phi^{2}} + \abs{\phi} \abs{\phi} \abs{\phi^{1}} \abs{\phi^{2}} \bb)\abs{\phi^{3}} .
\end{align*}
%
%
We now take the $\wnrm{\cdot}_{L^{2}_{\hT}}$ norm. The cubic terms can be estimated by $C \eps_{1}^{3} \hT^{-1} \log^{m-1} (1+\hT)$, using \eqref{eq:BA:L2}, \eqref{eq:NZ-L2} and Lemma \ref{lem:sharpLp}. 
The quintic term can be easily bounded by $\eps_{1}^{5} \hT^{-4+}$ using \eqref{eq:BA:L2} for the highest order factor and \eqref{eq:weakLinfty} for the rest. The point is that all factors except the highest order factor have at most two $\covZ$ derivatives, and thus \eqref{eq:weakLinfty} is applicable.

\paragraph*{\bf - Proof of \eqref{eq:CSH:N1:2}, \eqref{eq:CSH:N1:3} and \eqref{eq:CSH:N1:4}}
For the remaining cases \eqref{eq:CSH:N1:2}, \eqref{eq:CSH:N1:3} and \eqref{eq:CSH:N1:4}, the nonlinearity is quintic or higher and the total number of $\covZ$ derivatives is $\leq 2$.  These cases turn out to be much less delicate compared to \eqref{eq:CSH:N1:1}, and can be treated using just \eqref{eq:BA:L2}, \eqref{eq:weakLinfty} (in Lemma \ref{lem:BA:KlSob}), Lemma~\ref{lem:BA:bilin} and Lemma~\ref{lem:CSH:weakBilin}. Furthermore, we may rely on the crude pointwise bound \eqref{eq:ptwise-easy} to treat $\iota_{Z} \star$. Since the arguments are similar, we only present the case of \eqref{eq:CSH:N1:2} in detail, and briefly sketch the others.

To prove \eqref{eq:CSH:N1:2}, we distinguish two types of terms, namely those which do not involve commutation of $\covud$ with $\iota_{Z} \star$, and those which arise from this commutation. More precisely, by \eqref{eq:Gmm-CSH-1}, \eqref{eq:ptwise-easy}, \eqref{eq:ptwise-N1-easy} and Lemma~\ref{lem:d-i-star}, we first bound $\cosh \hY \abs{\frkN_{1}[\Gmm_{\CSH}^{(1)}[\phi^{1}, \ldots, \phi^{4}], \phi^{5}]}$ by
\begin{align}
& C\hT \cosh^{2} \hY \abs{\covud \phi^{1} \wedge (\iota_{Z} \star \Gmm_{\CSH}^{(0)}[\phi^{3}, \phi^{4}]) \wedge \phi^{2}} \abs{\phi^{5} } \label{eq:CSH:N1:2:1} \\
&		+ C\hT \cosh^{2} \hY \abs{\phi^{1} \wedge (\iota_{Z} \star \Gmm_{\CSH}^{(0)}[\phi^{3}, \phi^{4}]) \wedge \covud \phi^{2}} \abs{\phi^{5} } \label{eq:CSH:N1:2:1.5} \\
&		+ C\hT \cosh^{2} \hY \abs{\phi^{1}} \abs{\phi^{2}} \abs{\iota_{Z} \star \covdlt \Gmm_{\CSH}^{(0)}[\phi^{3}, \phi^{4}] } \abs{\phi^{5}} 
\label{eq:CSH:N1:2:2} \\
&		+ C\hT \cosh^{2} \hY \abs{\phi^{1}} \abs{\phi^{2}} \abs{\star \covLD_{Z} \Gmm_{\CSH}^{(0)}[\phi^{3}, \phi^{4}] } \abs{\phi^{5}}  
\label{eq:CSH:N1:2:3}
\end{align}
where we recall that $k_{1} + \cdots k_{5} \leq 2$. Note that \eqref{eq:CSH:N1:2:1} and \eqref{eq:CSH:N1:2:1.5} are precisely the terms where $\covud$ does not fall on $\iota_{Z} \star$, whereas \eqref{eq:CSH:N1:2:2} and \eqref{eq:CSH:N1:2:3} arise from commuting $\covud$ with $\iota_{Z} \star$ using Lemma~\ref{lem:d-i-star}.

For \eqref{eq:CSH:N1:2:1} and \eqref{eq:CSH:N1:2:1.5}, we first apply \eqref{eq:ptwise-easy} to derive the pointwise estimate 
\begin{align*}
\eqref{eq:CSH:N1:2:1} + \eqref{eq:CSH:N1:2:1.5}
\leq &	C\hT^{2} (\cosh \hY)^{3} (\abs{\covT \phi^{1}}\abs{\phi^{2}} 
							+ \abs{\phi^{1}}\abs{\covT \phi^{2}})
							 \abs{\phi^{3}} \abs{\covT \phi^{4}} \abs{\phi^{5}}
\end{align*}
and then estimate the $\wnrm{\cdot}_{L^{2}_{\hT}}$ norm of the right-hand side by $C\eps_{1}^{5} \hT^{-2+}$, using Lemma \ref{lem:BA:bilin} for the factors with $\covT$ and \eqref{eq:weakLinfty} for the rest.

For \eqref{eq:CSH:N1:2:2}, we begin with the pointwise bound
\begin{align*}
\eqref{eq:CSH:N1:2:2}
\leq &	C\hT^{2} (\cosh \hY)^{3} \abs{\phi^{1}} \abs{\phi^{2}} \abs{\covdlt \Gmm_{\CSH}^{(0)} [\phi^{3}, \phi^{4}]} \abs{\phi^{5}},
\end{align*}
which follows from \eqref{eq:ptwise-easy}, and then estimate the $\wnrm{\cdot}_{L^{2}_{\hT}}$ norm of the right-hand side by $C \eps_{1}^{5} \hT^{-2+} $ using \eqref{eq:weakLinfty} and \eqref{eq:CSH:dltGmm0}.

Finally, for \eqref{eq:CSH:N1:2:3}, we apply \eqref{eq:ptwise-easy} to estimate
\begin{align*}
	\eqref{eq:CSH:N1:2:3}
\leq &	C\hT (\cosh \hY)^{2} \abs{\phi^{1}} \abs{\phi^{2}} \abs{\covLD_{Z} \Gmm_{\CSH}^{(0)} [\phi^{3}, \phi^{4}]} \abs{\phi^{5}},
\end{align*}
and then the $\wnrm{\cdot}_{L^{2}_{\hT}}$ norm of the right-hand side is estimated by $C \eps_{1}^{5} \hT^{-3+}$ using \eqref{eq:weakLinfty} and \eqref{eq:CSH:ZGmm0}.

Now we sketch the proofs of \eqref{eq:CSH:N1:3} and \eqref{eq:CSH:N1:4}. 
For \eqref{eq:CSH:N1:3}, we begin by estimating $\cosh \hY \abs{\frkN_{1}[\Gmm_{\CSH}^{(2)}[\phi^{1}, \ldots, \phi^{6}], \phi^{7}]}$ by
\begin{align}
&	C \hT^{3} (\cosh \hY)^{4} \bb( \abs{\covT \phi^{1}} \abs{\phi^{2}}  \abs{\phi^{3}} \abs{\phi^{4}} + \cdots + \abs{\phi^{1}} \abs{\phi^{2}} \abs{\phi^{3}} \abs{\covT \phi^{4}} \bb) \abs{\phi^{5}} \abs{\covT \phi^{6}} \abs{\phi^{7}} \label{eq:CSH:N1:3:1} \\
&	+ 	C \hT \cosh^{2} \hY \abs{\phi^{1}} \cdots \abs{\phi^{4}}
		\abs{(\iota_{Z} \star)^{2} \covud \Gmm^{(0)}_{\CSH} [\phi^{5}, \phi^{6}]} \abs{\phi^{7}}
	\label{eq:CSH:N1:3:2} \\
&	+	C\hT \cosh^{2} \hY \abs{\phi^{1}} \cdots \abs{\phi^{4}}
		\abs{\star \covLD_{Z} \iota_{Z} \star  \Gmm^{(0)}_{\CSH} [\phi^{5}, \phi^{6}]} \abs{\phi^{7}} 
	\label{eq:CSH:N1:3:3}	\\
&	+	C \hT \cosh^{2} \hY \abs{\phi^{1}} \cdots \abs{\phi^{4}}
		\abs{\iota_{Z} (\Gmm^{(0)}_{\CSH} [\phi^{5}, \phi^{6}] \wedge \ud Z^{\flat})} \abs{\phi^{7}} .
	\label{eq:CSH:N1:3:4}
\end{align}
Note that \eqref{eq:CSH:N1:3:1} is precisely the contribution of the terms with $\covud$ not falling on $(\iota_{Z} \star)^{2}$ (where we used the trivial bounds \eqref{eq:ptwise-easy} to simplify the expression), whereas \eqref{eq:CSH:N1:3:2}--\eqref{eq:CSH:N1:3:4} arise from commuting $\covud$ with $(\iota_{Z} \star)^{2}$ using Lemma~\ref{lem:d-i-star} (twice).

%
%

Recall that $k_{1} + \cdots + k_{7} \leq 1$. We estimate $\wnrm{\eqref{eq:CSH:N1:3:1}}_{L^{2}_{\hT}} \leq C \eps_{1}^{7} \hT^{-3+}$ using \eqref{eq:weakLinfty}, \eqref{eq:ptwise-easy} and Lemma \ref{lem:BA:bilin}, as in the case of \eqref{eq:CSH:N1:2:1}. Furthermore, using \eqref{eq:weakLinfty}, \eqref{eq:CSH:Gmm0}, \eqref{eq:CSH:ZGmm0}, \eqref{eq:CSH:dGmm0} and \eqref{eq:ptwise-easy}, we have
$\wnrm{\eqref{eq:CSH:N1:3:2}+\eqref{eq:CSH:N1:3:4}}_{L^{2}_{\hT}} \leq C \eps_{1}^{7} \hT^{-3+}$ and 
$\wnrm{\eqref{eq:CSH:N1:3:3}}_{L^{2}_{\hT}} \leq C \eps_{1}^{7} \hT^{-4+}$, from which \eqref{eq:CSH:N1:3} follows.

It only remains to prove \eqref{eq:CSH:N1:4}. Using \eqref{eq:ptwise-N1-easy}, we bound $\cosh \hY \abs{\frkN_{1}[\Gmm_{\CSH}^{(3)}[\phi, \ldots, \phi], \phi]}$  by
\begin{align}
& C \hT^{4} (\cosh \hY)^{5} \abs{\covT \phi}^{2} \abs{\phi}^{7}
\label{eq:CSH:N1:4:1} \\
&	+	C\hT \cosh^{2} \hY \abs{\phi}^{7} \bb(
		\abs{(\iota_{Z} \star)^{3} \covdlt \Gmm_{\CSH}^{(0)}[\phi, \phi]} 
		+ C\abs{\iota_{Z}   ((\iota_{Z} \star \Gmm_{\CSH}^{(0)}[\phi, \phi]) \wedge \ud Z^{\flat})} \bb)
		\label{eq:CSH:N1:4:2}  \\
&	+	C \hT \cosh^{2} \hY \abs{\phi}^{7} \bb(
		\abs{\iota_{Z} \star \iota_{Z} \covLD_{Z} \Gmm_{\CSH}^{(0)}[\phi, \phi]} 
		+ C \abs{\star \covLD_{Z} (\iota_{Z} \star)^{2} \Gmm_{\CSH}^{(0)}[\phi, \phi]} \bb). 	\label{eq:CSH:N1:4:3}
\end{align}
where \eqref{eq:CSH:N1:4:1} is again the contributions of the terms with $\covud$ not falling on $(\iota_{Z} \star)^{3}$, and \eqref{eq:CSH:N1:4:2}--\eqref{eq:CSH:N1:4:3} arise from applying Lemma~\ref{lem:d-i-star} (three times).

We have $\wnrm{\eqref{eq:CSH:N1:4:1}} \leq C \eps_{1}^{9} \hT^{-4+}$ using Lemma \ref{lem:BA:bilin}, \eqref{eq:weakLinfty} and \eqref{eq:ptwise-easy}, again as in  \eqref{eq:CSH:N1:2:1}. Using \eqref{eq:weakLinfty}, \eqref{eq:CSH:Gmm0}, \eqref{eq:CSH:ZGmm0}, \eqref{eq:CSH:dltGmm0} and \eqref{eq:ptwise-easy}, we also have
$\wnrm{\eqref{eq:CSH:N1:4:2}} \leq C \eps_{1}^{9} \hT^{-4+}$ and
$\wnrm{\eqref{eq:CSH:N1:4:3}} \leq C \eps_{1}^{9} \hT^{-4+}$. 
The desired estimate \eqref{eq:CSH:N1:4} now follows, which concludes our proof of \eqref{eq:CSH:N1}. \qedhere
\end{proof}

\begin{proof}[Proof of \eqref{eq:CSH:N2}]
This case obeys the same estimate as \eqref{eq:CSH:N1} (see \eqref{eq:CSH:N2:1}); however, since there is no need to compute $\covud J_{\CSH}$, the amount of work needed is much less than \eqref{eq:CSH:N1}.

As in our proof of \eqref{eq:CSH:N1}, it suffices to establish the following bounds: For $1 \leq m \leq 4$,
\begin{align}
	\sum_{k_{1}+k_{2}+k_{3} \leq m-1}	\wnrm{\cosh \hY \,\frkN_{2}[\Gmm_{\CSH}^{(0)}[\covZ^{(k_{1})} \phi, \covZ^{(k_{2})} \phi], \covZ^{(k_{3})} \phi] }_{L^{2}_{\hT}} 	
	\leq& C \eps_{1}^{3}  \hT^{-1} \log^{m-1} (1+\hT) 	\label{eq:CSH:N2:1} \\
	\sum_{k_{1}+\cdots+k_{5} \leq 2}
	\wnrm{\cosh \hY \,\frkN_{2}[\Gmm_{\CSH}^{(1)}[\covZ^{(k_{1})} \phi, \ldots, \covZ^{(k_{4})} \phi], \covZ^{(k_{5})} \phi] }_{L^{2}_{\hT}} 
	\leq& C \eps_{1}^{5}  \hT^{-2+}		\label{eq:CSH:N2:2}\\
	\sum_{k_{1}+\cdots+k_{7} \leq 1}
	\wnrm{\cosh \hY \,\frkN_{2}[\Gmm_{\CSH}^{(2)}[\covZ^{(k_{1})} \phi, \ldots, \covZ^{(k_{6})} \phi], \covZ^{(k_{7})} \phi] }_{L^{2}_{\hT}} 
	\leq& C \eps_{1}^{7}  \hT^{-3+}  \label{eq:CSH:N2:3}\\
	\wnrm{\cosh \hY \,\frkN_{2}[\Gmm_{\CSH}^{(3)}[\phi, \phi, \ldots, \phi], \phi] }_{L^{2}_{\hT}} 
	\leq& C \eps_{1}^{9}  \hT^{-4+}	\label{eq:CSH:N2:4}
\end{align}
As before, we use the shorthand $\phi^{j} = \covZ^{(k_{j})} \phi$ in what follows.
\paragraph*{\bfseries - Proof of \eqref{eq:CSH:N2:1}}
By \eqref{eq:ptwise-N1}, we have 
\begin{align*}
\cosh \hY \abs{\frkN_{2}[\Gmm_{\CSH}^{(0)}[\phi^{1}, \phi^{2}], \phi^{3}]}
\leq & C \hT \cosh \hY \abs{\phi^{1}} 
	(\abs{\covN \phi^{2}} \abs{\covT \phi^{3}}
	+ \abs{\covT \phi^{2}} \abs{\covN  \phi^{3}}),
\end{align*}
where $k_{1}+k_{2}+k_{3} \leq m-1$ and $1 \leq m \leq 4$. Then using \eqref{eq:BA:L2}, \eqref{eq:NZ-L2} and Lemma \ref{lem:sharpLp}, the $\wnrm{\cdot}_{L^{2}_{\hT}}$ norm of the right-hand side can be estimated by $C \eps_{1}^{3} \hT^{-1} \log^{(m-1)} (1 + \hT)$ as desired.

\paragraph*{\bfseries - Proof of \eqref{eq:CSH:N2:2}, \eqref{eq:CSH:N2:3} and \eqref{eq:CSH:N2:4}}
As in our preceding proof of \eqref{eq:CSH:N1:2}--\eqref{eq:CSH:N1:4}, there is more room in this case. This case can be treated using just \eqref{eq:weakLinfty} and Lemma \ref{lem:BA:bilin}, relying on the pointwise bounds \eqref{eq:ptwise-N2-easy} and \eqref{eq:ptwise-CSH}. We omit the straightforward details. \qedhere

\end{proof}
\begin{proof}[Proof of \eqref{eq:CSH:N3} and \eqref{eq:CSH:N4}]
The estimates \eqref{eq:CSH:N3} and \eqref{eq:CSH:N4} are easier than the preceding cases, and can be proved with similar techniques as before. The key ingredients are: the pointwise bounds \eqref{eq:ptwise-N3} and \eqref{eq:ptwise-N4} for $\frkN_{3}$ and $\frkN_{4}$, respectively; Proposition~\ref{prop:comm-J-CSH}, which allows us to expand $\covLD_{Z}^{(k)} J_{\CSH}$ in terms of $\Gmm_{\CSH}^{(\ell)}$ as in \eqref{eq:comm-J-CSH}; the general pointwise bound \eqref{eq:ptwise-CSH} for $\Gmm_{\CSH}^{(\ell)}$; Lemma~\ref{lem:BA:KlSob}, Lemma~\ref{lem:BA:NZ} and the bound \eqref{eq:BA:L2}. We omit the routine proof.\qedhere

%
\end{proof}

\subsubsection{Chern--Simons--Dirac equations}
We now consider the case of \eqref{eq:CSD} and establish \eqref{eq:BA:KG:improved}. As before, we first handle the contribution of $U_{\CSD}$. Recall from Section~\ref{subsec:comm-U} that $U_{\CSD}(\phi) = \tilde{U}_{3}[\phi, \phi, \phi]$, where $\tilde{U}_{3}$ obeys the Leibniz rules in Lemma~\ref{lem:comm-U-CSD} and the pointwise bounds in Lemma~\ref{lem:ptwise-U}. Then by Lemma~\ref{lem:BA:KlSob} and H\"older's inequality, we obtain:
\begin{proposition} \label{prop:U-CSD}
Let $(A, \phi)$ obey the bootstrap assumptions in Section~\ref{subsec:BAs}. Then for $0 \leq m \leq 4$, we have
\begin{equation} 
	\wnrm{\cosh \hY \covZ^{(m)} \tilde{U}_{3}[\phi, \phi, \phi]}_{L^{2}_{\hT}}
	\leq C \eps_{1}^{3} \hT^{-2+}.
\end{equation}
\end{proposition}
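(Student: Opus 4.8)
The plan is to mimic, in the much simpler cubic-nonlinearity setting of \eqref{eq:CSD}, the scheme already used for the abelian and non-abelian Higgs potentials in Proposition~\ref{prop:U-CSH}. First I would apply the Leibniz rule \eqref{eq:comm-U-CSD} repeatedly to expand $\covZ^{(m)} \tilde{U}_{3}[\phi, \phi, \phi]$ as a finite linear combination of terms of the form $\tilde{U}_{3}[\covZ^{(k_{1})} \phi, \covZ^{(k_{2})} \phi, \covZ^{(k_{3})} \phi]$ with $k_{1}+k_{2}+k_{3} \leq m \leq 4$. By the pointwise bound \eqref{eq:ptwise-U-CSD}, each such term is controlled pointwise by $C \abs{\covZ^{(k_{1})} \phi} \abs{\covZ^{(k_{2})} \phi} \abs{\covZ^{(k_{3})} \phi}$, so after inserting the extra weight $\cosh \hY$ I need only estimate
\begin{equation*}
	\wnrm{\cosh \hY \, \abs{\covZ^{(k_{1})} \phi} \abs{\covZ^{(k_{2})} \phi} \abs{\covZ^{(k_{3})} \phi} }_{L^{2}_{\hT}}, \qquad k_{1}+k_{2}+k_{3} \leq 4.
\end{equation*}

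The key observation is that among any three indices summing to at most $4$, at most one can exceed $2$, and in fact at most one can exceed $1$ once we use that the two lowest indices sum to at most $\lfloor 4/2 \rfloor = 2$; more precisely, after relabeling so that $k_{1} \geq k_{2} \geq k_{3}$, we have $k_{2} \leq 2$ and $k_{3} \leq 1$ (indeed $k_{2}+k_{3} \leq 4 - k_{1} \leq 4$, and since $k_{2}\geq k_{3}$ and $k_{2}\le k_1$ one checks $k_2\le 2$, $k_3\le1$). So I would bound the highest-order factor $\cosh \hY \, \covZ^{(k_{1})} \phi$ in $\wnrm{\cdot}_{L^{2}_{\hT}}$ using the bootstrap assumption \eqref{eq:BA:L2}, which gives $\leq 10\eps_{1} \log^{k_{1}}(1+\hT)$, and the two remaining lower-order factors (with $\leq 2$ covariant $Z$-derivatives each) in $\wnrm{\cdot}_{L^{\infty}_{\hT}}$ using the weak decay estimate \eqref{eq:weakLinfty} of Lemma~\ref{lem:BA:KlSob}, which gives $\leq C\eps_{1} \hT^{-1} \log^{k_{j}+2}(1+\hT)$ for each. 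Multiplying, the contribution of each term is $\leq C \eps_{1}^{3} \hT^{-2} \log^{N}(1+\hT)$ for some fixed $N$, and since $\hT \geq 2R$ and $\log^{N}(1+\hT) \lesssim \hT^{0+}$, this is bounded by $C\eps_{1}^{3} \hT^{-2+}$, as claimed. One must also absorb the extra single factor of $\cosh \hY$ into one of the $L^\infty$ norms; this is harmless because $\wnrm{\cosh\hY\,\covZ^{(k_j)}\phi}_{L^\infty_\hT}$ is exactly the quantity controlled by \eqref{eq:weakLinfty}.

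I do not expect any genuine obstacle here: the nonlinearity $\tilde{U}_{3}$ is exactly cubic with no $\covS$ or $\covN$ factors (contrast the anomalous commutator terms $\frkN_j$), the Leibniz rule \eqref{eq:comm-U-CSD} holds with covariant derivatives on both sides because the tensors $\eps(T_\mu,T_\nu,T_\lmb)$, $\gmm^\mu$, $\alp^\mu$ are parallel, and the pointwise bound \eqref{eq:ptwise-U-CSD} is already recorded. The only bookkeeping point is to make sure the Klainerman--Sobolev step (which costs two $\covZ$-derivatives and hence requires the differentiated field to have $\leq 2$ such derivatives, i.e. the total budget $5$ is respected) is applied only to the two lower-order factors — and this is guaranteed by the combinatorial fact above. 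Hence the proof is essentially identical to that of Proposition~\ref{prop:U-CSH} and the straightforward details may be omitted.
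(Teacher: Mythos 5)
Your argument is correct and is exactly the route the paper intends (and omits): expand $\covZ^{(m)}\tilde{U}_{3}[\phi,\phi,\phi]$ via the Leibniz rule \eqref{eq:comm-U-CSD}, apply the pointwise bound \eqref{eq:ptwise-U-CSD}, and then use H\"older with the highest-order factor in $\wnrm{\cdot}_{L^{2}_{\hT}}$ via \eqref{eq:BA:L2} and the two lower-order factors (which indeed carry at most two $\covZ$'s) in $\wnrm{\cdot}_{L^{\infty}_{\hT}}$ via Lemma~\ref{lem:BA:KlSob}, absorbing the logarithms into $\hT^{-2+}$. No gaps; your combinatorial bookkeeping on the derivative distribution is the only point requiring care and you handled it correctly.
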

We omit the details. Proposition~\ref{prop:U-CSD} shows that the contribution of $U_{\CSD}$ is acceptable for proving \eqref{eq:BA:KG:improved}.

Next, we treat the terms $\frkN_{j}$. As in the case of \eqref{eq:CSH}, it is sufficient to establish the following bounds hold for $\frkN_{1}, \ldots, \frkN_{4}$:
\begin{proposition} \label{prop:N-CSD}
Let $(A, \phi)$ obey the bootstrap assumptions in Section~\ref{subsec:BAs}. Then for $0 \leq m \leq 4$, we have
\begin{align}
	\sum_{k_{1}+k_{2} \leq m-1}
	\wnrm{\cosh \hY \, \frkN_{1}[\covLD_{Z}^{(k_{1})} J_{\CSD}, \covZ^{(k_{2})} \phi] }_{L^{2}_{\hT}} 
\leq& C \eps_{1}^{3}  \hT^{-1} \log^{m-1} (1+\hT), \label{eq:CSD:N1} \\
	\sum_{k_{1}+k_{2} \leq m-1}
	\wnrm{\cosh \hY \, \frkN_{2}[\covLD_{Z}^{(k_{1})} J_{\CSD}, \covZ^{(k_{2})} \phi] }_{L^{2}_{\hT}} 
\leq& C \eps_{1}^{3}  \hT^{-1} \log^{m-1} (1+\hT), \label{eq:CSD:N2} \\
	\sum_{k_{1}+k_{2} \leq 3}
	\wnrm{\cosh \hY \, \frkN_{3}[\covLD_{Z}^{(k_{1})} J_{\CSD}, \covZ^{(k_{2})} \phi] }_{L^{2}_{\hT}} 
\leq& C \eps_{1}^{3}  \hT^{-2+}, \label{eq:CSD:N3} \\
	\sum_{k_{1}+k_{2}+k_{3} \leq 2}
	\wnrm{\cosh \hY \, \frkN_{4}[\covLD_{Z}^{(k_{1})} J_{\CSD}, \covLD_{Z}^{(k_{2})} J_{\CSD}, \covZ^{(k_{3})} \phi] }_{L^{2}_{\hT}} 
\leq& C \eps_{1}^{5}  \hT^{-2+}. \label{eq:CSD:N4}
\end{align}
\end{proposition}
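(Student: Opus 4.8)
The proof will parallel that of Proposition~\ref{prop:N-CSH}, and is in fact considerably simpler: the essential gain is that $\Gmm_{\CSD}^{(\ell)}$ obeys the \emph{growth-free} pointwise bound \eqref{eq:ptwise-CSD}, with no factor of $(\hT\cosh\hY)^{\ell}$ as in \eqref{eq:ptwise-CSH}, so the higher-order terms in the expansion of $\covLD_{Z}^{(k)} J_{\CSD}$ are benign. As in the Higgs case, by \eqref{eq:CS-uni} and the commutation formula \eqref{eq:comm-covKG} the nonlinearity $(\covBox\m1)\covZ^{(m)}\phi$ decomposes schematically into $\covZ^{(m)} U_{\CSD}(\phi)$ plus the four families $\frkN_{1},\dots,\frkN_{4}$; the contribution of $U_{\CSD}$ is already controlled by Proposition~\ref{prop:U-CSD}, so it suffices to establish \eqref{eq:CSD:N1}--\eqref{eq:CSD:N4}, which together with Proposition~\ref{prop:U-CSD} yields \eqref{eq:BA:KG:improved} and hence the improvement of \eqref{eq:BA:KG}. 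First I would record the CSD analogue of Lemma~\ref{lem:CSH:weakBilin}: using \eqref{eq:comm-J-CSD}, the pointwise bound \eqref{eq:ptwise-CSD} and the formula \eqref{eq:covud-CSD} for $\covud\Gmm_{\CSD}^{(k)}$ (which is again bilinear, with a single derivative, in the arguments), Hölder's inequality, and the interior estimates of Lemmas~\ref{lem:BA:KlSob}, \ref{lem:BA:NZ}, \ref{lem:sharpLp}, \ref{lem:BA:bilin} together with \eqref{eq:BA:L2}, one obtains for $k_{1}+k_{2}\le 3$ the bound
\[
\sum_{k_{1}+k_{2}\le 3}\wnrm{\cosh\hY\,\abs{\Gmm_{\CSD}^{(0)}[\covZ^{(k_{1})}\phi,\covZ^{(k_{2})}\phi]}}_{L^{2}_{\hT}}\le C\eps_{1}^{2}\hT^{-1+},
\]
and the same bound (with the range of indices reduced to $k_{1}+k_{2}\le 2$) with $\Gmm_{\CSD}^{(0)}$ replaced by $\covLD_{Z}\Gmm_{\CSD}^{(0)}$ or $\covud\Gmm_{\CSD}^{(0)}$.

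For \eqref{eq:CSD:N1} and \eqref{eq:CSD:N2} I would expand $\covLD_{Z}^{(k_{1})} J_{\CSD}$ via Proposition~\ref{prop:comm-J-CSD} into a sum of terms $\Gmm_{\CSD}^{(\ell)}[\covZ^{(j_{1})}\phi,\covZ^{(j_{2})}\phi]$ with $j_{1}+j_{2}\le k_{1}-\ell$, and distinguish the main ($\ell=0$) terms from the lower-order ($\ell\ge1$) ones. For the $\ell=0$ cubic terms $\frkN_{i}[\Gmm_{\CSD}^{(0)}[\covZ^{(j_{1})}\phi,\covZ^{(j_{2})}\phi],\covZ^{(k_{2})}\phi]$ with $j_{1}+j_{2}+k_{2}\le m-1\le3$, I use the sharp pointwise bounds \eqref{eq:ptwise-N1} and \eqref{eq:ptwise-N2}: these produce, respectively, $\hT\abs{\iota_{N}\covud\Gmm_{\CSD}^{(0)}}\abs{\covZ^{(k_{2})}\phi}$ and $\hT(\abs{\Gmm_{\CSD}^{(0)}}\abs{\covN\covZ^{(k_{2})}\phi}+\abs{\iota_{N}\Gmm_{\CSD}^{(0)}}\abs{\covT\covZ^{(k_{2})}\phi})$, and via \eqref{eq:covud-CSD} and the Leibniz rule \eqref{eq:leibniz-bbrk-iota} the first is a trilinear product containing a factor $\covN\covZ^{(j)}\phi$ (the derivative from $\covud$ passing through $\iota_{N}$) together with terms where the tangential derivative sits on the lowest-order factor. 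Taking $\wnrm{\cosh\hY\,(\cdot)}_{L^{2}_{\hT}}$, placing the single highest-order factor (with its natural $\cosh\hY$ weight) in $L^{2}$ via \eqref{eq:BA:L2}/\eqref{eq:NZ-L2} and the remaining factors in $L^{p}$ via Lemma~\ref{lem:sharpLp} and \eqref{eq:weakLinfty}, and using the bilinear bound Lemma~\ref{lem:BA:bilin} when two tangential derivatives collide, yields the claimed $C\eps_{1}^{3}\hT^{-1}\log^{m-1}(1+\hT)$. The $\ell\ge1$ terms have degree $\ge5$ and total $\covZ$-order $\le2$; since $\Gmm_{\CSD}^{(\ell)}$ carries no weight, it is enough to use the crude pointwise bounds \eqref{eq:ptwise-easy}, \eqref{eq:ptwise-N1-easy}, \eqref{eq:ptwise-N2-easy}, the $L^{\infty}$ decay \eqref{eq:weakLinfty} and the preliminary bilinear estimates above (commuting $\covud$ past $\iota_{Z}\star$ with Lemma~\ref{lem:d-i-star} where needed, which costs at most one extra $\cosh\hY$ that is absorbed by the spare $L^{\infty}$ factors), and every such term decays at least like $\hT^{-2+}$.

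Finally, \eqref{eq:CSD:N3} and \eqref{eq:CSD:N4} are the easiest and parallel \eqref{eq:CSH:N3}--\eqref{eq:CSH:N4}: combining the pointwise bounds \eqref{eq:ptwise-N3} and \eqref{eq:ptwise-N4} with Proposition~\ref{prop:comm-J-CSD} and \eqref{eq:ptwise-CSD}, every summand is a product of at most four (resp.\ five) factors, each of $\covZ$-order $\le4$, all estimated by the $L^{2}$ bound \eqref{eq:BA:L2} for one factor and the $L^{\infty}$ decay \eqref{eq:weakLinfty} for the rest, giving $\hT^{-2+}$ with ample room to spare. The one genuine difficulty, exactly as in the Higgs case, is the weight-counting: one must verify that in the $m=4$ instance of the $\ell=0$ part of $\frkN_{1}$ and $\frkN_{2}$ the Hölder exponents can always be chosen so that exactly one power of $\cosh\hY$ is absorbed by the highest-order factor and the logarithmic losses from Lemmas~\ref{lem:sharpLp} and \ref{lem:BA:bilin} do not exceed $\log^{m-1}(1+\hT)$; crucially, because $J_{\CSD}$ involves only one derivative, one is never forced to place two derivatives on the two low-order factors simultaneously, so none of the null-structure subtleties of the massless case (and no analogue of the refined bound \eqref{eq:ptwise-CSH:X}) are needed.
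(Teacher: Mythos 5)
Your treatment of the main ($\ell=0$) terms and of $\frkN_{3}$, $\frkN_{4}$ is essentially the paper's argument: expand $\covLD_{Z}^{(k)}J_{\CSD}$ via Proposition~\ref{prop:comm-J-CSD}, use the pointwise bounds of Lemma~\ref{lem:ptwise-N} together with \eqref{eq:ptwise-CSD}/\eqref{eq:covud-CSD}, and close with H\"older, \eqref{eq:BA:L2} and Lemma~\ref{lem:sharpLp} (the paper needs Lemma~\ref{lem:BA:NZ} only for \eqref{eq:CSD:N4}, and no auxiliary bilinear lemma \`a la Lemma~\ref{lem:CSH:weakBilin} at all). But there is a genuine gap in your handling of the $\ell\geq 1$ contributions to \eqref{eq:CSD:N1}--\eqref{eq:CSD:N2}: you transplant the Higgs structure and assert that these terms ``have degree $\geq 5$.'' That is false for \eqref{eq:CSD}. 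By its definition in Section~\ref{subsec:comm-CSD}, $\Gmm_{\CSD}^{(\ell)}[\psi^{1},\psi^{2}]=\bbrk{\psi^{1}\wedge i(\calL_{Z}^{(\ell)}\alp)\psi^{2}}$ is \emph{bilinear} in $\psi$ for every $\ell$ --- the index $\ell$ counts Lie derivatives of the constant matrix-valued 1-form $\alp$, not extra factors of $\psi$ (there is also no nested $\iota_{Z}\star$ inside $\Gmm_{\CSD}^{(\ell)}$, so your appeal to Lemma~\ref{lem:d-i-star} is vacuous here). Consequently the $\ell\geq1$ terms of $\frkN_{1},\frkN_{2}$ are still \emph{cubic}, and the crude route you propose for them (pointwise bounds \eqref{eq:ptwise-easy}, \eqref{eq:ptwise-N1-easy}, \eqref{eq:ptwise-N2-easy} plus the log-lossy decay \eqref{eq:weakLinfty}) cannot deliver your claimed $\hT^{-2+}$: schematically one gets $\hT\cdot\eps_{1}^{3}\hT^{-2}\log^{(\text{large})}(1+\hT)$, i.e.\ $\eps_{1}^{3}\hT^{-1}$ with too many logarithms. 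For small $m$ this overshoots the budget --- e.g.\ for $m=2$, $\ell=1$ the target is $\eps_{1}^{3}\hT^{-1}\log(1+\hT)$, while \eqref{eq:weakLinfty} on two factors alone already costs $\log^{4}(1+\hT)$; and if instead you insert your preliminary bilinear estimate ($\eps_{1}^{2}\hT^{-1+}$) you end with $\eps_{1}^{3}\hT^{-1+}$, which is \emph{not} bounded by $C\eps_{1}^{3}\hT^{-1}\log^{m-1}(1+\hT)$.

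The repair is immediate, and is what the paper does: the pointwise bounds are uniform in $\ell$, namely $\cosh\hY\abs{\frkN_{1}[\Gmm_{\CSD}^{(\ell)}[\phi^{1},\phi^{2}],\tilde{\phi}]}\leq C\hT(\cosh\hY)^{2}(\abs{\covT\phi^{1}}\abs{\phi^{2}}+\abs{\phi^{1}}\abs{\covT\phi^{2}})\abs{\tilde{\phi}}$ and $\cosh\hY\abs{\frkN_{2}[\Gmm_{\CSD}^{(\ell)}[\phi^{1},\phi^{2}],\tilde{\phi}]}\leq C\hT(\cosh\hY)^{2}\abs{\phi^{1}}\abs{\phi^{2}}\abs{\covT\tilde{\phi}}$, so the $\ell\geq1$ terms are handled verbatim by your own $\ell=0$ trilinear H\"older argument (sharp $L^{2}\times L^{p}\times L^{q}$ splittings from \eqref{eq:BA:L2}, \eqref{eq:BA:Linfty} and Lemma~\ref{lem:sharpLp}), and since they carry at most $m-1-\ell\leq m-2$ powers of $\covZ$ they in fact land inside $\log^{m-2}(1+\hT)$. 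Relatedly, in your main-term estimate you should rely on the log-free bounds (\eqref{eq:BA:Linfty}, Lemma~\ref{lem:sharpLp}) rather than \eqref{eq:weakLinfty} for the low-order factors, since the latter's $\log^{k+2}$ losses would already break the $\log^{m-1}$ count when $m\leq2$.
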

We only give a sketch of the proof, since the method is not too different from the previous case of \eqref{eq:CSH}. In fact, 
the task of establishing these bounds is far simpler in the case of \eqref{eq:CSD}, thanks in large part to the absence of derivatives in $J_{\CSD}$. 
\begin{proof} [Sketch of proof]
By Proposition~\ref{prop:comm-J-CSD}, it suffices to establish the following bounds involving $\Gmm_{\CSD}^{(\ell)}$:
\begin{gather}
	\sum_{\ell + j_{1} + j_{2} + k \leq m-1}
	\wnrm{\cosh \hY \, \frkN_{1}[\Gmm_{\CSD}^{(\ell)}[\phi^{1}, \phi^{2}], \tilde{\phi}] }_{L^{2}_{\hT}} 
\leq  C \eps_{1}^{3}  \hT^{-1} \log^{m-1} (1+\hT), \label{eq:CSD:N1-Gmm} \\
	\sum_{\ell + j_{1} + j_{2} + k \leq m-1}
	\wnrm{\cosh \hY \, \frkN_{2}[\Gmm_{\CSD}^{(\ell)}[\phi^{1}, \phi^{2}], \tilde{\phi}] }_{L^{2}_{\hT}} 
\leq  C \eps_{1}^{3}  \hT^{-1} \log^{m-1} (1+\hT),  \label{eq:CSD:N2-Gmm} \\
	\sum_{\ell + j_{1} + j_{2} + k \leq 3}
	\wnrm{\cosh \hY \, \frkN_{3}[\Gmm_{\CSD}^{(\ell)}[\phi^{1}, \phi^{2}], \tilde{\phi}] }_{L^{2}_{\hT}} 
\leq  C \eps_{1}^{3}  \hT^{-2+}, \label{eq:CSD:N3-Gmm} \\
	\sum_{\ell_{1} + \ell_{2} + j_{1} + j_{2} + j_{3} + j_{4} + k \leq 2}
	\wnrm{\cosh \hY \, \frkN_{4}[\Gmm_{\CSD}^{(\ell_{1})}[\phi^{1}, \phi^{2}], \Gmm_{\CSD}^{(\ell_{2})}[\phi^{3}, \phi^{4}], \tilde{\phi}] }_{L^{2}_{\hT}} 
\leq  C \eps_{1}^{5}  \hT^{-2+},  \label{eq:CSD:N4-Gmm}
\end{gather}
where we have used the shorthand $\phi^{i} = \covZ^{(j_{i})} \phi$ for $i=1,2,3,4$ and $\tilde{\phi} = \covZ^{(k)} \phi$. By Lemma~\ref{lem:ptwise-N} and \eqref{eq:ptwise-easy}, we may derive the following pointwise bounds:
\begin{align*}
	\cosh \hY \abs{\frkN_{1}[\Gmm_{\CSD}^{(\ell)}[\phi^{1}, \phi^{2}], \tilde{\phi}]}	
&	\leq C \hT (\cosh \hY)^{2}
	\bb( \sum_{\set{i_{1}, i_{2}} = \set{1,2}} \abs{\covT \phi^{i_{1}}}\abs{\phi^{i_{2}}} \bb) \abs{\tilde{\phi}}	\\
	\cosh \hY \abs{\frkN_{2}[\Gmm_{\CSD}^{(\ell)}[\phi^{1}, \phi^{2}], \tilde{\phi}]}	
&	\leq C \hT (\cosh \hY)^{2} \abs{\phi^{1}} \abs{\phi^{2}} \abs{\covT \tilde{\phi}} \\
	\cosh \hY \abs{\frkN_{3}[\Gmm_{\CSD}^{(\ell)}[\phi^{1}, \phi^{2}], \tilde{\phi}]}	
&	\leq 	C \cosh \hY \abs{\phi^{1}}\abs{\phi^{2}}\abs{\tilde{\phi}} \\
	\cosh \hY \abs{\frkN_{4}[\Gmm_{\CSD}^{(\ell_{1})}[\phi^{1}, \phi^{2}], \Gmm_{\CSD}^{(\ell_{2})}[\phi^{3}, \phi^{4}], \tilde{\phi}]}	
&	\leq  C \hT^{2} (\cosh \hY)^{3} \abs{\phi^{1}} \cdots \abs{\phi^{4}} \abs{\tilde{\phi}}
\end{align*}
Then by H\"older's inequality, \eqref{eq:BA:L2}, Lemma~\ref{lem:sharpLp} and Lemma~\ref{lem:BA:NZ} (only used for \eqref{eq:CSD:N4-Gmm}), it is straightforward to prove the bounds \eqref{eq:CSD:N1-Gmm}--\eqref{eq:CSD:N4-Gmm}. \qedhere
\end{proof}


\subsection{Improving energy and decay estimates}
In this subsection, we will prove
\begin{equation} \label{eq:BA:L2:improved}
\begin{aligned}  
&\hskip-2em
	\sum_{0 \leq m \leq 4} \bb( \wnrm{\cosh \hY \covZ^{(m)} \phi}_{L^{2}_{\hT}} + \wnrm{\covT \covZ^{(m)} \phi}_{L^{2}_{\hT}} \bb)
	+ \sum_{1 \leq m \leq 4} \wnrm{\cosh \hY \covN \covZ^{(m-1)} \phi}_{L^{2}_{\hT}} \\
&	\leq C_{1} (\eps + \eps_{1}^{3}  \log^{m} (1+\hT))
\end{aligned}
\end{equation}
and
\begin{align}  \label{eq:BA:Linfty:improved}
	\wnrm{\cosh \hY \phi}_{L^{\infty}_{\hT}} 
	+ \wnrm{\cosh \hY \covN \phi}_{L^{\infty}_{\hT}}
	+ \wnrm{\covT \phi}_{L^{\infty}_{\hT}} 
	\leq C_{1} (\eps + \eps_{1}^{3}) \frac{1}{\hT}
\end{align}
for some constant $0 < C_{1} < \infty$. Once these estimates are proved, choosing $\eps_{1} = B \eps \leq B \dlt_{\ast}(R)$ with $B = 2 C_{1}$ and taking $\dlt_{\ast}(R)$ sufficiently small, \eqref{eq:BA:L2}, \eqref{eq:BA:L2:S} and \eqref{eq:BA:Linfty} would improve and hence the proof of Proposition~\ref{prop:main} would be complete.

We begin with \eqref{eq:BA:L2:improved}. By the Chern--Simons equation $F = \star J$ and \eqref{eq:BA:Linfty}, for both \eqref{eq:CSH} and \eqref{eq:CSD} we have
\begin{equation*}
	\int_{2R}^{T} \sup_{\calH_{\hT}} \bb( \sum_{\mu} \abs{F(T_{\mu}, T_{0})}^{2} \bb)^{1/2} \, \ud \hT
	\leq C \eps_{1}^{2} \int_{2R}^{\infty} \hT^{-2} \, \ud \hT \leq C \eps_{1}^{2}.
\end{equation*}
Taking $\eps_{1}$ sufficiently small, we may apply the covariant energy inequality (Proposition~\ref{prop:en}) with $C_{F} = 1$ and $\hT_{0} = 2R$. Recall from \eqref{eq:BA:ini} that $\eps$ stands for the size of the data on $\calH_{2R}$. Using also the improved nonlinearity estimate \eqref{eq:BA:KG:improved}, we obtain \eqref{eq:BA:L2:improved}.

Next, we turn to proving \eqref{eq:BA:Linfty:improved}. 
By the pointwise estimate \eqref{eq:T-N-Z}, it suffices to establish
\begin{align}
	\wnrm{\cosh \hY \phi}_{L^{\infty}_{\hT}} 
	+ \wnrm{\cosh \hY \covN \phi}_{L^{\infty}_{\hT}}
	\leq & C_{1}' (\eps + \eps_{1}^{3}) \frac{1}{\hT}. \label{eq:BA:Linfty:improved-2}
\end{align}
for some constant $0 < C_{1}' < \infty$.  

To prove \eqref{eq:BA:Linfty:improved-2}, we now apply Proposition~\ref{prop:ODE}. The first term on the right-hand side of \eqref{eq:ODE} is bounded by $C \eps$ thanks to the Klainerman--Sobolev inequality. Next, using the Klainerman--Sobolev inequality and \eqref{eq:BA:L2:improved}, which we just proved, we bound the second term on the right-hand side of \eqref{eq:ODE} by
\begin{align*}
	\sum_{1 \leq k \leq 2} \int_{2R}^{T} \frac{\cosh \hY}{\hT'} \wnrm{\covZ^{k} \phi}_{L^{\infty}_{\hT'}} \, \ud \hT'
	\leq & C \sum_{1 \leq k \leq 4} \sup_{\hT: 2R \leq \hT < T} \wnrm{\cosh \hY \covZ^{(k)} \phi}_{L^{2}_{\hT}} \int_{2R}^{\infty} (\hT')^{-2+} \, \ud \hT' \\
	\leq & C ( \eps + \eps_{1}^{3}).
\end{align*}
To handle the last term in \eqref{eq:ODE}, we begin by noting that for both \eqref{eq:CSH} and \eqref{eq:CSD}, we have
\begin{equation*}
	\abs{(\covBox \m 1) \phi} \leq C \abs{\phi}^{2} \max \set{\abs{\phi}, \abs{\covT \phi}}.
\end{equation*}
Therefore, by \eqref{eq:BA:Linfty}, we have
\begin{align*}
	\int_{2R}^{T} \hT' \cosh \hY \wnrm{(\covBox \m 1) \phi}_{L^{\infty}_{\hT'}} \, \ud \hT'
	\leq & C \eps_{1}^{3}.
\end{align*}
Since the left-hand side of \eqref{eq:ODE} bounds $\hT \cosh \hY \abs{ \phi}$ and $\hT \cosh \hY\abs{\covN \phi}$ from the above, \eqref{eq:BA:Linfty:improved-2} follows as desired.

\appendix
\numberwithin{equation}{section}
\section{Reduced systems in the temporal and Cronstr\"om gauges} \label{app:gauge}
The goal of this section is to derive reduced systems for \eqref{eq:CSH} and \eqref{eq:CSD} in the temporal and Cronstr\"om gauge, for which local well-posedness and finite speed of propagation are evident. In Section~\ref{subsec:expand}, we first expand various covariant expressions in terms of $A$ and the usual (component-wise) differential operators for vector- and Lie algebra-valued objects. Then in Sections~\ref{subsec:temporal} and \ref{subsec:cronstrom}, we exhibit reduced systems in the temporal and the Cronstr\"om gauges, respectively.

\subsection{Expansion of the covariant expressions} \label{subsec:expand}
We begin with expansion of various covariant expressions that arise in the Chern--Simons systems considered in this paper.
\begin{lemma} \label{lem:expand}
The following identities hold.
\begin{align} 
	\covBox \phi 
	= & \Box \phi + 2 \star (A \wedge \star \ud \phi)
	+ \dlt A \phi
	+ \star (A \wedge \star (A \phi)). \label{eq:expand:cov-Box} \\
	J_{\CSH}(\varphi) =&  2\bbrk{\varphi \wedge \ud \varphi} + 2 \bbrk{\varphi \wedge (A \varphi)} \label{eq:expand:J-CSH} \\
	\ud J_{\CSH}(\varphi) = & 2 \bbrk{\ud \varphi \wedge \ud \varphi} + 2 \bbrk{\ud \varphi \wedge (A \varphi)}
	+ 2 \star \bbrk{\varphi \wedge (J_{\CSH}(\varphi) \, \varphi)} \label{eq:expand:dJ-CSH} \\
	& - \bbrk{\varphi \wedge ([A \wedge A] \varphi)} 
	- 2 \bbrk{\varphi \wedge (A \wedge \ud \varphi)} , \notag \\	
	J_{\CSD}(\varphi) =&  \bbrk{\psi \wedge i \alp \psi} \label{eq:expand:J-CSD}\\
	\ud J_{\CSD}(\psi) = & \bbrk{\ud \psi \wedge i \alp \psi} - \bbrk{\psi \wedge (i \alp \wedge \ud \psi)}.	\label{eq:expand:dJ-CSD}
\end{align}
where $\Box$ denotes the usual d'Alembertian $\Box_{\bbR^{1+2}} = \nb^{\mu} \nb_{\mu}$ acting component-wisely.
\end{lemma}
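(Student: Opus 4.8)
The identities in Lemma~\ref{lem:expand} are all straightforward expansions of the covariant operators $\covud$, $\covBox$ in terms of the flat operators $\ud$, $\Box$ and the connection $1$-form $A$, carried out in the formalism of Section~\ref{subsec:extr-calc-2}. The plan is to treat each line separately, in increasing order of complexity, and to reduce everything to repeated use of the defining relations \eqref{eq:covud}, \eqref{eq:covLD}, the Leibniz rules of Lemmas~\ref{lem:extr-calc-V}, \ref{lem:leibniz-gV}, \ref{lem:extr-calc-bbrk}, together with the Hodge-star identities of Lemma~\ref{lem:star}, the formula \eqref{eq:covBox} for $\covBox$, and the Chern--Simons equation $F=\star J$ where the latter appears.

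\textbf{Step 1 (expansion of $\covBox$).} Starting from $\covBox\phi = -\star\covud\star\covud\phi$ (Lemma~\ref{lem:covBox}), substitute $\covud\phi = \ud\phi + A\wedge\phi$ and then $\covud(\star\covud\phi) = \ud(\star\covud\phi) + A\wedge\star\covud\phi$. Distributing and using $\Box\phi = -\star\ud\star\ud\phi$ gives the leading term $\Box\phi$; the cross terms produce $2\star(A\wedge\star\ud\phi)$ after applying the Leibniz rule \eqref{eq:leibniz-covud} for $\ud$ on $\star(A\phi)$ and the identity $\dlt\omega = (-1)^{k+1}\star\ud\star\omega$ from \eqref{eq:star-dlt} (this is where the $\dlt A\,\phi$ term arises, from $\ud\star A$); the remaining quadratic-in-$A$ term is $\star(A\wedge\star(A\phi))$. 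The main care needed here is sign-bookkeeping for the Hodge star on forms of various degrees on $\bbR^{1+2}$ with signature $(-,+,+)$, and correctly accounting for the $(-1)^k$ factors in \eqref{eq:leibniz-covud}; I expect this to be the most error-prone line, though not conceptually hard.

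\textbf{Step 2 (the currents $J$).} The formulas \eqref{eq:expand:J-CSH} and \eqref{eq:expand:J-CSD} are immediate: write $J_{\CSH}(\varphi) = 2\bbrk{\varphi\wedge\covud\varphi}$ (established in Section~\ref{subsec:comm-CSH}) and substitute $\covud\varphi = \ud\varphi + A\wedge\varphi$, using bilinearity of $\bbrk{\cdot\wedge\cdot}$; similarly $J_{\CSD}(\psi) = \bbrk{\psi\wedge i\alpha\psi}$ needs no expansion since $\alpha$ has no derivatives. For the differentiated currents $\ud J_{\CSH}$ and $\ud J_{\CSD}$, apply the Leibniz rule \eqref{eq:leibniz-bbrk-d} to $\bbrk{\varphi\wedge\covud\varphi}$: one gets $\bbrk{\ud\varphi\wedge\covud\varphi} + \bbrk{\varphi\wedge(\ud\covud\varphi)}$. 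For the first summand re-expand $\covud\varphi$ once more. For the second, note $\ud\covud\varphi = \ud(\ud\varphi + A\wedge\varphi) = \ud A\wedge\varphi - A\wedge\ud\varphi$ by \eqref{eq:leibniz-covud}; then use $\ud A = F - \tfrac12[A\wedge A]$ from \eqref{eq:curv-eq1} and the Chern--Simons equation $F = \star J_{\CSH}(\varphi)$ to convert the $F\wedge\varphi$ piece into $\star(J_{\CSH}(\varphi)\,\varphi)$ via \eqref{eq:star-aux:0-form}. This accounts for every term on the right-hand side of \eqref{eq:expand:dJ-CSH}; the $\CSD$ case \eqref{eq:expand:dJ-CSD} is the same computation but simpler since $\ud\alpha = 0$ (noted in Section~\ref{subsec:comm-CSD}), so no curvature term appears and one is left with $\bbrk{\ud\psi\wedge i\alpha\psi} - \bbrk{\psi\wedge(i\alpha\wedge\ud\psi)}$.

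\textbf{Main obstacle.} There is no deep difficulty; the entire lemma is a bookkeeping exercise. The one place demanding genuine attention is the $\covBox$ expansion of Step~1, where the interplay of the Hodge star, the codifferential, and the degree-dependent signs in the graded Leibniz rules must be tracked carefully — a single sign slip there propagates. I would organize that computation by first recording $\star\covud\phi$ explicitly as a $2$-form, then applying $\covud$ and $\star$ in turn, rather than trying to manipulate the composite operator symbolically. The remaining identities then follow mechanically from the algebraic toolkit already assembled in Section~\ref{subsec:extr-calc-2}, and I would simply state that they are verified by the same method, omitting the routine details as the paper does elsewhere.
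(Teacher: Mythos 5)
Your plan matches the paper's proof essentially line for line: the paper expands $\covBox \phi = -\star \covud \star \covud \phi$ exactly as in your Step 1, and obtains \eqref{eq:expand:dJ-CSH} by the same $\ud$-Leibniz expansion of $2\bbrk{\varphi \wedge \covud \varphi}$ followed by $\ud A = F - \tfrac{1}{2}[A \wedge A]$ and $F = \star J_{\CSH}$, with the \eqref{eq:CSD} case handled via $\ud \alp = 0$. The only nitpick is that the Leibniz rule you need in Step 2 is the ordinary one for $\ud$ (valid because $\bbrk{\cdot,\cdot}$ is a constant bilinear pairing), rather than the covariant rule \eqref{eq:leibniz-bbrk-d} you cite, but this does not affect the argument.
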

\begin{proof} 
 The key tool is the calculus developed in Sections~\ref{subsec:extr-calc} and \ref{subsec:extr-calc-2}, which applies in particular to the usual differential operators $\ud$, $\LD$, $\Box$ etc. For \eqref{eq:expand:cov-Box}, we use Lemmas~\ref{lem:star}, \ref{lem:star-aux}, \ref{lem:covBox} and the identity $\covud = \ud + A \wedge(\cdot)$ to compute
 \begin{align*}
	\covBox \phi =& - \star \covud \star \covud \phi \\
	=& - \star \covud \star \ud \phi - \star \covud \star (A \phi) \\
	=& - \star \ud \star \ud \phi 
		- \star (A \wedge \star \ud \phi)
	- \star \ud \star (A \phi)
	- \star (A \wedge \star (A \phi)) \\
	=& \Box \phi - 2 \star (A \wedge \star \ud \phi) - \dlt A \phi - \star(A \wedge \star(A \phi)).
\end{align*}
The identity \eqref{eq:expand:J-CSH} follow directly from the definitions. On the other hand, to prove \eqref{eq:expand:dJ-CSH} we use Lemma~\ref{lem:extr-calc-bbrk} and the Leibniz rule for $\ud$ to compute
\begin{align*}
	\ud J_{\CSH}(\varphi) 
	= & 2 \ud \bbrk{\varphi \wedge \covud \varphi}	\\
	= & 2 \bbrk{\ud \varphi \wedge \ud \varphi} + 2 \bbrk{\ud \varphi \wedge (A \wedge \varphi)}
	+ 2 \bbrk{\varphi \wedge \ud (A \wedge \varphi)} \\
	=& 2 \bbrk{\ud \varphi \wedge \ud \varphi} + 2 \bbrk{\ud \varphi \wedge (A \wedge \varphi)}
	+ 2 \bbrk{\varphi \wedge (\ud A \wedge \varphi)} - 2 \bbrk{\varphi \wedge (A \wedge \ud \varphi)} \\
	=& 2 \bbrk{\ud \varphi \wedge \ud \varphi} + 2 \bbrk{\ud \varphi \wedge (A \wedge \varphi)}
	+ 2 \bbrk{\varphi \wedge (F \wedge \varphi)} \\
	& - \bbrk{\varphi \wedge ([A \wedge A] \wedge \varphi)} 
	- 2 \bbrk{\varphi \wedge (A \wedge \ud \varphi)}.
\end{align*}
Then by the Chern--Simons equation $F = \star J_{\CSH}$, \eqref{eq:expand:dJ-CSH} follows. Finally, \eqref{eq:expand:J-CSD} and \eqref{eq:expand:dJ-CSD} are straightforward consequences of the definitions; we remark that $\ud \alp = 0$ is used for the latter, which is clear in the rectilinear coordinates $(t, x^{1}, x^{2})$.
\end{proof}

\subsection{Reduced system in the temporal gauge} \label{subsec:temporal}
In the temporal gauge, the system \eqref{eq:CS-uni} is equivalent to the following system, for which local well-posedness and finite speed of propagation is rather immediate.
\begin{lemma} \label{lem:reduce:temp}
Let $I$ be a connected interval, and let $(A, \phi)$ be a pair of (smooth) connection 1-form and $V$-valued function on $I \times \bbR^{2} \subseteq \bbR^{1+2}$ which obeys the temporal gauge condition $\iota_{\rd_{t}} A = 0$. Then $(A, \phi)$ solves \eqref{eq:CS-uni} on $I \times \bbR^{2}$ if and only if it solves the reduced system
\begin{equation} \label{eq:reduce:temp}
\left\{
\begin{aligned}
	(\Box  \m 1) \phi =& 2 \star (A \wedge \star \ud \phi) + b \phi + \star(A \wedge \star(A \phi)) + U(\phi) \\
	\calL_{\rd_{t}} A = & \star (J \wedge \ud t) \\
	\calL_{\rd_{t}} b = & - \star (\ud J \wedge \ud t)
\end{aligned}
\right.
\end{equation}
and obeys the constraints
\begin{equation} \label{eq:reduce:temp-c}
	(F - \star J) \restriction_{\Sgm_{t_{0}}}= 0, \qquad (\dlt A - b) \restriction_{\Sgm_{t_{0}}} = 0,
\end{equation}
on $\Sgm_{t_{0}} = \set{t = t_{0}}$ for some $t_{0} \in I$.
\end{lemma}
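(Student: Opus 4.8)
\textbf{Proof strategy for Lemma~\ref{lem:reduce:temp}.}
The plan is to establish the equivalence by a direct translation of the covariant system \eqref{eq:CS-uni} into component form using the temporal gauge condition $\iota_{\rd_t} A = 0$, together with the expansion formulae of Lemma~\ref{lem:expand}, and then to account for the role of the constraint equations \eqref{eq:reduce:temp-c}. The argument splits naturally into two directions. First I would show that a solution of \eqref{eq:CS-uni} in the temporal gauge solves the reduced system \eqref{eq:reduce:temp} and satisfies the constraints; then I would show the converse.

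For the forward direction, the covariant Klein--Gordon equation $(\covBox - 1)\phi = U(\phi)$ becomes the first equation of \eqref{eq:reduce:temp} immediately upon substituting \eqref{eq:expand:cov-Box} and writing $b := \dlt A$. For the evolution equation for $A$: the Chern--Simons equation $F = \star J$ gives, after applying $\iota_{\rd_t}$ and using \eqref{eq:star-iX} together with $\iota_{\rd_t} A = 0$ (so that $\iota_{\rd_t} F = \iota_{\rd_t} \ud A = \LD_{\rd_t} A$ by Cartan's formula \eqref{eq:cartan-eq}, since $\ud \iota_{\rd_t} A = 0$), the identity $\calL_{\rd_t} A = \iota_{\rd_t} \star J = \star(J \wedge \ud t)$, where I use that $(\rd_t)^\flat = - \ud t$ up to the sign dictated by the signature $(-1,+1,+1)$ and absorb this into the schematic reading of the lemma. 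The evolution equation for $b = \dlt A$ is then obtained by applying $\dlt$ (equivalently $-\star \ud \star$ via \eqref{eq:star-dlt}) to the equation $F = \star J$ and contracting with $\rd_t$; since $\dlt F = \dlt \star J = \pm \star \ud J$, one gets $\calL_{\rd_t}(\dlt A) = \iota_{\rd_t}(\dlt \star J)$ which reduces to $-\star(\ud J \wedge \ud t)$ after using $\dlt \ud A = \dlt F$ and the temporal gauge condition to commute $\dlt$ past $\LD_{\rd_t}$. The constraints \eqref{eq:reduce:temp-c} hold on every slice (in particular on $\Sgm_{t_0}$) simply because $F = \star J$ and $b = \dlt A$ by definition.

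For the converse, assume $(A,\phi)$ solves \eqref{eq:reduce:temp} in the temporal gauge and satisfies the constraints on $\Sgm_{t_0}$. Reversing the computation of Lemma~\ref{lem:expand} shows that the first equation of \eqref{eq:reduce:temp} is equivalent to $(\covBox - 1)\phi = U(\phi)$ \emph{provided} one knows $b = \dlt A$; thus the main task is to propagate the two constraints off $\Sgm_{t_0}$. The quantity $\dlt A - b$ satisfies $\calL_{\rd_t}(\dlt A - b) = \dlt \calL_{\rd_t} A - \calL_{\rd_t} b = \dlt \star(J \wedge \ud t) + \star(\ud J \wedge \ud t)$, and using Lemma~\ref{lem:d-i-star} (or a direct computation with $\iota_{\rd_t}\star$) this right-hand side vanishes identically, so $\dlt A - b \equiv 0$ by integrating the transport equation along the constant-$x$ curves from $\Sgm_{t_0}$. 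Similarly, set $\mathcal{F} := F - \star J$; then $\iota_{\rd_t}\mathcal{F} = \LD_{\rd_t} A - \iota_{\rd_t}\star J = 0$ by the second equation of \eqref{eq:reduce:temp}, so $\mathcal{F}$ has no $\ud t$ component, and the Bianchi identity $\ud F = 0$ combined with the equation satisfied by $J$ (which is consequence of the covariant Klein--Gordon equation once $\dlt A = b$ is known) forces $\calL_{\rd_t}\mathcal{F} = 0$; integrating from $\Sgm_{t_0}$ gives $\mathcal{F} \equiv 0$, i.e.\ $F = \star J$. Once both constraints are known globally, the first equation of \eqref{eq:reduce:temp} upgrades to $(\covBox - 1)\phi = U(\phi)$, completing the equivalence.

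\textbf{Main obstacle.} The delicate point is the constraint propagation for $\mathcal{F} = F - \star J$: one must verify that the time derivative $\calL_{\rd_t}\mathcal{F}$ genuinely vanishes, which requires the compatibility between the evolution equation for $A$ and the covariant conservation-type identity for $J$ (namely that $\covdlt J$, or the relevant contraction of $\ud J$, is controlled by the matter equation). This is where the precise structure of $J_{\CSH}$ and $J_{\CSD}$ enters, via Lemmas~\ref{lem:covud-covdlt-CSH} and \ref{lem:covud-CSD}; concretely, one needs that the $\phi$-equation implies $\ud \star J$ has the specific form making $\calL_{\rd_t}\mathcal{F}$ a linear expression in $\mathcal{F}$ itself (so that $\mathcal{F} \equiv 0$ on $\Sgm_{t_0}$ propagates). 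The remaining computations — expanding $\covBox$, identifying $\iota_{\rd_t}\star J$ with $\star(J\wedge\ud t)$, and integrating the transport equations — are routine given the calculus of Section~\ref{subsec:extr-calc-2}, and I would state them briefly rather than in full detail.
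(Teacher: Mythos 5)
Your proposal is correct and follows essentially the same route as the paper's proof: the forward direction via Lemma~\ref{lem:expand}, Cartan's formula and the Killing property of $\rd_{t}$ (to commute $\calL_{\rd_{t}}$ with $\dlt$), and the converse via propagation of the two constraints, with the conservation property $\covdlt J = 0$ coming from the matter equation as the key input for propagating $F - \star J$ — indeed your ordering (propagate $\dlt A - b$ first, so that the covariant Klein--Gordon equation is available before invoking conservation of $J$) is, if anything, slightly more careful than the paper's presentation. The one point to correct is the phrase ``Bianchi identity $\ud F = 0$'': for a non-abelian gauge group this must be the covariant Bianchi identity $\covud F = 0$ (the paper avoids Bianchi altogether and computes $\covLD_{\rd_{t}} F$ directly from $F = \ud A + \frac{1}{2}[A \wedge A]$ and the evolution equation for $A$); with $\covud$ in place of $\ud$ your argument — $\iota_{\rd_{t}}(F - \star J) = 0$ plus $\covud (F - \star J) = 0$ gives a covariant transport equation for the $\ud x^{1} \wedge \ud x^{2}$ component, whose vanishing on $\Sgm_{t_{0}}$ then propagates since bi-invariance of $\brk{\cdot,\cdot}_{\LieAlg}$ makes the pointwise norm constant along the transport — goes through unchanged.
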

Here, the notation $(F - \star J) \restriction_{\Sgm_{t_{0}}}$ refers to the restriction (or pullback) of the 2-form $F - \star J$ to $\Sgm_{t_{0}}$; in coordinates,
\begin{equation*}
(F - \star J) \restriction_{\Sgm_{t_{0}}} = (F - \star J)_{12} \, \ud x^{1} \wedge \ud x^{2}.
\end{equation*}
We also remind the reader that $J$ and $\ud J$ were computed in Lemma~\ref{lem:expand}.

As it will be clear from the proof below, the system \eqref{eq:CS-uni} is in fact already equivalent to the first two equations of \eqref{eq:reduce:temp} if we take $b = \dlt A$. The reason for introducing the auxiliary variable $b$ and the third equation is to exploit the fact that $\dlt A$ obeys a `better' transport equation than a general derivative of $A$. In particular, in the case of \eqref{eq:CSH} we only have at most one derivative of $\varphi$ on the right-hand side of $\calL_{\rd_{t}} \dlt A$; in general, we expect to see two derivatives from differentiating $\calL_{\rd_{t}} A = \star (J_{\CSH} \wedge \ud t)$. This observation is crucial for establishing local well-posedness of \eqref{eq:CSH} in the temporal gauge, since $\dlt A$ appears on the right-hand side of the Klein--Gordon equation for $\phi$, and the latter equation only gains one derivative.

\begin{proof} [Proof of Lemma~\ref{lem:reduce:temp}]
First, we claim that if $(A, \phi)$ is a solution to \eqref{eq:CS-uni}, then \eqref{eq:reduce:temp} and \eqref{eq:reduce:temp-c} are satisfied with $b = \dlt A$. Indeed, by Lemma~\ref{lem:expand}, the equation $(\covBox \m 1) \phi = U(\phi)$ is equivalent to the first equation of \eqref{eq:reduce:temp} with $b = \dlt A$. The second equation follows from $F = \star J$ by taking $\iota_{\rd_{t}}$ and using Cartan's formula \eqref{eq:cartan-eq}. Finally, taking $\dlt$ of the second equation and using Lemma~\ref{lem:star}, we have
\begin{equation} \label{eq:reduce:temp-dltA}
	\LD_{\rd_{t}} \dlt A = \dlt \calL_{\rd_{t}} A = \star \ud \star \star (J \wedge \ud t) = - \star (\ud J \wedge \ud t).
\end{equation}
Here, we have crucially used the fact that $\rd_{t}$ is Killing to commute $\calL_{\rd_{t}}$ with $\dlt$. This equation implies the third equation of \eqref{eq:reduce:temp}.

To conclude the proof, it remains to show that a solution to \eqref{eq:reduce:temp} and \eqref{eq:reduce:temp-c} also solves \eqref{eq:CS-uni}. As a first step, we observe that the constraints \eqref{eq:reduce:temp-c} are propagated by \eqref{eq:reduce:temp}. Indeed, for the first equation of \eqref{eq:reduce:temp-c}, we have
\begin{align*}
	\covLD_{\rd_{t}} F =& (\LD_{\rd_{t}} + \iota_{\rd_{t}} A) (\ud A + \frac{1}{2} [ A \wedge A]) \\
	 =& \ud (\iota_{\rd_{t}} \star J) + [A \wedge (\iota_{\rd_{t}} \star J)] \\
	 = & \covud \iota_{\rd_{t}} \star J = \covLD_{\rd_{t}} \star J - \iota_{\rd_{t}} \covud \star J.
\end{align*}
Note that the last term vanishes, since $\covdlt J = \star \covud \star J = 0$ for both $J = J_{\CSH}$ and $J_{\CSD}$. Hence $\covLD_{\rd_{t}}(F - \star J) = 0$, which along with \eqref{eq:reduce:temp-c} implies that $(F - \star J) \restriction_{\Sgm_{t}} = 0$ for every $t \in I$. Next, by \eqref{eq:reduce:temp-dltA} we have $\calL_{\rd_{t}} (\dlt A - b) = 0$, which shows that $b = \dlt A$ for every $t \in I$ as well.

We are now ready to show that $(A, \phi)$ solves \eqref{eq:CS-uni}. As we have just seen, if \eqref{eq:reduce:temp} and \eqref{eq:reduce:temp-c} hold, then $(F - \star J) \restriction_{\Sgm_{t}} = 0$ for every $t \in I$, i.e., the tangential components of Chern--Simons equation holds. On the other hand, the remaining components $\iota_{\rd_{t}} (F - \star J)$ are precisely the second equation of \eqref{eq:reduce:temp}. Finally, since $b = \dlt A$, it follows from Lemma~\ref{lem:expand} that the covariant Klein--Gordon equation holds as well. \qedhere
\end{proof}
%

\subsection{Reduced system in the Cronstr\"om gauge} \label{subsec:cronstrom}
In the Cronstr\"om gauge, we have the following analogue of Lemma~\ref{lem:reduce:temp}.
\begin{lemma} \label{lem:reduce:cron}
Let $I$ be a connected interval, and let $(A, \phi)$ be a pair of (smooth) connection 1-form and $V$-valued function on $\set{\hT \in I} \subseteq \bbR^{1+2}$ which obeys the Cronstr\"om gauge condition $\iota_{\rd_{\hT}} A = 0$. Then $(A, \phi)$ solves \eqref{eq:CS-uni} on $\set{\hT \in I}$ if and only if it solves the reduced system
\begin{equation} \label{eq:reduce:cron}
\left\{
\begin{aligned}
	(\Box  \m 1) \phi =& 2 \star (A \wedge \star \ud \phi) + b \phi + \star(A \wedge \star(A \phi)) + U(\phi) \\
	\calL_{\rd_{\hT}} A = & \star (J \wedge \ud \hT) \\
	\bb( \calL_{\rd_{\hT}} + \frac{2}{\hT} \bb) b = & - \star (\ud J \wedge \ud \hT)
\end{aligned}
\right.
\end{equation}
and obeys the constraints
\begin{equation} \label{eq:reduce:cron-c}
	(F - \star J) \restriction_{\calH_{\hT_{0}}}= 0, \qquad (\dlt A - b) \restriction_{\calH_{\hT_{0}}} = 0,
\end{equation}
on $\calH_{\hT_{0}}$ for some $\hT \in I$.
\end{lemma}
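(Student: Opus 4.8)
The proof will closely parallel that of Lemma~\ref{lem:reduce:temp}, with the time translation vector field $\rd_{t}$ replaced by the scaling vector field $S = \hT \rd_{\hT}$ (equivalently, working with $\rd_{\hT}$ but keeping track of the conformal weight). The plan is to show both directions of the equivalence, with the main work being the derivation of the modified transport equation for $b$ and the verification that the constraints propagate.

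First I would show that a solution $(A, \phi)$ to \eqref{eq:CS-uni} in the Cronstr\"om gauge solves \eqref{eq:reduce:cron} with $b = \dlt A$. As before, the covariant Klein--Gordon equation $(\covBox \m 1)\phi = U(\phi)$ is equivalent to the first equation of \eqref{eq:reduce:cron} via the expansion \eqref{eq:expand:cov-Box} of Lemma~\ref{lem:expand}, together with the identification $b = \dlt A$. The second equation follows from $F = \star J$ by applying $\iota_{\rd_{\hT}}$ and using Cartan's formula \eqref{eq:cartan-eq} together with the gauge condition $\iota_{\rd_{\hT}} A = 0$, exactly as in the temporal case. The one genuinely different point is the equation for $b = \dlt A$: here $\rd_{\hT}$ is \emph{not} Killing (it is the unit normal to the hyperboloids, and the conformal Killing field is $S = \hT \rd_{\hT}$), so $\calL_{\rd_{\hT}}$ does not commute with $\dlt$. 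Instead I would apply $\dlt$ to the second equation and compute the commutator $[\dlt, \calL_{\rd_{\hT}}]$ acting on 1-forms on $\bbR^{1+2}$, using that $\defT{S}_{ab} = 2 \met_{ab}$ (since $S$ is conformal Killing with conformal factor $1$) and that $\rd_{\hT} = \hT^{-1} S$. The extra $\frac{2}{\hT}$ term in the third equation of \eqref{eq:reduce:cron} is precisely the contribution of this commutator; concretely, $\dlt \calL_{\rd_{\hT}} A = (\calL_{\rd_{\hT}} + \frac{2}{\hT}) \dlt A + (\text{curvature-free terms})$, and one checks $\dlt \star(J \wedge \ud \hT) = -\star(\ud J \wedge \ud \hT)$ using Lemma~\ref{lem:star}. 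I expect verifying this commutator computation carefully (getting the weight $\frac{2}{\hT}$ rather than some other multiple) to be the main obstacle, though it is a routine exterior-calculus exercise once the conformal weight of $\rd_{\hT}$ on 1-forms is pinned down.

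Conversely, suppose $(A, \phi)$ solves \eqref{eq:reduce:cron} and satisfies the constraints \eqref{eq:reduce:cron-c} on some $\calH_{\hT_{0}}$. I would first show the constraints propagate. For the curvature constraint, the computation is identical in form to the temporal case: using $\iota_{\rd_{\hT}} A = 0$,
\begin{equation*}
	\covLD_{\rd_{\hT}} F = \covud \iota_{\rd_{\hT}} \star J = \covLD_{\rd_{\hT}} \star J - \iota_{\rd_{\hT}} \covud \star J,
\end{equation*}
and the last term vanishes because $\covdlt J = \star \covud \star J = 0$ for both $J = J_{\CSH}$ and $J_{\CSD}$ (this follows from the definitions of $J_{\CSH}$, $J_{\CSD}$ and the covariant Klein--Gordon / Dirac equations; it is the covariant conservation of the current). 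Hence $\covLD_{\rd_{\hT}}(F - \star J) = 0$, and since $\covLD_{\rd_{\hT}}$ is the transport operator along the integral curves of $S$, which are causal and foliate $\set{\hT \in I}$ starting from $\calH_{\hT_{0}}$, the tangential constraint $(F - \star J)\restriction_{\calH_{\hT}} = 0$ holds for all $\hT \in I$. For the second constraint, the third equation of \eqref{eq:reduce:cron} combined with the identity $\dlt \calL_{\rd_{\hT}} A = (\calL_{\rd_{\hT}} + \frac{2}{\hT})\dlt A$ derived above gives $(\calL_{\rd_{\hT}} + \frac{2}{\hT})(\dlt A - b) = 0$, a linear homogeneous transport equation for $\dlt A - b$ which, with the initial condition from \eqref{eq:reduce:cron-c}, forces $b = \dlt A$ throughout.

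Finally, with both constraints established for all $\hT \in I$, I would assemble the full system: the tangential components of $F = \star J$ are exactly the propagated first constraint; the normal components $\iota_{\rd_{\hT}}(F - \star J)$ are exactly the second equation of \eqref{eq:reduce:cron}; and since $b = \dlt A$, the expansion \eqref{eq:expand:cov-Box} turns the first equation of \eqref{eq:reduce:cron} back into $(\covBox \m 1)\phi = U(\phi)$. This completes the equivalence. I would remark, as in the temporal case, that the auxiliary variable $b$ with its own transport equation is introduced so that $\dlt A$ (which appears in the $\phi$-equation) satisfies a transport equation losing fewer derivatives of $\phi$ than a generic component of $A$ would, which is what makes the Picard iteration behind Theorem~\ref{thm:lwp} close.
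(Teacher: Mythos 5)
Your proposal is correct and follows essentially the same route as the paper: the only genuinely new step beyond the temporal-gauge lemma is the weighted transport identity $(\calL_{\rd_{\hT}}+\tfrac{2}{\hT})\dlt A = -\star(\ud J\wedge \ud\hT)$, which the paper derives by working with the conformal Killing field $S=\hT\rd_{\hT}$ (using $\LD_{S}\met=2\met$, $\LD_{S}\eps=3\eps$, hence $\LD_{S}\star\omg=\star\LD_{S}\omg+(3-2k)\star\omg$ and $\dlt\LD_{S}A=(\LD_{S}+2)\dlt A$, together with $\ud S^{\flat}=0$) --- the same conformal-weight computation you propose via $\defT{S}=2\met$, and under the gauge condition the extra terms you call ``curvature-free'' indeed vanish since $\iota_{\rd_{\hT}}\calL_{\rd_{\hT}}A=0$. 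The rest (propagation of the constraints and reassembly of \eqref{eq:CS-uni}) is carried over from the temporal-gauge argument exactly as you describe, which is also what the paper does.
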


\begin{proof} 
We only sketch the proof of the following analogue of \eqref{eq:reduce:temp-dltA}:
\begin{equation} \label{eq:reduce:cron-dltA}
	\bb( \LD_{\rd_{\hT}} + \frac{2}{\hT} \bb) \dlt A = - \star (\ud J \wedge \ud \hT),
\end{equation}
since rest of the proof is analogous to the temporal gauge case (Lemma~\ref{lem:reduce:temp}).

In Lemma~\ref{eq:reduce:temp-dltA}, commutation of $\calL_{\rd_{t}}$ and $\star$ was simple due to the fact that $\rd_{t}$ is Killing. In the present case, $\rd_{\hT}$ is \emph{not} Killing; however, we may exploit the fact that $S = \hT \rd_{\hT}$ is \emph{conformally Killing}. Indeed, on the $(1+d)$-dimensional Minkowski space, the scaling vector field $S$ obeys the identities
\begin{align*}
	\LD_{S} \eta = 2 \eta, \quad
	\LD_{S} \eta^{-1} = - 2 \eta^{-1}, \quad
	\LD_{S} \eps = (1+d) \eps.
\end{align*}
In our case, $1+d = 3$. Given two real-valued $k$-forms $\omg^{1}$ and $\omg^{2}$, we have
\begin{equation*}
	\LD_{S} \bb( \eta^{-1}(\omg^{1}, \omg^{2}) \eps \bb)
	= \eta^{-1}(\LD_{S} \omg^{1}, \omg^{2}) \eps 
		+ \eta^{-1}(\omg^{1}, \LD_{S} \omg^{2}) \eps 
		+ (3-2k) \eta^{-1}(\omg^{1}, \omg^{2}) \eps.
\end{equation*}
Recalling the characterization \eqref{eq:star-def} of $\star$, it follows that
\begin{equation*}
	\LD_{S} \star \omg = \star \LD_{S} \omg + (3-2k) \star \omg.
\end{equation*}

We now begin the proof in earnest. Assume that the second equation of \eqref{eq:reduce:cron} holds. 
Computing component-wisely for a $\LieAlg$-valued 1-form $A$, we have
\begin{align*}
	\dlt \LD_{S} A
	= & \star \ud \star \LD_{S} A \\
	= & \LD_{S} \star \ud \star A - (3 - 2 \cdot 3) \star \ud \star A - (3 - 2) \star \ud \star A 
	= (\LD_{S} + 2) \dlt A.
\end{align*}
By Cartan's formula \eqref{eq:cartan-eq} and the fact that $\iota_{S} A = \hT \iota_{\rd_{\hT}} A = 0$, the left-hand side equals
\begin{equation*}
	\dlt (\star J \wedge S^{\flat}) = - \star (\ud J \wedge S^{\flat}) + \star(J \wedge \ud S^{\flat}).
\end{equation*}
Note that $S^{\flat} = \hT \ud \hT = \frac{1}{2} \ud \hT^{2}$, hence $\ud S^{\flat} = \frac{1}{2} \ud^{2} \hT^{2} = 0$. It follows that
\begin{equation*}
	\hT \bb( \LD_{\rd_{\hT}} + \frac{2}{\hT} \bb) \dlt A = (\LD_{S} + 2) \dlt A = - \hT \star (\ud J \wedge \ud \hT).
\end{equation*}
Dividing by $\hT > 0$, \eqref{eq:reduce:cron-dltA} follows. \qedhere
\end{proof}

\bibliographystyle{amsplain}

\begin{thebibliography}{10}

\bibitem{ao:2014} W. Ao, C.-S. Lin, J. Wei, 
\emph{On non-topological solutions of the $A_2$ and $B_2$ Chern-Simons system.}
 Memoir Amer. Math. Soc. 2014, to appear.
 
\bibitem{MR1328596}
L.~Berg{{\'e}}, A.~De~Bouard, and J.-C. Saut, \emph{Blowing up time-dependent
  solutions of the planar, {C}hern-{S}imons gauged nonlinear {S}chr{\"o}dinger
  equation}, Nonlinearity \textbf{8} (1995), no.~2, 235--253. \MR{1328596
  (96b:81025)}

\bibitem{Bieri:2014lq}
L.~Bieri, S.~Miao, and S.~Shahshahani, \emph{Asymptotic properties
  of solutions of the maxwell klein gordon equation with small data},  (2014).

\bibitem{MR2539222}
N.~Bournaveas, \emph{Low regularity solutions of the
  {C}hern-{S}imons-{H}iggs equations in the {L}orentz gauge}, Electron. J.
  Differential Equations (2009), No. 114, 10. \MR{2539222 (2011b:58027)}

\bibitem{Bournaveas:2013vk}
N.~Bournaveas, T.~Candy, and S.~Machihara, \emph{{A note on the
  Chern-Simons-Dirac equations in the Coulomb Gauge}}, arXiv.org (2013).

\bibitem{MR582536}
H.~Br{{\'e}}zis and T.~Gallouet, \emph{Nonlinear {S}chr{\"o}dinger evolution
  equations}, Nonlinear Anal. \textbf{4} (1980), no.~4, 677--681. \MR{582536
  (81i:35139)}

\bibitem{MR3136522}
D.~Bump, \emph{Lie groups}, second ed., Graduate Texts in Mathematics, vol.
  225, Springer, New York, 2013. \MR{3136522}

\bibitem{Chae:2002eu}
D.~Chae and K.~Choe, \emph{{Global existence in the Cauchy problem
  of the relativistic Chern-Simons-Higgs theory}}, Nonlinearity (2002).
  
\bibitem{chae:2000}
D.~Chae and O. Yu Imanuvilov, \emph{The existence of non-topological multivortex solutions in the relativistic
self-dual Chern-Simons theory}, Commun. Math. Phys.\textbf{215}, 119-142 (2000).

\bibitem{choe:2015}
K. Choe, N. Kim, C.-S. Lin,
 \emph{Self-Dual Symmetric Nontopological Solutions in the SU(3) Model in $\bbr^2$},
 Comm. Math. Phys. textbf{334}, 1-37,(2015).
 
\bibitem{MR1316662}
D.~Christodoulou and S.~Klainerman, \emph{The global nonlinear
  stability of the {M}inkowski space}, Princeton Mathematical Series, vol.~41,
  Princeton University Press, Princeton, NJ, 1993. \MR{1316662 (95k:83006)}

\bibitem{MR2056833}
J.-M.~Delort, D.~Fang, and R.~Xue, \emph{Global existence of small
  solutions for quadratic quasilinear {K}lein-{G}ordon systems in two space
  dimensions}, J. Funct. Anal. \textbf{211} (2004), no.~2, 288--323.
  \MR{2056833 (2005f:35214)}

\bibitem{dunne1995self}
G.~Dunne, \emph{Self-dual Chern-Simons theories}, vol.~36, Springer Science
  \& Business Media, 1995.
  
 \bibitem{du1}
 G. Dunne, \emph{Mass degeneracies in self-dual models.} Phys. Lett. B \textbf{345}, 452-457, (1995).

\bibitem{du2}
G. Dunne, \emph{Vacuum mass spectra for SU(4) self-dual Chern-Simons-Higgs systems.}
Nucl. Phys. B \textbf{433}, 333-348, (1995).

\bibitem{gu1} 
S. B. Gudnason, \emph{Fractional and semi-local non-Abelian Chern-Simons vortices.}
Nucl. Phys. B 840, 160-185, (2010).

\bibitem{gu2}  S. B. Gudnason, \emph{Non-abelian Chern-Simons vortices with generic gauge groups,}
Nucl. Phys. B \textbf{821}, 151-169, (2009).

\bibitem{hong:1990} 
J.~Hong, Y.~Kim and P. Y. ~Pac,\emph{Multivortex solutions of the Abelian Chern-Simons-Higgs theory}, Phys. Rev. Lett 
\textbf{64}, 2230, (1990)

\bibitem{huang:2014} 
H.-Y. Huang, C.-H. Lin, 
\emph{On the entire radial solutions of the Chern-Simons SU(3) system.}
Comm. Math. Phys. \textbf{327}, 815-848, (2014).

\bibitem{MR2290338}
H.~Huh, \emph{Cauchy problem for the fermion field equation coupled with
  the {C}hern-{S}imons gauge}, Lett. Math. Phys. \textbf{79} (2007), no.~1,
  75--94. \MR{2290338 (2009b:35348)}

\bibitem{MR2274820}
\bysame, \emph{Local and global solutions of the {C}hern-{S}imons-{H}iggs
  system}, J. Funct. Anal. \textbf{242} (2007), no.~2, 526--549. \MR{2274820
  (2007m:58021)}

\bibitem{MR2812958}
\bysame, \emph{Towards the {C}hern-{S}imons-{H}iggs equation with finite
  energy}, Discrete Contin. Dyn. Syst. \textbf{30} (2011), no.~4, 1145--1159.
  \MR{2812958 (2012m:58005)}

\bibitem{Oh:2012uq}
H. Huh and S.-J.~Oh, \emph{Low regularity solutions to the
  Chern-Simons-Dirac and the Chern-Simons-Higgs equations in the Lorenz gauge},
  arXiv:1209.3841 [math.AP] (2012).
  
\bibitem{jackiw:1990}
R.~Jackiw and E.J.~Weinberg, \emph{Self-dual Chern-Simons vortices}, Phys. Rev. Lett., \textbf{64}, 2969 (1990).

\bibitem{jaffe:1980}
A.~Jaffe and C.~Taubes, \emph{Vortices and Monopoles}, Birkh\"{a}user, Boston (1980).
\bibitem{kao:1994}
H. Kao, K. Lee,
\emph{Self-dual SU(3) Chern-Simons-Higgs systems.}
Phys. Rev. D \textbf{50}, 6626-6632, (1994).

\bibitem{Klainerman:tc}
S.~Klainerman, \emph{{The Null Condition and Global Existence to Nonlinear
  Wave Equations}}, Lectures in Applied Mathematics.

\bibitem{MR803252}
\bysame, \emph{Global existence of small amplitude solutions to nonlinear
  {K}lein-{G}ordon equations in four space-time dimensions}, Comm. Pure Appl.
  Math. \textbf{38} (1985), no.~5, 631--641. \MR{803252 (87e:35080)}

\bibitem{MR784477}
\bysame, \emph{Uniform decay estimates and the {L}orentz invariance of the
  classical wave equation}, Comm. Pure Appl. Math. \textbf{38} (1985), no.~3,
  321--332. \MR{784477 (86i:35091)}

\bibitem{Kobayashi:1963uh}
S.~Kobayashi and K.~Nomizu, \emph{{Foundations of Differential
  Geometry: Vol 1}}, Wiley Interscience, 1963.

\bibitem{Kobayashi:1969ub}
\bysame, \emph{{Foundations of Differential Geometry: Vol 2}}, Wiley
  Interscience, 1969.
  
  \bibitem{lin:2013} 
  C.-S. Lin, S. Yan, \emph{Bubbling solutions for the SU(3) Chern-Simons model on a torus.}
Comm. Pure Appl. Math. \textbf{66}, 991-1027, 2013.

\bibitem{MR2188297}
H.~Lindblad and A.~Soffer, \emph{A remark on asymptotic completeness for the
  critical nonlinear {K}lein-{G}ordon equation}, Lett. Math. Phys. \textbf{73}
  (2005), no.~3, 249--258. \MR{2188297 (2006i:35249)}

\bibitem{Lindblad:2006vh}
H.~Lindblad and J.~Sterbenz, \emph{{Global stability for charged-scalar
  fields on Minkowski space}}, IMRP. International Mathematics Research Papers
  (2006).

\bibitem{Liu:2013xr}
B.~Liu and P.~Smith, \emph{Global wellposedness of the equivariant
  Chern-Simons-Schr{\"o}dinger equation},  (2013).

\bibitem{MR3286341}
B.~Liu, P.~Smith, and D.~Tataru, \emph{Local wellposedness of
  {C}hern-{S}imons-{S}chr{\"o}dinger}, Int. Math. Res. Not. IMRN (2014),
  no.~23, 6341--6398. \MR{3286341}
  
 \bibitem{lo} G. Lozano, D. Marques, E. Moreno, F. Schaposnik, 
\emph{Non-abelian Chern-Simons theory}, Comm. Math. Phys. 213, 599-639, 2000.

\bibitem{Oh:2013bq}
S.-J.~Oh, \emph{Finite energy global well-posedness of the
  Chern-Simons-Higgs equations in the Coulomb gauge},  (2013).

\bibitem{MR3190112}
\bysame, \emph{Gauge choice for the {Y}ang-{M}ills equations using the
  {Y}ang-{M}ills heat flow and local well-posedness in {$H^1$}}, J. Hyperbolic
  Differ. Equ. \textbf{11} (2014), no.~1, 1--108. \MR{3190112}

\bibitem{MR3357182}
\bysame, \emph{Finite energy global well-posedness of the {Y}ang-{M}ills
  equations on {$\Bbb{R}^{1+3}$}: an approach using the {Y}ang-{M}ills heat
  flow}, Duke Math. J. \textbf{164} (2015), no.~9, 1669--1732. \MR{3357182}

\bibitem{Oh:2013tx}
S.-J.~Oh and F.~Pusateri, \emph{Decay and scattering for the
  Chern-Simons-Schr{\"o}dinger equations}, to appear in Int. Math. Res. Not.
  (2013).

\bibitem{MR3163407}
M.~Okamoto, \emph{Well-posedness of the {C}auchy problem for the
  {C}hern-{S}imons-{D}irac system in two dimensions}, J. Hyperbolic Differ.
  Equ. \textbf{10} (2013), no.~4, 735--771. \MR{3163407}

\bibitem{Pecher:2014pd}
H.~Pecher, \emph{Low regularity local well-posedness for the
  Chern-Simons-Higgs system in temporal gauge},  (2014).

\bibitem{Pecher:2014ul}
\bysame, \emph{A remark on low regularity solutions of the Chern-Simons-Dirac
  system},  (2014).

\bibitem{Pecher:2015yg}
\bysame, \emph{Global well-posedness in energy space for the Chern-Simons-Higgs
  system in temporal gauge},  (2015).

\bibitem{MR1672001}
M.~Psarelli, \emph{Asymptotic behavior of the solutions of
  {M}axwell-{K}lein-{G}ordon field equations in {$4$}-dimensional {M}inkowski
  space}, Comm. Partial Differential Equations \textbf{24} (1999), no.~1-2,
  223--272. \MR{1672001 (2000a:35232)}

\bibitem{MR2131047}
\bysame, \emph{Maxwell-{D}irac equations in four-dimensional {M}inkowski
  space}, Comm. Partial Differential Equations \textbf{30} (2005), no.~1-3,
  97--119. \MR{2131047 (2006c:81048)}

\bibitem{Selberg:2012vb}
S.~Selberg and A.~Tesfahun, \emph{{Global well-posedness of the
  Chern-Simons-Higgs equations with finite energy}}, arXiv.org (2012).
  
\bibitem{spruck:1992}
J.~Spruck and Y.~Yang, \emph{The existence of nontopological solitons in the self-dual Chern-Simons theory},
Comm. Math. Phys. \textbf{149}, 361-376 (1992).

\bibitem{spruck:1995}
J.~Spruck and Y.~Yang, \emph{Topological solutions in the self-dual Chern-Simons theory: existence and approximation,}
Ann. Inst. Henri Poincar\'{e} \textbf{12}, 75-97 (1995).

\bibitem{yang:1997}
Y.~Yang, \emph{The relativistic non-Abelian Chern-Simons equations}, Comm. Math. Phys. \textbf{186}, 199-218
(1997).

%

\bibitem{Uhlenbeck:1982vna}
K.~Uhlenbeck, \emph{{Connections with $L^{p}$ bounds on curvature}},
  Communications in Mathematical Physics \textbf{83} (1982), no.~1, 31--42.
  
\bibitem{wang:1991}  
 R.~Wang, \emph{The existence of Chern-Simons Vortices}, Comm. Math. Phys.\textbf{137}, 587-597 (1991).

\end{thebibliography}

\providecommand{\bysame}{\leavevmode\hbox to3em{\hrulefill}\thinspace}
\providecommand{\MR}{\relax\ifhmode\unskip\space\fi MR }
\providecommand{\MRhref}[2]{%
  \href{http://www.ams.org/mathscinet-getitem?mr=#1}{#2}
}
\providecommand{\href}[2]{#2}


\end{document}